\DeclareMathAlphabet{\mathpzc}{OT1}{pzc}{m}{it}
\newcommand{\po}{\ar@{}[dr]|{\text{\pigpenfont R}}}
\newcommand{\pb}{\ar@{}[dr]|{\text{\pigpenfont J}}}
\mathchardef\mhyphen="2D 
\DeclareRobustCommand{\hvec}[1]{{\mathpalette\hvec@{#1}}}
\newcommand{\hvec@}[2]{
  \vbox{\offinterlineskip
    \ialign{
      \hfil##\hfil\cr
      $\m@th#1{}_{\rightharpoonup}$\kern-\scriptspace\cr
      $\m@th#1#2$\cr
    }
  }
}
\newlength\myheight
\newlength\mydepth
\settototalheight\myheight{Xygp}
\DeclareMathOperator{\Aut}{Aut}
\newcommand{\st}{\mathrm{st}}
\newcommand{\hocolim}{\mathrm{hocolim}}
\newcommand{\colim}{\mathrm{colim}}
\newcommand{\hofib}{\mathrm{hofib}}
\DeclareMathOperator{\loc}{loc}
\DeclareMathOperator{\Diff}{Diff}
\newcommand{\pt}{\operatorname{pt}}
\def\bC{\mathbb{C}}
\def\bD{\mathbb{D}}
\def\bE{\mathbb{E}}
\def\bF{\mathbb{F}}
\def\bH{\mathbb{H}}
\def\bO{\mathbb{O}}
\def\bP{\mathbb{P}}
\def\bR{\mathbb{R}}
\def\bS{\mathbb{S}}
\def\bU{\mathbb{U}}
\def\bV{\mathbb{V}}
\def\bZ{\mathbb{Z}}
\def\fg{\mathfrak{g}}
\DeclareMathOperator{\Thom}{\mathsf{Thom}}
\DeclareMathOperator{\Base}{\mathsf{Base}}
\newcommand{\cA}{\mathcal{A}}
\newcommand{\cB}{\mathcal{B}}
\newcommand{\cC}{\mathcal{C}}
\newcommand{\cD}{\mathcal{D}}
\newcommand{\cE}{\mathcal{E}}
\newcommand{\cF}{\mathcal{F}}
\newcommand{\cG}{\mathcal{G}}
\newcommand{\cH}{\mathcal{H}}
\newcommand{\cI}{\mathcal{I}}
\newcommand{\cL}{\mathcal{L}}
\newcommand{\cM}{\mathcal{M}}
\newcommand{\cN}{\mathcal{N}}
\newcommand{\cO}{\mathcal{O}}
\newcommand{\cP}{\mathcal{P}}
\newcommand{\cR}{\mathcal{R}}
\newcommand{\cS}{\mathcal{S}}
\newcommand{\cV}{\mathcal{V}}
\newcommand{\cW}{\mathcal{W}}
\newcommand{\cX}{\mathcal{X}}
\def\bZ{\mathbb{Z}}
\tikzset{
    ncbar angle/.initial=90,
    ncbar/.style={
        to path=(\tikztostart)
        -- ($(\tikztostart)!#1!\pgfkeysvalueof{/tikz/ncbar angle}:(\tikztotarget)$)
        -- ($(\tikztotarget)!($(\tikztostart)!#1!\pgfkeysvalueof{/tikz/ncbar angle}:(\tikztotarget)$)!\pgfkeysvalueof{/tikz/ncbar angle}:(\tikztostart)$)
        -- (\tikztotarget)
    },
    ncbar/.default=0.5cm,
}
\tikzset{square left brace/.style={ncbar=0.5cm}}
\tikzset{square right brace/.style={ncbar=-0.5cm}}
\tikzset{round left paren/.style={ncbar=0.5cm,out=120,in=-120}}
\tikzset{round right paren/.style={ncbar=0.5cm,out=60,in=-60}}
\renewcommand{\cong}{\simeq}
\numberwithin{equation}{section} 
\begin{document}

\title{Bordism from quasi-isomorphism}

\author[]{Noah Porcelli\footnote{MPIM Bonn, Germany\\ E-mail: porcelli@mpim-bonn.mpg.de}, Ivan Smith\footnote{University of Cambridge, United Kingdom\\ E-mail: is200@cam.ac.uk}}
\thanks{The Clynelish distillery.}
\date{September 2024. Unauthorised readers will be pestered by pufflings.}

\address{Noah Porcelli, Max Planck Institute for Mathematics, Vivatsgasse 7, 53111 Bonn, Germany}
\address{Ivan Smith, Centre for Mathematical Sciences, University of Cambridge, Wilberforce Road, Cambridge CB3 0WB, U.K.}


\begin{abstract} {\sc Abstract:}
    Let $X$ be a graded Liouville domain. Fix a pair of infinite loop spaces $\Psi = (\Theta \to \Phi)$ living over $(BO \to BU)$. This determines a spectral Fukaya category $\scrF(X;\Psi)$ whenever $TX$ lifts to $\Phi$, containing closed exact Lagrangians $L$ for which $TL$ lifts compatibly to $\Theta$; and by Bott periodicity and index theory, a Thom spectrum $R$ with bordism theory $R_*$.

Suppose that $L$ and $K$ are quasi-isomorphic in the Fukaya category over $\bZ$. We prove that: 

(a) if both lift to $\scrF(X;\Psi)$, then there is a rank one $R$-local system $\xi: L \to BGL_1(R)$ over $L$ so that $(L,\xi)$ and $K$ are quasi-isomorphic in the spectral Fukaya category; \newline
(b) when $X$ is polarised and $\Psi = (BO \times F \to BO)$, if only $K$ lifts to $\scrF(X;\Psi)$, then the composition $L \to B^2GL_1(R)$ of the stable Gauss map of $L$ and the delooped $J$-homomorphism is nullhomotopic. 

Combined with the computation of the open-closed fundamental class associated to $(L,\xi)$  in \cite{PS3}, these results have applications to bordism and stable homotopy types of quasi-isomorphic Lagrangians, to Hamiltonian monodromy groups, and to  smooth structures on nearby Lagrangians.  

A key ingredient in the proofs  is a new form of obstruction theory for flow categories `lying over' a manifold $L$, closely related to a `spectral Viterbo restriction functor' also introduced here.

\end{abstract}

\maketitle
\thispagestyle{empty}
\setlength{\cftbeforesubsecskip}{-2pt}
\tableofcontents

\section{Introduction}

\subsection{Context}

A fundamental question in symplectic topology is to constrain the Lagrangian submanifolds $L \subset X$ of a symplectic manifold $X$; both their intrinsic topology (homotopy type, smooth structure,...) and the way they lie in $X$ (their homology class, knot type,...). This paper resolves the long-standing question of determining the bordism-theoretic constraints on an exact Lagrangian submanifold determined by its quasi-isomorphism class in the classical Fukaya category. This question was one of the motivating goals for the development of Floer homotopy theory.

Indeed, constraints on the homology class of a Lagrangian often arise from Floer theory: two closed exact Lagrangians which are equivalent in the Fukaya category realise the same fundamental homology class. When $L$ carries some further tangential structure, like a $Spin$ structure or framing, it is natural to seek to constrain the associated $Spin$ or framed bordism class.  This paper, when combined with its prequel \cite{PS3}, explains how to derive such bordism-theoretic constraints by combining classical Floer theory with Floer homotopy theory.

In \cite{PS}, building on work of \cite{Large,FO3:smoothness}, we constructed a spectral Fukaya category for a stably framed Liouville manifold, with objects being compatibly stably framed compact exact Lagrangian submanifolds.  The sequel \cite{PS2} constructed an analogous spectral Fukaya category for any tangential structure $\Psi = (\Theta \to \Phi)$ living over $BO \to BU$, thus allowing us to access information contained in more general bordism theories. Here the symplectic manifold $X$ is assumed to have a lift of the classifying map for $TX$ to $\Phi$, and objects of the category are compact exact Lagrangians equipped with lifts of their tangent bundles to $\Theta$.   

In \cite{PS3} we revisited the constructions of \cite{PS,PS2} in the special case where the pair $(\Theta \to \Phi)$ is commutative, so forms a diagram of infinite loop spaces over $(BO \to BU)$. In that case, we showed that one can incorporate rank one spectral local systems into the category $\scrF(X;\Psi)$, and one furthermore retains control over the bordism class associated to a Lagrangian equipped with such a local system. It is this refined version of the spectral Fukaya category which is required essentially in this paper.

In both \cite{PS,PS2} there is an obstruction theory constraining when a quasi-isomorphism of branes $L$ and $K$ in $\scrF(X;\bZ)$ can be lifted to $\scrF(X;\Psi)$. The obstructions are typically non-vanishing; \cite{PS} discussed cases when some obstructions could be killed by passing to a quotient of the sphere spectrum, and \cite{PS2} worked in settings where obstructions vanished for degree reasons. In both cases, our applications were were limited to the cases where we could bypass said obstructions in one of these two ways. The underlying obstruction theory was in some sense classical in nature, related to the truncations of a spectrum arising from the choice of tangential structure $\Psi$.

The primary goal in this paper is to show that one can \emph{always} lift quasi-isomorphisms to spectra after incorporating (rank one) spectral local systems into the Fukaya category. This relies on a fundamentally new flavour of obstruction theory, not related to the truncations of an auxiliary spectrum, but coming from a geometric handle decomposition of the Lagrangian $L$, and direct arguments with Morse models for certain Floer moduli spaces.

One now encounters two obstructions at each stage of an argument -- an obstruction to extending a spectral local system over a handle, and then an obstruction to extending the desired quasi-isomorphism.  A crucial feature of our approach is that one can control both obstructions simultaneously at each stage of the induction. It is also worth emphasising that the new obstruction theory relies essentially on having access to spectral local systems, which in turn requires commutativity of the tangential pair, and hence relies on the particular model for the spectral Fukaya category developed in the prequel \cite{PS3}.

Together with the computation of the open-closed fundamental class from \cite{PS3}, this has numerous applications to symplectic topology which are detailed in Section \ref{sec: applications}.

\subsection{Results}

We work with the version of the Fukaya category constructed in \cite{PS3} and associated to a \emph{commutative} tangential pair $\Psi$, meaning the data of a commutative diagram of infinite loop spaces:\footnote{We impose some additional constraints, corresponding to requiring the ordinary Fukaya category to be $\bZ$-graded and defined over $\bZ$.}
\[
\xymatrix{
\Theta \ar[r] \ar[d] & \Phi \ar[d] \\
BO \ar[r] & BU
}
\]

Let $F$ be the homotopy fibre of the map $\Theta \to \Phi$. Using real Bott periodicity, we may define a commutative ring spectrum $R_\Psi$, as the Thom spectrum of an index bundle defined by the composition $\Omega F \to \Omega\widetilde{U/O} \xrightarrow{\mathrm{Bott}} BO$, see Section \ref{sec:tp and ad}. Its base, which we write as $E_\Psi$, is homotopy equivalent to $\Omega F$.

\begin{ex}
    Taking $\Theta=\Phi$ to be contractible corresponds to a Fukaya category of a framed symplectic manifold and its compatibly framed Lagrangians Lagrangians. Taking $\Theta=(BO, BO)$ corresponds to a compatible polarisation on symplectic manifold and Lagrangians.
    In both cases, $R_\Psi$ is the sphere spectrum.
\end{ex}

Analogously to ordinary rings, $R_\Psi$ has a `group of units' $GL_1(R_\Psi)$, and we define a \emph{$\Psi$-local system} over a base $B$ to be  a map $B \to BGL_1(R_\Psi)$. As with ordinary rings, this serves as a model for rank-1 $R_\Psi$-local systems.

Given a spectral lift of the Fukaya category, a natural question to ask is whether this can distinguish between any Lagrangians that the ordinary Fukaya category could not. We answer this in the negative: we prove that, essentially whenever it makes sense, quasi-isomorphisms in the ordinary Fukaya category (i.e. over $\bZ$) can be `lifted' to the spectral setting, at the expense of turning on a $\Psi$-local system.

    \begin{thm} \label{thm:main}
        Let $X$ be a graded Liouville domain, and $L, K \subseteq X$ closed exact graded Lagrangians.

        Let $\Psi=(\Theta,\Phi)$ be an oriented commutative tangential pair, and assume that $X$ admits a $\Phi$-orientation $\phi$, and $L$ and $K$ admit compatible $\Theta$-orientations. 
    
        Assume that $L$ and $K$ are quasi-isomorphic in the ordinary Fukaya category $\scrF(X, b^{can}; \bZ)$. Then there is a  $\Psi$-local system $\xi$ on $L$, such that $(L, \xi)$ and $K$ are isomorphic in the spectral Fukaya category $\scrF(X; \Psi)$.
    \end{thm}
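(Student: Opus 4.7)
The strategy is to run an obstruction theory for lifting the classical quasi-isomorphism $L \simeq K$ in $\scrF(X;\bZ)$ to $\scrF(X;\Psi)$, and to organise the cumulative obstruction into a single classifying map $\xi : L \to BGL_1(R_\Psi)$, which then serves as the required $\Psi$-local system.

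Concretely, I would begin by fixing Floer cycles $\alpha \in CF^0(L, K;\bZ)$ and $\beta \in CF^0(K, L;\bZ)$ representing the classical quasi-isomorphism and its inverse. Since $L$ and $K$ both lift to $\scrF(X;\Psi)$, the morphism object $\mathrm{Hom}_{\scrF(X;\Psi)}(L, K)$ is an $R_\Psi$-module spectrum whose underlying $\bZ$-chain complex recovers the classical hom; producing a spectral isomorphism amounts to lifting $\alpha$ to $\pi_0$ of this spectrum compatibly with $\beta$. I would organise the Floer moduli spaces of strips from $L$ to $K$ into a flow category lying over $L$ via evaluation at the Lagrangian boundary on $L$, and apply the paper's new obstruction theory for such flow categories. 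This expresses the obstruction to the desired spectral lift as a class valued in an $R_\Psi$-cohomology of $L$.

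Next, I would apply the spectral Viterbo restriction functor of the paper to localise to a Weinstein neighbourhood of $L$. In this neighbourhood the zero section has spectral endomorphism algebra given by a Thom-spectral cochain algebra on $L$ determined by its $\Theta$-structure and the $J$-homomorphism, and the image of $K$ under restriction is classically isomorphic to $L$. Because $\Psi$ is commutative, $BGL_1(R_\Psi)$ is an infinite loop space and acts on the collection of spectral lifts of $L$; the tower of level-by-level obstructions then collapses into a single classifying map $\xi : L \to BGL_1(R_\Psi)$. By construction, twisting $L$ by $\xi$ cancels this obstruction, making the resulting spectral lift of $\alpha$ invertible and yielding $(L, \xi) \simeq K$ in the neighbourhood; pulling back along Viterbo restriction transports this isomorphism to $\scrF(X;\Psi)$ and completes the argument.

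The principal obstacle is demonstrating that the obstruction truly coalesces into a single map into $BGL_1(R_\Psi)$ rather than some more complicated, non-loopable target, and that the resulting class is independent of all auxiliary choices (Floer data, almost complex structures, and the specific representatives $\alpha, \beta$), so as to define an intrinsic $\Psi$-local system on $L$. This is precisely where the interplay between the new obstruction theory for flow categories lying over $L$ and the spectral Viterbo restriction functor carries the essential content: the former supplies the machinery to extract a classifying map from a compatible system of Floer obstructions, while the latter reduces the obstruction computation to an intrinsic Thom-spectral question on a neighbourhood of $L$, where commutativity of $\Psi$ and infinite-loop structure can be exploited.
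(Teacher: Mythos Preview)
Your proposal names the right ingredients (flow categories over $L$, the spectral Viterbo restriction $\cR$, commutativity of $\Psi$) and correctly identifies that the output should be a map $\xi: L \to BGL_1(R_\Psi)$. But the central mechanism by which the obstruction ``coalesces'' is left as a black box, and the paper's actual mechanism is quite different from what you sketch.

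The paper does \emph{not} run a tower of level-by-level obstructions in truncation degree (the $\tau_{\leq i}$ obstruction theory of \cite{PS,PS2}) and then argue that these collapse. Instead, it first proves a standalone classification theorem for evaluation local systems (Theorem~\ref{thm: WELS}): any $\cF \in \Flow^{E_\Psi}_{/L}$ whose $\tau_{\leq 0}$ is a Morse flow category twisted by a $\bZ$-local system is isomorphic to $\cM(f)^\zeta$ for some monodromy local system $\zeta$. The induction here is over a \emph{handle decomposition of $L$}, not over truncation levels. At the inductive step one writes $\cF$ as $\mathrm{Cone}(\cW: \ast[i] \to \cH)$, uses a Morse-theoretic ``sliding'' trick to arrange that $\cW$ has only two nonempty moduli spaces $\cW_{PQ}$, $\cW_{PR}$, and then observes that the obstruction to extending $\zeta_i$ over the new handle is the $E_\Psi$-bordism class of $\cF_{QR}$ --- which is nullbordant precisely because $\cF_{QR} = \partial \cW_{PR}$ from the cone structure. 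The nullbordism $\cW_{PR}$ simultaneously \emph{specifies} the extension $\zeta_{i+1}$. This is the content you are missing: the existence of the ELS is what forces each handle-level obstruction to vanish, and the nullbordism data is what builds $\zeta$.

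Once Theorem~\ref{thm: WELS} is in hand, the endgame is also more concrete than ``pull back along Viterbo restriction'': one has $\cM^{L,K} \simeq \cM(f)^\zeta \simeq \cM^{L,(L,\zeta)}$ in $\Flow^{E_\Psi}_{/L}$, then twists on the \emph{other} boundary by $\zeta$ (Lemma~\ref{lem:common twist}) to get $\cM^{(L,\zeta),K} \simeq \cM^{(L,\zeta),(L,\zeta)}$, and transports the unit across this equivalence to produce an explicit degree-zero class whose $\tau_{\leq 0}$ is a $\bZ$-isomorphism, hence a spectral isomorphism.
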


\begin{rmk}
    Recall that the ordinary Fukaya category depends on a choice of \emph{background class} $b \in H^2(X; \bZ/2)$, and for a Lagrangian to define an object it must be relatively Spin relative to $b$; we write $\scrF(X, b; \bZ)$ for the resulting category.

    Because $\Psi$ is oriented, a $\Phi$-orientation on $X$ picks out a distinguished background class $b^{can}$, such that $\Theta$-oriented Lagrangians are relatively Spin relative to this class $b^{can}$ (cf. Section \ref{sec:Fuk and OC}). The ordinary Fukaya category in Theorem \ref{thm:main} is taken with respect to $b^{can}$.
\end{rmk}

    \begin{rmk}
        A version of this theorem in the case $X=T^*Q$ is a cotangent bundle, $L = Q$ is the zero-section and $K$ is a nearby Lagrangian was obtained in \cite{ADP}. They  use a significantly different set-up and proof, which appears to be fundamentally tied to  working with cotangent bundles. 
    \end{rmk}

This paper contains one other general structural result, constraining the stable Gauss maps of quasi-isomorphic Lagrangians. Recall that if $X$ is stably polarised (meaning $TX \oplus \bC^k \cong V \otimes \bC$ for some real vector bundle $V \to X$), the \emph{stable Gauss map} of a Lagrangian $L$, $g_L: L \to U/O$, determines whether $TL$ agrees (stably) with $V|_L$, as totally real subbundles of $TX|_L$. It is the stable Gauss map that determines which versions of the spectral Fukaya category $L$ determines an object in. In this setting, the canonical background class is $b^{can} = w_2(V)$. 

\begin{defn}\label{defn: rj}
    Let $\bS$ be the sphere spectrum. The \emph{$J$-homomorphism} $J: O \to GL_1(\bS)$ is defined roughly to send an orthogonal transformation $f: \bR^n \to \bR^n$ to the map induced by 1-point compactifying $(f: S^n \to S^n) \in \mathrm{hAut}(S^n)$; composing with the unit defines a map $J_R: O \to GL_1(R)$ for any commutative ring spectrum $R$. This is a map of infinite loop spaces, and so can be delooped.
\end{defn}

\begin{thm}\label{thm:main3} 
   Let $X$ be a stably polarised Liouville domain and $L, K \subseteq X$ closed exact relatively Spin graded Lagrangians. Let $\Psi = (\Theta = BO \times F, \Phi = BO)$ (cf. Example \ref{qi ex: tang str pol}), for some infinite loop space space $F$ admitting a map to $\widetilde{U/O}$.

   Assume $K$ admits a compatible $\Theta$-orientation, and that $L$ and $K$ are isomorphic in the ordinary Fukaya category $\scrF(X, b; \bZ)$. Then the following composition is nullhomotopic:
   \begin{equation}\label{eq: irwjgoergnieovn}
       L \xrightarrow{g_L} U/O \xrightarrow[\simeq]{\mathrm{Bott}} B^2 O \xrightarrow{B^2 J_{R_\Psi}} B^2 GL_1(R_\Psi)
   \end{equation}
\end{thm}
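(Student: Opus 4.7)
My plan is to bootstrap from Theorem \ref{thm:main} by enlarging the tangential pair so that $L$ becomes orientable, and then identify the obstruction to descending back to $\Psi$ with the composition \eqref{eq: irwjgoergnieovn}.

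First, I would introduce an enlarged oriented commutative tangential pair $\hat \Psi = (\hat \Theta, \Phi)$ with $\hat \Theta = BO \times \widetilde{U/O}$ projecting to $\Phi = BO$. The given map $F \to \widetilde{U/O}$ yields a morphism $\Psi \to \hat \Psi$, inducing a ring spectrum map $R_\Psi \to R_{\hat\Psi}$ and a forgetful functor $\scrF(X; \Psi) \to \scrF(X; \hat\Psi)$. Both $L$ and $K$ admit compatible $\hat \Theta$-orientations: for $L$, use the lift $\widetilde g_L : L \to \widetilde{U/O}$ of the stable Gauss map granted by the relatively Spin hypothesis, paired with the polarisation restriction $V|_L$; for $K$, push forward its given $\Theta$-orientation along $\Psi \to \hat \Psi$. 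Since $L \simeq K$ in $\scrF(X, b^{\mathrm{can}}; \bZ)$, Theorem \ref{thm:main} applied to $\hat \Psi$ produces a rank one $\hat\Psi$-local system $\hat \xi : L \to BGL_1(R_{\hat\Psi})$ together with an isomorphism $(L, \hat \xi) \simeq K$ in $\scrF(X; \hat \Psi)$.

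The key step is then to identify $\hat \xi$ with the composition \eqref{eq: irwjgoergnieovn}. Since $K$ already lives in $\scrF(X; \Psi)$, its image in $\scrF(X; \hat\Psi)$ carries a canonical twist factoring through $BGL_1(R_\Psi) \to BGL_1(R_{\hat\Psi})$. Modulo such a pullback, $\hat \xi$ therefore records the discrepancy between the $\hat\Theta$-orientation $(V|_L, \widetilde g_L)$ that $L$ does have and a hypothetical $\Theta$-orientation on $L$, which would require $\widetilde g_L$ to lift to $F$. Because $\hat \Theta$ differs from $\Theta$ precisely by replacing $F$ with $\widetilde{U/O}$, this discrepancy is classified at the level of spaces by the map $L \to \widetilde{U/O}$ modulo lifts to $F$; translating via the Thom-spectrum definitions of $R_\Psi, R_{\hat\Psi}$ and Bott periodicity identifies it at the level of $B^2 GL_1$ with \eqref{eq: irwjgoergnieovn}. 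The isomorphism $(L, \hat \xi) \simeq K$ then forces this composition to be nullhomotopic.

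The main obstacle will be executing this final identification: showing rigorously that the $\Psi$-versus-$\hat\Psi$ orientational discrepancy for $L$ is encoded at the level of $B^2 GL_1(R_\Psi)$ by the delooped $J$-homomorphism applied to $g_L$. This depends on Bott periodicity of the $J$-homomorphism and the universal property of the Thom construction converting tangential lifts into units of ring spectra, and is where the obstruction theory for flow categories over $L$ and the spectral Viterbo restriction functor developed earlier in the paper should supply the precise comparison data needed.
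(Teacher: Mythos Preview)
Your strategy has a genuine gap at the identification step. You produce $\hat\xi: L \to BGL_1(R_{\hat\Psi})$ from Theorem~\ref{thm:main}, but the target of \eqref{eq: irwjgoergnieovn} is $B^2GL_1(R_\Psi)$, and there is no natural map $BGL_1(R_{\hat\Psi}) \to B^2GL_1(R_\Psi)$ through which to compare them. The ring map $R_\Psi \to R_{\hat\Psi}$ gives $BGL_1(R_\Psi) \to BGL_1(R_{\hat\Psi})$, but the cofibre of this map has no reason to be $B^2GL_1(R_\Psi)$: the functor $GL_1$ does not fit into fibre sequences of that shape. Concretely, take $F = \pt$ so $R_\Psi = \bS$ and $R_{\hat\Psi}$ is (close to) $MSO$; the cofibre of $GL_1(\bS) \to GL_1(MSO)$ is unrelated to $BGL_1(\bS)$. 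Moreover, Theorem~\ref{thm:main} only asserts existence of \emph{some} $\hat\xi$ (uniqueness is explicitly not claimed, cf.\ Remark~\ref{rmk:no uniqueness claimed}), so even if there were a ``good'' $\hat\xi$ encoding the Gauss discrepancy, you have no control over which local system the theorem hands you.

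The paper's proof is structurally different: it does not bootstrap from Theorem~\ref{thm:main} as a black box. Instead it introduces the notion of a \emph{Gauss-twisted} $E_\Psi$-orientation on a flow category over $L$ (Definition~\ref{def: oerhgouergh}), in which the orientation data lands in $E_\Psi \times \bU^{or}$ with the second factor prescribed by $g_L$. It shows $\cM^{LK}$ is Gauss-twisted in this sense (Proposition~\ref{prop: gaus ind comm}), and then runs a separate handle-by-handle induction (Proposition~\ref{prop: gauss}) to show existence of such a Gauss-twisted ELS forces the lift of $g_L$ to $F^\circ$. The ELS classification (Theorem~\ref{thm: WELS}) is used inside this induction, but only to simplify the flow category below the top handle; the obstruction at the top handle is killed by a genuinely new ingredient, Lemma~\ref{lem: gaus tech kill obst}, which says $R_C$-orientations can be pushed forward along degree-one maps via Pontrjagin--Thom. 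This is what forces the passage from $F$ to $F^\circ = \hofib(BO \to BGL_1(R_C))$ and hence makes contact with $B^2GL_1(R_\Psi)$; your approach has no analogue of this step.
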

Theorem \ref{thm:main3} implies that if $X$ is Weinstein then $L$ determines an object of the $R_\Psi$-linear microlocal sheaf category defined by Nadler-Shende \cite{Nadler-Shende}.

Theorem \ref{thm:main3} should be viewed as a form of rigidity for `higher' Maslov classes; compare with the rigidity result \cite{Jack:Tiny} on the ordinary Maslov class of Lagrangians.

\begin{rmk}
    Cotangent bundles $T^*Q$ admit a canonical polarisation. In the case $F=\pt$ (so $R_\Psi \simeq \bS$), Jin \cite{Xin} and Abouzaid-Courte-Guillermou-Kragh \cite{ACGK} showed that for a nearby Lagrangian $L \subseteq T^*Q$, the composition $L \to B^2GL_1(\bS)$ of the stable Gauss map with the delooped $J$ homomorphism is nullhomotopic; applying Theorem \ref{thm:main3} to $L$ gives another proof of this fact\footnote{Though note that the composition (\ref{eq: irwjgoergnieovn}) depends on an a choice of Bott periodicity equivalence $\widetilde{U/O} \simeq B^2 O$. The results in \cite{Xin}, \cite{ACGK} and Theorem \ref{thm:main3} all use different models for this equivalence, which to our knowledge have not yet been shown to agree (though it seems likely that they do). }. They use different methods (microlocal sheaves and generating functions, respectively), which appear less well suited to working with general Liouville domains.
\end{rmk}
\begin{rmk}
    Following the ideas of \cite{AK}, from Theorem \ref{thm:main3} and the Gromov-Lees $h$-principle one can obtain constraints on immersion classes of Lagrangians in plumbings (e.g. the $A_2$-Milnor fibre or the affine flag $3$-fold, in each of which we have quasi-isomorphism classifications in the integral Fukaya category). These  are likely some of the first such constraints away from cotangent bundles; since this is a variation on themes covered by our other applications, we leave details to the interested reader.
\end{rmk}

\subsection{Methods of proof}

In the proofs of Theorems \ref{thm:main} and \ref{thm:main3}, we make systematic use of flow categories \emph{over} a manifold $L$ (possibly $E_\Psi$-orientated, see Section \ref{sec:E-orientations recap}). This is a flow category $\cF$ equipped with maps $ev: \cF_{xy} \to \Omega L$, which are compatible with concatenation. 

Lagrangian Floer flow categories naturally have this extra structure: the flow category $\cM^{LK}$ of a pair of Lagrangians $L$ and $K$ has such maps by restricting a holomorphic strip to one of its boundary components. This notion was also used in \cite{Barraud-Cornea,Abouzaid:based_loops, BDHO}, and is used both in the set-up and proof of main theorem in \cite{PS3}. 

By counting the 0-dimensional manifolds in $\cF$ and keeping track of the class they determine in $\pi_1 L$, we obtain a chain complex $CM_*(\cF; \bZ[\pi_1 L])$ over $\bZ[\pi_1 L]$. There is a category of ($E_{\Psi}$-oriented) flow categories over $L$, and our construction (cf. Section \ref{sec: Vit}) yield a Viterbo-style restriction functor
    \[
    \scrF(X;\Psi) \longrightarrow {\Flow}^{E_{\Psi}}_{/L}.
    \]

\begin{rmk}
   There is also a truncation of $\Flow_{/L}$ which recovers $C_*(\Omega L)\mhyphen$mod, making contact with well known ideas of  Floer theory with `universal local systems', cf. for instance \cite{BDHO, BDHO2}.
\end{rmk}

    The starting point for the proof of Theorem \ref{thm:main} in Section \ref{sec:ELS} is that if $L$ and $K$ are quasiisomorphic over $\bZ$, then $CM_*(\cM^{LK}; \bZ[\pi_1 L])$ is quasiisomorphic to the Morse chain complex of some $\bZ$-local system over $L$ with $\bZ[\pi_1 L]$-coefficients. We call such a flow category over $L$ an \emph{evaluation local system (ELS)} (to contrast with this, we often refer to a spectral local system  $L \to BGL_1(R)$ as a \emph{monodromy local system}). We classify evaluation local systems over $L$, showing that they are all isomorphic to a Morse flow category twisted by a monodromy local system $\xi_L \in [L, BGL_1(R_\Psi)]$ (cf. Theorem \ref{thm: WELS}). This involves an induction over a handle decomposition of $L$. At each stage of the induction, existence of such an ELS forces the relevant obstruction to vanish; this involves a different kind of obstruction theory to that of \cite{PS, PS2} (although that latter obstruction theory does also play a minor role in the proof). 
    
    This classification gives us strong control over $\cM^{LK}$, and we use this to show that the obstructions to lifting a quasiisomorphism over $\bZ$ to the spectral Fukaya identified in \cite{PS,PS2} vanish after twisting by the relevant local system.

    \begin{rmk}
        At one point in the inductive argument we use a Morse-Bott model, and so the smoothness of moduli spaces required in this paper requires deeper estimates from \cite{FO3:smoothness} than were used in our earlier work. This is reviewed and explained in Section \ref{sec:morse-floer}.
    \end{rmk}

    We now switch to the setting of Theorem \ref{thm:main3}. Again $\cM^{LK}$ is a flow category over $L$, but if only $K$ has a $\Theta$-orientation, $\cM^{LK}$ is not $E_\Psi$-oriented but instead satisfies a related condition, `twisted' by the stable Gauss map $g_L$ of $L$ (cf. Section \ref{sec: gaus twis eval loca syst}); we call such flow categories \emph{Gauss-twisted}. Again the hypothesis $L \cong K$ over $\bZ$ forces $\cM^{LK}$ to be a Gauss-twisted ELS, and inducting over a handle decomposition of $L$, we show existence of such a Gauss-twisted ELS forces the composition of $g_L$ with the delooped $J$-homomorphism $B^2J_{R_\Psi}$ to be null. Though structurally similar to the above proof, killing the obstruction at each stage is different.

    \begin{rmk}
        In all results in this paper, commutativity of the tangential pair is essential. In Theorem \ref{thm:main}, this is required in the obstruction theory, see Remark \ref{rmk: epughrdouighrdpsghdisrg}. In Theorem \ref{thm:main3}, commutativity is required to be able to deloop $BGL_1(R_\Psi)$ and hence to even state the result.
    \end{rmk}

\subsection{Sample applications}

We state some consequences of Theorems \ref{thm:main},  \ref{thm:main3} in tandem with \cite[Theorem 1.3]{PS3}, which we recall later in Theorem \ref{thm: gour main theorem}.

The proofs of these (and a number of other applications not listed here, including to symplectic mapping class groups and immersed Lagrangian cobordism) occupy Section \ref{sec: applications}.  

\subsubsection*{Bordism: Lagrangian, complex, framed}

Let $L, K \subseteq X$ be closed exact Lagrangians in a graded Liouville domain $X$, which define isomorphic objects in $\scrF(X; \bZ)$.
\begin{cor}
    Assume $L$ and $K$ are stably framed. Then there is a stably complex cobordism between $L$ and $K$, lying over $X$.

\end{cor}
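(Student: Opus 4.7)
My plan is to apply Theorem~\ref{thm:main} to a carefully chosen commutative tangential pair whose associated ring spectrum is $MU$, then feed the resulting spectral quasi-isomorphism into the open--closed fundamental class computation of \cite{PS3} (Theorem~\ref{thm: gour main theorem}) to produce the desired cobordism.

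Concretely, I would take $\Psi = (\Theta \to \Phi) = (\pt \to BU)$ living over $(BO \to BU)$, with $\pt \to BO$ and $\pt \to BU$ the basepoints and $\Phi = BU \to BU$ the identity. A $\Theta$-orientation is then exactly a stable framing, and a $\Phi$-orientation on the graded Liouville domain $X$ is a stable complex structure on $TX$, which is canonically provided by the ambient almost complex structure. Both $\pt$ and $BU$ are infinite loop spaces with infinite-loop structural maps, so $\Psi$ is a commutative, manifestly oriented tangential pair. The homotopy fibre of $\pt \to BU$ is $F \simeq U$, so $\Omega F \simeq BU$, and the composite $\Omega F \to \Omega\widetilde{U/O} \xrightarrow{\mathrm{Bott}} BO$ can be identified with the forgetful map $BU \to BO$ classifying the underlying real bundle of the universal complex bundle. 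Hence $R_\Psi \simeq MU$.

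By hypothesis $L$ and $K$ are stably framed and so define objects of $\scrF(X;\Psi)$, and they are isomorphic in $\scrF(X, b^{can}; \bZ)$. Theorem~\ref{thm:main} then supplies a rank-one $MU$-local system $\xi \colon L \to BGL_1(MU)$ such that $(L,\xi)$ and $K$ become isomorphic in $\scrF(X;\Psi)$. Applying Theorem~\ref{thm: gour main theorem} to this spectral isomorphism produces a compact manifold $W$ over $X$, extending the inclusions of $L$ and $K$ and carrying a stable complex structure, which is precisely a stably complex cobordism over $X$. The $\xi$-twist only affects the way in which the stable complex structure of $W$ restricts to the $L$-boundary, which is irrelevant for the existence statement of the corollary.

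The main step requiring verification is the identification $R_\Psi \simeq MU$: one must check that the natural map $F \simeq U \to \widetilde{U/O}$ associated to this tangential pair loops to the forgetful map $BU \to BO$, so that the Thom spectrum appearing in the definition of $R_\Psi$ computes $MU$. Once this is in hand the argument is a formal concatenation of the two cited theorems.
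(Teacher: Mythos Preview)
Your setup is essentially the one the paper uses (with the minor correction that $\Phi$ should be $BS_\pm U$ rather than $BU$ so as to fit the paper's definition of a graded tangential pair, cf.\ Section~\ref{sec:tp and ad}; the identification $R_\Psi \simeq MU$ then goes through as you indicate). The real issue is the final paragraph, where the argument goes wrong.

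Theorem~\ref{thm: gour main theorem} does not hand you a cobordism between $L$ and $K$. What it says is that $[L,\xi] = [L]\cap[\eta\xi]$ in the $\cO\cC$-bordism group; combined with the spectral isomorphism $(L,\xi)\cong K$ and the open--closed map you get
\[
[K] \;=\; [L,\xi] \;=\; [L]\cap[\eta\xi] \quad \in \; \Omega^{E_\Psi;\cO\cC}_d(X,\phi)\;\longrightarrow\; MU_d(X).
\]
Your sentence ``the $\xi$-twist only affects the way the stable complex structure of $W$ restricts to the $L$-boundary'' is exactly what fails: the cap product with $[\eta\xi]$ changes the bordism \emph{class}, not merely a tangential decoration. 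A priori you only know that $[L]-[K]$ lies in the image of multiplication by $\eta$, and by Remark~\ref{rmk: recap rmk 1.4 ps3} this only yields vanishing after inverting~$2$. The corollary claims vanishing on the nose, and that is the content you are missing.

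The paper's proof (Corollary~\ref{cor:complex cobordism}) supplies the missing idea, flagged already in the remark following the statement in the introduction: one must show that $\eta$ acts trivially on $gl_1(MU)$ after each Morava $K$-theory localisation. This uses the Bousfield--Kuhn functor (Lemma~\ref{lem:BK}), together with Hovey's theorem that $BP$ splits off from $\prod_n L_{K(n)}(BP)$, to conclude that $[L]-[K]$ vanishes $p$-locally for every prime~$p$, hence integrally by arithmetic fracture. Without this chromatic step your argument only gives $[L]-[K]$ two-power torsion in $MU_d(X)$, which is strictly weaker than the statement.
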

\begin{rmk}
    Note the conclusion here is that the bordism class $[L]-[K] \in MU_d(X)$ vanishes, not that it is 2-torsion: for $MU$, we use chromatic homotopy theory and  work of Bousfield-Kuhn and Hovey \cite{Bousfield,Kuhn,Hovey} to show that $\eta$ is nullhomotopic ($p$-locally for all $p$) rather than just 2-torsion (see Section \ref{sec:Fuk and OC} for a recap of the role of $\eta$).
\end{rmk}
\begin{rmk}
    Any closed framed manifold bounds a stably complex manifold; the content here is that the cobordism lies over $X$.
\end{rmk}

Even in the case of the tangential pair $\Psi = (pt \to pt)$ corresponding to stable framings, and framed bordism,  our results have new implications.  For instance, we show:

\begin{cor}
 Suppose $L$ and $K$ are stably framed compatibly with an ambient stable framing of $X$. If $L$ and $K$ are quasi-isomorphic in $\scrF(X;\bZ)$, then they are stably homotopy equivalent.  
\end{cor}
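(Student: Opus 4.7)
The plan is to apply Theorem \ref{thm:main} with the trivial commutative tangential pair $\Psi = (\pt \to \pt)$, for which $R_\Psi = \bS$ is the sphere spectrum and the objects of $\scrF(X;\bS)$ are compact exact stably framed Lagrangians compatible with the ambient framing of $X$; by hypothesis $L$ and $K$ both define such objects.

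First, applying Theorem \ref{thm:main} produces a spectral local system $\xi : L \to BGL_1(\bS)$ together with an isomorphism $(L,\xi) \simeq K$ in $\scrF(X;\bS)$. An isomorphism of objects in a spectrally enriched category induces an equivalence of endomorphism ring spectra, so
\[
\mathrm{End}^*_{\scrF(X;\bS)}\bigl((L,\xi)\bigr) \simeq \mathrm{End}^*_{\scrF(X;\bS)}(K).
\]
The next step is to identify both of these spectra with shifted suspension spectra of the underlying Lagrangians. For the untwisted object $K$, a compatibly framed closed exact Lagrangian should have self-Floer spectrum a shift of $\Sigma^\infty K_+$; I would extract this as a diagonal specialisation of the open-closed fundamental class computation \cite[Theorem 1.3]{PS3} (recalled below as Theorem \ref{thm: gour main theorem}) applied with trivial twist. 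For the twisted object $(L,\xi)$, the self-Floer problem is governed by the canonically trivial line bundle $\xi^{-1}\otimes \xi$ on $L$: the parallel-transport twists induced by $\xi$ on the two boundary components of a holomorphic strip are mutually inverse, so the local system decouples from the endomorphism computation and one obtains $\mathrm{End}^*((L,\xi)) \simeq \mathrm{End}^*(L) \simeq \Sigma^{-\dim L}\Sigma^\infty L_+$ by the same identification applied to $L$. Chaining the three equivalences and cancelling the common suspension shift (using $\dim L = \dim K$) gives $\Sigma^\infty L_+ \simeq \Sigma^\infty K_+$.

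The main obstacle I expect is the identification of $\mathrm{End}^*(L)$ with $\Sigma^{-\dim L}\Sigma^\infty L_+$: this is a spectral refinement of the integral isomorphism $HF^*(L,L) \cong H^*(L)$, and requires both the compatible framing of $L$ and a careful bookkeeping of the coherent twisting by $\xi^{-1} \otimes \xi$ used in the twist-cancellation argument. I expect this to be a formal consequence of the construction in \cite{PS3}: local systems there form a Picard-like $\infty$-groupoid acting on morphism spectra compatibly with composition, so the tautological trivialisation of $\xi^{-1} \otimes \xi$ promotes to a canonical equivalence of spectra, and the specialisation of the open-closed fundamental class of $L$ to the diagonal recovers $\Sigma^\infty L_+$ up to the expected suspension shift.
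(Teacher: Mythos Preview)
Your high-level strategy matches the paper's: apply Theorem \ref{thm:main} with $\Psi = (\pt,\pt)$ to get $(L,\xi)\simeq K$, deduce an equivalence of self-Floer objects, argue the twist cancels on the $(L,\xi)$-side, and identify both self-Floer objects with suspension spectra. But two of your justifications don't hold up as written.

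First, the identification $\mathrm{End}(K)\simeq \Sigma^{-\dim K}\Sigma^\infty K_+$ is \emph{not} a specialisation of \cite[Theorem~1.3]{PS3}. That result computes the $\cO\cC$-fundamental class $[K,\xi]\in\Omega_d^{E_\Psi,\cO\cC}(X,\phi)$, a bordism class, not the endomorphism spectrum. The paper instead passes to the flow-category level: an isomorphism $(L,\xi)\simeq K$ in $\scrF(X;\bS)$ gives an equivalence $\cM^{\xi_L,\xi_L}\simeq\cM^{KK}$ in $\Flow^{fr}$, and then one applies the Abouzaid--Blumberg equivalence $\Flow^{fr}\to\bS$-mod \cite{AB} together with Blakey's computation \cite{Blakey} that $\cM^{KK}\mapsto\Sigma^\infty_+K$. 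Relatedly, $\scrF(X;\bS)$ as set up in \cite{PS3} has morphism \emph{groups} (bordism groups of flow modules), not morphism spectra, so your phrase ``spectrally enriched category'' is slightly off; the passage to spectra happens only after invoking \cite{AB}.

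Second, your twist-cancellation heuristic ``$\xi^{-1}\otimes\xi$ is canonically trivial'' is the right intuition but not a proof in this framework: the twisting is implemented by taking preimages of basepoints in Thom spaces, and one must show the resulting twisted flow category is \emph{framed bordant} to the untwisted one. The paper does this concretely (Lemma \ref{lem:22}) by passing to a Morse model for $\cM^{LL}$, where the two boundary evaluations of a flowline $\cF_{xy}\to\cP_{xy}L\times\cP_{yx}L$ land in the diagonal and can be coherently retracted to constant paths; this yields the required framed bordism $\cM^{LL}\sim\cM^{\xi_L\xi_L}$. Your Picard-groupoid sketch would need substantial extra input to produce this.
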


In particular, the isomorphism $H^*(L;\bF_p) \to H^*(K;\bF_p)$ induced by the quasi-isomorphism entwines Steenrod operations.

We emphasise that, unlike in the case of a `nearby Lagrangian', here there is no natural map between $L$ and $K$ in either direction.

\subsubsection*{Normal invariants of nearby Lagrangians}

    Let $d\geq 5$. For a closed smooth $d$-manifold $Q$, the simple structure set $\cS^s(Q)$ is the set of pairs $\{(N,f)\}$ where $N$is a compact smooth $d$-manifold and $f: N \to Q$ is a simple homotopy equivalence, up to the equivalence relation $(N,f) \sim (N',f')$ if there is a diffeomorphism $h: N \to N'$ with $f \sim f'\circ h$. This sits in the `surgery exact sequence'\footnote{An exact sequence of pointed sets.} 
\[
\cdots \to L^s_{d+1}(\pi_1(Q)) \to \cS^s(Q) \stackrel{\sigma}{\longrightarrow} [Q,G/O] \to \cdots
\]
Here we write $G := GL_1(\bS) \simeq \colim_{k \to \infty} \mathrm{hAut}(S^n)$, and $G/O$ is the homotopy quotient of the $J$ homomorphism as in Definition \ref{defn: rj}. The map $\sigma$ sends $(N, f)$ to its \emph{normal invariant}. If $\pi_1(Q) = 1$ then the action of $L^s_{d+1}(1)$  on $\cS^s(Q)$ factors through an action of the group $bP_{d+1}$ of framed-bounding homotopy spheres, via connect sum.  In particular, if $Q$ is simply-connected and $d$ is even (so $bP_{d+1}=0$), then $\sigma(f) = 0$ if and only if $f$ is homotopic to a diffeomorphism. This underscores the importance of the normal invariant in the diffeomorphism classification of manifolds.

\begin{rmk}
    Since $O$ and $G$ are infinite loop spaces (and $J$ is compatible with this structure), $G/O$ is too. By delooping, this allows us to obtain operations on it induced by elements of the stable homotopy groups of spheres.
    
    Explicitly, for $\alpha \in \pi_k\bS$ represented by some map $a: S^{i+k} \to S^i$, we take the composition:
    \begin{equation}
        \alpha_*: B^k(G/O) \simeq \Omega^i B^{i+k}(G/O) \xrightarrow{a^*} \Omega^{i+k} B^{i+k} (G/O) \simeq G/O
    \end{equation}
    where the middle map is pullback along $a$.
\end{rmk}

Let $Q$ be a closed manifold and $L \subseteq T^*Q$ a nearby Lagrangian submanifold (meaning it is closed and exact). Abouzaid and Kragh \cite{Abouzaid:homotopy,Kragh,Abouzaid-Kragh:Simple} proved that the map $j: L \to Q$ is a simple homotopy equivalence, and hence that it has a well-defined normal invariant $[(L, j)] \in [Q, G/O]$.

\begin{thm}\label{thm: norm}
    Let $L \subseteq T^*Q$ be a nearby Lagrangian submanifold. Then the normal invariant $[(L, j)]$ factors through $\eta_*: B(G/O) \to G/O$.
\end{thm}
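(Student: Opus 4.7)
The plan is to deduce Theorem~\ref{thm: norm} as a direct consequence of Theorem~\ref{thm:main3} applied to the nearby Lagrangian $L$ and the zero section $Q \subseteq T^\ast Q$, together with a comparison with the Abouzaid-Kragh normal invariant via \cite[Theorem~1.3]{PS3} (Theorem~\ref{thm: gour main theorem}). Equip $T^\ast Q$ with its canonical polarisation $\pi^\ast TQ$ and take the commutative tangential pair $\Psi = (BO, BO)$ of Example~\ref{qi ex: tang str pol} with $F = \mathrm{pt}$, so that $R_\Psi \simeq \bS$ and the zero section carries a tautological compatible $\Theta$-orientation. Since $L$ and $Q$ are quasi-isomorphic in $\scrF(T^\ast Q, b^{can}; \bZ)$ by the classical results of Fukaya-Seidel-Smith and Abouzaid, Theorem~\ref{thm:main3} applied to the pair $(L, K = Q)$ produces a nullhomotopy of
\[
L \xrightarrow{g_L} U/O \xrightarrow[\simeq]{\mathrm{Bott}} B^2 O \xrightarrow{B^2 J_\bS} B^2 G,
\]
and hence, through the fibre sequence $B(G/O) \to B^2 O \xrightarrow{B^2 J_\bS} B^2 G$, a lift $\widetilde{n}: L \to B(G/O)$ of $\mathrm{Bott} \circ g_L$.

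The core remaining task is to show that $\eta_\ast \circ \widetilde{n} : L \to G/O$ coincides, under the Abouzaid-Kragh simple homotopy equivalence $j: L \xrightarrow{\sim} Q$, with the normal invariant $[(L,j)]$. Both maps represent the stable virtual vector bundle $TL - j^\ast TQ \in \widetilde{KO}(L)$ equipped with a trivialisation of its underlying stable spherical fibration: the normal invariant's trivialisation comes from $j$ being a simple homotopy equivalence, while that of $\eta_\ast \circ \widetilde{n}$ comes via $\eta$-multiplication from the Floer-theoretic nullhomotopy produced by Theorem~\ref{thm:main3}, after the two Bott-deloopings implicit in the target $B^2 O$.

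The main obstacle is matching these two spherical trivialisations. I would transport both into the spectrum-valued fundamental class of $L$ using the spectral open-closed map of \cite[Theorem~1.3]{PS3}: for the spectral local system $\xi$ on $L$ provided by Theorem~\ref{thm:main}, the open-closed fundamental class of $(L, \xi)$ equals that of $Q$, and unpacking this equality through the Thom-spectrum/Whitehead-torsion framework of \cite{Kragh, Abouzaid-Kragh:Simple} recovers both trivialisations in a common setting. The residual factor of $\eta_\ast$ accounts for the Bott-periodicity shift between the stable Gauss map's natural target $U/O \simeq B^2 O$ and the normal-invariant target $G/O$, compatibly with the infinite-loop-space structure of the $J$-homomorphism; concretely, the delooped fibre sequence $B(G/O) \to B^2 O \to B^2 G$ maps $\eta$-equivariantly to $G/O \to BO \to BG$, and under this map the Floer-theoretic trivialisation upstairs descends to the Abouzaid-Kragh trivialisation downstairs. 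The delicate part is to check that the open-closed identification intertwines the two $\eta$-actions with the expected signs, i.e.\ that no further correction is needed.
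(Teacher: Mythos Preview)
Your outline correctly identifies the two main inputs---Theorem~\ref{thm:main3} to produce a lift $\widetilde n: L \to B(G/O)$, and Theorems~\ref{thm:main} and~\ref{thm: gour main theorem} to bring in the $\eta$-twisting---but the step you flag as ``the core remaining task'' is a genuine gap, not a formality. Theorem~\ref{thm:main3} only asserts the \emph{existence} of some lift $\widetilde n$; distinct lifts differ by classes in $[L, BG]$, and after composing with $\eta_*$ you land on different elements of $[L, G/O]$. So before you can say $\eta_*\circ\widetilde n$ equals the normal invariant, you must pin down \emph{which} lift Theorem~\ref{thm:main3} produces and then match its associated spherical trivialisation of $TL - j^*TQ$ to the one coming from the homotopy equivalence $j$. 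Your proposed fix---``unpacking'' the open-closed fundamental class ``through the Thom-spectrum/Whitehead-torsion framework of \cite{Kragh, Abouzaid-Kragh:Simple}''---does not explain how the Floer-theoretic nullhomotopy built in the obstruction-theoretic proof of Theorem~\ref{thm:main3} is related to the spherical trivialisation defining the normal invariant; these arise from entirely different constructions, and no mechanism in your sketch links them.

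The paper avoids this matching problem altogether by working in a different tangential pair and with a different polarisation. Rather than $\Psi=(BO,BO)$, it takes $\Theta = BO\times B(G/O)$, so that $E_\Psi$-bordism is bordism with trivialised stable spherical tangent fibration; in this theory normal invariants have a direct bordism description via the degree-one normal map model $\cN\cM(Q)$, and one builds an explicit map $\chi$ from $\cO\cC$-bordism to $[Q,G/O]$. Crucially, the paper polarises $T^*Q$ by $\phi_{TL}$ (pulled back from $TL$, not $TQ$), so that $L$ carries the \emph{trivial} $\Theta$-orientation and $\chi_{TL}([L])$ is, by a short direct computation, the normal invariant of $L$. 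Theorem~\ref{thm:main3} is then used not to produce a lift for $L$, but to show that $Q$ defines a $\Psi$-brane; a change-of-polarisation isomorphism (from $\phi_{TQ}$ to $\phi_{TL}$, which differ exactly by an element of $[Q,B(G/O)]$) shows $\chi_{TL}([Q])$ is already in the image of $\eta$. Finally Theorems~\ref{thm:main} and~\ref{thm: gour main theorem} relate $[L]$ to $[Q]$ by a further $\eta$-twist. The upshot is that the paper never needs to compare the Floer-theoretic lift to the normal invariant directly; everything is routed through the bordism model, where each step is a concrete identification.
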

Since $\eta$ is two-torsion, this implies the normal invariant must be too. A similar statement was proved recently in work of Abouzaid--Álvarez-Gavela--Courte--Kragh \cite{AAGCKarxiv}, using generating functions: they proved factorisation through some 2-torsion map $w: B(G/O) \to G/O$ (though did not identify $w$ with $\eta$, nor is it clear $w$ is a loop map). 

In the special case that $L$ is a homotopy sphere, Theorem \ref{thm: norm} was obtained earlier in an earlier version of \cite{AAGCKarxiv} as well as \cite{PS}.

Theorem \ref{thm:main} implies that $L$ is isomorphic to the zero-section $Q$ equipped with some spectral local system $\xi_Q$, and \cite[Theorem 1.3]{PS3} shows the difference between the open-closed images of $L$ and $(Q, \xi_Q)$ is in the image of $\eta$. To prove the theorem, we then show the open-closed map sends $L$ to its normal invariant, in a suitable bordism-theoretic model for normal invariants, see Section \ref{sec:normal}. Here we work in a spectral Fukaya category with $\Psi=(BO \times \hofib(B^2 J), BO)$, which $L$ lives in by Theorem \ref{thm:main3}.

We record some concrete consequences for nearby Lagrangians in cotangent bundles of manifolds including $T^n$, $S^4 \times S^4$, $\bC\bP^4$ in Section \ref{sec: conc norm}.

\begin{rmk}
   In principle our technology would yield constraints on normal invariants of homotopy equivalent quasi-isomorphic Lagrangians away from the case of cotangent bundles. The obstacle is that we have very few sufficient criteria to guarantee quasi-isomorphism in the \emph{integral} Fukaya category!  
\end{rmk}

\noindent \textbf{\emph{Acknowledgements.}} N.P. is supported by EPSRC standard grant EP/W015889/1.  I.S. is partially supported by UKRI Frontier Research grant EP/X030660/1 (in lieu of an ERC Advanced grant).  We are grateful to Mohammed Abouzaid, Daniel Álvarez-Gavela, Kenneth Blakey, Sylvain Courte, Jeremy Hahn, Alice Hedenlund, Thomas Kragh, Alex Oancea, Oscar Randal-Williams, John Rognes for helpful discussions. N.P. is grateful to the Max Planck Institute for Mathematics in Bonn for its hospitality and financial support. 

\section{Applications}\label{sec: applications}

In this section, we deduce various concrete symplectic consequences of Theorem \ref{thm:main}, \cite[Theorem 1.3]{PS3} (recalled as Theorem \ref{thm: gour main theorem} in Section \ref{sec:Fuk and OC} below) and Theorem \ref{thm:main3}. 

\subsection{Stable homotopy type}

Consider the tangential pair $\Psi = fr = (\pt, \pt)$; then $\scrF(X;\Psi) = \scrF(X;\bS)$ is a version of the Fukaya category over the sphere spectrum from \cite{PS}, but incorporating $\bS$-local systems. Geometrically, this means $X$ is stably framed and spectral branes are compatibly stably framed. 
 \begin{cor}\label{cor:stable homotopy}
        Let $\Psi = fr$, and that $X$ is $\Phi$-oriented and $L, K$ are $\Theta$-oriented.
        
        If $L$ and $K$ are quasi-isomorphic over $\bZ$, then they are stable homotopy equivalent. In particular, the equivalence $H^*(L;\bZ/p)\to H^*(K;\bZ/p)$ defined by the quasi-isomorphism entwines Steenrod operations.
    \end{cor}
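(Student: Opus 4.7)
The plan is to combine Theorem \ref{thm:main}, which lifts an ordinary quasi-isomorphism to a spectral one after turning on a rank-one $\bS$-local system, with a self-Floer computation. Applying Theorem \ref{thm:main} to $\Psi = fr = (\pt,\pt)$ under the given hypotheses produces an $\bS$-local system $\xi : L \to BGL_1(\bS)$ and an isomorphism $(L,\xi) \cong K$ in $\scrF(X;\bS)$. Isomorphic objects in a spectrally enriched category have equivalent endomorphism spectra, so
\[
\mathrm{HF}\bigl((L,\xi),(L,\xi);\bS\bigr) \;\simeq\; \mathrm{HF}(K,K;\bS).
\]

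The next step is to identify each side with the suspension spectrum of the underlying manifold. For $K$, which is closed, exact and compatibly stably framed, the spectral PSS/open-closed identification --- an instance of Theorem \ref{thm: gour main theorem} --- gives $\mathrm{HF}(K,K;\bS) \simeq \Sigma^\infty K_+$. For $(L,\xi)$, the rank-one $\bS$-local system $\xi$ is invertible, and in the self-pairing it appears once on each side of the two input branes, once as $\xi$ and once as $\xi^{-1}$; the contributions should cancel, yielding $\mathrm{HF}((L,\xi),(L,\xi);\bS) \simeq \mathrm{HF}(L,L;\bS) \simeq \Sigma^\infty L_+$. Combining the two identifications produces $\Sigma^\infty L_+ \simeq \Sigma^\infty K_+$, i.e. $L$ and $K$ are stably homotopy equivalent. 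The Steenrod claim is then automatic: any equivalence of suspension spectra of closed manifolds induces an isomorphism on $H^*(-;\bZ/p)$ entwining Steenrod operations, and tracing through the construction identifies this with the isomorphism coming from the original quasi-isomorphism.

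The main technical point is justifying the cancellation $\mathrm{HF}((L,\xi),(L,\xi);\bS) \simeq \mathrm{HF}(L,L;\bS)$. Heuristically this is just the assertion that pairing a module with its inverse in a rigid symmetric monoidal setting is homotopy trivial, but implementing it at the spectrum level requires unwinding how $BGL_1(\bS)$-local systems are built into the spectral Fukaya construction of \cite{PS3}. The cleanest route is likely to invoke Theorem \ref{thm: gour main theorem} directly: it expresses the open-closed image of $(L,\xi)$ as a Thom-spectrum-type invariant built from $\xi$, and invertibility of $\xi$ should then identify that invariant, after the relevant pushforward, with $\Sigma^\infty L_+$, reducing the problem to the case of the trivial local system where the identification with $\Sigma^\infty K_+$ is immediate.
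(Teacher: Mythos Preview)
Your overall strategy matches the paper's: apply Theorem \ref{thm:main} to obtain $(L,\xi)\cong K$ in $\scrF(X;\bS)$, pass to endomorphisms, and identify each side with a suspension spectrum. The gap is in how you justify the two endomorphism computations.

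First, Theorem \ref{thm: gour main theorem} is the wrong citation for $\mathrm{HF}(K,K;\bS)\simeq\Sigma^\infty_+K$. That theorem computes the $\cO\cC$-fundamental class $[L,\xi]=[L]\cap[\eta\xi]$ in a bordism group; it says nothing about identifying the endomorphism flow category with a suspension spectrum. The paper instead invokes the Abouzaid--Blumberg equivalence $\Flow^{fr}\to\bS\text{-mod}$ together with Blakey's result that $\cM^{KK}\mapsto\Sigma^\infty_+K$ under it (see the paragraph before Lemma \ref{lem:22}).

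Second, and more seriously, your cancellation argument for $\cM^{\xi,\xi}\simeq\cM^{LL}$ is only a heuristic, and your proposed route through Theorem \ref{thm: gour main theorem} again does not apply: the open-closed class governs the unit/counit modules, not the self-Floer flow category. The paper's Lemma \ref{lem:22} supplies the missing mechanism, and it is genuinely geometric rather than formal. In the Morse model $\cM^{LL}\cong\cM(f)$, a point of $\cF_{xy}$ is a single gradient flowline, and the two boundary-evaluation maps $\cF_{xy}\to\cP_{xy}L\times\cP_{yx}L$ land in the diagonal (the ``two boundaries'' of a Morse trajectory trace the \emph{same} path in $L$). One can then coherently retract these evaluations to constant paths, which yields a framed bordism between the twisted category $\cM^{\xi,\xi}$ and the untwisted one. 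This is exactly the $\xi\otimes\xi^{-1}$ cancellation you anticipate, but made precise at the level of flow categories; your proposal does not supply any such argument.
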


    \begin{proof}[Proof of Corollary \ref{cor:stable homotopy}]
        Theorem \ref{thm:main} implies there is a local system $\xi_L$ over $L$ for which $(L,\xi_L) \simeq K$ are equivalent in $\scrF(X;\Psi)$.  It follows that the categories $\cM^{\xi_L,\xi_L}$ and $\cM^{KK}$ are equivalent in the category $\Flow^{fr}$. Passing to $\bS$-mod and using Lemma \ref{lem:22}, the result follows.
    \end{proof}

    Note that, in contrast to Abouzaid's classical theorem that an exact $L\subset T^*Q$ is homotopy equivalent to the zero-section, in Corollary \ref{cor:stable homotopy} there is no obvious map beteen $L$ and $K$ in either direction.

\subsection{Hamiltonian monodromy}

We again take the tangential pair $\Psi = fr$.

Whenever $L^d\subset X^{2d}$ is a Lagrangian submanifold, there are maps
\[
\xymatrix{
\pi_0\mathrm{Ham}(X,L) \ar[r]_{\lambda} & \pi_0\Diff(L) \\ & \Theta_{d+1} \ar[u]_{\iota}
}
\]
where $\mathrm{Ham}(X,L)$ denotes those Hamiltonian symplectomorphisms which preserve $L$ setwise, and $\iota$ includes diffeomorphisms supported in a disc (recall that the mapping class group of the disc $D^n$ is naturally isomorphic to the group of $(n+1)$-dimensional exotic spheres up to $h$-cobordism $\Theta_{n+1}$, via a clutching construction). A special case of the `Hamiltonian monodromy' question asks which elements of $\Theta_{d+1}$ lift to $\mathrm{Ham}(X,L)$.

Suppose now $L$ is a spectral brane with respect to $\Psi$, in particular exact and stably framed. For any such $L$, any element of $\mathrm{Ham}(X,L)$ acts on $HF(L,L) = H^*(L;\bZ)$ trivially, by \cite{Hu-Lalonge-Leclercq,P}.

Recall that there is a map
\[
\Theta_n / bP_{n+1} \longrightarrow \pi_n^{st} / \mathrm{im}(J) = \mathrm{cok}(J)
\]
from the quotient of the group of homotopy spheres by the subgroup of those which framed bound, to the cokernel of the $J$-homomorphism. In the terminology of Section \ref{sec: topo gene}, this sends (the equivalence class of) $\Sigma$ to the normal invariant of the degree-1 map $\Sigma \to S^n$. (The map is injective, and either onto or has index two image, depending on `Kervaire invariant one' considerations.)

\begin{cor}\label{cor: teoihgreiopgsdivn}
    Assume $d \in \{3,5,6,7\}$ mod $8$. If $\alpha \in \Theta_{d+1}$ and $\iota(\alpha)$ lifts to $\pi_0\mathrm{Ham}(X,L)$, then the image of $\alpha$ in $\mathrm{cok}(J)$ vanishes after inverting the prime 2. 

    If $L$ is a homotopy sphere, the image of $\alpha$ in $\mathrm{cok}(J)$ is 2-torsion.
\end{cor}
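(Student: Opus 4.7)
The strategy is to compare two framed spectral brane structures on $L$. Let $\phi \in \mathrm{Ham}(X,L)$ be a lift of $\iota(\alpha)$. Because $\phi$ is Hamiltonian, $\phi(L) = L$ as a Lagrangian, but the pullback framing on $L$ differs from the original in a Darboux disc by the clutching diffeomorphism $\iota(\alpha)$; call the resulting spectral brane $L^\alpha$. Hamiltonian isotopy to the identity provides a quasi-isomorphism $L \simeq L^\alpha$ in $\scrF(X;\bZ)$. Applying Theorem~\ref{thm:main} with $\Psi = fr$ produces an $\bS$-local system $\xi : L \to BGL_1(\bS)$ and an isomorphism $(L,\xi) \simeq L^\alpha$ in $\scrF(X;fr)$.

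Next I would take open-closed fundamental classes into the appropriate framed bordism group associated to $X$. On the geometric side, $[L^\alpha] - [L]$ is localised in the Darboux disc, and the clutching construction identifies the local framing change with a Pontryagin--Thom representative of the homotopy sphere $\Sigma_\alpha$; this forces the difference to represent the image of $\alpha$ in $\mathrm{cok}(J_{d+1})$, pushed into $X$ along the inclusion of $L$. On the spectral side, Theorem~\ref{thm: gour main theorem} (\cite[Theorem 1.3]{PS3}) expresses $[(L,\xi)] - [L]$ as an element in the image of multiplication by $\eta \in \pi_1(\bS)$.

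Combining, the image of $\alpha$ in $\mathrm{cok}(J_{d+1})$ lies in the image of $\eta$. Since $\eta$ is $2$-torsion, this directly yields the second assertion when $L$ is a homotopy sphere (where there is no further indeterminacy from $\pi_1 L$ to contend with). For the first assertion the dimension condition $d \equiv 3,5,6,7 \pmod 8$ ensures $\pi_d(U/O) = 0$, which both makes the relevant Fukaya-theoretic obstructions degenerate and permits the chromatic/Bousfield--Kuhn argument alluded to in Section~\ref{sec:Fuk and OC} (via \cite{Bousfield,Kuhn,Hovey}) to refine ``$\eta$-multiple'' into ``vanishes after inverting $2$'' inside $\mathrm{cok}(J)$.

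The principal obstacle I expect is the identification in the second step: pinning down that $[L^\alpha] - [L]$ really is the Pontryagin--Thom class of $\Sigma_\alpha$, inside the framed bordism target of the open-closed map relative to the embedding $L \hookrightarrow X$. The delicate point is that $\phi$ acts trivially on $L$ as a subset of $X$ while nontrivially on its intrinsic framing, so one must track tangent spaces and basepoints with care; and one then must check that the image of a local framing twist under $OC$ matches the expected connect-sum with $\Sigma_\alpha$ at a single point of $L$.
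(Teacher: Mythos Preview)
Your approach has a fundamental dimensional mismatch. You work with $L$ and $L^\alpha$ as $d$-dimensional framed manifolds, so any difference $[L^\alpha]-[L]$ of open-closed classes lands in a degree-$d$ framed bordism group. But $\alpha \in \Theta_{d+1}$ and the target is $\mathrm{cok}(J_{d+1}) \subset \pi_{d+1}^{st}/\mathrm{im}(J)$, one degree higher. There is no way to extract a class in $\pi_{d+1}^{st}$ from a difference of $d$-dimensional fundamental classes. Moreover, even taking your setup at face value, a ``change of framing in a disc'' on a $d$-manifold is classified by $\pi_d(O)$, and its image in $\pi_d^{st}$ lies in $\mathrm{im}(J)$ by definition --- so its class in $\mathrm{cok}(J_d)$ is zero and the argument would be vacuous. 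The clutching diffeomorphism $\iota(\alpha)$ is a diffeomorphism of $D^d$, not a reframing; it carries $(d{+}1)$-dimensional information that you cannot see by comparing two static brane structures on $L$.

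The paper resolves this by a suspension: form the Lagrangian mapping torus $L_\phi \subset X \times T^*S^1$, which is $(d{+}1)$-dimensional and diffeomorphic to $(L\times S^1)\#\Sigma_\alpha$. One then compares $L_\phi$ with $L\times S^1$ in $\scrF(X\times T^*S^1;fr)$; now the framed bordism difference genuinely represents $[\Sigma_\alpha]$ in degree $d{+}1$. This is also where the dimension hypothesis enters: it guarantees $\pi_{d+1}(U/O)=0$, which is needed to extend the obvious framing of $L_\phi$ (defined outside a ball) over the whole manifold, so that $L_\phi$ is a $\Psi$-brane at all. Your invocation of Bousfield--Kuhn/Hovey here is a red herring: that argument is used elsewhere in the paper for $MU$, and the passage from ``$\eta$-multiple'' to ``vanishes after inverting $2$'' is automatic since $2\eta=0$.
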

\begin{proof}
    Let $\phi \in \mathrm{Ham}(X,L)$ be a representative of the lift, so $\phi$ is the time-1 flow of some Hamiltonian. Suspending the exact Lagrangian  isotopy $\phi_t(L)$ gives rise to a `Lagrangian mapping torus', a Lagrangian submanifold $L_\phi \subset X \times T^*S^1$. If $\lambda(\phi) = \iota(f)$ then $L_\phi$ is diffeomorphic to $(L\times S^1)\# \Sigma_\alpha$ where $\Sigma_\alpha$ is the homotopy $(d+1)$-sphere associated to $\alpha$.  
    
    Since $\iota(\alpha)$ is represented by a diffeomorphism that is the identity outside of a small ball, by a Weinstein neighbourhood argument we may assume $\phi|_L$ is the identity outside of a small ball. Therefore $\phi|_L$ is compatible with the polarisation on $L$ outside a small ball, and so $L_\phi$ is polarised (with respect to the product polarisation on $L \times T^*S^1$) outside of a small ball (now of dimension $(d+1)$). By our dimension assumption, $\pi_{d+1} U/O=0$, so we may extend this to the entirety of $L_\phi$. 

    $L \times S^1$ is naturally framed and the above discussion implies $L_\phi$ is too. Moreover, they are diffeomorphic outside of a small ball, and this diffeomorphism preserves framings. It follows that the difference $[L \times S^1]-[L_\phi] \in \Omega^{fr}_d(\pt)$ is exactly the framed bordism class of $[\Sigma_\alpha]$, for some framing on $\Sigma_\alpha$. 
    
    $L \times S^1$ clearly defines an object in $\scrF(X; \Psi)$; the above discussion implies that $L_\phi$ does too. Lemma \ref{lem: eirughrepig} below implies they are quasi-isomorphic in the ordinary Fukaya category $\scrF(X \times T^*S^1; \bZ)$. By Theorem \ref{thm:main}, there is a spectral local system $\xi$ on $L \times S^1$ such that $(L \times S^1, \xi) \cong L_\phi$ in $\scrF(X; \Psi)$; by Theorem \ref{thm: gour main theorem} (and pushing forward along the map $X \times T^*S^1 \to \pt$), the framed bordism classes are related via $[L\times S^1]\cap[\eta\xi] = [L_\phi]$. Following Remark \ref{rmk: recap rmk 1.4 ps3}, we may conclude.
\end{proof}

\begin{lem}\label{lem: eirughrepig} 

    Assume $L$ and $\phi$ are as above, $L\times S^1$ and $L_\phi$ are quasi-isomorphic in $\scrF(X\times T^*S^1; \bZ)$.
\end{lem}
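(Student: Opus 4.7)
The subtlety is that $L \times S^1$ and $L_\phi$ are not in general diffeomorphic: as the proof of Corollary \ref{cor: teoihgreiopgsdivn} notes, $L_\phi$ is the mapping torus of $\phi|_L$, hence diffeomorphic to $(L \times S^1) \# \Sigma_\alpha$ with $\Sigma_\alpha$ a possibly exotic homotopy sphere. There is therefore no Hamiltonian isotopy in $X \times T^*S^1$ between them, and the desired quasi-isomorphism must be established categorically.

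My plan is to realise $L_\phi$ as a categorical mapping torus in $\scrF(X \times T^*S^1; \bZ)$ and then observe that the relevant monodromy is quasi-trivial. Concretely, I would establish a K\"unneth-type identification: the Lagrangian suspension $L_\phi$ represents the categorical mapping torus of the autoequivalence $\phi_*$ of $\scrF(X; \bZ)$ induced by $\phi$, applied to the object $L$. In the trivial case $\phi = \mathrm{id}$, this recovers $L \times S^1 \simeq L \otimes C_*(S^1)$ via the K\"unneth functor $\scrF(X; \bZ) \otimes \scrF(T^*S^1; \bZ) \to \scrF(X \times T^*S^1; \bZ)$, using that $S^1 \subset T^*S^1$ is the generator of the cotangent bundle's Fukaya category.

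Granted this identification, it remains to show $\phi_*|_L \simeq \mathrm{id}_L$. Since $\phi$ is Hamiltonian isotopic to $\mathrm{id}$ within $\mathrm{Ham}(X)$ via $\{\phi_t\}$, with no requirement that $\phi_t(L) = L$ for intermediate $t$, the isotopy induces a natural transformation $\eta: \mathrm{id} \Rightarrow \phi_*$ of endofunctors of $\scrF(X; \bZ)$ whose component on every object is a quasi-isomorphism given by a continuation map. Its value $\eta_L$ on $L$ lies in $HF^0(L, L) = H^0(L; \bZ)$ and, by a standard orientation computation, equals the identity $1$. Hence the categorical mapping torus of $\phi_*|_L$ agrees with that of $\mathrm{id}_L$, namely $L \otimes C_*(S^1) \simeq L \times S^1$, yielding $L_\phi \simeq L \times S^1$ in $\scrF(X \times T^*S^1; \bZ)$ as required.

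The main obstacle is the K\"unneth-type identification in the first step. This reduces to comparing holomorphic-disk counts for $L_\phi$ with the structure maps of the homotopy colimit defining the categorical mapping torus. One natural strategy exploits the projection $X \times T^*S^1 \to T^*S^1$: neck-stretching along the zero section $S^1$ decomposes the relevant holomorphic strips into pieces in $X$ (involving $L$ and the isotopy $\phi_t$) and pieces in $T^*S^1$ (involving the zero section), yielding precisely the structure maps of the mapping torus. Alternatively, one could apply the integral version of the Viterbo-style restriction functor $\scrF(X \times T^*S^1; \bZ) \to \mathrm{Flow}_{/S^1}$ mentioned in the introduction to reduce the comparison to one between flow categories over $S^1$, where the analogous identification is more transparent.
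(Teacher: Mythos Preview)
Your approach is conceptually coherent but takes a much harder route than the paper, and the step you flag as ``the main obstacle'' is a genuine gap rather than a routine matter. Identifying the geometric suspension $L_\phi$ with a categorical mapping torus in $\scrF(X\times T^*S^1;\bZ)$ via a K\"unneth functor and a neck-stretching limit is a substantial theorem in its own right; neither the K\"unneth functor nor the degeneration argument is set up in this paper, and making either precise over $\bZ$ (with coherent orientations and gluing) would be a significant detour. Your second step---that the Hamiltonian isotopy $\{\phi_t\}$ furnishes a natural isomorphism $\mathrm{id}\Rightarrow\phi_*$ whose component at $L$ is the unit---is fine, and indeed is essentially the content of the cited result that $\mathrm{Ham}(X,L)$ acts trivially on $HF(L,L)$; but it only pays off once the first step is in place.

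The paper's argument sidesteps all of this with a direct geometric computation. One chooses a small Morse perturbation of the zero-section $S^1\subset T^*S^1$ meeting $S^1$ in two points, and arranges that $L_\phi$ projects to $T^*S^1$ with image in the union of this perturbed circle and a small disc (where the suspension is supported) disjoint from the zero-section. Then $L\times S^1$ and $L_\phi$ intersect cleanly in two copies of $L$, and after picking a Morse function on $L$ the Floer complex is
\[
CF(L\times S^1,\,L_\phi)\;\cong\;C^*_{Morse}(L)\,\oplus\,C^{*+1}_{Morse}(L).
\]
The unit in $C^0_{Morse}(L)$ is then a cycle, and one checks it defines the desired quasi-isomorphism. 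This is essentially your ``$\eta_L = 1$'' observation, but read off from an explicit model rather than routed through a K\"unneth/mapping-torus formalism.
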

This lemma does not require any constraints on the dimension $d$.

\begin{proof}[Sketch]
    Take a small Morse exact   perturbation of $S^1\subset T^*S^1$ which meets the zero-section in two points. 
    We can suppose that $L_\phi$ maps down to $T^*S^1$ with image contained in the union of the perturbed zero-section and a small disc disjoint from the zero-section (where the Hamiltonian suspension is supported).  In this model, the Lagrangians $L\times S^1$ and $L_\phi$ meet cleanly in two copies of $L$; fixing a Morse function on $L$, the Floer complex 
    \[
    CF(L\times S^1, L_\phi) = C^*_{Morse}(L) \oplus C^{*+1}_{Morse}(L)
    \]  The unit in $C^0_{Morse}(L)$ is a cycle which defines the required quasi-isomorphism.
\end{proof}

\begin{rmk}
    From the perspective of Fukaya categories, in parallel with $\mathrm{Ham}(X, L)$ it is natural to consider the group $\mathrm{Ham}^\Psi(X,L)$ of pairs $(\phi, h)$, where $L$ is $\Theta$-oriented, $\phi \in \mathrm{Ham}(X,L)$ and $h$ is a homotopy between the given $\Theta$-orientation and its pullback under $h$. This is the class of automorphisms to which one expects to be able to associate a spectral Lagrangian Seidel element.

    The homotopy $h$ in such a pair $(\phi, h)$ provides a natural $\Theta$-orientation on the mapping torus $L_\phi$. The same argument as Corollary \ref{cor: teoihgreiopgsdivn} then tells us that without the dimension assumption, in the same setting if $\iota(\alpha)$ lifts to $\pi_0\mathrm{Ham}^\Psi(X,L)$ the image of $\alpha$ in $\mathrm{cok}(J)$ is 2-torsion.
\end{rmk}

There is a pretty variation on this theme in the case when $L = T^n$ is a Lagrangian torus.  For each decomposition $T^n = T^k \times T^{n-k}$ one can consider diffeomorphisms $f\times \id$ where $f\in\Theta_{k+1}$ is supported in a disc on the first factor. Taking the obvious co-ordinate decompositions, and varying $k$, this defines a map
    \begin{equation} \label{eqn:MCG torus}
    \oplus \iota_k: \oplus_{k=0}^n\, (\Theta_{k+1})^{{n \choose k}} \longrightarrow \pi_0\Diff(T^n)
    \end{equation}
    This is close to being injective in large dimensions $n\geq 5$, see \cite[Theorem 4.1]{Hatcher:concordance} and \cite{Hsiang-Sharpe} (for the detailed relationship between the group $\Gamma_i$ that occurs in \cite{Hatcher:concordance} and $bP$, see e.g. \cite{Hsiang-Shaneson}). Moreover, the kernel of the map $\pi_0\Diff(T^n) \to GL(n;\bZ)$ coming from the action on $H_1(T^n;\bZ)$ is $(\bZ/2)^{\infty} \oplus \mathrm{image}(\iota)  \oplus (\bZ/2)^{n \choose 2}$. The first factor is related to pseudo-isotopy theory; the final factor is comprised of diffeomorphisms whose mapping tori are not $PL$-homeomorphic to the standard torus. 

    \begin{cor}\label{cor:torus case} 
        Suppose $n\geq 5$ and $L\subset X$ is an exact Lagrangian torus, stably framed compatibly with an ambient stable framing. Assume there is a retraction $r: X \to L$.

        If an element in $\mathrm{image}(\oplus \iota_k)$ from \eqref{eqn:MCG torus}  lifts to $[\phi] \in \pi_0\mathrm{Ham}(X,T^n)$, then it is $2$-torsion.
    \end{cor}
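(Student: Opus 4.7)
The plan is to adapt the argument of Corollary \ref{cor: teoihgreiopgsdivn}, using the retraction $r: X \to T^n$ to obviate the dimension restriction $d \in \{3,5,6,7\} \pmod{8}$ that appeared there. In that proof, the dimension restriction was used to extend a stable framing on the Lagrangian mapping torus from the complement of a ball to all of the mapping torus, via the vanishing of $\pi_{d+1} U/O$; here the retraction will replace this obstruction-theoretic extension step.

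By the near-injectivity of $\oplus\iota_k$ for $n \geq 5$ (\cite{Hatcher:concordance, Hsiang-Sharpe}), it suffices to treat one summand. Fix a decomposition $T^n = T^k \times T^{n-k}$ and an $\alpha \in \Theta_{k+1}$ such that $\iota_k(\alpha)$ lifts to some $[\phi] \in \pi_0\mathrm{Ham}(X,T^n)$. Arrange, via a Weinstein neighbourhood argument, $\phi|_L = f \times \mathrm{id}_{T^{n-k}}$ with $f$ supported in a disc $D^k \subset T^k$. Form the Lagrangian mapping torus $L_\phi \subset X \times T^*S^1$; it is diffeomorphic to $(T^k \times S^1 \# \Sigma_\alpha) \times T^{n-k}$, where $\Sigma_\alpha$ is the homotopy $(k+1)$-sphere classified by $\alpha$. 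To equip $L_\phi$ with a stable framing compatible with the ambient one on $X \times T^*S^1$, compose $L_\phi \hookrightarrow X \times T^*S^1 \xrightarrow{r \times \mathrm{id}} T^n \times T^*S^1$ with the projection to $T^n \times S^1 = T^{n+1}$, obtaining a degree-one map. Pulling back the standard stable framing of $T^{n+1}$ gives a stable framing on $L_\phi$, and a local analysis near $\partial D^k \times T^{n-k} \times S^1$ shows it agrees with the ambient framing (which exists away from the $(k+1)$-dimensional exotic region).

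With the compatible framing in place, Lemma \ref{lem: eirughrepig} shows that $L \times S^1$ and $L_\phi$ are quasi-isomorphic in $\scrF(X \times T^*S^1; \bZ)$; Theorem \ref{thm:main} for $\Psi = fr$ then produces a spectral local system $\xi$ on $L \times S^1$ with $(L \times S^1, \xi) \simeq L_\phi$, and Theorem \ref{thm: gour main theorem} gives the bordism identity $[L \times S^1] \cap [\eta \xi] = [L_\phi]$ in $\Omega^{fr}_{n+1}(X \times T^*S^1)$. Pushing forward along $r \times \mathrm{id}$ to $T^{n+1}$ and slanting against the framed fundamental class $[T^{n-k}]$ extracts $[\Sigma_\alpha] \in \pi_{k+1}^{st}$ as lying in the image of $\eta$; by Remark \ref{rmk: recap rmk 1.4 ps3}, $[\Sigma_\alpha]$ is thus $2$-torsion in $\mathrm{cok}(J)$, and together with the kernel structure of $\oplus\iota_k$ recalled before the corollary this yields $2$-torsion of $\iota_k(\alpha)$ in $\pi_0\mathrm{Diff}(T^n)$.

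The main obstacle I foresee is verifying that the retraction-pulled-back framing and the ambient framing on $L_\phi$ agree homotopically in the sense required by Theorem \ref{thm:main}; the check is local near the boundary of the exotic region but requires some care, since the two framings are constructed from genuinely different data. A secondary difficulty is the accounting of the $bP_{k+2}$-subgroup together with any $J$-image corrections coming from the slant-product against $[T^{n-k}]$, which is needed to translate from $2$-torsion in $\mathrm{cok}(J)$ to $2$-torsion of the mapping class group element itself.
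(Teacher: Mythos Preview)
Your approach has a genuine gap at the framing step, and you are right to flag it as the main obstacle. Pulling back the standard framing of $T^{n+1}$ along the retraction-induced map $L_\phi \to T^{n+1}$ only trivialises the pullback of $T(T^{n+1})$; it does not trivialise $TL_\phi$, and in particular says nothing about whether the stable Gauss map $g_{L_\phi}: L_\phi \to U/O$ (taken relative to the ambient framing of $X \times T^*S^1$) is null. By Lemma~\ref{lem:iergjhoiuerhg}, that Gauss map is determined by $D\phi \in [T^n, \Omega(\widetilde{U/O})]$, and there is no reason for it to vanish for an arbitrary $\phi$ in the image of $\oplus\iota_k$; Corollary~\ref{cor: gaus mono} only shows it dies after composing with $BJ$. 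So $L_\phi$ need not define an object of $\scrF(X \times T^*S^1; fr)$, and the framed-bordism identity $[L\times S^1]\cap[\eta\xi] = [L_\phi]$ you want to invoke is not available.

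The paper sidesteps this entirely by applying Proposition~\ref{prop: fin} instead of the framed-bordism argument. That proposition only needs: a polarisation on $X \times T^*S^1$ (here trivial, hence Spin); the stable Gauss map of $L\times S^1 = T^{n+1}$ null (obvious); a retraction $X \times T^*S^1 \to T^{n+1}$ whose restriction to $L_\phi$ is a homotopy equivalence (both are homotopy tori, so the degree-one map is one); and $V \cong r^* T(T^{n+1})$ (both trivial). Crucially, \emph{no} $\Theta$-structure on $L_\phi$ is assumed---inside the proof of Proposition~\ref{prop: fin}, Theorem~\ref{thm:main3} is used to manufacture the needed lift of $L_\phi$. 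The output is that the normal invariant of $L_\phi \to T^{n+1}$ factors through $\eta$, hence is $2$-torsion, and the structure of $\oplus\iota_k$ recalled before the corollary then gives the conclusion. This route also dissolves your secondary difficulty: working with normal invariants in $[T^{n+1},G/O]$ rather than framed bordism classes removes the $bP$ and $\mathrm{im}(J)$ bookkeeping.
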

    \begin{proof} 
        We use the same suspension trick as above. Consider the Lagrangian mapping torus $L_\phi \subseteq X \times T^*S^1$. As before, it is isomorphic to $L \times S^1$ in $\scrF(X \times T^*S^1; \bZ)$. The map $L_\phi \to L \times S^1$ is a homology equivalence; though they are not simply connected, they are both homotopy equivalent to tori, so this map is a homotopy equivalence.
        The result then follows from Proposition \ref{prop: fin}.
    \end{proof}

     The pseudo-isotopy factor $(\bZ/2)^{\infty}$ in $\pi_0\Diff(T^n)$ was studied in \cite{CP}; they showed that  Hamiltonian monodromies which are local (supported in $T^*T^n$) can never give elements in this factor.
    Together with Corollary \ref{cor:torus case}, this puts a significant constraint on which diffeomorphisms can be realised by local exact isotopies of a Lagrangian torus.

\subsubsection*{Monodromy action on tangential structures}

We end the section with a slightly different flavour of application. A pretty consequence of mod two gradings in Floer theory  is that if $L$ is an oriented exact Lagrangian and $\phi \in \mathrm{Ham}(X,L)$, then the induced action of $\phi$ on $L$ preserves its orientation, see \cite{Jack:Tiny}.  One can use Lagrangian suspensions, and Theorem \ref{thm:main3}, to derive similar constraints on how monodromy can act on a tangential structure more complicated than an orientation.

\begin{lem}\label{lem:iergjhoiuerhg}
   Let $\phi \in \mathrm{Ham}(X,L)$, and assume that $X$ is polarised and the stable Gauss map of $L$ is nullhomotopic. The stable Gauss map of the suspension $L_{\phi} \subset X \times T^*S^1$ is determined by the derivative $D\phi$ viewed as an element of $[L,\Omega(\widetilde{U/O})]$.
\end{lem}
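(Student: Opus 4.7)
The plan is to exhibit $L_\phi$ as the mapping torus of $\phi|_L$, so that it sits in a fibre sequence $L \to L_\phi \to S^1$, and to compute $g_{L_\phi}$ using this structure. First I would observe that the fibre of $L_\phi \to S^1$ over $t_0$ is the embedded image of $L$ under $x \mapsto (\phi_{t_0}(x), t_0, -H_{t_0}(\phi_{t_0}(x)))$; after projecting to the $X$-factor (and discarding the stably trivial cotangent contribution), this recovers $\phi_{t_0}(L) \subset X$, which is Hamiltonian isotopic to $L$ and hence has stable Gauss map equal to $g_L$. Since $g_L$ is nullhomotopic by hypothesis, $g_{L_\phi}$ restricts trivially to every fibre.

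From this I would invoke the cofibre sequence $L \to L_\phi \to \Sigma L$ to lift $g_{L_\phi}$ (depending on a chosen nullhomotopy of $g_L$) to a class
\[
\alpha_\phi \in [\Sigma L, \widetilde{U/O}] \cong [L, \Omega(\widetilde{U/O})].
\]
The task then reduces to identifying $\alpha_\phi$ with $D\phi$ under this adjunction.

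To compute $\alpha_\phi$ explicitly, the next step is to pass to the cover $L \times [0,1]$ of $L_\phi$ (glued via $(x,1) \sim (\phi(x),0)$). Stably, $TL_\phi|_{(x,t)}$ is $D\phi_t(T_xL) \oplus \bR$ with the $\bR$-factor in the $S^1$-direction, while the polarisation restricted to $L_\phi$ is stably $V|_{\phi_t(x)} \oplus \bR$. Parallel-transporting $V$ along $t \mapsto \phi_t(x)$, both subbundles may be viewed inside $(V \otimes \bC)|_x$, and comparing them defines a map $[0,1] \times L \to \widetilde{U/O}$ whose values at $t=0$ and $t=1$ are $g_L(x)$ and $g_L(\phi(x))$ respectively --- both nullhomotopic. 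Closing up via the chosen nullhomotopy of $g_L$ yields, for each $x \in L$, a loop in $\widetilde{U/O}$ that traces the path of totally real subspaces $t \mapsto D\phi_t(T_xL)$; this is precisely $D\phi$ viewed as an element of $[L, \Omega(\widetilde{U/O})]$.

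The hard part will be this final identification: it requires careful bookkeeping of the parallel-transport trivialisation, the chosen nullhomotopy of $g_L$, and the conventions in the loop-space adjunction $[\Sigma L, \widetilde{U/O}] \cong [L, \Omega(\widetilde{U/O})]$. Since these conventions will already be fixed in the proof of Theorem~\ref{thm:main3}, the argument should import and reuse them rather than set them up from scratch.
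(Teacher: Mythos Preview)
Your approach is essentially the same as the paper's --- exploit the mapping-torus description of $L_\phi$, factor $g_{L_\phi}$ through a suspension of $L$, and identify the resulting class with the clutching data $D\phi$ --- but there are two places where the paper is cleaner and one genuine gap you should address.

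First, the cofibre of the fibre inclusion $L \hookrightarrow L_\phi$ is not $\Sigma L$ but $\Sigma L \vee S^1$ (collapsing one fibre in $L \times [0,1]/{\sim}$ identifies both ends to a single point, which is the unreduced suspension with its two cone points then glued). Since $\widetilde{U/O}$ is simply connected this extra $S^1$ contributes nothing, so your argument still goes through, but the paper avoids this by first deforming $\phi$ within $\mathrm{Ham}(X,L)$ to fix a point $p \in L$ and act trivially on $T_pL$; then $g_{L_\phi}$ is already trivialised over a fibre \emph{and} a section, and hence factors cleanly through the reduced suspension $L_\phi/(L \vee S^1) \simeq \Sigma L$.

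Second, you implicitly assume $g_{L_\phi}$ lands in $\widetilde{U/O}$ rather than just $U/O$, i.e.\ that the Maslov class of $L_\phi$ vanishes. This does not follow from $g_L$ being null: the obstruction lives on the $S^1$-factor. The paper invokes \cite{Jack:Tiny} for this; you should do the same (or note that after the fixed-point deformation the section contribution is visibly zero).

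Your explicit parallel-transport computation of the loop $t \mapsto D\phi_t(T_xL)$ is exactly what the paper compresses into the phrase ``the vertical tangent bundle of $L_\phi$ is clutched by $D\phi$''; the content is the same.
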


\begin{proof}
We deform $\phi$ to preserve a point $p\in L$ and act trivially on $T_pL$. It follows that the (stable) vertical tangent bundle of $L_{\phi}$ is trivialised over a fibre and a section, so is pulled back from the reduced suspension $\Sigma L$, and the stable Gauss map of $L_{\phi}$ is then pulled back along $[L_{\phi},U/O] \to [\Sigma L, U/O]$. The fact that we have trivialised the mapping torus of $\phi$ on $X$ means that $D\phi: L \to O$ lifts to the homotopy fibre of $O \to U$, giving a class in $[L, \Omega (U/O)] = [\Sigma L, U/O]$. Since the vertical tangent bundle of $L_{\phi}$ is clutched by $D\phi$, this class is identified  with the stable Gauss map of the suspension.  Finally, \cite{Jack:Tiny} implies that this stable Gauss map lifts to $\Omega\,\widetilde{U/O}$.
\end{proof}

    \begin{cor}\label{cor: gaus mono}
        Let $L \subseteq X$ be a closed exact Lagrangian in a polarised Liouville domain, and assume the stable Gauss map $g_L$ of $L$ is nullhomotopic.

        Then $\phi$ preserves the nullhomotopy of the composition of $g_L$ with the delooped $J$ homomorphism $\widetilde{U/O} \simeq B^2O \to B^2GL_1(\bS)$.
    \end{cor}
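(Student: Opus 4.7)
The plan is to apply Theorem \ref{thm:main3} to the Lagrangian mapping torus $L_\phi \subseteq X \times T^*S^1$ of $\phi$ (as in Corollary \ref{cor: teoihgreiopgsdivn}), compared against the product $L \times S^1$, using the tangential pair $\Psi = (\Theta, \Phi) = (BO, BO)$ (i.e.\ with $F = \pt$ in the notation of Theorem \ref{thm:main3}). In this setting a compatible $\Theta$-orientation on a Lagrangian is exactly a nullhomotopy of its stable Gauss map, and $R_\Psi = \bS$, so $B^2 J_{R_\Psi} = B^2 J_\bS$.

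Since $g_L$ is nullhomotopic by hypothesis and $S^1 \subset T^*S^1$ is framed as the zero-section, $L \times S^1 \subseteq X \times T^*S^1$ has nullhomotopic stable Gauss map and so admits a compatible $\Theta$-orientation; and by Lemma \ref{lem: eirughrepig}, $L_\phi$ is isomorphic to $L \times S^1$ in $\scrF(X \times T^*S^1; \bZ)$. Theorem \ref{thm:main3} then gives that the composition
\[
L_\phi \xrightarrow{g_{L_\phi}} U/O \xrightarrow{\simeq} B^2 O \xrightarrow{B^2 J_\bS} B^2 GL_1(\bS)
\]
is nullhomotopic. By Lemma \ref{lem:iergjhoiuerhg}, $g_{L_\phi}$ is the pullback along a collapse map $p: L_\phi \to \Sigma L$ of a class $\widetilde{D\phi} \in [\Sigma L, \widetilde{U/O}]$ adjoint to $D\phi \in [L, \Omega \widetilde{U/O}]$. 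Since $\phi|_L$ is smoothly isotopic to the identity, $L_\phi$ is diffeomorphic to $L \times S^1$, and the standard stable splitting $\Sigma^\infty_+ (L\times S^1) \simeq \Sigma^\infty_+ L \vee \Sigma(\Sigma^\infty_+ L)$ exhibits $p$ as a stable retract. Consequently $p^*: [\Sigma L, B^2 GL_1(\bS)] \hookrightarrow [L_\phi, B^2 GL_1(\bS)]$ is injective, and the class $B^2 J_\bS \circ \widetilde{D\phi} \in [\Sigma L, B^2 GL_1(\bS)] \simeq [L, BGL_1(\bS)]$ vanishes.

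Finally, I would identify this vanishing class as the obstruction to $\phi$ preserving the nullhomotopy. The set of nullhomotopies of $B^2 J_\bS \circ g_L$ forms a torsor over $[L, BGL_1(\bS)]$; since $\phi|_L$ is isotopic to the identity, $\phi^*$ acts trivially on $[L, BGL_1(\bS)]$ and hence on the torsor by translation by a fixed class $c_\phi$, and tracking how the clutching of the vertical tangent bundle of $L_\phi$ encodes both the ambient polarisation and $\phi$ should identify $c_\phi$ with $B^2 J_\bS \circ D\phi$. The hard part will be this last identification $c_\phi = B^2 J_\bS \circ D\phi$: it requires reconciling the ``lift of the Gauss map to $\hofib(B^2 J_\bS)$'' formulation of the tangential structure with the clutching description of $L_\phi$ via $D\phi$, essentially by unpacking the suspension-loop adjunction used in Lemma \ref{lem:iergjhoiuerhg}.
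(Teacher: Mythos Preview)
Your overall strategy---apply Theorem~\ref{thm:main3} to the pair $(L_\phi, L\times S^1)$ in $X\times T^*S^1$, then translate the conclusion back via Lemma~\ref{lem:iergjhoiuerhg}---is exactly the paper's approach. The paper's proof is the single sentence: ``The stable Gauss map of $L \times S^1$ is nullhomotopic, so Theorem~\ref{thm:main3} implies the composition of the stable Gauss map of $L_\phi$ with the delooped $J$ homomorphism is nullhomotopic; the result then follows.''

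There is, however, a genuine gap in your write-up. You assert that ``$\phi|_L$ is smoothly isotopic to the identity'', and use this twice: to identify $L_\phi \cong L\times S^1$ (hence get the stable splitting and injectivity of $p^*$), and to say $\phi^*$ acts trivially on $[L,BGL_1(\bS)]$. Neither is given: $\phi$ is a Hamiltonian diffeomorphism of $X$ preserving $L$ setwise, and there is no reason its restriction to $L$ lies in the identity component of $\Diff(L)$. (In Corollary~\ref{cor: teoihgreiopgsdivn} this did hold by hypothesis, but not here.) So the stable-splitting argument for injectivity of $p^*:[\Sigma L, B^2GL_1(\bS)]\to[L_\phi, B^2GL_1(\bS)]$ does not go through as written.

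The paper is admittedly terse at exactly this point, but the intended logic seems to run through the sentence just before the proof: the corollary is \emph{equivalent}, via Lemma~\ref{lem:iergjhoiuerhg}, to $D\phi$ lying in the kernel of $BO \to BGL_1(\bS)$. That equivalence is a direct unpacking of ``$\phi$ preserves the nullhomotopy'' in terms of the torsor of nullhomotopies and the clutching description of $D\phi$ (your final paragraph), and does not require $\phi|_L$ to be isotopic to the identity. If you want to complete your version of the argument, you should either justify the injectivity of $p^*$ for the cofibre sequence $L\vee S^1 \to L_\phi \to \Sigma L$ without assuming $\phi|_L\simeq\id$, or---cleaner---bypass injectivity by directly establishing the torsor identification $c_\phi = BJ\circ D\phi$ from the definition of the Gauss map and the Hamiltonian trivialisation of $\phi^*V\simeq V$, which is the content of Lemma~\ref{lem:iergjhoiuerhg} read in this direction.
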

    Under Lemma \ref{lem:iergjhoiuerhg}, this is equivalent to $D\phi$ lying in the kernel of the delooped $J$ homomorphism $\Omega(\widetilde{U/O}) \simeq BO \to BGL_1(\bS)$.
    \begin{proof}
        The stable Gauss map of $L \times S^1$ is nullhomotopic, so Theorem \ref{thm:main3} implies the composition of the stable Gauss map of $L_\phi$ with the delooped $J$ homomorphism is nullhomotopic; the result then follows.
    \end{proof}

\subsection{Non-existence of twists}

For $X^{2d}$ a graded Liouville domain and $L \in \scrF(X; \bZ)$ an object of its derived Fukaya category whose endomorphism algebra $HF_*(L,L)$ is isomorphic to $H^*(S^d)$, there is an autoequivalence $T_L$ of $\scrF(X; \bZ)$, called the \emph{spherical twist} autoequivalence. If instead $d$ is even and $HF_*(L,L) \cong H^*(\bC\bP^{d/2})$, there is similarly a \emph{projective space twist} $T_L$ of $\scrF(X; \bZ)$. If $L$ is realised by a (non-exotic) sphere or complex projective space, this autoequivalence is induced by a compactly supported symplectomorphism of $X$, see \cite{Seidel:book, Mak-Wu}.

In \cite{PS} we showed that, if the spherical twist autoequivalence $T_{\Sigma}$ on a homotopy sphere $\Sigma$ is realised by a compactly supported symplectomorphism of $T^*\Sigma$, then $\Sigma$ has order two in the group $\Theta_d / bP_{d+1}$.  This paper allows one to make similar non-existence statements for twists in homotopy projective spaces.

There is a map $\iota: \Theta_{2k} \to \cS^s(\bC\bP^k)$, sending $\Sigma$ to $\bC\bP^k\#\Sigma$. The \emph{inertia group} of $\bC\bP$, $I(\bC\bP^k)$, is defined to be the preimage of the basepoint; this is a subgroup of $\Theta_{2k}$. These inertia groups have been computed in low degrees, for example it is trivial for $3 \leq k \leq 8$ \cite[Theorem 2.5]{Kasilingam-cpn58}, but non-trivial for $k=9$ \cite[Theorem 3.11]{Basu-Kasilingam:cpn}

\begin{rmk}
    Since $\bC\bP^k$ is even-dimensional, the normal invariant map $\cS^s(\bC\bP^k) \to [\bC\bP^k, G/O]$ is injective. The composition with $\iota$ sends $\Sigma \in \Theta_{2k}$ to the normal invariant of the homotopy equivalence $\bC\bP^k \#\Sigma \to \bC\bP^k$. Since the normal invariant map for $S^{2k}$, $\Theta_{2k} \to [S^{2k}, G/O]$, is also injective, it follows that the inertia group $I(\bC\bP^k)$ naturally embeds into the kernel of the map $[S^{2k}, G/O]\to [\bC\bP^k, G/O]$ obtained by pullback along the degree 1 map $\bC\bP^{2k} \to S^{2k}$. In particular, $I(\bC\bP^k) = I(Y)$ for $Y$ any closed manifold homotopy equivalent to $\bC\bP^k$.
\end{rmk}

Let $\Phi = BO$ with $\Phi \to BU$ the complexification map; $X$ admits a $\Phi$-structure exactly when it admits a stable polarisation. We let $\Theta = BO$; in this case, we can work over $\bS$ and over framed bordism.

\begin{cor} \label{cor:exotic non-twist}
    Let $L = \bC\bP^k \# \Sigma$ for a homotopy $2k$-sphere $\Sigma$. If the projective space twist $T_L$ is realised by a compactly supported symplectomorphism of the cotangent bundle $T^*L$, then $4[\Sigma] \in I(\bC\bP^k)$ lies in the inertia group.
\end{cor}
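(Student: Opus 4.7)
The plan is to deduce the statement from Theorem~\ref{thm: norm} applied to a well-chosen nearby Lagrangian, together with an analysis of how a geometric realisation of the projective twist acts on the zero section. Suppose $\phi \in \mathrm{Symp}^c(T^*L)$ is a compactly supported symplectomorphism realising $T_L$. Then $\phi(L) \subset T^*L$ is a nearby Lagrangian, and the composition
\begin{equation*}
\phi^{\sharp} := \pi|_{\phi(L)} \circ \phi|_L \colon L \longrightarrow L,
\end{equation*}
with $\pi \colon T^*L \to L$ the projection, is a self-simple-homotopy-equivalence. Since $\phi|_L : L \to \phi(L)$ is itself a diffeomorphism, the normal invariant $\nu(\phi^{\sharp}) \in [L, G/O]$ coincides (via pullback along $\phi|_L$) with the normal invariant of the Abouzaid--Kragh equivalence $\pi|_{\phi(L)} \colon \phi(L) \to L$; by Theorem~\ref{thm: norm}, this lies in the image of $\eta_* \colon B(G/O) \to G/O$, and is in particular $2$-torsion.

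The main algebraic step is then to identify $\nu(\phi^{\sharp})$ in terms of the exotic summand $\Sigma$. Writing $j \colon L \to \bC\bP^k$ for the canonical degree-one homotopy equivalence associated to $L = \bC\bP^k \# \Sigma$, the class $\nu(j) \in [L, G/O] \cong [\bC\bP^k, G/O]$ is the pullback of $[\Sigma] \in [S^{2k}, G/O]$ along the collapse map. The claim to prove is that $\nu(\phi^{\sharp}) = 2\,\nu(j)$. This should follow from a local model for the projective twist along the lines of \cite{Mak-Wu}: the geometric realisation of $T_L$ acts on the zero section by a map that is essentially a ``degree-two collapse'' onto a disc bundle neighbourhood (this is the projective analogue of the antipodal action of the spherical Dehn twist on its zero section), so both factors $\phi|_L$ and $\pi|_{\phi(L)}$ of $\phi^{\sharp}$ each contribute one copy of $\nu(j)$ to the resulting normal invariant. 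Combining this identification with the $2$-torsion constraint yields $4\,\nu(j) = 2\,\nu(\phi^{\sharp}) = 0$ in $[L, G/O]$; by the injectivity of the normal invariant map for closed manifolds homotopy equivalent to $\bC\bP^k$ (recalled in the remark preceding the corollary), this forces $4[\Sigma] \in I(\bC\bP^k)$, as required.

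The main obstacle is the topological identification $\nu(\phi^{\sharp}) = 2\,\nu(j)$. This requires a precise local model for the projective twist (analogous to the identification of the Dehn twist along $S^d$ with the antipodal map on the zero section, used in the spherical analogue in \cite{PS}) together with a careful surgery-theoretic bookkeeping of how the exotic summand $\Sigma$ is transported. The factor of $2$ here should be intrinsic to the projective nature of $T_L$, just as the $\pm 1$ degree of the antipodal map is intrinsic to the spherical Dehn twist; everything else in the argument is structural, coming from Theorem~\ref{thm: norm} and the algebraic properties of normal invariants.
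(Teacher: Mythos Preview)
Your approach tries to read off the normal invariant of $\phi^{\sharp}$ directly from the action of the projective twist on the zero section, but the crucial identification $\nu(\phi^{\sharp}) = 2\,\nu(j)$ is not justified and is unlikely to be accessible by the route you sketch. First, your heuristic that ``both factors $\phi|_L$ and $\pi|_{\phi(L)}$ each contribute one copy of $\nu(j)$'' cannot be right as stated: $\phi|_L$ is a diffeomorphism, so precomposing with it contributes nothing to the normal invariant. The entire normal invariant of $\phi^{\sharp}$ is that of the nearby Lagrangian projection $\pi|_{\phi(L)}$, and there is no a priori reason this should see $\Sigma$ at all, let alone with multiplicity two. (In the spherical case the Dehn twist acts on the zero section as the antipodal map, a diffeomorphism, so the analogous $\nu(\phi^{\sharp})$ is zero; the argument in \cite{PS} does \emph{not} proceed via the zero section.) There is no known ``local model'' computation of the projective twist's action on $L$ that would hand you $2\,\nu(j)$.

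The paper avoids this problem entirely by working not with the zero section but with a cotangent \emph{fibre}, realised as the second core $F \cong S^{2k}$ in the plumbing $X = T^*L \#_{pt} T^*S^{2k}$. The categorical projective twist acts on $F$ by an explicit double mapping cone, and Mak--Wu's Bott Lagrangian surgery gives a geometric Lagrangian representing this double cone which is diffeomorphic to $F \# \Sigma \# \Sigma$. If $T_L$ is realised by a symplectomorphism $\tau_L$, then $\tau_L(F) \cong F$ is quasi-isomorphic to $F\#\Sigma\#\Sigma$ over $\bZ$; applying Theorems~\ref{thm:main} and~\ref{thm: gour main theorem} (not Theorem~\ref{thm: norm}) then shows these have the same framed bordism class up to capping with $[\eta\xi]$ for some $\xi$ on $F \simeq S^{2k}$. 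Pushing forward along the collapse $X \to L$ and connect-summing with $L$ converts this into a statement about normal invariants: $[L\#\Sigma\#\Sigma]$ has $2$-torsion normal invariant, hence $4[\Sigma] \in I(\bC\bP^k)$. The factor of $2$ here comes from the \emph{double} cone structure of the projective twist (i.e.\ from Mak--Wu's surgery model), not from any analysis of $\phi|_L$.
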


\begin{rmk}
    Note that it is not true that $\Sigma$ must be diffeomorphic to $S^{2k}$. The best we could hope for is that $[\Sigma] \in I(\bC\bP^k)$, since if this holds $\bC\bP^k\#\Sigma$ is diffeomorphic to $\bC\bP^k$ and hence such a symplectomorphism does exist.
\end{rmk}

\begin{proof}[Proof of Corollary \ref{cor:exotic non-twist}]

    Form the plumbing $X = T^*L \#_{pt} T^*S^{2k}$ by attaching a handle to the boundary of one fibre in $T^*L$ along a standard Legendrian unknot. Let $F = S^{2k}\subset X$ be the second core component, which is the compactification of a fibre of $T^*L$. $X$ has a natural polarisation by a vector bundle $V \to X$ which restricts to the tangent bundle over the two cores $L$ and $F$, and both their stable Gauss maps are trivial.
    
    According to Mak and Wu, see also \cite{Huybrechts-Thomas, HarrisR}, there is an autoequivalence $T_L$ of $\scrF(X;\bZ)$ which acts on objects by a double mapping cone
    \[
    T_L(F) := \mathrm{Cone}\left( \mathrm{Cone}(\xymatrix{L \ar[rr]_-{1\otimes h - h\otimes 1}&& L}) \to F \right)
    \]
    where $h \in CF^2(L,L)$ is a chain-level lift of a generator for $HF^2(L,L) = H^2(L;\bZ) = \bZ$. (When $\Sigma = S^{2k}$ and $L$ is a projective space, this is the categorical action of the geometric symplectomorphism given by the projective space twist; the functor is invertible whether or not it comes from geometry.) 
    
    If $T_L$ is represented by a compactly supported symplectomorphism $\tau_L$ then the LHS is quasi-isomorphic to $\tau_L(F)$, which is diffeomorphic to $F$, whilst the RHS is represented by a Bott Lagrangian surgery diffeomorphic to $F \# \Sigma \# \Sigma$, cf. \cite[Lemma 3.3]{Mak-Wu}. Note that since the stable Gauss map of $F$ is trivial, so is that of its image under a symplectomorphism, so both $F$, $F\#\Sigma\#\Sigma$ define objects in $\scrF(X; fr)$. By Theorem \ref{thm:main},  at the cost of turning on a spectral local system, these two Lagrangians are then quasi-isomorphic in $\scrF(X; fr)$. 
    
    Now consider the open-closed fundamental classes $[F], [F \#\Sigma\#\Sigma] \in \Omega^{fr}_{2k}(\Thom(-V \to X))$. There is a map of spaces $g: X \to L$ collapsing the second plumbing factor; since $T^* S^{2k} \oplus \bR$ is trivial this extends to a map of Thom spaces $g: (X, V) \oplus \bR^1 \to (L, TL) \oplus \bR^1$. Pushing forward along this, we obtain classes $g_*[F], g_*[F\#\Sigma\#\Sigma] \in \Omega^{fr}_{2k}(\Thom(-TL \to L))$. By Lemma \ref{lem: oreigrpieoghdirpgsgr}, these classes represent the fundamental classes of $F$ and $F\#\Sigma\#\Sigma$ over $L$. Connect summing (just as framed manifolds, here we do not use the Lagrange surgery) with $L$ (equipped with its tautological framing relative to $TL$) gives us classes $[L\#F]=[L]+[F] = [L]$ and $[L\#F\#\Sigma\#\Sigma]=[L\#\Sigma\#\Sigma]$ in $\Omega^{fr}_{2k}(\Thom(-TL \to L))$ (using that $F \cong S^{2k}$ with the trivial stable framing). But in the model for normal invariants $\cN\cM(L)$ discussed in Section \ref{sec: topo gene}, $[L]$ represents the trivial normal invariant and $[L\#\Sigma\#\Sigma]$ represents the normal invariant of $L\#\Sigma\#\Sigma$.

    By Theorem \ref{thm: gour main theorem}, $[F]=[F\#\Sigma\#\Sigma] \cap [\eta\xi]$ for some class $[\eta\xi]: F \simeq S^{2k} \to G$ in the image of $\eta$. This implies that under the splitting $\pi^0_{st}(S^{2k})=\bZ \oplus \pi_{2k}^{st}$, $[\eta\xi]$ corresponds to $[\pm 1, \zeta]$ where $\zeta \in \pi_{2k}^{st}$ is some 2-torsion class. Pulling back to $\bC\bP^k$ along the degree-1 map, we find that the normal invariant of $[L\#\Sigma\#\Sigma]$ is 2-torsion.
\end{proof}

\subsection{Complex cobordism}

 For a general commutative ring spectrum $R$, it seems hard to tell when $\eta: BGL_1(R) \to GL_1(R)$ is nullhomotopic, for instance this property is not functorial in any reasonable sense under maps of rings.  For Morava-local spectra, however, the property can be checked readily.  This can be combined with Hovey's splitting of $MU$ from the product of its localisations to obtain the following result.

Let $\Psi$ be the tangential pair where $\Psi=(\Theta=\pt, \Phi=BS_\pm U)$.

\begin{cor}\label{cor:complex cobordism}
    If $L$ and $K$ are $\Psi$-oriented and quasi-isomorphic over $\bZ$, then $[L] = [K] \in MU_*(X)$, i.e. they are complex cobordant over $X$.
\end{cor}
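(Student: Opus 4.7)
The plan is to apply Theorem \ref{thm:main} together with the open-closed computation of Theorem \ref{thm: gour main theorem}, and then use chromatic homotopy theory to show the resulting $\eta$-twist acts trivially on $MU$-homology.

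First, Theorem \ref{thm:main} supplies a $\Psi$-local system $\xi: L \to BGL_1(R_\Psi)$ such that $(L,\xi) \cong K$ in $\scrF(X; \Psi)$. Pushing forward to $MU$ via the canonical map $R_\Psi \to MU$ determined by the Thom class, and applying Theorem \ref{thm: gour main theorem}, we obtain
\[
[L] = [K] \cap [\eta\xi] \in MU_*(X),
\]
where $[\eta\xi]$ arises from the composition of $\xi$ with the $\eta$-multiplication map $\eta: BGL_1(MU) \to GL_1(MU)$. Consequently it suffices to show that this $\eta$-multiplication is nullhomotopic, equivalently that the image of $\eta \in \pi_1 \bS$ under the unit map $\bS \to gl_1(MU)$ vanishes.

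At odd primes this image is automatically zero since $\eta$ is $2$-torsion, so the content of the claim lies at the prime $2$. Here I would invoke Hovey's theorem \cite{Hovey}, which expresses $MU_{(2)}$ chromatically in terms of its Morava localizations $L_{K(n)} MU$, combined with the Bousfield-Kuhn functor \cite{Bousfield, Kuhn} which identifies the $K(n)$-localization of $gl_1(MU)$ with (a shift of) $L_{K(n)} MU$. Under these identifications, vanishing of $\eta$ in $\pi_1 gl_1(MU)_{(2)}$ reduces to its vanishing in each $\pi_1 L_{K(n)} MU$; but $\pi_* MU$ is concentrated in even degrees, and this is preserved by Morava $K$-theory localization (for formal-group-theoretic reasons), so $\pi_1 L_{K(n)} MU = 0$ and the vanishing is automatic.

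The main obstacle is making the chromatic reassembly precise: one needs to know that Hovey's splitting for $MU_{(2)}$ passes through the $GL_1$ construction (which is not exact), and that the Bousfield-Kuhn identification of $L_{K(n)} gl_1(MU)$ is compatible with the $\eta$-action coming from the unit. The conceptual content — that $MU$ has no odd-degree homotopy at any chromatic height — is immediate; the work is entirely in verifying that these chromatic and Bousfield-Kuhn manoeuvres respect the infinite-loop structure well enough to propagate the vanishing from $MU$ itself down to its space of units.
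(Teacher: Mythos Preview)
Your overall strategy matches the paper's: apply Theorems \ref{thm:main} and \ref{thm: gour main theorem}, then use Bousfield--Kuhn together with Hovey's splitting to kill the $\eta$-twist. But the logical core is misplaced, and one step is simply wrong.

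The claimed equivalence ``$\eta$-multiplication $\Sigma\, gl_1(MU) \to gl_1(MU)$ is nullhomotopic $\Leftrightarrow$ the image of $\eta$ in $\pi_1\, gl_1(MU)$ vanishes'' is false. The spectrum $gl_1(MU)$ is \emph{not} an $MU$-module (it is not even a ring spectrum), so vanishing of $\eta$ in $\pi_1$ does not force the map $\eta\cdot(-)$ to be null; the paper even remarks explicitly that for general $R$ it is hard to decide when $\eta: BGL_1(R) \to GL_1(R)$ is null. Note that $\pi_1\, gl_1(MU) = \pi_1\, MU = 0$, so if your equivalence held, no chromatic argument would be needed at all. (Minor: the formula should read $[K] = [L] \cap [\eta\xi]$, not the reverse, since $\xi$ lives on $L$.)

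What the paper actually does is encapsulated in Lemma \ref{lem:BK}: the Bousfield--Kuhn functor gives $L_{K(n)}\, gl_1(R) \simeq L_{K(n)}\, R$ (since both have the same underlying space $\Omega^\infty_0 R$), and on $R$ itself $\eta$-multiplication \emph{is} null because $R$ is an $R$-module and $\eta = 0 \in \pi_1 R$. So $\eta$ vanishes on $gl_1(R)$ only \emph{after} $K(n)$-localisation, for each $n$. The paper never claims (and does not need) global nullity of $\eta$ on $gl_1(MU)$. Instead it works $p$-locally, decomposes $MU_{(p)}$ into shifts of $BP$, and uses Hovey's splitting $BP \to \prod_n L_{K(n)} BP$ to conclude that the difference $[L]-[K]$, which is governed by the $\eta$-twist, already vanishes in $(MU_{(p)})_*(X)$; the arithmetic fracture square finishes. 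The reassembly happens at the level of the bordism group $MU_*(X)$, not at the level of $gl_1$.

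So your last paragraph correctly locates the difficulty, but it is not a technicality to be checked: it is the whole argument, and the Bousfield--Kuhn step is the key idea rather than a compatibility to verify.
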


Fix a prime $p$, and write $L_{K(n)}$ for localisation with respect to the height $n$ Morava $K$-theory $K_p(n)$. For $R$ a connective $\bE_{\infty}$-ring, write $gl_1(R)$ for the connective spectrum with $\Omega^{\infty}(gl_1(R)) = GL_1(R)$. We learned the following from Oscar Randal-Williams. 

\begin{lem} \label{lem:BK}
    Suppose $\eta = 0 \in \pi_1(R)$. Then multiplication 
    \[
    \eta: \Sigma gl_1(R) \to gl_1(R)
    \] vanishes after any localisation $L_{K(n)}$. 
\end{lem}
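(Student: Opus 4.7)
The plan is to reduce the statement to the analogous one for $R$ itself via a chromatic identification $L_{K(n)} gl_1(R) \simeq L_{K(n)} R$, where the hypothesis makes the $\eta$-action manifestly null.

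First I would invoke the Bousfield-Kuhn theorem: for any spectrum $E$, there is a natural equivalence $\Phi_n(\Omega^\infty E) \simeq L_{K(n)} E$ of $K(n)$-local spectra, with $\Phi_n$ the height-$n$ Bousfield-Kuhn functor. Applying this to $E = gl_1(R)$ (so $\Omega^\infty E = GL_1(R)$) and to $E = R$, and using that the translation $u \mapsto u - 1$ gives a pointed homotopy equivalence between the unit component $(GL_1(R))_1$ and the zero component $(\Omega^\infty R)_0$ (which is all that $\Phi_n$ sees in the relevant range), one extracts an equivalence of spectra $L_{K(n)} gl_1(R) \simeq L_{K(n)} R$. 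Equivalently, one can realise the comparison through the Rezk logarithm $\log_n : L_{K(n)} gl_1(R) \to L_{K(n)} R$.

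Second, the multiplication-by-$\eta$ map $\eta_E : \Sigma E \to E$ is natural in the spectrum $E$, being obtained by smashing $\eta \in \pi_1(\bS)$ with $\mathrm{id}_E$. Any equivalence of spectra therefore intertwines these actions, so the vanishing of $\eta$ on $L_{K(n)} gl_1(R)$ is equivalent to its vanishing on $L_{K(n)} R$. But $\eta : \Sigma R \to R$ is $R$-linear and equals multiplication by $\eta \cdot 1_R \in \pi_1(R)$; the hypothesis $\eta = 0 \in \pi_1(R)$ makes this map null, and localisation preserves null maps, giving the conclusion on $L_{K(n)} R$ and hence on $L_{K(n)} gl_1(R)$.

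The principal subtlety lies in the equivalence $L_{K(n)} gl_1(R) \simeq L_{K(n)} R$: although $gl_1(R)$ and $R$ are genuinely distinct spectra (differing already in $\pi_0$), after $K(n)$-localisation the chromatic data they carry coincide, and the comparison map is a map of spectra (even though the translation $u \mapsto u-1$ used at the space level is not itself an infinite-loop-space map). This input from Bousfield-Kuhn theory is the key ingredient; once granted, the remainder of the argument is formal naturality plus the hypothesis.
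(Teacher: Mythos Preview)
Your proof is correct and follows essentially the same line as the paper's: both invoke the Bousfield--Kuhn functor to identify $L_{K(n)} gl_1(R)$ with $L_{K(n)} R$ via the equivalence of identity components $\Omega^\infty_0 gl_1(R) \simeq \Omega^\infty_0 R$, then use naturality of the $\eta$-action together with the hypothesis $\eta = 0 \in \pi_1(R)$ (preserved under localisation) to conclude. You are somewhat more explicit about the subtlety that the space-level translation is not an infinite loop map, and you mention the Rezk logarithm as an alternative realisation of the comparison, but the core argument is the same.
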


   \begin{proof}
       The Bousfield-Kuhn functor $\Phi_{BK}$ from pointed homotopy types to spectra has the property that $\Phi_{BK}(\Omega^{\infty}(Y)) \simeq L_{K(n)}(Y)$, see \cite{Bousfield,Kuhn}. In particular the localisation $L_{K(n)}(Y)$ of a spectrum $Y$ is completely determined by the identity component $\Omega^{\infty}_0$ of $\Omega^{\infty}(Y)$ \emph{as a space}. The spaces $\Omega^{\infty}_0(R)$ and $\Omega^{\infty}_0(gl_1(R))$ are equivalent.  It follows that whether the localisation of $\eta$ vanishes depends entirely on whether $\eta$ vanishes as a map on $L_{K(n)}(R)$. Since this map is induced from the module structure (over $\bS$ and hence over $R$), it vanishes exactly when $\eta = 0 \in \pi_1 L_{K(n)}(R)$. But we have assumed $\eta = 0 \in \pi_1(R)$.
   \end{proof}

  \begin{proof}[Proof of Corollary \ref{cor:complex cobordism}]
      Theorem \ref{thm:main} says that there is some local system $\xi_L \to L$ and an equivalence $(L,\xi_L) \simeq K$ in $\scrF(X;\Psi)$. Theorem \ref{thm: gour main theorem} then says that the difference $[L] - [K] \in MU_*(X)$ lies in the image of multiplication by $\eta$. Now work at a fixed prime $p$, and with the localisation $MU_{(p)}$ of $MU$. This splits as a sum of shifts of copies of a spectrum $BP$, and a theorem of Hovey \cite{Hovey} says that the natural map
      \[
      BP \to \prod_n L_{K(n)}(BP)
      \]
      splits. But after any localisation, $\eta$ vanishes by Lemma \ref{lem:BK}. It follows that the difference of $[L]$ and $[K]$ vanishes in the $p$-localised theory $(MU_{(p)})_*(X)$,  for any $p$, and hence vanishes in $MU_*(X)$ by the arithmetic fracture square \cite{Sullivan:Galois}. 
  \end{proof}
  \subsection{Immersed Lagrangian cobordism}

Two closed Lagrangian submanifolds $L, K \subset X$ are \emph{Lagrangian cobordant} if there is a Lagrangian $P \subset X\times \bC$, which maps properly to $\bC$, and which outside a compact set fibres over real half-lines and agrees with $L \times (-\infty,-R] \sqcup K \times [R,\infty)$ for some real $R \gg 0$. One can allow all of $L,K$ and $P$ to be immersed, and one can also insist that all of them be oriented (with the orientation on $P$ compatible with those on the ends); they need not be connected, so up to Hamiltonian isotopy this incorporates the case of cobordisms with many ends, which fibre over finite collections of half-lines near $\pm\infty \subset \bR \subset \bC$. 

\begin{defn}
    The \emph{immersed oriented Lagrangian cobordism} group of $X$ is the abelian group generated by closed immersed oriented Lagrangian submanifolds, with relations $[L] = [K]$ if $L,K$ are immersed oriented Lagrangian cobordant. 
\end{defn}
\begin{rmk}
    It is natural to impose orientation requirements to rule out trivial cobordisms given by taking the product of $L$ and a $\subset$-shaped curve in $\bC$. Such cobordisms show that the unoriented immersed Lagrangian cobordism group is $2$-torsion.
\end{rmk}

The infinite complex Lagrangian Grassmannian $Sp/U$ admits a natural map to $U/O$ by viewing a complex Lagrangian subspace of $\bH^N$ as a real subspace. Since $Sp/U$ is simply connected, this canonically lifts to $\widetilde{U/O}$.

We take the tangential pair $\Psi = (\Theta, \Phi)$ with $\Theta = hofib(Sp/U \to \widetilde{U/O})$ and $\Phi = \{pt\}$. Note that a $\Theta$-structure on $L$ equips it with an orientation. 

\begin{cor} \label{cor:immersed general}
    Suppose $X$ is stably framed and $L,K$ admit $\Theta$-brane structures. If $L$ and $K$ are quasi-isomorphic in $\scrF(X;\bZ)$, then the classes of $L$ and $K$ agree in the oriented immersed Lagrangian cobordism group, after inverting the prime 2.

    If in addition $L$ and $K$ are homotopy spheres, then $L \sqcup L$ and $K \sqcup K$ are oriented immersed Lagrangian cobordant.
\end{cor}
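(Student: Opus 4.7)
My plan is to combine Theorem \ref{thm:main} with Theorem \ref{thm: gour main theorem} to obtain an algebraic bordism identity and then to transport this identity to a geometric statement about immersed oriented Lagrangian cobordism via the Gromov--Lees $h$-principle. By Theorem \ref{thm:main}, there is a $\Psi$-local system $\xi_L \colon L \to BGL_1(R_\Psi)$ such that $(L, \xi_L) \simeq K$ in $\scrF(X; \Psi)$. As in the proofs of Corollaries \ref{cor:complex cobordism} and \ref{cor:exotic non-twist}, Theorem \ref{thm: gour main theorem} then forces the difference $[L] - [K]$ of open-closed fundamental classes to lie in the image of multiplication by $\eta$ in $R_\Psi$-bordism of the appropriate Thom spectrum of $X$. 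Since $2\eta = 0$, this difference is $2$-torsion.

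Next I would identify this $R_\Psi$-bordism with a formal model for oriented immersed Lagrangian cobordism. For our choice of tangential pair, a $\Theta$-brane structure on $L$ amounts to a stable Lagrangian lift of $T(X \times \bC)|_L$ (via the underlying $Sp/U$-structure) together with a nullhomotopy of its class in $\widetilde{U/O}$. The Gromov--Lees $h$-principle realises such formal Lagrangian cobordisms as genuine immersed Lagrangian cobordisms, giving a natural map from $R_\Psi$-bordism to the oriented immersed Lagrangian cobordism group of $X$ under which the open-closed fundamental classes of $L$ and $K$ go to their tautological Lagrangian bordism classes. Inverting $2$ kills the $\eta$-torsion, yielding the first statement.

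For the second statement, one does not invert $2$; instead observe that $2\eta = 0$ already gives $2([L] - [K]) = 0$ integrally in $R_\Psi$-bordism, equivalently $[L \sqcup L] = [K \sqcup K]$. When $L$ and $K$ are homotopy spheres, the $\Theta$-brane data is controlled by a single homotopy class in $\pi_d \Theta$, so the $h$-principle can be applied without further primary obstructions to realise this algebraic identity as a genuine oriented immersed Lagrangian cobordism between $L \sqcup L$ and $K \sqcup K$ in $X \times \bC$.

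The main obstacle I anticipate is the identification of $R_\Psi$-bordism with a bordism theory mapping to oriented immersed Lagrangian cobordism, and the verification that open-closed fundamental classes correspond to tautological Lagrangian bordism classes under this map. Both steps rest on the Gromov--Lees $h$-principle, and it is the scope of this principle that forces the restriction to homotopy spheres in the second statement.
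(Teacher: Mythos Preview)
Your overall strategy is right, but you have misidentified where the homotopy--sphere hypothesis enters, and the step ``$2\eta = 0 \Rightarrow 2([L]-[K]) = 0$'' does not hold in general.

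The issue is that the $\eta$--action on $GL_1(R_\Psi)$ uses the infinite loop space structure, which corresponds to the \emph{multiplicative} structure on $R_\Psi^0(L)$, not the additive one. So $2\eta = 0$ translates to $[\eta\xi]^2 = 1$, i.e.\ $[\eta\xi] = 1 + \alpha$ with $2\alpha + \alpha^2 = 0$. Hence $[L]-[K] = -[L]\cap\alpha$, and $2^k\alpha = \pm\alpha^{k+1}$; since $\alpha \in \tilde R_\Psi^0(L)$ is nilpotent for a finite complex $L$, this gives only that $[L]-[K]$ is $2$-\emph{power} torsion (enough for the first statement), not $2$-torsion. When $L$ is a homotopy sphere, the cup square on $\tilde R_\Psi^0(S^d)$ vanishes for filtration reasons, so $\alpha^2 = 0$ and then $2\alpha = 0$; this is exactly the content of Remark~\ref{rmk: recap rmk 1.4 ps3}, and it is why the paper invokes the homotopy--sphere hypothesis at this point.

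By contrast, the identification of $\Omega^{\Psi}_*(X)$ with the oriented immersed Lagrangian cobordism group holds in complete generality: this is the theorem of Audin and Rathel--Fournier (a consequence of Gromov--Lees), and the paper uses it for arbitrary $L,K$. So your anticipated obstacle --- that the $h$-principle requires spheres --- is not one; the genuine obstacle is the algebraic one above, and you have the two roles swapped.
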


\begin{proof} 
    By Theorems \ref{thm:main} and \ref{thm: gour main theorem} (cf. Remark \ref{rmk: recap rmk 1.4 ps3}), $L$ and $K$ have the same fundamental class in $\Omega^{\Psi}_*(X)$ up to inverting 2, so for some $l>0$ the disjoint unions $\sqcup_{i=1}^{2^l} [L\sqcup \ldots \sqcup L]$ and $\sqcup_{i=1}^{2^l} [K\sqcup \ldots \sqcup K]$ have the same class. By \cite{Audin, R-F} this bordism theory governs immersed oriented Lagrangian cobordism, so there is an immersed oriented Lagrangian $P \to X \times \bC$ which agrees near infinity with the immersions $\sqcup_j L_j  \times (-\infty,-R] \to L \times (-\infty,-R]$, and correspondingly for $K$ near $+\infty$, where the maps are the trivial covers indexed by $j\in \{1,\ldots,2^l\}$. We now use a (non-compactly-supported) Lagrangian isotopy to ``separate the ends'' so the projections lie over different real half-lines near $\pm\infty$. 

    The final statement uses the 2-torsion part of Remark \ref{rmk: recap rmk 1.4 ps3}.
\end{proof}

\begin{cor}
    If $Q$ is stably framed and $L \subset T^*Q$ is a nearby Lagrangian submanifold, then for some $l>0$ the union of $2^l$ copies of $L$ is immersed oriented Lagrangian cobordant to the union of $2^l$ copies of the zero-section.
\end{cor}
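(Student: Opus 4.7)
The plan is to apply Corollary \ref{cor:immersed general} with $X = T^*Q$ (equipped with the stable framing induced from that of $Q$) and $K$ the zero-section. Three conditions need verifying: (i) $T^*Q$ is stably framed, (ii) both $L$ and $Q$ admit $\Theta$-brane structures for the tangential pair $\Psi$ used there, and (iii) $L$ and $Q$ are quasi-isomorphic in $\scrF(T^*Q;\bZ)$.

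Condition (i) is standard: the decomposition $T(T^*Q)|_Q \cong TQ \otimes \bC$ together with the given stable framing on $Q$ provides one. Condition (iii) is the classical theorem that a nearby Lagrangian is quasi-isomorphic to the zero-section, due to Fukaya-Seidel-Smith, Nadler, and Abouzaid. The zero-section $Q$ is a $\Theta$-brane by inspection: its stable Gauss map is constant, the canonical complex Lagrangian refinement $TQ \hookrightarrow TQ \otimes \bC$ gives a lift to $Sp/U$, and the stable framing of $Q$ trivialises the resulting class in $\widetilde{U/O}$.

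The substantive step is equipping $L$ with a $\Theta$-brane structure. The input I would use is the combined work of Jin \cite{Xin} and Abouzaid-Courte-Guillermou-Kragh \cite{ACGK} on nearby Lagrangians: these results produce a compatible lift of the stable Gauss map of $L$ to $Sp/U$ together with a nullhomotopy after projection to $\widetilde{U/O}$. Paired with Abouzaid's (simple) homotopy equivalence $L \simeq Q$ \cite{Abouzaid:homotopy,Abouzaid-Kragh:Simple}, which lets the framing structure be compared against the one on the base, this supplies the required $\Theta$-brane structure on $L$.

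With the hypotheses verified, Corollary \ref{cor:immersed general} gives $[L]=[Q]$ in the oriented immersed Lagrangian cobordism group of $T^*Q$ after inverting $2$, equivalently that for some $l > 0$ the disjoint union of $2^l$ copies of $L$ is oriented immersed Lagrangian cobordant to the disjoint union of $2^l$ copies of the zero-section. The main obstacle is precisely the extraction of the $\Theta$-brane structure on $L$ from the nearby Lagrangian literature in a form that matches the setup of Corollary \ref{cor:immersed general}; this is a translation between homotopy-theoretic formulations of ``stable complex Lagrangian refinement'' but does not require new geometric input beyond \cite{Xin,ACGK,Abouzaid:homotopy}.
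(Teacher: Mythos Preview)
Your overall strategy—apply Corollary \ref{cor:immersed general} with $X=T^*Q$, $K=Q$ the zero-section, and $L$ the nearby Lagrangian—is exactly what the paper intends, and conditions (i) and (iii) are verified as you say. The issue is your treatment of (ii), which is both an overcomplication and somewhat misdirected.

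For this particular tangential pair $\Psi$, the identification of $\Omega^{\Psi}_*(X)$ with the immersed oriented Lagrangian cobordism group via \cite{Audin, R-F} (invoked in the proof of Corollary \ref{cor:immersed general}) says precisely that a $\Theta$-brane structure on an oriented graded Lagrangian in a stably framed $X$ is tautological: it is the tangential data carried by the stable Gauss map of $L$ as an actual Lagrangian submanifold. A nearby Lagrangian $L\subset T^*Q$ is orientable and graded because $L\simeq Q$ (Abouzaid--Kragh) and $Q$ is framed; the $\Theta$-structure then comes for free. The hypothesis in Corollary \ref{cor:immersed general} is there because orientability and gradedness are not automatic for arbitrary exact Lagrangians, not because a further lift is required once those hold.

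Your invocation of \cite{Xin, ACGK} is also off target. Those papers show that the stable Gauss map composed with the delooped $J$-homomorphism $\widetilde{U/O}\to B^2G$ is nullhomotopic, a statement about $G/O$ rather than about $Sp/U$. They do not produce a ``lift of the stable Gauss map to $Sp/U$ together with a nullhomotopy after projection to $\widetilde{U/O}$'' (indeed, the latter would amount to triviality of the Gauss map itself, which is not known for nearby Lagrangians). So the input you cite is neither what is proved there nor what a $\Theta$-orientation requires in this setting.
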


\subsection{Concrete consequences from normal invariant constraints}\label{sec: conc norm} 

    In this section we record some concrete consequences of Theorem \ref{thm: norm} for nearby Lagrangians in specific cotangent bundles, particularly of 8-manifolds. Most of them also follow from \cite{AAGCKarxiv}, though not the full statement of Example \ref{eq; ewrouhgruodhveudhv}, nor Corollary \ref{cor: wrigjeifvh}. 

    \begin{cor}
        $L \subseteq T^* S^8$ be a nearby Lagrangian. Then $L$ is diffeomorphic to $S^8$.
    \end{cor}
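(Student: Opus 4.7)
The plan is to combine Theorem \ref{thm: norm} with a direct homotopy-group computation and the simply-connected surgery exact sequence.

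First, by Abouzaid--Kragh \cite{Abouzaid:homotopy, Kragh, Abouzaid-Kragh:Simple} the projection $j: L \to S^8$ is a simple homotopy equivalence, so $(L,j)$ defines a class in $\cS^s(S^8)$ with a well-defined normal invariant $[(L,j)] \in [S^8, G/O]$. By Theorem \ref{thm: norm}, this class lies in the image of
\[
\eta_*: [S^8, B(G/O)] \longrightarrow [S^8, G/O].
\]
Since $B(G/O)$ is simply connected, $[S^8, B(G/O)] = \pi_8(B(G/O)) = \pi_7(G/O)$.

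Next, I would compute $\pi_7(G/O)$ from the fibre sequence $O \to G \to G/O$. The associated long exact sequence gives
\[
\pi_7(O) \xrightarrow{J_*} \pi_7(G) \longrightarrow \pi_7(G/O) \longrightarrow \pi_6(O).
\]
By Bott periodicity $\pi_6(O) = 0$ and $\pi_7(O) = \bZ$, while $\pi_7(G) = \pi_7^s = \bZ/240$. Adams's theorem on the image of $J$ shows $J_*$ is surjective onto $\bZ/240$, so $\pi_7(G/O) = 0$. Consequently any map $S^8 \to B(G/O)$ is nullhomotopic, and the normal invariant of $(L,j)$ vanishes.

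Finally, I would appeal to the surgery exact sequence for $Q = S^8$. Since $\pi_1(S^8) = 1$ and the simply-connected $L$-group $L^s_9(\{e\})$ vanishes (odd-degree simply-connected $L$-groups are zero), the normal invariant map
\[
\cS^s(S^8) \hookrightarrow [S^8, G/O]
\]
is injective. Hence $(L,j)$ is the trivial element of $\cS^s(S^8)$, meaning $j$ is homotopic to a diffeomorphism; equivalently, $L \cong S^8$.

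The entire argument is short given Theorem \ref{thm: norm}; the only non-formal input is Adams's surjectivity of $J$ in degree $7$, which ensures the key group $\pi_7(G/O)$ vanishes. There is no real obstacle to carry out — the work is all contained in the proof of Theorem \ref{thm: norm} itself.
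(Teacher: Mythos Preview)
Your proof is correct and follows essentially the same route as the paper: both invoke Theorem \ref{thm: norm} to force the normal invariant of $L$ into the image of $\eta_*$, and then observe this image is too small to contain the normal invariant of the exotic $8$-sphere. The paper's proof is a single sentence (``There is a unique exotic $8$-sphere, and its normal invariant in $\pi_8(G/O)$ is not in the image of $\eta$''), whereas you make this explicit by computing $\pi_7(G/O)=0$ via Adams's surjectivity of $J$ in degree $7$, so the image of $\eta_*$ is actually zero; your surgery-sequence argument for injectivity of the normal invariant map is exactly what the paper's introduction records for simply-connected even-dimensional $Q$.
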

    This was shown earlier in \cite{AAGCKarxiv, PS}. There is a unique exotic 8-sphere \cite{Kervaire-Milnor}, and its normal invariant in $\pi_8 G/O$ is not in the image of $\eta$.
    \begin{ex}\label{eq; ewrouhgruodhveudhv}
        $T^n$ is stably homotopy equivalent to a wedge of spheres so $[T^n, G/O] \cong \oplus_k(\pi_k G/O)^{n \choose k}$, along with a similar direct sum decomposition of $[T^n, B(G/O)]$. $\eta$ preserves these decompositions since it is an infinite loop map. From Theorem \ref{thm: norm} it follows that for a nearby Lagrangian in $T^*T^n$, under this decomposition each component of its normal invariant must be in the image of $\eta$, and in particular be 2-torsion. 
        
        There is much odd torsion and non-torsion in $[T^n, G/O]$ that does arise from normal invariants of clases in $\cS^{\Diff}(T^n)$, as can be seen from \cite[(18.70) \& Theorem 17.6]{Luck-Macko}.
    \end{ex}

    \begin{cor}
        \begin{enumerate}
            \item Let $L \subseteq T^*(S^3 \times S^4)$. Then $L$ is diffeomorphic to $(S^3 \times S^4) \# \Sigma^7$, for $\Sigma^7$ some 7-dimensional homotopy sphere (of which there are 28, in particular finitely many).
            \item Let $L \subseteq T^*(S^4 \times S^4)$. Then $L$ is diffeomorphic to $S^4 \times S^4$.
        \end{enumerate}
    \end{cor}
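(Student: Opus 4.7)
The plan is to deduce both statements by combining Theorem~\ref{thm: norm} with standard surgery theory, reducing each to a computation of the image of $\eta_* \colon [Q, B(G/O)] \to [Q, G/O]$. By Abouzaid--Kragh~\cite{Abouzaid-Kragh:Simple}, $L$ is simple homotopy equivalent to $Q$ and thus has a well-defined normal invariant $\sigma(L) \in [Q, G/O]$; Theorem~\ref{thm: norm} asserts that $\sigma(L)$ lies in the image of $\eta_*$. I would first record the relevant low-degree homotopy groups of $G/O$, extracted from $G/O \to BO \to BG$: $\pi_3 G/O = \pi_5 G/O = \pi_7 G/O = 0$, $\pi_4 G/O \cong \bZ$, $\pi_6 G/O \cong \bZ/2$; consequently $\pi_4 B(G/O) = \pi_8 B(G/O) = 0$, $\pi_3 B(G/O) \cong \bZ/2$, and $\pi_7 B(G/O) \cong \bZ/2$.

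For $Q = S^4 \times S^4$ the cofibre sequence $S^4 \vee S^4 \to S^4 \times S^4 \to S^8$ together with the vanishing of $\pi_4 B(G/O)$ and $\pi_8 B(G/O)$ immediately gives $[S^4 \times S^4, B(G/O)] = 0$, so $\sigma(L) = 0$. Since $Q$ is simply connected and $L^s_9(1) = 0$, the surgery exact sequence becomes an injection $\cS^s(Q) \hookrightarrow [Q, G/O]$, whence $L \cong S^4 \times S^4$ as a diffeomorphism.

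For $Q = S^3 \times S^4$ the group $[S^3 \times S^4, B(G/O)]$ is nonzero, so the argument must show its image in $[S^3 \times S^4, G/O]$ vanishes. The cofibre sequence $S^3 \vee S^4 \to S^3 \times S^4 \to S^7$ combined with $\pi_7 G/O = 0$ yields an injection $[S^3 \times S^4, G/O] \hookrightarrow [S^3 \vee S^4, G/O] = \pi_4 G/O = \bZ$. For any $f \colon S^3 \times S^4 \to B(G/O)$, naturality gives $(\eta_* f)|_{S^i} = \eta \circ (f|_{S^i})$: the restriction to $S^3$ lands in $\pi_3 G/O = 0$, while the restriction to $S^4$ already starts in $\pi_4 B(G/O) = 0$. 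Hence $\eta_* f$ is zero on $S^3 \vee S^4$, and by injectivity zero in $[S^3 \times S^4, G/O]$. Therefore $\sigma(L) = 0$, and the surgery sequence places $L$ in the orbit of $[Q]$ under the $L^s_8(1) = \bZ$-action, which factors through the connect-sum action of $bP_8 \cong \bZ/28$. Since $\Theta_7 = bP_8$, this exhibits $L \cong (S^3 \times S^4) \# \Sigma^7$ for some $\Sigma^7 \in \Theta_7$, of which there are $28$.

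I expect no real obstacle: the symplectic content is entirely packaged in Theorem~\ref{thm: norm}, and what remains is the combination of classical low-dimensional homotopy data for $G/O$ with the simply-connected surgery exact sequence. The one point requiring a moment's care is the naturality of $\eta_*$ under restriction along $S^3 \vee S^4 \hookrightarrow S^3 \times S^4$, but this is automatic since $\eta$ is a map of spaces.
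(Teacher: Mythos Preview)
Your argument is correct. For part~(2) your approach is essentially the same as the paper's: both show $[S^4 \times S^4, B(G/O)] = 0$ (you via the cofibre sequence, the paper via a stable splitting) and then invoke injectivity of the normal invariant map for simply-connected even-dimensional $Q$.

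For part~(1) you take a genuinely different and more direct route. The paper only uses the weaker consequence of Theorem~\ref{thm: norm} that the normal invariant is $2$-torsion, and then appeals to Crowley's classification \cite[Theorem~1.1]{Crowley} of $\cS^{\Diff}(S^3 \times S^4)$ to conclude that any $2$-torsion class lies in the image of $\omega^{bP}$. You instead use the full strength of the $\eta$-factorisation: since $\pi_7 G/O = 0$ the restriction map $[S^3 \times S^4, G/O] \hookrightarrow \pi_3 G/O \oplus \pi_4 G/O$ is injective, and you check the image of $\eta_*$ dies on each wedge summand. This yields $\sigma(L) = 0$ outright, after which the standard surgery sequence and $L^s_8(1) \twoheadrightarrow bP_8 = \Theta_7 \cong \bZ/28$ finish the job without any external classification result. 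Your approach is more self-contained; the paper's has the virtue of illustrating that even the coarser $2$-torsion statement already suffices when combined with known structure-set computations.
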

    In both cases, the smooth structure set of the base of the cotangent bundle is infinite \cite{Crowley}. 
    \begin{proof}
        In the setting of (1), \cite[Theorem 1.1]{Crowley}  implies that if the normal invariant of $L$ is 2-torsion, it must lie in the image of the map $\omega^{bP}: \Theta_7 \to \cS^{\Diff}(S^3 \times S^4)$ which sends a homotopy 7-sphere $\Sigma$ to $(S^3 \times S^4)\#\Sigma$.

        In the setting of (2), $[S^4 \times S^4, G/O] \cong \pi_4 G/O \oplus \pi_4 G/O \oplus \pi_8 G/O \cong \bZ \oplus \bZ \oplus \bZ/2$. Similarly $[S^4 \times S^4, BG/O] \cong \pi_3 G/O \oplus \pi_3 G/O \oplus \pi_7 G/O$ which vanishes (cf. the proof of Corollary \ref{cor:hptwo}), so the image of $\eta$ is 0 (even though the group of normal invariants does have 2-torsion). Since $S^4 \times S^4$ is simply-connected and even-dimensional, the normal invariant map $\cS^{\Diff}(S^4 \times S^4) \to [S^4 \times S^4, G/O]$ is injective.
    \end{proof}
    \begin{cor}\label{cor:1251}
        Let $L \subseteq T^*\bC\bP^4$ be a nearby Lagrangian. Then $L$ is diffeomorphic to $\bC\bP^4$.
    \end{cor}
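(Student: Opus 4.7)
The plan is to combine Theorem~\ref{thm: norm} with the vanishing of $[\bC\bP^4, B(G/O)]$. By Abouzaid--Kragh \cite{Abouzaid:homotopy, Kragh, Abouzaid-Kragh:Simple}, the nearby Lagrangian $L$ comes equipped with a simple homotopy equivalence $j \colon L \to \bC\bP^4$, and by Theorem~\ref{thm: norm} its normal invariant $[(L,j)] \in [\bC\bP^4, G/O]$ lies in the image of
\[
\eta_* \colon [\bC\bP^4, B(G/O)] \longrightarrow [\bC\bP^4, G/O].
\]
It therefore suffices to show $[\bC\bP^4, B(G/O)] = 0$. Indeed, granted this, the normal invariant $\sigma(j)$ vanishes, and since $\bC\bP^4$ is simply-connected of even dimension $d = 8$ (so $bP_9 = 0$), the surgery exact sequence recalled in Section~\ref{sec: topo gene} forces $j$ to be homotopic to a diffeomorphism.

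To carry out this vanishing computation I would use the Atiyah--Hirzebruch spectral sequence for the connective spectrum $bgo$ underlying the infinite loop space $B(G/O)$, whose homotopy groups satisfy $\pi_n bgo = \pi_{n-1}(G/O)$:
\[
E_2^{p,q} = H^p\bigl(\bC\bP^4;\, \pi_{-q}(bgo)\bigr) \Longrightarrow bgo^{p+q}(\bC\bP^4) \simeq [\bC\bP^4, B(G/O)]^{p+q}.
\]
Because $H^*(\bC\bP^4;\bZ)$ is concentrated in even degrees $p \in \{0,2,4,6,8\}$, only the terms $H^p(\bC\bP^4;\, \pi_{p-1}(G/O))$ can contribute in total degree $0$, so the whole calculation reduces to checking that $\pi_n(G/O) = 0$ for $n \in \{-1, 1, 3, 5, 7\}$.

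This in turn follows from the fibration $O \to G \to G/O$ and the resulting exact sequence
\[
\pi_n O \xrightarrow{J} \pi_n^s \to \pi_n(G/O) \to \pi_{n-1} O \xrightarrow{J} \pi_{n-1}^s.
\]
The cokernel of $J$ is trivial in each odd degree $n \in \{1,3,5,7\}$: by Adams' classical surjectivity of the image of $J$ onto $\pi^s_{4k-1}$ for $k = 1, 2$ in degrees $n = 3, 7$; trivially in degree $n=5$ since source and target both vanish; and in degree $n=1$ because $J \colon \pi_1 O = \bZ/2 \to \pi_1^s = \bZ/2$ identifies the two copies of $\eta$. Likewise the kernel of $J$ in the preceding degrees $n-1 \in \{0,2,4,6\}$ is trivial, since $\pi_2 O = \pi_4 O = \pi_6 O = 0$ and $J_0 \colon \bZ/2 \to \bZ/2$ is an isomorphism. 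Hence every $E_2$-term in total degree $0$ vanishes and $[\bC\bP^4, B(G/O)] = 0$, which completes the argument. The main work is in this (quite elementary) AHSS computation: no differentials or extensions need be analysed, and the input is entirely standard knowledge of $\pi_* O$ and the $J$-homomorphism.
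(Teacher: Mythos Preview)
Your proposal is correct and follows essentially the same route as the paper: reduce via Theorem~\ref{thm: norm} and the simply-connected even-dimensional surgery statement to showing $[\bC\bP^4, B(G/O)]=0$, then run the Atiyah--Hirzebruch spectral sequence for the connective spectrum $bgo$ and kill the relevant $\pi_i(G/O)$ using surjectivity/injectivity of the $J$-homomorphism in low degrees. The only cosmetic point is that the surgery exact sequence is recalled in the introduction rather than in Section~\ref{sec: topo gene}, and your target $\pi_0^s$ should strictly be $\pi_0 G = (\pi_0\bS)^\times \cong \bZ/2$, but neither affects the argument.
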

    \begin{rmk}
        Many exotic $\bC\bP^4$s can easily be shown not to embed in $T^*\bC\bP^4$ since they have the wrong Pontrjagin classes, such as those constructed in \cite[Theorem 2]{Hsiang}. Two other exotic $\bC\bP^4$s with the same $p_1$ as $\bC\bP^4$ were shown not to Lagrangian embed in $T^*\bC\bP^4$ in \cite[Theorem 7.11]{Torricelli}. However there are infinitely many homotopy-$\bC\bP^4$s that had not been ruled out: there are infinitely many topological manifolds which are homeomorphic to $\bC\bP^4$ and hence have the same Pontrjagin classes (such as $\bC\bP^4 \# \Sigma$, for $\Sigma$ the exotic 8-sphere) but are not diffeomorphic to $\bC\bP^4$. This can be seen from \cite[Theorem 18.15]{Luck-Macko}, combined with the exact sequence $[\bC\bP^4, G/O] \to [\bC\bP^4, G/\mathrm{Top}] \to [\bC\bP^4, B(\mathrm{Top}/O)]$ and finiteness of the final of these groups \cite[Theorem V.5.5(II)]{Kirby-Siebenmann}.
    \end{rmk}
    \begin{proof}[{Proof of Corollary \ref{cor:1251}}]
        Since $\bC\bP^4$ is simply-connected and even-dimensional, it suffices to show the normal invariant of $L$ vanishes; by Theorem \ref{thm: norm} it suffices to show $[\bC\bP^4, B(G/O)]$ vanishes. 
        
        $B(G/O)$ is an infinite loop space and so is the $0^{th}$ space in a connective spectrum $bgo$ with the same homotopy groups, and then $[\bC\bP^4, B(G/O)] = bgo^0(\bC\bP^4)$. There is an Atiyah-Hirzebruch spectral sequence: $H^*(\bC\bP^4, \pi_{-*}bgo) \Rightarrow bgo^*(\bC\bP^4)$. Therefore it suffices to show $\pi_i B(G/O) = 0$ for $i \in \{0,2,4,6,8\}$. The case $i=0$ is clear. From \cite{Adams}, we see that for $i \in \{1,3,5,7\}$, the $J$ homomorphism $\pi_i O \to \pi_i G$ is surjective, and for $i \in \{0,2,4,6\}$ it is injective. Therefore by the long exact sequence of a fibration $\pi_{i-1} O \to \pi_{i-1} G \to \pi_i B(G/O) \to \pi_{i-2} O \to \pi_{i-2} G$ we may conclude.
    \end{proof}
    
    \begin{cor}\label{cor:hptwo}
        Let $Q = \bH\bP^2$ or $\bH\bP^3$. Then any nearby Lagrangian $L \subseteq T^*Q$ is diffeomorphic to $Q$.
    \end{cor}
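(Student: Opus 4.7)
The plan is to mirror the proof of Corollary \ref{cor:1251}, replacing the cohomological input for $\bC\bP^4$ with that of the quaternionic projective spaces. Since $Q = \bH\bP^2$ or $\bH\bP^3$ is simply-connected of even dimension $d \in \{8,12\}$, we have $bP_{d+1} = 0$ (as $d+1$ is odd), so the normal invariant map $\cS^s(Q) \to [Q, G/O]$ is injective. Thus it suffices to show that the normal invariant of $L$ vanishes, and by Theorem \ref{thm: norm} this will follow if $[Q, B(G/O)] = 0$.

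Since $B(G/O) = \Omega^\infty bgo$ for the connective spectrum $bgo$ with $\pi_i(bgo) = \pi_{i-1}(G/O)$ for $i \geq 1$, I would compute $bgo^0(Q) = [Q, B(G/O)]$ via the Atiyah-Hirzebruch spectral sequence $H^*(Q; \pi_{-*} bgo) \Rightarrow bgo^*(Q)$. The cohomology $H^*(\bH\bP^n; \bZ)$ is concentrated in degrees $4k$ for $0 \leq k \leq n$, so it suffices to check that $\pi_{4k-1}(G/O) = 0$ for $1 \leq k \leq n$. The cases $k=1$ and $k=2$ (i.e.\ $\pi_3(G/O) = \pi_7(G/O) = 0$) are already established in the proof of Corollary \ref{cor:1251} from the surjectivity of the $J$-homomorphism onto $\pi_3^s = \bZ/24$ and $\pi_7^s = \bZ/240$ together with the LES for the fibration $G/O \to BO \to BG$.

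For $\bH\bP^3$ only, the additional case $k=3$ requires $\pi_{11}(G/O) = 0$. The relevant segment of the long exact sequence is
\[
\pi_{11} O \longrightarrow \pi_{11} G \longrightarrow \pi_{11}(G/O) \longrightarrow \pi_{10} O \longrightarrow \pi_{10} G,
\]
that is, $\bZ \to \bZ/504 \to \pi_{11}(G/O) \to \bZ/2 \to \bZ/6$. By Adams' computation of the image of $J$, the first map is surjective and the last map injects the $\bZ/2$ onto the 2-torsion subgroup of $\bZ/6$, which forces $\pi_{11}(G/O) = 0$. Once this vanishing is in hand, the AHSS collapses, giving $[Q, B(G/O)] = 0$, and the conclusion follows from the surgery exact sequence as above.

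The only mild obstacle is checking that the $J$-homomorphism behaves as required on $\pi_{10} O$ and $\pi_{11} O$, but this is standard from Adams; there is no conceptual novelty beyond Corollary \ref{cor:1251}. I would expect to state the argument briefly, pointing to that corollary for the common cases and adding only the extra computation at $\pi_{11}(G/O)$ needed for $\bH\bP^3$.
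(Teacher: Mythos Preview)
Your approach is exactly that of the paper: reduce to $[Q,B(G/O)]=0$ via Theorem~\ref{thm: norm} and the injectivity of the normal invariant map, then use the Atiyah--Hirzebruch spectral sequence and the vanishing $\pi_3(G/O)=\pi_7(G/O)=\pi_{11}(G/O)=0$.

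There is, however, a factual slip in your treatment of $\pi_{11}(G/O)$. You write the exact sequence as
\[
\bZ \to \bZ/504 \to \pi_{11}(G/O) \to \bZ/2 \to \bZ/6,
\]
but by Bott periodicity $\pi_{10}O=0$ (since $10\equiv 2 \pmod 8$), not $\bZ/2$. So there is nothing to check at the right-hand end: the vanishing of $\pi_{11}(G/O)$ follows immediately from surjectivity of $J$ on $\pi_{11}$ together with $\pi_{10}O=0$. The paper states exactly this, packaging all three cases uniformly via the observation that $\pi_jO=0$ for $j\equiv 2\pmod 4$ and that $J$ is surjective on $\pi_j$ for $j\in\{3,7,11\}$. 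Your argument is salvageable with this correction, but the injectivity claim you make for the (nonexistent) $\bZ/2$ should be removed.
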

    There exist exotic $\bH\bP^2$s, such as $\bH\bP^2 \# \Sigma$ (where $\Sigma$ is the (unique) exotic 8-sphere).
    \begin{proof}
        Since in both cases $Q$ is simply connected and even-dimensional, it suffices to show the normal invariant of $L$ vanishes; by Theorem \ref{thm: norm} it suffices to show $[Q, B(G/O)] = 0$. By the same Atiyah-Hirzebruch spectral sequence as Corollary \ref{cor:1251}, this follows from the computation $\pi_i B(G/O)=0$ for $i \in \{4,8,12\}$, which follows from the fact $\pi_jO=0$ for $j \equiv 2\, (\mathrm{mod }\,4)$ and that $\pi_j O \to \pi_j G$ is surjective for $j \in \{3,7,11\}$.
    \end{proof}
    \begin{cor}\label{cor: wrigjeifvh}
        Let $Q = \bH\bP^4$ or $\bO\bP^2$. Any nearby Lagrangian $L$ in $T^*Q$ is diffeomorphic to $Q$.
    \end{cor}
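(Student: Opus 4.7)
The plan is to follow precisely the template of Corollary \ref{cor:1251} and Corollary \ref{cor:hptwo}. Both $Q = \bH\bP^4$ and $Q = \bO\bP^2$ are closed simply-connected manifolds of dimension $16$, so the Wall $L$-group $L^s_{17}(1)=0$ and hence the normal invariant map $\cS^s(Q) \to [Q, G/O]$ is injective. By Abouzaid--Kragh, $j\colon L \to Q$ is a simple homotopy equivalence, and by Theorem \ref{thm: norm} the class $[(L,j)] \in [Q, G/O]$ factors through $\eta_* \colon [Q, B(G/O)] \to [Q, G/O]$. Therefore it suffices to show that $[Q, B(G/O)] = 0$.

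First I would realise $B(G/O)$ as the $0$-space of a connective spectrum $bgo$ and deploy the Atiyah--Hirzebruch spectral sequence
\[
H^p(Q;\, \pi_{-q}\, bgo) \Longrightarrow bgo^{p+q}(Q).
\]
The integral cohomology of $\bH\bP^4$ is concentrated in degrees $\{0,4,8,12,16\}$ and that of $\bO\bP^2$ in degrees $\{0,8,16\}$, so in both cases it is enough to prove $\pi_i B(G/O) = 0$ for $i \in \{0,4,8,12,16\}$. The case $i=0$ is immediate from connectedness of $B(G/O)$.

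For the remaining degrees, I would use the portion of the long exact sequence of the fibration $O \to G \to G/O$ (shifted by one to compute $\pi_i B(G/O) = \pi_{i-1}(G/O)$):
\[
\pi_{i-1} O \xrightarrow{J} \pi_{i-1} G \longrightarrow \pi_i B(G/O) \longrightarrow \pi_{i-2} O \xrightarrow{J} \pi_{i-2} G.
\]
Vanishing of the middle term reduces to surjectivity of $J$ in degrees $i-1 \in \{3,7,11,15\}$ and injectivity in degrees $i-2 \in \{2,6,10,14\}$. The injectivity conditions are free: by Bott periodicity $\pi_{4k+2} O = 0$ for all $k \geq 0$. The surjectivity conditions are precisely the classical computation of Adams \cite{Adams}: $J\colon \pi_{4k+3} O \twoheadrightarrow \pi_{4k+3}^{s}$ is onto for all $k \geq 0$. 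Assembling these yields the required vanishing.

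The argument is structurally identical to Corollaries \ref{cor:1251} and \ref{cor:hptwo}; the only genuinely new input beyond those corollaries is the top-dimensional degree $i=16$, which is handled exactly as the degrees $\{4,8,12\}$ were. There is no real obstacle: the potentially delicate point would have been a nontrivial differential or extension in the Atiyah--Hirzebruch spectral sequence, but since every relevant $E_2$ column we need vanishes, both issues are vacuous.
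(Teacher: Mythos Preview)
There is a genuine error in your argument. You claim that $J\colon \pi_{4k+3}O \to \pi_{4k+3}^s$ is surjective for all $k\ge 0$, but this fails for $k=3$: one has $\pi_{15}^s \cong \bZ/480 \oplus \bZ/2$, and the image of $J$ is only the cyclic summand $\bZ/480$. Hence $\pi_{16}B(G/O) = \pi_{15}(G/O) \cong \bZ/2$, not $0$, and your spectral sequence argument does not force $[Q, B(G/O)]=0$. (Adams' theorem identifies the image of $J$ as a cyclic direct summand of $\pi_{4k-1}^s$ of order given by the denominator of $B_{2k}/4k$; it does not assert that this summand is everything.)

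The paper's proof confronts this nonvanishing directly. It observes that the only nonzero contributions to the Atiyah--Hirzebruch spectral sequences for $[Q,B(G/O)]$ and $[Q,G/O]$ come from the top cell, namely $\pi_{15}(G/O)\cong\bZ/2$ and $\pi_{16}(G/O)\cong\bZ/2$ respectively, and then invokes the computation (from \cite{IWX}) that the map $\eta\colon \pi_{15}(G/O)\to\pi_{16}(G/O)$ is zero. Thus $\eta_*\colon [Q,B(G/O)]\to [Q,G/O]$ is the zero map even though its source is nonzero, and the normal invariant of $L$ vanishes. This extra input --- that $\eta$ annihilates the relevant class --- is precisely what your argument is missing, and it is the reason the paper singles out this corollary rather than folding it into Corollary~\ref{cor:hptwo}.
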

    \begin{proof}
        In either case, the only contributions to the Atiyah-Hirzebruch spectral sequences for $[Q, B(G/O)]$ and $[Q, G/O]$ is $\pi_{15} G/O \cong \pi_{16} G/O \cong \bZ/2$. However, the map induced by $\eta$ between them is 0 \cite{IWX}.
    \end{proof}
    Concrete restrictions on nearby Lagrangians in cotangent bundles of other manifolds should similarly be extractable from \cite{Kasilingam-inertia, Crowley,Basu-Kasilingam, Kasilingam-CP}  and other existing computations.
    \begin{rmk}
    
        It is reasonable to ask how much smaller the image of $\eta$ is than the group of 2-torsion in $[Q, G/O]$. This of course depends on $Q$, but we record as an illustrative example the case $Q \simeq S^n$, taking a rolling average (using \cite[Figure 1]{IWX}): $50\%$ of the 2-torsion elements in $\pi_{10 \leq * \leq 19}\bS$ are in the image of $\eta$, around $54\%$ of those in $\pi_{20\leq * \leq 29}$, followed by $17\%$, $36\%$, $31\%$, $7\%$, $16\%$, ...\footnote{More precisely, this sequence is $n_k = \left(\sum_{i=10k}^{10k+9} \#(\eta\cdot\pi_{i-1}\bS)\right)/\left(\sum_{i=10k}^{10k+9} \#(\textrm{$2$-torsion in }\pi_{i}\bS)\right) \times 100\%$.}. For tori $T^n$, since $[T^n, G/O] \cong \oplus_k(\pi_k G/O)^{n \choose k}$ for $k\leq n$, the proportion will be much smaller. 
        
        In fact, Burklund's study of the asymptotic size of the stable homotopy groups of spheres \cite[Theorem 1.10]{Burklund:Big} shows that (assuming some conjectures related to the now-resolved \cite{Telescope} telescope conjecture), the cumulative average does indeed converge to 0.

    \end{rmk}

\section{Spectral Fukaya category recap}

In this section we recall some of the notation, construction and features of the spectral Fukaya category as constructed in \cite{PS3}. This r\'esum\'e is intended to fix notation and help keep this paper self-contained, but we encourage the reader to consult \emph{op.cit.} for additional context, details and examples.

\subsection{$\cI$-monoids and Thom spaces}

 Let $[n] := \{1, \ldots, n\}$. Let $\cI$ be the category with objects nonnegative integers $n$, and morphisms $n \to m$ given by (not necessarily order-preserving) injections $[n] \to [m]$.

    \begin{defn}
        A (convergent) \emph{$\cI$-space} is a functor $n \mapsto X(n)$ from $\cI$ to the category of spaces, with the property that the connectivity of $X(n) \to X(n')$ goes to infinity as $n\to \infty$. 
    \end{defn}

    An \emph{$\cI$-monoid} is a monoidal object in the category of $\cI$-spaces. Explicitly, this consists of an $\cI$-space $X$ equipped with maps $\mu_{mn}: X(m) \times X(n) \to X(m+n)$, which form an associative natural transformation $X(\cdot) \times X(\cdot) \to X(\cdot+\cdot)$, where we view both sides as functors $\cI^2$ to spaces. We also ask that this natural transformation be unital, i.e. that the basepoint acts as a two-sided identity element.

        An $\cI$-monoid $X$ is \emph{commutative} if the following diagram commutes for all $m,n$:
        \begin{equation}
            \xymatrix{
                X(m) \times X(n)
                \ar[r]_\mu
                \ar[d]_{\operatorname{swap}}
                &
                X(m+n)
                \ar[d]_{\sigma_{mn}}
                \\
                X(n) \times X(m)
                \ar[r]_\mu
                &
                X(n+m)
            }
        \end{equation}  
        where $\sigma_{mn}$ swaps the first $m$ factors of $[m+n]$ with the final $n$ factors.

We define $X_{h\cI}=\operatorname{hocolim}_{n \in \cI} X(n)$ to be the homotopy colimit of the $X(n)$.   An $\cI$-monoid $X$ is \emph{group-like} if $\pi_0 X_{h\cI}$ is a group. Group-like commutative $\cI$-monoids model infinite loop spaces (in the sense that there is a Quillen equivalence between appropriate model categories).

\begin{ex}
    Each of $n \mapsto BU(n)$, $n \mapsto U/O (n)$ and $n \mapsto \widetilde{U/O} (n)$ (where the universal covering is taken levelwise) define commutative $\cI$-monoids.
\end{ex}

\begin{lem}\label{lem: eta acts}
    Let $X$ be a commutative $\cI$-monoid, and $\alpha \in \pi_1\bS$. Then $\alpha$  induces a map $BX_{h\cI} \to X_{h\cI}$, well-defined up to homotopy.
\end{lem}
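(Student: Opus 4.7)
The plan is to translate the construction into stable homotopy theory. Since $X$ is a commutative $\cI$-monoid, the homotopy colimit $X_{h\cI}$ carries an $E_\infty$-structure; by the standard dictionary (a version of which is already implicit in the paper's use of $\cI$-monoids), after group completion this is the zero-space $\Omega^\infty \mathbf{x}$ of a connective spectrum $\mathbf{x}$, with a natural comparison map $\iota: X_{h\cI} \to \Omega^\infty \mathbf{x}$ which is an equivalence when $X_{h\cI}$ is group-like, as in the intended application to $X = GL_1(R)$.

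First I would observe that $\alpha \in \pi_1 \bS$, regarded as a homotopy class of stable maps $\Sigma \bS \to \bS$, determines by smashing with $\mathbf{x}$ a map of spectra $\alpha \wedge \mathbf{x}: \Sigma \mathbf{x} \to \mathbf{x}$, well-defined up to homotopy since $\alpha$ is. Applying $\Omega^\infty$ yields $\Omega^\infty \Sigma \mathbf{x} \to \Omega^\infty \mathbf{x}$.

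Next I would use the natural structure map $B\Omega^\infty \mathbf{x} \to \Omega^\infty \Sigma \mathbf{x}$ coming from the infinite loop space structure on $\Omega^\infty \mathbf{x}$ (an equivalence on identity components since $\mathbf{x}$ is connective). Composing the bar construction of $\iota$ with this structure map, and then with the map from the previous paragraph, produces
\[
BX_{h\cI} \xrightarrow{B\iota} B\Omega^\infty \mathbf{x} \longrightarrow \Omega^\infty \Sigma \mathbf{x} \xrightarrow{\Omega^\infty(\alpha \wedge \mathbf{x})} \Omega^\infty \mathbf{x},
\]
which in the group-like case is identified with a map $BX_{h\cI} \to X_{h\cI}$ via $\iota^{-1}$.

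Well-definedness up to homotopy is inherited from $\alpha$ being a homotopy class and from the functoriality up to homotopy of all intermediate constructions (smashing, $\Omega^\infty$, the bar construction). I do not expect a serious obstacle; the main care is in choosing a suitably rigid model for the passage from commutative $\cI$-monoids to spectra (compatible with the paper's other conventions), and in verifying that the chosen model makes the comparison map $B\Omega^\infty \mathbf{x} \to \Omega^\infty \Sigma \mathbf{x}$ an equivalence of identity components so that one recovers a map into $X_{h\cI}$ rather than only into its group completion.
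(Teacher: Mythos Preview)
Your proposal is correct and is the standard route: pass from the commutative $\cI$-monoid to its associated connective spectrum $\mathbf{x}$, use the $\bS$-module structure to turn $\alpha$ into a map $\Sigma\mathbf{x}\to\mathbf{x}$, and precompose with the delooping structure map $B\Omega^\infty\mathbf{x}\simeq\Omega^\infty\Sigma\mathbf{x}$. The paper itself does not give a proof here but simply cites \cite[Remark~2.58]{PS3}, so there is nothing substantive to compare against; your argument is essentially what that reference would unpack.

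One small remark: you correctly flag that the final identification $\Omega^\infty\mathbf{x}\simeq X_{h\cI}$ requires $X$ to be group-like. The lemma as stated does not impose this, but every commutative $\cI$-monoid to which the lemma is applied in the paper (the homotopy fibre $F$ in Proposition~\ref{prop: uo eta}, and $GL_1(R_\Psi)$ in the definition of $[\eta\xi]$) is group-like by hypothesis, so your caveat is exactly the right one and causes no trouble.
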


\begin{proof}
    See \cite[Remark 2.58]{PS3}.
\end{proof}

 Let $R$ be an associative ring spectrum. Following \cite{Schlichtkrull}, there is a commutative $\cI$-monoid $\Omega^\infty R$ sending $n$ to $\Omega^n R_n$, whose $\cI$-space structure maps are described in \cite[Section 2.8]{PS3}. 
        By abuse of notation, we often write $\Omega^\infty R$ for $\Omega^\infty R_{h\cI}$.

  If $R$ is commutative, there is a corresponding group-like commutative $\cI$-monoid $GL_1(R)$. Each $GL_1(R)(i)$ is defined to be the union of components of $\Omega^\infty R(i)$, which admit an element $c$ such that there is some $j$ and $c' \in \Omega^\infty R(j)$ such that $\mu(c,c') \in \Omega^\infty R(i+j)$ lies in the trivial path component. The product structure is inherited from $\Omega^\infty R$.  The assignment $R \mapsto GL_1(R)$ is functorial in $R$.
    \begin{ex}
        When $R=\bS$ is the sphere spectrum, $GL_1(\bS)_{h\cI} \simeq \hocolim_{n \to \infty} h\Aut(S^n)$ can be described as the homotopy autoequivalences of $S^n$ as $n \to \infty$, cf. \cite[Example 2.14]{PS3}.
    \end{ex}

       A Thom space is a pair $(X,E)$ where $E$ is a vector bundle on $X$, with the obvious notion of morphism.  A \emph{Thom $\cI$-space} $X$ consists of Thom spaces $X(n)$ for each $n \in \cI$, and for each morphism $f: n \to m$ in $\cI$, a map of Thom spaces $f(n): X(n) \oplus \bR^{m-n} \to X(m)$, where $m-n$ denotes the finite set $[m] \setminus f[n]$.

        We require that, for morphisms $n \to m \to l$, the following diagram commutes:
        \begin{equation}
            \xymatrix{
                X(n) \oplus \bR^{m-n} \oplus \bR^{l-m} 
                \ar[r]_-=
                \ar[d]
                &
                X(n) \oplus \bR^{l-n}
                \ar[d]
                \\
                X(m) \oplus \bR^{l-m}
                \ar[r]
                &
                X(l)
            }
        \end{equation}

\subsection{Tangential pairs and abstract discs\label{sec:tp and ad}}

\begin{defn}
     A \emph{(graded) commutative tangential pair} $\Psi$ consists of a pair $(\Theta \to \Phi)$ of group-like connected commutative $\cI$-monoids, equipped with maps to $BO$ and $BS_\pm U$ fitting into the following commutative diagram:
        \begin{equation}
            \xymatrix{
                \Theta 
                \ar[r]
                \ar[d]
                &
                \Phi
                \ar[d]
                \\
                BO
                \ar[r]
                &
                BS_{\pm} U
            }
        \end{equation}
        where $BS_{\pm} U$ is the $\cI$-space with level  $BS_\pm U(N)$ given by the homotopy fibre of the map $BU(N) \to K(\bZ,2)$ classifying $2c_1$. 

        $\Psi$ is \emph{orientable} if the induced map $\hofib(\Theta \to \Psi) \to \hofib(BO \to BS_\pm U)$ is zero on $\pi_2$.
\end{defn}
\begin{rmk}
    The orientability condition guarantees that objects of a spectral Fukaya category associated to $\Psi$ determine objects of a corresponding $\bZ$-linear Fukaya category.
\end{rmk}

We will later make use of the following (note that whilst $\eta$ appears here, its appearance is not obviously tied to its appearance in the formula for the bordism class of a brane with local system as established in \cite{PS3}):

 \begin{prop}\label{prop: uo eta}   
    
        Let $\Psi = (\Theta \to \Phi)$ be a commutative tangential pair and $F \to \widetilde{U/O}$ its homotopy fibre. Let $\eta^*:F_{h\cI} \to \Omega F_{h\cI}$ be the map induced by the Hopf element $\eta \in \pi_1 \bS$ as in Lemma \ref{lem: eta acts}. Then the following diagram of spaces commutes up to homotopy:
        \begin{equation}
            \xymatrix{
                F_{h\cI} 
                \ar[d]_{\eta_*}
                \ar[rr]
                &&
                \widetilde{U/O}_{h\cI}
                \ar[d]_{Re}
                \\
                \Omega F_{h\cI} 
                &
                (E_F)_{h\cI}
                \ar[l]_\simeq 
                \ar[r]
                &
                BO_{h\cI}
            }
        \end{equation}
    \end{prop}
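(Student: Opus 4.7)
The plan is to reduce the assertion to an intrinsic statement about $\widetilde{U/O}$ using naturality, and then to verify the reduced statement by a low-degree comparison.

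By Lemma \ref{lem: eta acts}, $\eta$-multiplication is natural under morphisms of group-like commutative $\cI$-monoids. Applied to the structure map $F \to \widetilde{U/O}$ of the tangential pair, this yields a homotopy-commutative square
\begin{equation*}
    \xymatrix{
        F_{h\cI} \ar[r] \ar[d]_{\eta_*} & \widetilde{U/O}_{h\cI} \ar[d]^{\eta_*} \\
        \Omega F_{h\cI} \ar[r] & \Omega \widetilde{U/O}_{h\cI}
    }
\end{equation*}
Splicing this on the left of the defining factorisation $(E_F)_{h\cI} \simeq \Omega F_{h\cI} \to \Omega \widetilde{U/O}_{h\cI} \xrightarrow{\mathrm{Bott}} BO_{h\cI}$ of the bundle map used to define $R_\Psi$ in Section \ref{sec:tp and ad}, the problem reduces to proving the special case $F = \widetilde{U/O}$: namely, that the composition $\widetilde{U/O}_{h\cI} \xrightarrow{\eta_*} \Omega \widetilde{U/O}_{h\cI} \xrightarrow{\mathrm{Bott}} BO_{h\cI}$ is homotopic to $Re$.

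For this reduced identity I would proceed by a $\pi_2$-comparison. Both maps are infinite-loop maps between $1$-connected spaces, and each induces the unique nontrivial homomorphism $\pi_2 \widetilde{U/O} \cong \bZ/2 \to \pi_2 BO \cong \bZ/2$: for $Re$, this follows from the long exact sequence of the fibration $\widetilde{U/O} \to BO \to BU$ in degree $2$, using that the map $\pi_2 BO \to \pi_2 BU$ vanishes because $\pi_1 U = \bZ$ is torsion-free; for $\mathrm{Bott} \circ \eta_*$, it follows from the Bott equivalence together with the standard identification $\eta \colon \pi_1 bo \xrightarrow{\sim} \pi_2 bo$ realising $\eta^2$. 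In this range of degrees an infinite-loop map $\widetilde{U/O} \to BO$ is determined up to homotopy by its action on $\pi_2$ (e.g.\ via an Atiyah-Hirzebruch argument in real $K$-cohomology), so the two maps coincide.

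The main concern is consistency of Bott-periodicity models. As emphasised in the remark following Theorem \ref{thm:main3}, several models for the Bott equivalence $\Omega\widetilde{U/O} \simeq BO$ appear in the literature and are not known to agree on the nose; the argument above works for any of them because it is carried out at the level of $\pi_2$, which is insensitive to the choice. The only technical point requiring care is to ensure that the Bott map used in the construction of $R_\Psi$ is one for which the above $\pi_2$-computation can be performed cleanly, which holds for any standard model.
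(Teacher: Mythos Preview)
Your reduction step is exactly the paper's argument: use naturality of $\eta_*$ under the structure map $F \to \widetilde{U/O}$ to reduce to the universal case $F = \widetilde{U/O}$, i.e.\ $\Psi = (BO \to BS_\pm U)$. The paper then simply declares that case ``standard'' and stops.

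Where you go beyond the paper is in trying to justify the reduced identity $Re \simeq \mathrm{Bott}\circ\eta_*$ by a $\pi_2$-detection argument. The heuristic is right --- under Bott, both maps are multiplication by $\eta$ on (connective covers of) $ko$, and both hit the generator of $\pi_2 BO$ --- but your uniqueness claim, that an infinite-loop map $\widetilde{U/O}\to BO$ is determined by its effect on $\pi_2$ ``via an Atiyah--Hirzebruch argument,'' is not fully justified as stated. In the AHSS for $ko^0(ko\langle 2\rangle)$ there are potential contributions beyond the $H^2(-;\bZ/2)$ term (e.g.\ from $H^4(-;\pi_4 ko)$), and you have not argued these vanish or are killed by differentials. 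A cleaner route to the reduced case is to observe that both maps are $ko$-module maps and identify them directly with $\eta$-multiplication via the Wood-type cofibre sequence, or simply to cite the classical fact (as the paper does) that $Re: U/O \to BO$ corresponds to $\eta$ under the Bott equivalence $\Omega(U/O)\simeq \bZ\times BO$. Either way, your proof is at least as complete as the paper's; the only soft spot is the specific mechanism you invoke for uniqueness.
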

    \begin{proof}
        The left hand side of the diagram is functorial in $F$, so it suffices to check the final case $\Psi = (BO \to BS_\pm U)$ (in which case $F = \widetilde{U/O}$); in this case, the result is standard.
    \end{proof}

One source of commutative Thom $\cI$-monoids is spaces of abstract discs, and index bundles over such spaces.   For each $n \in \cI$, there is a Thom space $(\bU(n), \bV(n))$, defined as the geometric realisation of a simplicial set, whose $0$-simplices comprise: 
        \begin{itemize}
            \item A complex vector bundle $E$ over $D^2\setminus\{1\}$ of rank $n$. 
            
            \item A real subbundle over the boundary $F \subseteq E|_{\partial D}$; $E$ is called the \emph{complex part} and $F$ the \emph{real part}.

            We call $n$ the \emph{rank}.

            \item A classifying map $D^2\setminus \{1\} \to BS_\pm U(n)$ for $E$.

            \item A classifying map $\partial D^2 \setminus 1 \to BO(n)$ for $F$, compatible with the classifying map for $E$. 

            We require that the classifying maps agree with (the $n$-fold direct sum of) a certain standard model puncture datum over the strip-like end.

            \item A 1-form $Y \in \Omega^{0,1}(E)$, a metric (standard in the strip-like ends), and a Hermitian connection  $\nabla$ on $E$.
            
           We define a \emph{Cauchy-Riemann operator}:

            \begin{equation}
                D^{CR} = \overline\partial^\nabla + Y: \Gamma_{W^{2,\kappa}}\left(D^2\setminus \{1\}, E, F\right) \to \Omega^{0,1}_{W^{2,\kappa-1}}\left(D^2\setminus \{1\}, E\right)
            \end{equation}
            where the subscripts $\cdot_{W^{2,\cdot}}$ indicate that we are passing to suitable Sobolev completions. This operator is  Fredholm, and in particular its cokernel is finite-dimensional.

            \item A linear map $f: \bR^n \to \Omega^{0,1}_{W^{2,\kappa-1}}(D^2\setminus \{1\}, E)$, such that $D^{CR} + f$ is surjective. 
        \end{itemize}

        (One could more generally allow the stabilising vector space and the bundle pair to have different ranks; we have passed to a diagonal $\cI$-space inside a natural $\cI^2$-space.)
        
There is a simplicial set in which a $k$-simplex consists of a smoothly-varying family $(E_t, F_t, Y_t, \ldots)_{t \in \Delta^k}$ of all of the above data, parametrised by the standard $k$-simplex $\Delta^k$.

        The vector bundle $\bV(n) \to \bU(n)$ is defined so that for a simplex $(\sigma_t)_{t \in \Delta^k} = (E_t, F_t, \ldots)_{t \in \Delta^k}$ as above, the restriction of $\bV(n)$ to the realisation of $(\sigma_t)_t$ has fibre over $t \in \Delta^k$ given by the kernel of the associated Cauchy-Riemann operator  $\ker(D^{CR}_t)$. The particular choice of model puncture data that we make at the strip-like end ensures that 
the vector bundle $\bV(n)$ has rank $n$. 

\begin{lem}
    There is a weak equivalence of commutative $\cI$-monoids:
        \begin{equation}
            \bU \to \hofib(\Omega BO \to \Omega BS_\pm U)
        \end{equation}
\end{lem}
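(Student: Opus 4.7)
The plan is to write down an explicit comparison map and check it is a weak equivalence level by level, with the essentially formal check that it respects the $\cI$-monoid structure. The heart of the argument is that an abstract disc is, up to a contractible space of analytic choices, the same as a pair of compatible classifying maps on the disc and its boundary.

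First I would construct the map $\bU(n) \to \hofib(\Omega BO(n) \to \Omega BS_\pm U(n))$. Given a $0$-simplex $(E,F,Y,\nabla,f,\ldots) \in \bU(n)$, discard the analytic data and retain the pair of classifying maps $D^2 \setminus \{1\} \to BS_\pm U(n)$ and $\partial D^2 \setminus \{1\} \to BO(n)$, together with their agreement with the standard model on the strip-like end. Conformally identifying $D^2 \setminus \{1\}$ with a half-plane, the strip-like end condition fixes the boundary classifying map to equal a prescribed basepoint outside a compact interval, so that the restriction of the $BO(n)$-classifying map to $\partial D^2 \setminus \{1\}$ is a loop based at the standard puncture datum, i.e.\ a point of $\Omega BO(n)$. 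Composing with $BO(n) \to BS_\pm U(n)$ and using the interior $BS_\pm U(n)$-classifying map (which extends over the contractible $D^2 \setminus \{1\}$) furnishes a canonical null-homotopy of the image loop in $\Omega BS_\pm U(n)$. This pair of loop-plus-null-homotopy data is exactly an element of $\hofib(\Omega BO(n) \to \Omega BS_\pm U(n))$, and the construction extends over higher simplices in the obvious way.

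Next I would show this map is a levelwise weak equivalence by exhibiting the remaining analytic data as a contractible fibration over the target. The spaces of Hermitian metrics, Hermitian connections, and 1-forms $Y \in \Omega^{0,1}(E)$ are convex (or affine) once the underlying bundle is fixed, and the strip-like end normalisations cut out convex subspaces of these. The Cauchy--Riemann operator $D^{CR}$ is Fredholm, and the stabilisations $f : \bR^n \to \Omega^{0,1}_{W^{2,\kappa-1}}$ with $D^{CR}+f$ surjective form an open, weakly contractible subset of the linear space $\mathrm{Hom}(\bR^n,\Omega^{0,1}_{W^{2,\kappa-1}})$ (which is nonempty for $n$ at least the cokernel dimension, our standing assumption). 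Assembling, the forgetful map from abstract disc data to pairs of classifying maps has contractible fibres and is in fact a Serre fibration on geometric realisations of the simplicial models, hence a weak equivalence.

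Finally I would verify compatibility with the commutative $\cI$-monoid structures: the product on $\bU$ is levelwise direct sum of bundles together with direct sum of stabilisations, which classifies levelwise Whitney sum on $BS_\pm U$ and $BO$, and hence corresponds to loop addition on $\Omega BO$ and $\Omega BS_\pm U$ and so to the standard product on the homotopy fibre; the structure maps for injections $n \to m$ in $\cI$ correspond on both sides to stabilisation by the trivial rank $m-n$ bundle, so the square commutes strictly (up to the same coherence used on the target). The main obstacle is the second step, namely the analytic claim that stabilisations making $D^{CR}+f$ surjective form a (weakly) contractible space that varies compatibly in families; this uses elliptic regularity, finite-dimensionality of the Fredholm cokernel, and openness of surjectivity, and it is what justifies treating the extra analytic data as a genuinely auxiliary, contractible choice rather than as bona fide homotopy information.
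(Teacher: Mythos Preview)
The paper states this lemma without proof, as part of a recap from \cite{PS3}; your outline is the natural argument and is essentially what underlies the result there. However, there is a genuine gap in your step~2. For a \emph{fixed} choice of $(Y,\nabla,g)$ with $\dim\mathrm{coker}(D^{CR})=k>0$, the space of $f:\bR^n\to\Omega^{0,1}$ with $D^{CR}+f$ surjective is homotopy equivalent to the space of surjections $\bR^n\twoheadrightarrow\mathrm{coker}(D^{CR})\cong\bR^k$, hence to the Stiefel manifold $V_k(\bR^n)$. This is only $(n-k-1)$-connected, not weakly contractible; already for $k=1$ it is $S^{n-1}$. So the forgetful map is \emph{not} a levelwise weak equivalence, and your appeal to ``contractible fibres'' fails exactly at the point you flagged as the main obstacle.

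The repair is that you do not need a levelwise equivalence: a weak equivalence of convergent $\cI$-monoids only requires the connectivity of the level-$n$ comparison map to tend to infinity. One way to see this holds here is to note that the stabilised operator $D^{CR}+f$ has Fredholm index $n$ (the puncture data are chosen so that $D^{CR}$ has index $0$), and that varying $f$ alone moves freely through finite-rank perturbations. Hence inside the contractible space of all tuples $(Y,\nabla,g,f)$, the non-surjective locus (cokernel dimension $\geq 1$) meets each fibre over the classifying data in codimension $\geq n+1$, by the standard Fredholm stratification. Thus the fibre of $\bU(n)\to\hofib(\Omega BO(n)\to\Omega BS_\pm U(n))$ is $(n-1)$-connected, which suffices. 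Your treatment of the map itself and of the $\cI$-monoid compatibility in steps~1 and~3 is fine.
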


 \begin{defn}\label{qi def: E Psi}
        For $\Psi = (\Theta, \Phi)$ a commutative tangential pair, we define $\bU^\Psi$ to be the homotopy pullback of 
        \begin{equation}\label{qi eq: hopullb}
            \xymatrix{
                \bU^\Psi 
                \ar[r]
                \ar[d]
                &
                \bU
                \ar[d]
                \\
                \hofib(\Omega \Theta \to \Omega \Phi) 
                \ar[r]
                &
                \hofib(\Omega BO \to \Omega BS_\pm U)
            }
        \end{equation}
        and $\bV^\Psi$ to be pullback of $\bV$ (defined levelwise). The pair $(\bU^\Psi(\cdot), \bV^\Psi)$ forms a commutative Thom $\cI$-monoid, which we write as $E_\Psi$. We will write $R_{\Psi}$ for the associated symmetric spectrum, and $R_{\Psi}^*$ for the associated generalised cohomology /  bordism theory.
        \end{defn}

        The spaces of abstract discs $\bU^{\Psi}$ are built from discs with a single outgoing boundary puncture. There are more general spaces of discs $\bU^{\Psi}_{ij}$ with $i\geq 0$ and $j\geq 0$ incoming and outgoing boundary punctures, discussed at length in \cite{PS3}; it is often helpful to fix a particular choice $\bE$ of puncture data (which in  particular prescribes Maslov indices at punctures). For current purposes, it suffices to note that these more general spaces can be equipped with index bundles by gluing on fixed caps and co-caps to reduce to the case $i=0,j=1$ discussed above. There is an addition-type operation arising from a 
        map of Thom $\cI$-spaces (strictly of the underlying $\cI$-simplicial sets) 
        \[
        \bU_{\bE}^\Psi(n) \times_{\cR\cS_{ij}} \bU^\Psi_{\bE'}(n')  \to \bU^\Psi_{\bE\oplus\bE'}(n + n') 
        \]
                which is commutative, and if we consider the same construction applied to iterated fibre products, it is associative.

        \begin{rmk}
             A variation on these spaces of abstract discs, allowing boundary punctures with non-transverse totally real boundary conditions, will appear in Section \ref{sec:morse-floer} below.
        \end{rmk}

\subsection{$E$-oriented flow categories\label{sec:E-orientations recap}}

Fix a monoid $E$ in Thom $\cI$-spaces. Thus we have base spaces $\Base(E(n))$ and bundles $E(n) \to \Base(E(n))$, with suitably compatible addition maps. 

To define an $E$-orientation on a flow category $\cF$, we will use two auxiliary pieces of data:
\begin{enumerate}
    \item \emph{stabilisation data} $\{\nu_{xx'}\}$ for $x,x' \in \cF$; these are non-negative integers with injections $[\nu_{xx'}] \sqcup [\nu_{x'x''}] \to [\nu_{xx''}]$ which are associative for quadruples;
    \item \emph{index data} $\{d_{xx'}\}$ for $x,x' \in \cF$; these are finite sets $d_{xx'}$ with injections $d_{xx'} \sqcup d_{x'x''} \to d_{xx''}$ which again make the corresponding diagram for a quadruple of elements commute (but these injections need not preserve order);
\end{enumerate}
cf.  \cite[Section 4.1]{PS3}.

\begin{defn}
    An $E$-oriented flow category  comprises a flow category $\cF$ with stabilisation data $\{\nu_{xx'}\}$ and index data $\{d_{xx'}\}$,  together with maps of Thom spaces
   \begin{equation} 
   \rho: (\cF_{xx'},I^\cF_{xx'})(\nu_{xx'}) \to E(\nu_{xx'}+d_{xx'})
   \end{equation}
    for any $x,x' \in \cF$, which are compatible with and associative under composition. Note that $\rho$ incorporates a map of base spaces $\rho_{xx'}: \cF_{xx'} \to \Base(E(\nu_{xx'}+ d_{xx'}))$ and a covering isomorphism of bundles $\st_{xx'}: I^{\cF}_{xx'} \oplus \bR^{\nu_{xx'}} \to E(\nu_{xx'} + d_{xx'})$.
\end{defn}

One can similarly define $E$-orientations on morphisms, bordisms, associators, and left and right modules over flow categories, and there is a category $\Flow^E$ of $E$-oriented flow categories. An $E$-oriented right module of degree $i$ is exactly a morphism $\ast[i] \to \cF$ of $E$-oriented flow categories.  The bordism group $\Omega_i^E(\cF)$ is the morphism group $[\ast[i],\cF]$ in the category $\Flow^E$.

We defer to \cite{PS3} for most of this, but a couple of features will be particularly important in this paper.

 For a $k$-dimensional manifold with corners $X$, \cite[Lemma 6.3 \& Definition 6.4]{AB2} construct a $(k+1)$-dimensional manifold with corners $\bD(X)$, the `conic degeneration of $X$'. If in a local chart $X$ is modelled on $[0,\infty)^k$, then $\bD X$ maps to $[0,\infty)^k$ with fibres a union of $i+1$-intervals over the locus where $i$ co-ordinates vanish. 
    
    \begin{lem}\label{lem:flow unit}
        For a flow category $\cF$:
        \begin{enumerate}
            \item there is a morphism $\bD\cF: \cF \to \cF$ with spaces
    \[
    (\bD\cF)_{xy} = \begin{cases} \bD(\cF_{xy}) & x\neq y \\ \{pt\} &  x=y\end{cases}
    \]
    \item  $\bD\cF$ defines the unit for $\cF$ in a unital structure on the category $\Flow$;
    \item if $\cF$ is $E$-oriented, then $\bD\cF$ is canonically $E$-oriented, and becomes the unit in $\Flow^E$. 
        \end{enumerate}
    \end{lem}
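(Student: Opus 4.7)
The plan is to treat the three parts in order, with the geometric input in part (1) doing most of the work.

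\textbf{Part (1): the morphism structure.} By \cite[Lemma 6.3 \& Definition 6.4]{AB2}, for any manifold with corners $X$ the space $\bD X$ is a manifold with corners whose boundary has a canonical decomposition. Tracing the local description (a union of $i+1$ intervals over the codimension-$i$ stratum), one sees that $\partial \bD X$ splits as a copy of $X$ itself (the ``degenerate'' fibre end) together with strata of the form $\bD Y_1 \times \bD Y_2$ indexed by the decompositions $\partial X = \bigsqcup Y_1 \times Y_2$ coming from the corner structure of $X$; applied iteratively one in fact only sees single $\bD$'s, i.e. $\partial \bD(\cF_{xy}) = \cF_{xy} \;\sqcup\; \bigsqcup_{z \neq x,y} \bigl( \bD(\cF_{xz}) \times \cF_{zy} \;\sqcup\; \cF_{xz} \times \bD(\cF_{zy}) \bigr)$ (together with what happens on the index bundle, which is discussed below). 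Setting $(\bD\cF)_{xx} = \{\pt\}$ lets us reinterpret the ``extra'' copy of $\cF_{xy}$ in this decomposition as $(\bD\cF)_{xx} \times \cF_{xy}$ (equivalently, $\cF_{xy} \times (\bD\cF)_{yy}$). This now matches exactly the boundary decomposition required of a morphism $\cF \to \cF$ in $\Flow$, so the assignment $(x,y) \mapsto (\bD\cF)_{xy}$ with the induced concatenation maps is a morphism as claimed.

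\textbf{Part (2): the unit property.} I would exhibit, for an arbitrary morphism $\cM: \cF \to \cG$, natural equivalences $\bD\cG \circ \cM \simeq \cM \simeq \cM \circ \bD\cF$ in $\Flow$, together with the coherent compatibility with associators that makes $\bD$ into a unital structure. The key observation is that the composition $\bD\cG \circ \cM$ is built from the spaces $\bigsqcup_z \cM_{xz} \times \bD(\cG_{zy})$ (together with the degenerate $z=y$ stratum $\cM_{xy} \times \{\pt\}$), and the conic degeneration provides a natural collapse map onto this last stratum; this defines the required equivalence. The same argument on the other side uses the $(\bD\cF)_{xx} = \{\pt\}$ factor. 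Strict functoriality of $\bD$ on manifolds with corners (in particular compatibility with products of corner-structures) promotes this to a unital structure: the hexagon/triangle axioms reduce to statements about the standard degenerations $\bD(X_1 \times X_2) \to \bD X_1 \times \bD X_2$ which are formal from the construction of \cite{AB2}. The main bookkeeping is verifying that the two ``ends'' of the conic degeneration are distinguished correctly so that composing with $\bD\cF$ on the left versus on the right picks out the correct collapse map.

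\textbf{Part (3): the $E$-orientation.} Given an $E$-orientation $\{\rho_{xy}: (\cF_{xy}, I^\cF_{xy})(\nu_{xy}) \to E(\nu_{xy}+d_{xy})\}$ of $\cF$, I would first upgrade the conic degeneration to an oriented statement: the construction of $\bD X$ is functorial with respect to bundle data, and the index bundle on $(\bD\cF)_{xy}$ is canonically identified with the pullback of $I^\cF_{xy}$ along the natural retraction $\bD(\cF_{xy}) \to \cF_{xy}$, up to a trivial $\bR$-summand accounting for the extra collar direction. Using the same stabilisation and index data $\{\nu_{xy}\}, \{d_{xy}\}$ (with the extra $\bR$ absorbed into $\nu$), the composition $\bD(\cF_{xy}) \to \cF_{xy} \xrightarrow{\rho_{xy}} E(\nu_{xy}+d_{xy})$ gives the required $E$-orientation of $\bD\cF$. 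Associativity under composition follows from associativity for $\cF$ together with the strict compatibility of $\bD$ with products of corner-structures used in part (2). This orientation is canonical in the sense that the collapse maps in part (2) lift to $E$-oriented equivalences, so $\bD\cF$ is a unit in $\Flow^E$.

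\textbf{Main obstacle.} The principal work is in part (2): one has to keep track of both ``ends'' of the conic degeneration uniformly and coherently enough across the whole category to make the triangle/pentagon axioms of a unital structure on $\Flow$ literally hold (and not merely up to unspecified contractible choices). Parts (1) and (3) are essentially bookkeeping once the decomposition of $\partial \bD X$ from \cite{AB2} is in hand.
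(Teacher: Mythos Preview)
The paper itself does not give a proof here; it simply refers to \cite[Section~4.5]{PS3}. Your sketch is in the right spirit, but there is a concrete error in Part~(1) and a vagueness in Part~(2) that you should fix.

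In Part~(1), your boundary decomposition of $\bD(\cF_{xy})$ has only one copy of $\cF_{xy}$, and you write that it can be interpreted as $(\bD\cF)_{xx}\times\cF_{xy}$ ``equivalently'' as $\cF_{xy}\times(\bD\cF)_{yy}$. These are \emph{not} the same boundary face: the interval fibre of $\bD X$ over the interior of $X$ has two ends, so $\partial\bD(\cF_{xy})$ contains \emph{two} distinct copies of $\cF_{xy}$, one playing the role of $(\bD\cF)_{xx}\times\cF_{xy}$ and the other of $\cF_{xy}\times(\bD\cF)_{yy}$. (Think of the Morse model in Lemma~\ref{lem:identify identity}: the marked point on a flow line can collide with either endpoint.) Without both copies, the morphism boundary relation for $\bD\cF:\cF\to\cF$ is not satisfied.

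In Part~(2), ``collapse map'' is not the right mechanism: morphisms in $\Flow$ are bordism classes, so exhibiting $\bD\cG\circ\cM\simeq\cM$ means constructing an explicit bordism of morphisms, not a homotopy-theoretic collapse. The natural such bordism again comes from a conic-degeneration-type construction applied to the spaces of $\cM$, and carrying this out with the requisite coherence (associators, compatibility with the two-sided unit) is exactly the content deferred to \cite[Section~4.5]{PS3}. Your identification of this as the main obstacle is correct, but the sketch as written does not yet contain the idea that resolves it.
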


    \begin{proof}
        See \cite[Section 4.5]{PS3}.
    \end{proof}

\subsection{Flow categories over a target\label{sec:flow over target}}

Let $L$ be a finite cell complex; we will primarily be interested in the case where $L$ is a manifold or manifold with boundary, or possibly the quotient thereof by a contractible subset. We write $\cP L$ for the \emph{Moore path space} of $L$, comprising pairs $(r,\alpha)$ where $r\in \bR_{\geq 0}$ and $\alpha: [0,r] \to L$.  This admits a natural concatenation operation for paths whose endpoints agree. This gives a category that we also denote by $\cP L$, with objects points in $L$ and morphisms $x$ to $y$ given by $\cP_{xy}L$, Moore paths from $x$ to $y$.

\begin{defn}
A \emph{flow category over $L$} comprises a flow category $\cF$, along with a topological functor $\cF \to \cP L$.  
\end{defn}

  Explicitly, this consists of a point (abusively denoted) $x$ in $L$ for each $x \in \cF$ and maps of spaces $\cF_{xy} \to \cP_{xy}L$, compatible with concatenation.  We can define morphisms and bordisms of flow categories over $L$ in the obvious way, and an inspection of the definition of composition of morphisms from \cite{PS,PS2} shows that a composition of morphisms over $L$ inherits the structure of a morphism over $L$.

        An \emph{$E$-oriented flow category over $L$} consists of an $E$-oriented flow category $\cF$ which lives over $L$.  Explicitly, this means that there are maps  $\cF_{xy} \to \Base(E)\times \cP_{xy} L$ covered by maps of vector bundles $(\cF_{xy},I^{\cF}_{xy})(\nu_{xy}) \to (E\times \cP_{xy} L)(\nu_{xy}+d_{xy})$ where $(E\times \cP_{xy} L)(n) := E(n) \times \cP_{xy} L$ and the vector bundle on the target is pulled back from $E(n)$.

    There is a category $\Flow^{E}_{/L}$ of $E$-oriented flow categories over $L$. Morphisms in this category are given by bordism-over-$L$ classes of $E$-oriented morphisms over $L$.

The construction of the unit morphism $\bD\cF$ in \cite[Section 4.5]{PS3} applies unchanged when $\cF$ lives over $L$, and makes $\Flow^{E}_{/L}$ a unital category.

    \begin{ex}\label{ex: ev hom iso flow}
        Suppose $\cF',\cF'' \in\Flow_{/L}^E$ have the same underlying $E$-oriented flow category $\cF$, but are equipped with different evaluation maps $Z'_{xy},Z''_{xy}: \cF_{xy} \to \cP_{xy}L$.

        If $\{Z^t_{xy}\}^{t \in [0,1]}_{xy}$ is a collection of homotopies between $\{Z'_{xy}\}$ and $\{Z''_{xy}\}$, the identity morphism $\cF \to \cF$ in $\Flow^E$ equipped with the evaluation maps $\{Z^t_{xy}\}$ defines an isomorphism between $\cF'$ and $\cF''$ in $\Flow_{/L}^E$.
    \end{ex}

    \begin{defn}
        To define $*[i]$ as an $E$-oriented flow cat over $L$, the only data we need to pick is a base-point $\ell_0 \in L$.
    \end{defn}

Let $\cF$ be a flow category over $L$, with evaluation maps
\[
\Gamma_{xy}: \cF_{xy} \to \cP_{xy}L
\]
We fix a `spider'
\begin{tikzpicture}[scale=0.05]
    \draw [fill=black] (0,0) circle(.5cm);
    \draw [fill=black] (0,1) circle(.8cm);
    \draw[thick] (0,.1) parabola (2,3);
    \draw[thick] (0,.05) parabola (2,1);
    \draw[thick] (0,0) parabola (2,-1);
    \draw[thick] (0,0) parabola (2,-3);
    \draw[thick] (0,-.3) parabola (.3,-.8);
    
    \draw[thick] (0,.1) parabola (-2,3);
    \draw[thick] (0,.05) parabola (-2,1);
    \draw[thick] (0,0) parabola (-2,-1);
    \draw[thick] (0,0) parabola (-2,-3);
    \draw[thick] (0,-.3) parabola (-.3,-.8);
\end{tikzpicture}
$:= Y\subseteq L$ as in \cite[{Remark 4.35}]{PS3}, i.e. a contractible set of arcs which connect the points $x\in \mathrm{Ob}\,\cF \subset L$ to a basepoint $\ell \in L$. The collapse map $L \to L/Y$ is a homotopy equivalence; we will write $\theta: L/Y \to L$ for a homotopy inverse, and also 
\[
\theta: \Omega_{[\ell]}(L/Y) \to \Omega_{\ell}L
\]
for the induced homotopy inverse on based loops.

The choice of $(Y,\theta)$ means that the evaluation of any one-dimensional moduli space $\cF_{xy} \to L \to L/Y \dashrightarrow L$ yields an element of $\Omega_{[\ell]}L$; both morphisms and bordisms only change this up to homotopy. 
    One can then define a functor to chain complexes of local systems over $L$, as follows. We recall that a flow category $\cF$  admits a truncation $\tau_{\leq 0}\cF$, in which we keep track of only moduli spaces of dimension $\leq 1$, and both $E$-orientations and evaluation maps to $L$ are inherited by such a truncation.

Suppose that $E$ is oriented in the sense of \cite[Definition 4.27]{PS3}, i.e. that we have compatible trivialisations of the determinant lines of the bundles $E(n)$ (this enables us to work over $\bZ$ below). In this case, $\pi_0\Thom(E) = \bZ$.

    \begin{defn}
        Let $M$ be a $\bZ[\pi_1 L]$-module, and $\cF \in \tau_{\leq 0}\Flow^E_{/L}$. The \emph{Morse chains of $L$ with coefficients in $M$} is the chain complex $CM_*(\cF; M)$ which as a graded abelian group is given by

        \begin{equation}
            CM_*(\cF; M) := \bigoplus\limits_{x \in \cF} M \cdot x
        \end{equation}
        with $x \in \cF$ a formal generator in degree $|x|$, and with differential 
        \begin{equation}
            \partial m \cdot x := \sum\limits_{|x|-|y|=1} \sum\limits_{p \in \cF_{xy}} [p]m \cdot y.
        \end{equation}
        Here $[p] \in \bZ[\pi_1 L]$ is the homotopy class $\pm [\Gamma_{xy}(p)] \in \pi_1 L$, with sign given by the image of the oriented bordism class of $\{p\} \in \cF_{xy}$ determined by the orientation of $E$.

    \end{defn}
    It is easy to check:
    \begin{lem}
        $\partial^2 = 0$ so $CM_*(\cF; A)$ is a chain complex.
    \end{lem}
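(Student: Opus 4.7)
The plan is the standard argument for $\partial^2 = 0$ in Morse-type chain complexes, adapted to the present setting by using the flow-category composition maps, the compatibility of the evaluation functor $\cF \to \cP L$ with concatenation, and the $E$-orientation to handle signs. Since we are only interested in the differential, it suffices to work inside the truncation $\tau_{\leq 0}\cF$, so for each triple $x, y, z$ with $|x|-|y|=|y|-|z|=1$, the space $\cF_{xy}$ is zero-dimensional (compact, hence finite) and $\cF_{xz}$ is one-dimensional (compact, i.e.\ a compact 1-manifold with boundary).

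First, I would expand
\[
\partial^2(m \cdot x) \;=\; \sum_{|x|-|z|=2} \sum_{y:\,|x|-|y|=1} \sum_{(p,q) \in \cF_{xy}\times\cF_{yz}} [q]\,[p]\, m \cdot z.
\]
By the flow category composition maps, for $|x|-|z|=2$ the boundary of the compact oriented $1$-manifold $\cF_{xz}$ is identified (up to the sign conventions dictated by the $E$-orientation and the compatibility maps $\st_{\cdot\cdot}$) with $\bigsqcup_{y} \cF_{xy}\times\cF_{yz}$. Thus the pairs $(p,q)$ appearing above are precisely the boundary points of the various connected components of $\cF_{xz}$.

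Next, I would pin down the $\pi_1 L$ weights. The evaluation functor $\cF \to \cP L$ is compatible with concatenation, so the Moore path attached to a broken pair $(p,q)$ is the concatenation of those attached to $p$ and to $q$; after closing up via the spider $Y$ and the homotopy inverse $\theta$, the resulting element of $\pi_1(L,\ell)$ is exactly $[q]\,[p] \in \bZ[\pi_1 L]$. On each connected component $C$ of $\cF_{xz}$, the evaluation $C \to \cP_{xz} L \to \Omega_\ell L$ (through $L/Y$) yields a single homotopy class of loops, so $[q]\,[p]$ takes the same value at both endpoints of $C$. The signs also match across each component: the $E$-orientation of $\cF_{xz}$ restricts on $\partial\cF_{xz}$ to one of the two orderings of the product orientation on $\cF_{xy}\times\cF_{yz}$, opposite at the two ends of each component, and this is precisely what is needed so that the contributions of the two boundary points cancel.

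Summing over components of $\cF_{xz}$ then gives $\partial^2(m\cdot x) = 0$, as each component contributes a cancelling pair $[\alpha]m\cdot z - [\alpha]m\cdot z$ with a common group-ring label $[\alpha]$. The main obstacle is bookkeeping: making sure that (i) the gluing of evaluation paths at a broken trajectory agrees with concatenation, (ii) the element $[\cdot] \in \pi_1 L$ assigned via the spider $Y$ and $\theta$ is locally constant on components of the $1$-dimensional moduli space, and (iii) the sign comparison at the two ends of a component of $\cF_{xz}$ is the one induced by the $E$-orientation via the composition maps of the flow category; once these three points are in place the computation is routine.
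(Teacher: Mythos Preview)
Your argument is correct and is exactly the standard one the paper has in mind; the paper itself omits the proof entirely, simply writing ``It is easy to check'' before the lemma. Your sketch supplies the routine details (boundary of the compact 1-manifold $\cF_{xz}$, constancy of the $\pi_1 L$ label along components via continuity of evaluation, and cancellation of signs at the two ends from the $E$-orientation) that the authors left implicit.
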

Note that this construction is functorial under morphisms. If $E$ is not oriented, both statements hold over $\bZ/2$ instead of $\bZ$.

Let $hoCh(\bZ[\pi_1 L])$ denote the homotopy category of chain complexes of $\bZ[\pi_1 L]$-modules. 
    
    \begin{lem}\label{lem:morse chains over group rings}
        If $E$ is oriented and $\pi_0\Thom(E) \cong \bZ$, the functor 
        \begin{equation} 
            CM_*(\cdot; \bZ[\pi_1 L]): \tau_{\leq 0} {\Flow}^{E} _{/L} \to hoCh(\bZ[\pi_1 L])
        \end{equation}
        is an isomorphism onto the full subcategory of finitely generated free complexes.  The corresponding statement also holds when $\pi_0\Thom(E) \cong \bZ/2$.
    \end{lem}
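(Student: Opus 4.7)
The plan is to prove the functor is essentially surjective and fully faithful onto the stated subcategory, constructing the inverse by realising algebraic chain data as 0- and 1-dimensional moduli of a truncated flow category over $L$.

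\textbf{Essential surjectivity.} Given a finitely generated free complex $(C_*,d)$ over $\bZ[\pi_1 L]$, fix a free basis $\{x_i\}$ with degrees $|x_i|$. Write $d(x_i) = \sum_j \sum_k n_{ij,k}[\gamma_{ij,k}]\, x_j$ with $n_{ij,k} \in \bZ$ and $\gamma_{ij,k}\in \pi_1(L,\ell)$. Build $\cF$ by taking the objects to be the $x_i$, placed at the basepoint $\ell\in L$ via the trivial spider, and set $\cF_{x_i x_j}$ for $|x_i|-|x_j|=1$ to be a finite $0$-manifold with one point $p_{ij,k,s}$ for each $k$ and each $s\in\{1,\dots,|n_{ij,k}|\}$, with $E$-orientation whose sign is $\operatorname{sgn}(n_{ij,k})$, and evaluation map sending $p_{ij,k,s}$ to a chosen Moore loop representing $\gamma_{ij,k}$. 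For pairs $x_i,x_j$ with $|x_i|-|x_j|=2$, the relation $d^2=0$ says that, on the level of $\bZ[\pi_1 L]$-coefficients, the signed sum $\sum_z \cF_{x_i z}\times \cF_{z x_j}$ of boundary loops bounds in $\pi_0\cP_{\ell\ell}L$ with the right multiplicities; we therefore realise $\cF_{x_i x_j}$ as a compact $1$-manifold with boundary equal to this signed zero-manifold, equipped with an $E$-orientation extending the boundary one (possible because compact $1$-manifolds have trivial stable tangent bundle and $E$ is oriented) and a Moore-path-valued evaluation map extending the boundary data (possible since concatenation of loops models the multiplication in $\pi_1 L$ up to Moore-path homotopy). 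Higher-dimensional moduli are discarded by truncation. One checks $CM_*(\cF;\bZ[\pi_1 L]) \cong (C_*,d)$ on the nose.

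\textbf{Full faithfulness.} A morphism $\cF \to \cF'$ in $\tau_{\leq 0}\Flow^E_{/L}$ is represented by a bimodule-type flow category whose $0$-dimensional moduli $\cN_{x,y'}$ (with $|x|=|y'|$) and the $E$-orientation/evaluation data thereon produce a $\bZ[\pi_1 L]$-linear degree-zero map $CM_*(\cF;\bZ[\pi_1 L]) \to CM_*(\cF';\bZ[\pi_1 L])$, and the $1$-dimensional moduli enforce the chain map identity by boundary counting. Bordisms of such morphisms yield, by the same construction one dimension up, chain homotopies; conversely any chain map and any chain homotopy can be realised by choosing $0$- and $1$-dimensional Moore-path-decorated manifolds exactly as in the essential surjectivity argument. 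Hence the induced map
\begin{equation*}
\operatorname{Hom}_{\tau_{\leq 0}\Flow^E_{/L}}(\cF,\cF') \longrightarrow \operatorname{Hom}_{hoCh(\bZ[\pi_1 L])}\bigl(CM_*(\cF),CM_*(\cF')\bigr)
\end{equation*}
is a bijection.

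\textbf{Main obstacle and unoriented case.} The delicate point is the compatibility of the $E$-orientation data with the ad hoc construction: one must check that the standing conventions on stabilisation data $\{\nu_{xx'}\}$, index data $\{d_{xx'}\}$, and the Thom maps $\rho$ can be chosen coherently when the underlying moduli are prescribed as finite signed sets and abstract $1$-manifolds. This boils down to the fact that an oriented $E$ makes $\pi_0\Thom(E)\cong\bZ$ carry a canonical generator, so there is no ambiguity in reading off signs, and that for compact $1$-manifolds the required extension of $\rho$ over the interior is unobstructed by connectivity of the relevant classifying spaces. When $E$ is unoriented one replaces $\bZ$ by $\bZ/2$ throughout: signs disappear, and the same argument gives the statement over $\bZ/2[\pi_1 L]$.
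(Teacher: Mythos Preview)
Your proposal is correct and follows the standard approach: the paper's own proof is simply a citation to \cite[Lemma 5.41]{PS} and \cite[Lemma 5.25]{PS2}, and what you have written is precisely the expected reconstruction of those arguments adapted to the ``over $L$'' setting, where the extra Moore-path labels carry the $\pi_1 L$ information. There is nothing to add.
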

   
    \begin{proof}
        Identical to \cite[Lemma 5.41]{PS} and \cite[Lemma 5.25]{PS2}.
    \end{proof}

  \begin{lem}\label{lem: 0-modification}
        Let $\cA, \cB \in \Flow^{E}_{/L}$. Assume that $\tau_{\leq 0} \cA$ and $\tau_{\leq 0}\cB$ are isomorphic in $\tau_{\leq 0}\Flow^{E}_{/L}$. 

        Then there is some $\cB' \in \Flow^{E}_{/L}$ which is isomorphic to $\cA$ in $\Flow^{E}_{/L}$, with $\tau_{\leq 0} \cB' = \tau_{\leq 0}\cB$.
    \end{lem}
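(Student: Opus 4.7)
The plan is to construct $\cB'$ by \emph{grafting} the higher-dimensional strata of $\cA$ onto the truncated part $\tau_{\leq 0}\cB$, using the given truncated isomorphism as a recipe for the attaching data. The resulting $\cB'$ will equal $\cB$ on the nose in dimensions $\leq 1$, and will be isomorphic to $\cA$ by an essentially tautological morphism.

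First I would pick a concrete representative $\phi\colon \tau_{\leq 0}\cA \to \tau_{\leq 0}\cB$ of the given isomorphism class, together with a truncated inverse $\phi^{-1}$ and chosen truncated bordisms $\bD\cA \sim \phi^{-1}\phi$ and $\bD\cB \sim \phi\phi^{-1}$, all in $\tau_{\leq 0}\Flow^E_{/L}$. At the level of objects, passing through these morphisms fixes a bijection $\sigma\colon \mathrm{Ob}(\cA)\to \mathrm{Ob}(\cB)$ (possibly after adjoining canceling pairs of objects with zero-dimensional bordisms, which does not affect any isomorphism class); at the level of low-dimensional moduli, $\phi$ and $\phi^{-1}$ supply specific $E$-oriented bordisms between $\cA_{xy}$ and $\cB_{\sigma(x)\sigma(y)}$, compatible with the evaluation maps to $\cP L$.

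Next I would build $\cB'$ inductively in dimension, with $\mathrm{Ob}(\cB') := \mathrm{Ob}(\cB)$. In dimensions $\leq 1$, set $\cB'_{xy}$ equal to $\cB_{xy}$ with all structure inherited from $\cB$. For each two-dimensional stratum of $\cA_{xy}$, its codimension-one boundary decomposes via the flow-category composition as a union of products of $0$-dimensional pieces in $\cA$; using $\sigma$ and the chosen bordism $\phi^{-1}\phi \sim \bD\cA$, these products are identified (up to a canonical collar) with the corresponding products in $\cB$, and we glue the two-dimensional stratum of $\cA$ into $\cB'$ along this identification, inserting a cylinder to interpolate the bordism where necessary. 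The higher-dimensional strata are attached by the same procedure in increasing dimension. The $E$-orientations extend canonically from those on the glued pieces, and the evaluation maps $\cB'_{xy}\to \cP L$ combine those of $\cA$ and $\cB$, agreeing on the gluing locus via the chosen homotopies supplied by $\phi$.

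Finally I would verify the two claims: $\tau_{\leq 0}\cB' = \tau_{\leq 0}\cB$ is immediate by construction, and the isomorphism $\cB' \simeq \cA$ in $\Flow^E_{/L}$ is realised by an explicit morphism which restricts to $\phi^{-1}$ on the low-dimensional part and to the tautological identification on the higher-dimensional strata that were imported from $\cA$, with inverse built symmetrically. The main obstacle will be the coherent bookkeeping of $E$-orientations, corner structures, stabilisation and index data, and evaluation maps to $L$ at each codimension-one interface where an attached higher-dimensional stratum of $\cA$ meets the low-dimensional skeleton of $\cB$; all of this compatibility can be arranged by interposing cylinders that realise the chosen bordisms $\phi^{-1}\phi \sim \bD\cA$, in the same spirit as the construction of unit and associator morphisms in \cite{PS3}. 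This step also clarifies why it is essential that one works in the bordism category $\Flow^E_{/L}$ rather than a stricter category of flow categories: rigidifying the construction to an actual equality of flow categories beyond the truncated level would obstruct the needed interpolation.
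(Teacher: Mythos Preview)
Your approach has a genuine gap at the very first step. An isomorphism in $\tau_{\leq 0}\Flow^{E}_{/L}$ is, via Lemma~\ref{lem:morse chains over group rings}, nothing more than a chain homotopy equivalence of finitely generated free $\bZ[\pi_1 L]$-complexes. Such an equivalence does \emph{not} in general pick out a bijection $\sigma\colon \mathrm{Ob}(\cA)\to \mathrm{Ob}(\cB)$, and adjoining cancelling pairs of objects does not fix this: two chain homotopy equivalent free complexes over $\bZ[\pi_1 L]$ need not become isomorphic after stabilising by elementary acyclic complexes when the Whitehead group $Wh(\pi_1 L)$ is nonzero. Your entire grafting construction is predicated on $\sigma$, so without it the argument does not start. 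Even if one were in a situation where $\sigma$ happened to exist, the later steps are underspecified: when you attach a $k$-dimensional stratum of $\cA$ to the $(k-1)$-skeleton of $\cB'$, the boundaries only agree up to the inserted interpolating cylinders from the previous stage, and you would need an inductive system of higher coherences (not just a single bordism $\phi^{-1}\phi \sim \bD\cA$) to make the corner structure and $E$-orientations close up.

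The paper's route is different and avoids this problem entirely. It invokes \cite[Proposition~4.30]{PS3}, whose proof does not try to match the objects of $\cA$ with those of $\cB$. Instead one modifies $\cA$ \emph{in place} by a sequence of elementary operations, the key one being the construction $\cC \mapsto \cC(P)$ which takes a collection of closed $E$-oriented manifolds $P=\{P_{yy'}\}$ (here additionally equipped with maps to $\cP L$) and alters the low-dimensional moduli spaces of $\cC$ accordingly, while exhibiting an explicit isomorphism $\cC \cong \cC(P)$ in $\Flow^{E}_{/L}$. Iterating such moves (together with births/deaths of cancelling pairs) one can transform $\tau_{\leq 0}\cA$ into any chain homotopy equivalent truncated category, in particular into $\tau_{\leq 0}\cB$, and the full flow category carried along is the desired $\cB'$. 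The point is that each step only changes $\tau_{\leq 0}$ by a single controlled move and comes with its own isomorphism in the untruncated category; no global identification of objects is ever needed.
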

    \begin{proof}
        Same as \cite[Proposition 4.30]{PS3}, incorporating the evaluation map to $L$  in the natural way. In particular, the closed manifolds $P = \{P_{yy'}\}$ introduced in the proof of \emph{op.cit.}~in constructing the category $\cC(P)$ should now be equipped with  classifying maps to $E \times \cP L$ rather than to $E$.  
    \end{proof}

      \begin{prop}\label{prop: cone prop over L}
        Let $\cW, \cW': \cA \to \cB$ be $E$-oriented morphisms over $L$ of $E$-oriented flow categories over $L$. 
        \begin{enumerate}
            \item If $\cW$ and $\cW'$ are bordant (via a $E$-oriented bordism $\cR$ which itself lives over $L$), then $\Cone(\cW) \cong \Cone(\cW') \in \Flow^{E}_{/L}$.
            \item If $\cV': \cA' \to \cA$ and $\cV'': \cB \to \cB'$ are isomorphisms, then $\Cone(\cW) \cong \Cone(\cV'' \circ \cW \circ \cV')$ are isomorphic in $\Flow^{E}_{/L}$.
        \end{enumerate}
    \end{prop}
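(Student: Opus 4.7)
The overall plan is to reduce each part to its analogue in $\Flow^E$ (established in \cite{PS3}) and verify that all the constructions are compatible with evaluation maps to the Moore path space $\cP L$. The key structural point is that Moore-path concatenation is strictly associative: whenever a moduli space in the cone construction is built by gluing pieces of $\cA$, $\cB$, and $\cW$ (or a bordism $\cR$), it inherits a canonical evaluation to $\cP L$ by concatenating the evaluations of its inputs, and this concatenation is strictly compatible with the composition structure on $\cF$. So no new homotopies need to be chosen.

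For part (1), given an $E$-oriented bordism $\cR: \cW \Rightarrow \cW'$ over $L$, I would construct a \emph{cone bordism} $\Cone(\cR)$: a bordism of $E$-oriented flow categories over $L$ whose two boundary ends recover $\Cone(\cW)$ and $\Cone(\cW')$, formed by the same gluing recipe as $\Cone(\cW)$ but with $\cR$ in place of $\cW$. Its moduli spaces carry canonical evaluation maps to $\cP L$ by the principle above, and the $E$-orientation and index/stabilisation data extend directly from the input data on $\cA$, $\cB$, and $\cR$. This realises $\Cone(\cW)$ and $\Cone(\cW')$ as bordant in $\Flow^E_{/L}$; to upgrade this bordism to an honest isomorphism I would invoke Lemma \ref{lem: 0-modification} (or adapt its proof) applied to the pair $(\Cone(\cW),\Cone(\cW'))$, which produces a representative in the same isomorphism class with matching zero-dimensional strata.

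For part (2), I would write down an explicit morphism $\Cone(\cV''\circ\cW\circ\cV')\to \Cone(\cW)$ in $\Flow^E_{/L}$ whose $\cA$-component is $\cV'$, whose $\cB$-component is $\cV''$, and whose mixed moduli spaces are assembled from these with the cone moduli spaces of $\cW$. The evaluation maps on $\cV'$ and $\cV''$ concatenate with those on the cone to produce evaluations to $\cP L$. That this morphism is an isomorphism then follows by performing the same construction with two-sided inverses $(\cV')^{-1}$ and $(\cV'')^{-1}$, together with part (1) applied to the bordisms witnessing $\cV'\circ(\cV')^{-1}\simeq \bD\cA$ and $\cV''\circ (\cV'')^{-1}\simeq \bD\cB$ (and the analogous statements on the other side).

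The main obstacle is not geometric but bookkeeping: one has to check that the stabilisation data, index data, and the Thom-space structure of the $E$-orientation remain coherent throughout the cone and composition constructions while simultaneously tracking evaluations to $\cP L$. Since the $\Flow^E$ versions of (1) and (2) are already available in \cite{PS3}, and since at every stage the evaluation data on a new moduli space is canonically determined from its inputs via Moore-path concatenation (no coherence choices required), this verification is diagrammatic and requires no additional analytic input.
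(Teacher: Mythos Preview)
Your high-level strategy is precisely what the paper does: the proof is one line, ``Same as \cite[Proposition 4.26]{PS3}, incorporating the evaluation to path spaces,'' and your observation that Moore-path concatenation is strictly associative so the evaluation data on every new moduli space is determined by its inputs is exactly the content of ``incorporating the evaluation.'' Part (2) is also essentially right.

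There is, however, a genuine gap in your sketch for part (1). The object you call a ``cone bordism'' $\Cone(\cR)$, described as a bordism of flow categories with boundary ends $\Cone(\cW)$ and $\Cone(\cW')$, is not a notion available in this framework: objects of $\Flow^E_{/L}$ do not themselves have a bordism relation, only morphisms do. What part (1) requires is an honest \emph{morphism} $\Cone(\cW)\to\Cone(\cW')$ in $\Flow^E_{/L}$, together with an inverse up to bordism of morphisms. Moreover, your appeal to Lemma \ref{lem: 0-modification} is misplaced: that lemma produces a representative with prescribed $\tau_{\leq 0}$ once you already know two objects are isomorphic (via their $0$-truncations); it does not promote any kind of ``bordism of objects'' to an isomorphism.

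The correct mechanism (which is what \cite[Proposition 4.26]{PS3} actually does) is to use $\cR$ to build the morphism directly: take $\bD\cA$ on the $\cA$-block, $\bD\cB$ on the $\cB$-block, and on the mixed block use $\cR_{ab}$ suitably collared so that its boundary faces $\cW_{ab}$ and $\cW'_{ab}$ get absorbed into the products $\cW_{ab'}\times\bD\cB_{b'b}$ and $\bD\cA_{aa'}\times\cW'_{a'b}$ required by the morphism axioms. Reversing $\cR$ gives the inverse, and the composite is checked to be bordant to the identity. All of this takes place over $L$ for the reason you state.
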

           \begin{proof}
        Same as \cite[Proposition 4.26]{PS3}, incorporating the evaluation to path spaces.
    \end{proof}

\begin{rmk}\label{rmk:loop space coefficients}
    We briefly comment on the connection between flow categories over a target $L$, and the development of `Floer theory with universal local coefficients' (in the based loop space) as in \cite{BDHO,BDHO2}.

 Let $\cF$ be a flow category over $L$. There are chains $\sigma_{xy} \in C_{|x|-|y|-1}(\cF_{xy})$ which represent the relative fundamental class, and which satisfy 
    \begin{equation} \label{eqn:fundamental chains}
    \partial \sigma_{xy} = \sum_z (-1)^{|x|-|z|} \sigma_{xz} \times \sigma_{zy}
    \end{equation}
    For the flow category associated to a Morse function on a compact manifold,  this is established over $\bZ/2$ by \cite{Barraud-Cornea} and over $\bZ$ by \cite{BDHO}. The proof constructs the required fundamental chains inductively in $|x|-|y|-1$, using only that the boundary of $\cF_{xy}$ is covered by the facets $\cF_{xz} \times \cF_{zy}$ (compatibly with evaluation to $L$); it therefore generalises immediately to any flow category over $L$.  Let $\frak{m}_{xy} \in C_*(\Omega_{\ell}L)$ be given by the pushforward of $\sigma_{xy}$ under the map
\[
\cF_{xy} \to \cP_{xy}L \to \Omega_{[\ell]}(L/Y) \stackrel{\theta}{\longrightarrow} \Omega_{\ell}L.
\]
The Morse complex $CM_*(\cF;\Omega L)$ of $\cF$ over $L$ is the complex
    \begin{equation} \label{eqn:morse complex over L}
    CM_*(\cF) \otimes C_*(\Omega_{\ell} L), \quad \partial (x \otimes \alpha) = x \otimes \partial \alpha + \sum_y (-1)^{|\alpha|} y \otimes \frak{m}_{xy}\cdot \alpha
    \end{equation}
     A morphism $\cB: \cF \to \cG$ of flow categories over $L$ and which itself lives over $L$ defines a chain map
    \[
    \cB_*: CM_*(\cF;\Omega L) \to CM_*(\cG;\Omega L).
    \]
    If $\cB$ and $\cB'$ are bordant over $L$, the corresponding chain maps are chain homotopic. Thus, there is a functor
\[
{\Flow}_ {/L} \longrightarrow C_*(\Omega L)\mhyphen\textrm{mod}
\]
taking a flow category $\cF$ over $L$ to $CM_*(\cF;\Omega L)$.
\end{rmk}

\subsection{Twisting flow categories by local systems\label{sec:twist me}}

A \emph{monodromy local system} (MLS) over $L$, also called a `spectral local system', is a map $L \to BGL_1(R)$, where $R$ is a commutative ring spectrum. We will be interested in the case where $R = R_{\Psi}$ is associated to a commutative tangential pair as in Definition \ref{qi def: E Psi}.

Suppose the flow category $\cF$ lives over $L$. Fix a spider $Y \subset L$ as in Section \ref{sec:flow over target}, and to simplify notation identify $L$ and $L/Y$. There are then  maps $\cF_{xy} \to \Omega L$.  Combined with $\xi: L \to BGL_1(R)$, we obtain maps $\cF_{xy} \to GL_1(R)$. \cite[Section 7]{PS3} explains how to refine these to obtain maps
\[
\Xi_{\cM}: \cM \to GL_1(n_{xy})
\]
for every moduli space $\cM$ in the flow category, or indeed related moduli space (underlying a morphism, bilinear map, associator, unit \ldots) 
which are compatible with composition, and where the values $n_{xy}$ are determined by the chosen stabilisation and index data for the flow category.  By definition, $GL_1(n)$ is a union of components of $\Omega^n \Thom(\bV^\Psi(n) \to \bU^\Psi(n))$. Hence the maps $\Xi_\cM$ give maps, for each  moduli space $\cM$:
    \begin{equation}
        \hat\Xi_\cM: \cM \times S^{(\sum_b n_b)} \to  \Thom\left(\bigoplus_b\bV^\Psi(n_b) \to \prod_b\bU^\Psi(n_b)\right)
    \end{equation}
    where, in the geometric case, $b$ runs over the set of boundary components of the domains of maps in the particular moduli space $\cM$.

    One can assume that all these maps are transverse to the zero-sections of the image Thom spaces, and those preimages $\hat\cM = \hat\Xi_{\cM}^{-1}(0)$ are then smooth manifolds with faces (of the same dimension as $\cM$, since the bundle $\bV(n)$ has rank $n$). The compatibility of the $\Xi_{\cM}$ with flow category compositions and other algebraic structures mean that the same properties are inherited by the $\hat{\cM}$, so from $\cF$ over $L$ and $\xi: L \to BGL_1(R)$ we obtain a twisted flow category $\cF^{\xi}$.  Starting from the fact that there are canonical maps $\cF^{\xi}_{xy} \to \cF_{xy}$ from the construction, \cite{PS3} shows that:
    
    \begin{lem}\label{lem:twists are oriented}
        When $\cF$ is $E$-oriented, the twist $\cF^{\xi}$ inherits a canonical $E$-orientation.
    \end{lem}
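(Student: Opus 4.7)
The goal is to equip $\cF^{\xi}$ with stabilisation data $\{\nu'_{xy}\}$, index data $\{d'_{xy}\}$ and a coherent family of maps of Thom spaces $\rho'_{xy}$ valued in $E = E_\Psi = (\bU^\Psi,\bV^\Psi)$. The ingredients available are: the ambient $E$-orientation $(\rho_{xy},\st_{xy})$ of $\cF$; the canonical projection $\cF^{\xi}_{xy} \to \cF_{xy}$; the auxiliary classifying maps $\cF^{\xi}_{xy} \to \prod_b \bU^\Psi(n_b)$ extracted from the restriction of $\hat\Xi_{\cF_{xy}}$ to the transverse zero set $\cF^\xi_{xy}$; and the identification of the normal bundle of $\cF^{\xi}_{xy}$ inside $\cF_{xy}\times S^{(\sum_b n_b)}$ with the pullback of $\bigoplus_b \bV^\Psi(n_b)$. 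Since the transverse cut-out cancels the $\sum_b n_b$ extra dimensions of the sphere factor against the rank-$\sum_b n_b$ normal bundle, $\cF^{\xi}_{xy}$ has the same dimension as $\cF_{xy}$, so setting $d'_{xy} := d_{xy}$ is forced; $\nu'_{xy}$ then absorbs $\nu_{xy}$ together with the contributions from the standard stable trivialisation $TS^{(\sum_b n_b)}\oplus\bR \cong \bR^{\sum_b n_b+1}$, the precise integer being dictated by the paper's conventions for $I^{\cF^\xi}_{xy}$. Associativity of these new data under composition is inherited from that of $\{\nu_{xy},d_{xy}\}$ together with the fact that the boundary-component indices $b$ and their attached values $n_b$ add under concatenation of moduli.

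Next I would construct $\rho'_{xy}$ as the iterated $\cI$-monoid sum of $\rho_{xy}$ pulled back along the projection $\cF^{\xi}_{xy}\to\cF_{xy}$ with the classifying maps $\cF^{\xi}_{xy}\to\prod_b\bU^\Psi(n_b)$, and assemble the covering stabilisation isomorphism from three pieces: the pullback of $\st_{xy}$, the identification of the normal bundle with $\bigoplus_b \bV^\Psi(n_b)$, and the standard trivialisation of $TS^{(\sum_b n_b)}\oplus\bR$. The Thom $\cI$-monoid additivity $\bV^\Psi(m)\oplus\bV^\Psi(n) \cong \mu^*\bV^\Psi(m+n)$ glues these three into a single isomorphism onto $(\rho'_{xy})^*\bV^\Psi(\nu'_{xy}+d'_{xy})$, which is the required stabilisation datum.

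Finally I would verify compatibility with composition. Both ingredient families — the original $E$-orientation of $\cF$ and the twist classifying data $\hat\Xi$ — are associatively compatible by hypothesis and by the construction of \cite[Section 7]{PS3}. The main point, which is also the principal (though essentially formal) obstacle, is verifying that under the flow-category gluing $\cF^{\xi}_{xz}\times\cF^{\xi}_{zy}\to\partial\cF^{\xi}_{xy}$, the $\cI$-monoid sums coming from the two factors reassemble into the $\cI$-monoid sum on $\partial\cF^{\xi}_{xy}$ after a canonical reshuffling of the $\bU^\Psi(n_b)$-factors indexed by the boundary components of the glued domain, and that the corresponding reshuffling on $\bV^\Psi$-factors identifies the composite stabilisation isomorphism with the one attached to the boundary face. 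This reshuffling is provided coherently by the commutativity and associativity of $\bU^\Psi$ as a commutative $\cI$-monoid and of $\bV^\Psi$ as its Thom $\cI$-monoid, so the verification reduces to strict identities on $\cI$-diagrams and the construction of $\bigcup \cF^\xi_{xy}$ on simplicial sets.
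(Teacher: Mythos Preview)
The paper does not actually prove this lemma: it is stated in the recap section with the preamble ``Starting from the fact that there are canonical maps $\cF^{\xi}_{xy} \to \cF_{xy}$ from the construction, \cite{PS3} shows that:'' and no further argument is given here. Your outline --- pulling back the original $E$-orientation along the projection, summing (via the $\cI$-monoid structure) with the classifying maps to $\prod_b \bU^\Psi(n_b)$ arising from the twist, and using the identification of the normal bundle of $\cF^\xi_{xy}\subset \cF_{xy}\times S^{(\sum_b n_b)}$ with $\bigoplus_b\bV^\Psi(n_b)$ to assemble the stabilisation isomorphism --- is the natural construction and is essentially what \cite{PS3} carries out; the coherence under composition is indeed the main bookkeeping point, and it is handled exactly as you say, by the commutativity/associativity of the Thom $\cI$-monoid $E_\Psi$ together with the compatibility-under-breaking of the $\hat\Xi$.
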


    \begin{rmk}
        In \cite{PS3} the fact that this twisting can be done coherently is crucial. In this paper, that coherence plays a much more minor role.
    \end{rmk}

\subsection{The Fukaya category and $\cO\cC$-bordism\label{sec:Fuk and OC}}

We fix a commutative $\cI$-monoidal tangential structure $\Psi = (\Theta \to \Phi)$.  
We pick a finite set of Lagrangians $\cL$, and for each one pick some finite collection of $\Psi$-monodromy local systems $\{\xi_L \, | \, L \in \cL\}$. In \cite{PS3} we defined a spectral (Donaldson-)Fukaya category $\scrF^{loc}(X; \Psi)$ with objects $(L,\xi_L)$, where $\xi_L \to L$ is one of this finite set of distinguished monodromy local systems. We will typically abbreviate the pair ($L,\xi_L)$ to just $\xi_L$, and we call such pairs \emph{$\Psi$-branes}.  

The appearance of the Lagrangian data $\cL$, in particular the restriction to finitely many Lagrangians and local systems, appears because of the use of inductive arguments in dimension in setting up coherent $E_{\Psi}$-orientations and twistings of moduli spaces in \cite{PS3}.

We will not give a general review of Floer flow categories, which are discussed extensively in \cite{PS,PS2}. The new aspect of the story begins with the incorporation of spectral local systems.  For any pair $\xi_L, \xi_K$ with $L,K \in \cL$ we have a twist $\cM^{\xi_L,\xi_K}$ of the Floer flow category $\cM^{LK}$, as sketched in Section \ref{sec:twist me},  and $E_\Psi$-orientations on all the twisted moduli spaces, as above. Thus the categories $
\cM^{\xi_L,\xi_K}$ 
are $E_\Psi$-oriented flow categories, see \cite[Section 7.4]{PS3} for the detailed constructions. 

\begin{defn}
    The morphism group $\scrF^{loc}_i((L,\xi_L), (K,\xi_K); \Psi)$ is the group of bordism classes of $E_{\Psi}$-oriented  right flow modules $\Omega_i^{E_\Psi}(\cM^{\xi_L,\xi_K})$.
\end{defn}

Note that this is the morphism group $[\ast[i],\cM^{\xi_L\xi_K}]$ in $\Flow^{E_{\Psi}}$; we are not currently working over a target Lagrangian, so are not working in $\Flow^E_{/L}$ (or indeed $\Flow^E_{/K}$, since moduli spaces of strips evaluate to both Lagrangians). 

All required algebraic properties are established in \cite{PS3}, so the objects $\xi_L$ and morphism groups above do define a category $\scrF^{\loc}(X;\Psi)$.

To relate this category to the `usual' Fukaya category, we recall first that an exact symplectic manifold $X$ gives rise to a collection of integral Fukaya categories $\scrF(X,b;\bZ)$ indexed by a choice of $b \in H^2(X;\bZ/2)$, where $b$ determines a particular coherent orientation scheme.

In \cite[Lemma 7.20]{PS3}, it is shown that if $\Psi=(\Theta, \Phi)$ is an oriented commutative tangential pair and $X$ is $\Phi$-oriented, then there is a canonical class $b^{can} \in H^2(X; \bZ/2)$, and any $\Theta$-oriented Lagrangian $L \subseteq X$ inherits a relative Spin structure relative to $b^{can}$.

\begin{prop}\label{prop:truncate to ordinary}
    Let $\Psi$ be an oriented graded tangential pair, and $X$ a $\Phi$-oriented Liouville domain. Let $b^{can} \in H^2(X; \bZ/2)$ be as defined above. Fix a set of $\Psi$-oriented Lagrangian data $\cL$. 
    
    Then there is a fully faithful functor
    \[
    \tau_{\leq 0} \scrF^{loc,\cL}(X;\Psi) \to \scrF^{loc,\cL}(X,\phi^*w_2;\bZ).
    \]
\end{prop}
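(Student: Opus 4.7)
The plan is to construct the functor in two steps (objects and morphisms), and then deduce full faithfulness from a general equivalence between truncated $E$-oriented flow categories and chain complexes.

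\textbf{On objects.} I would send a $\Psi$-brane $(L,\xi_L)$ to the pair $(L, \pi_0\xi_L)$. Here $L$ is equipped with its canonical relative Spin structure relative to $b^{can} = \phi^*w_2$, obtained from its $\Theta$-orientation and the $\Phi$-orientation of $X$ via \cite[Lemma 7.20]{PS3} (which is exactly the remark preceding Theorem \ref{thm:main} in the introduction). The underlying $\bZ$-local system $\pi_0\xi_L: L \to BGL_1(\bZ)$ is the composition $L \xrightarrow{\xi_L} BGL_1(R_\Psi) \to BGL_1(\pi_0 R_\Psi) = BGL_1(\bZ)$, using that $\Psi$ being oriented gives $\pi_0 \Thom(E_\Psi) = \bZ$.

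\textbf{On morphisms.} Given $(L,\xi_L)$ and $(K,\xi_K)$, by Lemma \ref{lem:twists are oriented} the twisted flow category $\cM^{\xi_L,\xi_K}$ is $E_\Psi$-oriented. Truncating to $\tau_{\leq 0}\cM^{\xi_L,\xi_K} \in \tau_{\leq 0}\Flow^{E_\Psi}$ and applying (the version for flow categories not living over $L$ of) Lemma \ref{lem:morse chains over group rings}, we obtain a finitely generated free chain complex $CM_*(\tau_{\leq 0}\cM^{\xi_L,\xi_K})$ of $\bZ$-modules. I would define the functor on morphisms by identifying this complex with the ordinary Floer cochain complex $CF^*((L,\pi_0\xi_L),(K,\pi_0\xi_K); \bZ)$ used in $\scrF^{loc,\cL}(X,\phi^*w_2; \bZ)$; morphism groups in the truncated spectral category are by definition $\Omega^{E_\Psi}_i(\tau_{\leq 0}\cM^{\xi_L,\xi_K})$, and Lemma \ref{lem:morse chains over group rings} identifies these with $\Hom$ groups in the homotopy category of chain complexes, which on the Fukaya side are the Floer cohomology groups.

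\textbf{Full faithfulness.} Once the above identification is established on chain-level, Lemma \ref{lem:morse chains over group rings} immediately implies full faithfulness: both morphism groups are computed by the same finite free $\bZ$-complex, so the induced map on bordism classes (respectively on cohomology) is an isomorphism. Functoriality under composition follows from compatibility of the Morse chain functor with composition in $\Flow^{E_\Psi}$ together with the fact that Floer-theoretic triangle products are built from the same $\tau_{\leq 0}$-data.

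\textbf{Main obstacle.} The serious content is in the chain-level identification: verifying that (i) the generators of $\tau_{\leq 0}\cM^{\xi_L,\xi_K}$ are the usual Lagrangian intersection points/Hamiltonian chords, (ii) the oriented count of 0-dimensional twisted moduli spaces (weighted by the integer $\pi_0$ of the $GL_1(R_\Psi)$-valued map produced in Section \ref{sec:twist me}) recovers the classical Floer differential twisted by $\pi_0\xi_L$ and $\pi_0\xi_K$, and (iii) the sign induced by the $E_\Psi$-orientation matches the Floer-theoretic sign determined by the relative Spin structure coming from $b^{can}$. Points (i) and (ii) are structural, following from the construction of the twist: passing to $\pi_0$ turns the $R_\Psi$-valued monodromy into a signed $\bZ$-valued monodromy. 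Point (iii) is the core compatibility, and is where the canonical-background-class identification of \cite[Lemma 7.20]{PS3} does the work — the orientation of $\Psi$ and the abstract-disc / index-bundle framework of Section \ref{sec:tp and ad} are precisely what ensures that the determinant-line trivialisation packaged in the $E_\Psi$-orientation reduces modulo higher-dimensional information to the standard relative Pin/Spin sign convention for $b^{can}$.
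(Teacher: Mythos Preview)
The paper does not actually prove this proposition; it simply cites \cite[Proposition 7.25]{PS3} and adds a parenthetical remark about the superscript $\cL$. Your outline is a reasonable reconstruction of the argument one expects to find there: the object-level map via $\pi_0$ of the local system together with the canonical relative Spin structure, and the morphism-level map via the equivalence of Lemma \ref{lem:morse chains over group rings} (in its $L=\pt$ specialisation). Your identification of the sign-compatibility (point (iii)) as the genuine content is accurate, and your pointer to \cite[Lemma 7.20]{PS3} as the place where this is handled is correct. So your proposal is essentially right and aligned with the paper's intent, though strictly speaking you have provided more than the paper itself does here.
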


\begin{proof}
    See \cite[Proposition 7.25]{PS3}. (The superscript indicates that also in the target we are only seeing the subcategory of the Fukaya category comprising the finite set of chosen branes $\cL$.)
\end{proof}

\begin{rmk}
    In \cite{PS,PS2} we developed an obstruction theory for lifting quasi-isomorphisms from $\tau_{\leq i}\Flow$ to $\tau_{\leq (i+1)}\Flow$. However, the obstructions are often non-vanishing. Despite Proposition \ref{prop:truncate to ordinary}, the crucial `lifting quasi-isomorphisms from integral to spectral categories' theorems in this paper will work fundamentally differently, over a handle decomposition of the target Lagrangian $L$.
\end{rmk}

 Let $F = \hofib(\Theta \to \Phi)$ be the homotopy fibre and $E_\Psi$ the commutative Thom $\cI$-monoid constructed from spaces of abstract discs in Section \ref{sec:tp and ad}.  As with any Thom spectrum, there is an associated bordism theory $R^{\Psi}_*$, but the `fundamental class' of a Lagrangian brane does not  live most naturally in $R^{\Psi}_*(X)$, but in a cousin called `open-closed bordism'. 
 
 \begin{defn}\label{defn: oc-bordism}
     The groups $\Omega^{E_\Psi; \cO\cC}_i(X, \phi)$ are defined to be the set of cobordism classes of tuples $(M, f, k, \rho, h)$ where:
        \begin{itemize}
            \item $f: M \to X$ is a closed $i$-manifold over $X$.
            \item $k \in \cI$.
            \item $\rho: (M, TM)(k+N) \to (\Theta(N), \Gamma) \times E(i+k)$ is a map of Thom spaces, where $\Gamma \to \Theta(N)$ is the pullback of the universal bundle over $BO(N)$.
            \item $h$ is a homotopy between the two maps $M \to \Phi(N)$ which factor through $f$ and $\Base(\rho)$ respectively.
        \end{itemize}
 \end{defn}

\begin{rmk}
    A somewhat more intrinsic description of the $\cO\cC$-bordism group is given at the start of \cite[Section 5.3]{PS3}, but this concrete description will be most useful in this paper, in particular in the discussion of normal invariants in Section \ref{sec:normal}.
\end{rmk}

The definition of the groups $\Omega_*^{E_{\Psi};\cO\cC}(\bullet)$ makes sense for any pair $(Y,\phi)$ where $Y$ is a cell complex with a complex vector bundle $\phi: Y \to \Phi \to BU$ equipped with a  $\Phi$-lift $\phi$. 

\begin{lem}\label{lem: oreigrpieoghdirpgsgr}
    Any $\Psi$-oriented Lagrangian $L \subset X$ has a `fundamental class' $[L] \in  \Omega_d^{E_{\Psi};\cO\cC}(X)$ in $\cO\cC$-bordism.
\end{lem}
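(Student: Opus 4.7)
The plan is to take $M = L$ with $f: L \hookrightarrow X$ the inclusion (so $M$ is tautologically a closed $d$-manifold over $X$) and extract the remaining data of Definition \ref{defn: oc-bordism} from the compatible $\Theta$- and $\Phi$-orientations. The target class will be the $\cO\cC$-avatar of the unit $e_L$ in the spectral Fukaya category, which arises geometrically from the moduli space of constant discs at points of $L$.

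First I would construct the Thom-space map $\rho: (L, TL)(k + N) \to (\Theta(N), \Gamma) \times E_\Psi(d + k)$. The $\Theta$-structure on $L$, by definition, is a lift of the stable Gauss map to $\Theta(N)$ (for $N$ large) together with a covering map of Thom spaces $(L, TL)(N) \to (\Theta(N), \Gamma)$, providing the first factor. For the $E_\Psi$-factor, I would use the family of \emph{constant abstract discs}: at each $p \in L$, the constant map $D^2 \setminus \{1\} \to p \in L \subset X$ with boundary condition $T_pL \subset T_pX$ yields an abstract disc in the sense of Section \ref{sec:tp and ad}, and the Cauchy--Riemann operator of such a constant disc has index canonically (stably) isomorphic to $T_pL$. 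As $p$ varies over $L$, the $\Theta$- and $\Phi$-orientations turn this family into a classifying map $L \to \bU^\Psi(k)$ together with an identification of the pulled-back bundle $\bV^\Psi$ with $TL \oplus \bR^k$; this furnishes the second factor of $\rho$ after stabilising to appropriate levels of the $\cI$-monoid.

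Next I would produce the homotopy $h$ between the two maps $L \to \Phi(N)$: one is the composition of $f$ with the $\Phi$-orientation of $X$ restricted to $L$, the other is the composition of $\tau_L: L \to \Theta(N)$ with $\Theta(N) \to \Phi(N)$. Both classify the complex vector bundle $TX|_L$ (the latter via the canonical isomorphism $TX|_L \cong TL \otimes \bC$), so the required $h$ is exactly the compatibility datum packaged into the definition of a $\Theta$-orientation on $L$ relative to the $\Phi$-orientation of $X$.

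The only real content is checking that the classifying map into $\bU^\Psi$ arising from the constant-disc family is coherently identified, after stabilisation, with the $\Theta$-data of $L$ so that $\rho$ lands in the product Thom space and $h$ exists canonically. The main obstacle is therefore not geometric but bookkeeping: matching levels of the various $\cI$-monoids and verifying that the index-bundle identification is compatible with the chosen stabilisation and index data. All the ingredients are already present in the construction of coherent $E_\Psi$-orientations of Floer moduli spaces in \cite{PS3} (in particular in the cap/co-cap gluing used to define $\bU^\Psi_{ij}$), so what remains is essentially an unpacking of Section \ref{sec:tp and ad} applied to the degenerate case of a constant disc.
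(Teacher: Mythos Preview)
The paper's argument is much shorter: work first in $\Omega^{E_\Psi,\cO\cC}_d(L,\phi|_L)$, where $L$ itself with $f=\id$ gives a canonical element by taking the $\Theta$-factor of $\rho$ to be the given $\Theta$-orientation and the $E_\Psi$-factor to be the \emph{constant map at the basepoint} (the ``trivial $E$-orientation''); the homotopy $h$ is then precisely the compatibility datum between the $\Theta$-orientation on $L$ and the $\Phi$-orientation on $X$. One then pushes forward along $L \hookrightarrow X$.

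Your route through the family of constant abstract discs $p \mapsto (T_pX,T_pL)$ is not wrong in spirit---this is indeed how the class is later related to the unit in $\scrF(X;\Psi)$---but as you have set it up it double-counts $TL$. The index bundle of the constant-disc family is (stably) $TL$, while the pullback $\Gamma|_L$ under the $\Theta$-orientation is also (stably) $TL$; so the target bundle of your $\rho$ is $TL \oplus TL \oplus \bR^{N+k-d}$, and the required isomorphism with $TL \oplus \bR^{k+N}$ exists only when $L$ is stably framed. The fix is to send one of the two factors to the basepoint: sending the $E_\Psi$-factor there recovers the paper's construction, whereas sending the $\Theta$-factor to the basepoint would break the existence of $h$ (since $L \to \Theta \to \Phi$ would then be constant while $L \to X \to \Phi$ classifies $TX|_L$). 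So only the trivial-$E$ option works for general $\Psi$, and what you call ``bookkeeping'' is in fact where your construction fails.
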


\begin{proof}
    We define $\Omega^{E_\Psi,\cO\cC}_*(L,\phi|_L)$ using the restriction of $\phi$ to $L$, here $L$ defines a canonical element (taking the trivial $E$-orientation), and then we push forward under inclusion.
\end{proof}

\begin{lem}\label{lem:module}
    The groups $\Omega_*^{E_{\Psi};\cO\cC}(Y,\phi)$ are modules over $R_{\Psi}^{-*}(Y)$.
\end{lem}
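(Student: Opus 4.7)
The plan is to exhibit the $R_\Psi^{-*}(Y)$-module structure on $\Omega_*^{E_\Psi;\cO\cC}(Y,\phi)$ via a geometric cap-product construction, modelled on the classical action of a cohomology ring on its associated bordism theory. Represent a class $\alpha \in R_\Psi^{-i}(Y)$ as a stable homotopy class of maps $a: Y_+ \wedge S^{n+i} \to \Thom(\bV^\Psi(n) \to \bU^\Psi(n))$ for $n$ sufficiently large. Given a representative $(M,f,k,\rho,h) \in \Omega_j^{E_\Psi;\cO\cC}(Y,\phi)$, first form the composition
\[
M_+ \wedge S^{n+i} \xrightarrow{f_+ \wedge \mathrm{id}} Y_+ \wedge S^{n+i} \xrightarrow{a} \Thom(\bV^\Psi(n) \to \bU^\Psi(n)),
\]
and after smooth approximation and a small perturbation transverse to the zero-section $\bU^\Psi(n)$ inside the Thom space, take the preimage of $\bU^\Psi(n)$ to obtain a closed smooth submanifold $V \subseteq M \times \bR^{n+i}$ of dimension $j+i$, whose normal bundle in $M \times \bR^{n+i}$ carries a classifier into $\bV^\Psi(n) \to \bU^\Psi(n)$. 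The composition $V \hookrightarrow M \times \bR^{n+i} \to M \xrightarrow{f} Y$ then provides the evaluation map for the putative class $\alpha \cap [M,f,k,\rho,h]$.

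The next step is to equip $V$ with the remaining structures required to define an element of $\Omega_{j+i}^{E_\Psi;\cO\cC}(Y,\phi)$. The identification $TV \oplus \nu_V \cong TM|_V \oplus \bR^{n+i}$ combined with the monoid addition $\mu: \bU^\Psi(n') \times \bU^\Psi(n) \to \bU^\Psi(n'+n)$ of the commutative Thom $\cI$-monoid $E_\Psi$ converts the $\Theta$-classifier and $\bV^\Psi(i+k)$-orientation on $M$ recorded in $\rho$, together with the $\bV^\Psi(n)$-classifier on $\nu_V$ obtained from $a$, into a $\Theta$-classifier and $\bV^\Psi(i+k+n)$-orientation on $V$ of the correct total degree; here the defining pullback \eqref{qi eq: hopullb} for $\bU^\Psi$ is precisely what promotes a $\bV^\Psi$-classifier into the extra $\Theta$-datum required. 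The compatibility homotopy for $V$ is obtained by pulling back $h$ along $V \to M$ and concatenating with the canonical null-homotopy of the image of the $\bV^\Psi(n)$ summand in $\Phi$, which exists because $\bU^\Psi$ lies over $\hofib(\Omega\Theta \to \Omega\Phi)$.

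Finally I would check that this operation descends to bordism classes and satisfies the module axioms. Well-definedness in both slots follows by applying the same transversality construction to a bordism of either input; independence of the representative $a$ and of perturbations follows similarly. Associativity, commutativity on the cohomology factor, and unitality reduce to the associativity and commutativity of $E_\Psi$ as a commutative Thom $\cI$-monoid, together with the observation that the unit acts via the trivial $\bV^\Psi$-orientation of a point. The main obstacle is the bookkeeping in the middle step --- simultaneously tracking the $\Theta$-structure, its $\Phi$-lift realised by $h$, and the $E$-orientations of the correct total degree on both $M$ and $V$ --- but these all compose coherently thanks to the commutativity and associativity built into the Thom $\cI$-monoid structure on $E_\Psi$ and the naturality of the pullback square \eqref{qi eq: hopullb}.
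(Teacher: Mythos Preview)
Your approach is correct and matches the paper's: the proof is deferred to \cite{PS3}, but the mechanism is made explicit later in the proof of Lemma~\ref{lem: change of polarisation relates OC module actions}, where a class $\alpha$ acts by ``replacing $M$ by the preimage $M'$ under zero of a map $M \times S^j \to \Thom(E_\Psi(j))$, and noting that this preimage inherits the required tangential data via projection $M' \to M$''. This is exactly your transverse-preimage-of-the-zero-section construction.

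One small correction to your bookkeeping: the $\Theta$-component of $\rho'$ on $V$ is simply the pullback of the $\Theta$-component of $\rho$ along the projection $V \to M$; the pullback square \eqref{qi eq: hopullb} is not needed to manufacture extra $\Theta$-data from the normal-bundle classifier. What changes is only the $E$-factor, where the monoid addition combines the original $E(j+k)$-orientation on $TM$ with the $E(n)$-classifier on $\nu_V$ to produce the required $E$-orientation on $TV$ (after using $TV \oplus \nu_V \cong TM|_V \oplus \bR^{n+i}$ and the $\cI$-space structure maps). The homotopy $h'$ is likewise just the pullback of $h$ along $V \to M$, since the $\Phi$-map factors through $M$ on both sides.
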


\begin{proof}
    See \cite{PS3}.
\end{proof}

The $\cO\cC$-fundamental class of $L$ is defined by an evaluation map from moduli spaces of perturbed holomorphic discs with boundary on $L$, themselves arising from the moduli spaces underlying the unit and counit. The `twisting' construction for flow categories in the presence of a spectral local system $\xi: L \to BGL_1(R)$ can also be applied to the moduli spaces of holomorphic half-planes defining the unit and co-unit of $L$, so give rise to a twisted version of its $\cO\cC$-fundamental class, which is by definition the class $[L,\xi] \in \Omega^{E_\Psi,\cO\cC}_*(X,\phi)$ associated to the pair $(L,\xi)$.

There is a degree $0$ $R_{\Psi}^*$-cohomology class $[\eta \xi] \in R_{\Psi}^0(L)$ given by the following composition:
        \begin{equation}
            L \xrightarrow{\xi} B(GL_1^\Psi)_{h\cI} \xrightarrow{\eta_*} (GL_1^\Psi)_{h\cI} \subseteq \Omega^\infty R 
        \end{equation}
        Here $\eta_*$ is the action of the Hopf map $S^3 \to S^2$ on the infinite loop space $GL_1^\Psi$ as arising from Lemma \ref{lem: eta acts}, cf. also \cite[Section 2.6]{PS3}

Then the main result of \cite{PS3} asserts that the twisted fundamental class is given by the module action of Lemma \ref{lem:module}, using this particular class:

\begin{thm}\label{thm: gour main theorem}
    Let $(L, \xi)$ be an object of $\scrF^{loc}(X; \Psi)$, i.e. $L$ is a closed exact $\Psi$-oriented Lagrangian in $X$, and $\xi$ a $\Psi$-local system on $L$.  Then:
        \begin{equation}
            [L,\xi] = [L] \cap [\eta\xi] \in \Omega_d^{E_\Psi,\cO\cC}(L, \phi_L).
        \end{equation}
\end{thm}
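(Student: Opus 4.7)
The plan is to reduce the identity to an explicit bordism comparison between two $E_\Psi$-oriented cycles over $L$ in the sense of Definition \ref{defn: oc-bordism}. First I would unpack the left hand side via the twisting construction of Section \ref{sec:twist me}: the twisted moduli spaces $\cM^\xi$ underlying the unit and co-unit of $(L,\xi)$ in $\scrF^{loc}(X;\Psi)$ arise as transverse preimages $\hat\Xi_{\cM}^{-1}(0) \subseteq \cM \times S^n$ of maps built from the composition $\cM \to \cP L \xrightarrow{\xi} GL_1(R_\Psi)$, i.e.\ from the \emph{full path} evaluation of a half-plane boundary to $L$. The $\cO\cC$-class $[L,\xi]$ is the bordism class of this preimage equipped with its evaluation to $X$ and its induced $E_\Psi$-orientation.

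Next, I would unpack the right hand side using the $R_\Psi^*$-module structure on $\cO\cC$-bordism from Lemma \ref{lem:module}. Capping $[L]$ with $[\eta\xi] \in R_\Psi^0(L)$ should be representable by a structurally identical transverse preimage construction: a stabilised untwisted moduli space $\cM \times S^m$ mapped to $\Thom(E_\Psi)$, but where the twist factor is pulled back via a \emph{single-point} evaluation $\cM \to L$ composed with the map $\eta_*\xi: L \to GL_1^\Psi$ representing the cohomology class $[\eta\xi]$. Both sides are therefore transverse preimages of zero under maps out of the same (suitably stabilised) source, and the task becomes comparing two twisting maps built from $\xi$.

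The core step, and the main obstacle, is to exhibit a one-parameter deformation interpolating between these two twisting maps and to verify it defines a legitimate $E_\Psi$-oriented $\cO\cC$-bordism. Geometrically, the full-path evaluation $\cM \to \cP L$ differs from a single-point boundary evaluation by a deformation parametrised by an extra $S^1$ coordinate (collapsing the path to its endpoint); by Lemma \ref{lem: eta acts}, the monodromy of this extra circle acts on the infinite loop space $GL_1^\Psi$ as $\eta$, and this is precisely the structural reason the Hopf element appears in the claimed identity rather than just $\xi$ itself. The compatibility of this interpolation with the $E_\Psi$-orientations, i.e.\ that the $\eta$-correction in the twisting is compensated by the way index bundles over spaces of abstract discs interact with the real-part map to $BO$, is exactly the content of Proposition \ref{prop: uo eta}. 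The main technical hurdle is checking that this deformation can be executed coherently across all the moduli spaces underlying the unit and co-unit simultaneously, with stabilisation data and index data transforming correctly along the interpolation; the commutativity hypothesis on $\Psi$ is essential, since it is what allows $\eta_*$ to be defined on $BGL_1^\Psi$ in the first place, and what makes the deformation into a map of $\cI$-monoids.
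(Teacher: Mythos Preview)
This theorem is not proved in the present paper. It is explicitly stated as a recap of the main result of the prequel \cite{PS3}: the sentence immediately preceding the statement reads ``Then the main result of \cite{PS3} asserts that the twisted fundamental class is given by the module action of Lemma \ref{lem:module}, using this particular class,'' and the entire section in which it appears is a r\'esum\'e of constructions and results from \cite{PS3}. There is therefore no proof in this paper to compare your proposal against.

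That said, your outline does pick out ingredients the paper flags as relevant to the argument in \cite{PS3}: the twisting construction of Section \ref{sec:twist me} for the left hand side, the module action of Lemma \ref{lem:module} for the right, the role of Lemma \ref{lem: eta acts} in producing the $\eta$-action, and Proposition \ref{prop: uo eta} linking $\eta$ to the real-part map. Your heuristic for why $\eta$ appears --- that collapsing a full boundary path to a single point introduces an extra $S^1$ whose monodromy acts as $\eta$ --- is suggestive but imprecise as written; the honest mechanism involves comparing the $\cI$-monoid structure on $GL_1^\Psi$ (which governs twisting) with loop concatenation, and extracting $\eta$ from the resulting Eckmann--Hilton comparison on $BGL_1^\Psi$. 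Whether your sketch matches the actual argument of \cite{PS3}, or underestimates the coherence work required, cannot be judged from this paper alone.
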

\begin{rmk}\label{rmk: recap rmk 1.4 ps3}
    \cite[Corollary 1.4]{PS3} implies the difference between the usual bordism class $[L]-[L]\cap[\eta\xi]$ is always trivial after inverting the prime 2.

    If $L$ is a homotopy sphere, this difference is in fact 2-torsion.
\end{rmk}
\begin{rmk}\label{recap: OC bordism maps here}
    \cite[Example 5.22]{PS3} produces maps from $\cO\cC$-bordism groups to more familiar theories in some cases:
    \begin{enumerate}
        \item If $\Psi=(\pt,BS_\pm U)$, there is a map $\Omega^{E_\Psi;\cO\cC}_i(X, \phi) \to MU_i(X)$.
        \item If $\Psi=(BO \times F, BO)$ is as in Example \ref{qi ex: tang str pol}, the $\Psi$-structure on $X$ corresponds to a polarisation $TX \cong V \otimes \bC$. Then there is a map to bordism relative to $V$: $\Omega^{E_\Psi;\cO\cC}_i(X,\phi) \to \Omega^{E_\Psi}_i(X, V)$.
    \end{enumerate}
    In both cases, these are $R^*(X)$-module maps. 

    Our applications combine Theorem \ref{thm: gour main theorem} with this to land in a more familiar bordism theory.
\end{rmk}

Together with the lifting theorem for quasi-isomorphisms established below, this is the key result from \cite{PS3} that underpins the applications in Section \ref{sec: applications}.

\section{Morse models for Floer flow categories\label{sec:morse-floer}}

Let $L$ be a closed exact Lagrangian in a Liouville domain $X$. 

\subsection{Morse flow categories}

It will be helpful to have a Morse model for the flow category associated to the pair of branes given by $L$ and a (sufficiently small) transverse Hamiltonian push-off of $L$.

         Assume that $L$ is a smooth manifold of real dimension $d$ and let $f: L \to \bR$ be Morse. To define the Morse flow category $\cM(f)$, we set the grading of a critical point to be given by 
         \begin{equation} \label{eqn:damned conventions}
         |x| = \mathrm{ind}(x).
         \end{equation} 
         We work with \emph{negative} gradient flow, so the function $f$ decreases along flow-lines, and moduli spaces $\cM(f)_{xy}$ are spaces of negative flow-lines from $x$ to $y$.

         Assuming the Riemannian metric is chosen to be standard at the critical points, 
         $\cM(f)$ admits the structure of a smooth flow category \cite{Wehrheim, CJS}.  By using Moore paths it comes with a functor $\cM(f) \to \cP L$, so it admits the structure of a flow category over $L$.

         \begin{rmk}
             Working with negative gradient flow (which corresponds to a `positively adapted' gradient field in the terminology of \cite{Abbaspour-Laudenbach}) means that our Morse complex computes absolute homology. 
         \end{rmk}

         \begin{rmk}\label{rmk: boundary ok}
             If $L$ is a manifold with boundary, and we pick a Morse function $f: L \to \bR$ with only interior critical points and with $-\nabla f$ inwards pointing at the boundary, there is still a well-defined flow category $\cM(f)$. A typical example of interest is to take $L$ the sublevel set of a Morse function on a closed manifold. 
         \end{rmk}

We will write $\cM^{LL}$ for the flow category associated to the pair $(L, \phi_f^1(L))$ where $f$ is a Morse function on $L$. Then the set of Hamiltonian chords $\cX(L,L)$ between $L$ and $\phi_f^1(L)$ bijects with the critical points of the Morse function $f$. 
We record the following stronger Morse-Floer comparison, cf. \cite[Remark 5.3]{PS3}.

\begin{lem}\label{lem:morse-floer}
Let the Morse function $f: L \to \bR$ be sufficiently $C^2$-small. There is a taming almost complex structure $J$ with respect to which 
    there is an equivalence of \emph{unoriented} flow categories $\cM(f) \simeq \cM^{LL}[d]$.
\end{lem}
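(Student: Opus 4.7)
The plan is to work in a Weinstein neighbourhood $\nu(L) \subset X$ of $L$, which by the Weinstein tubular neighbourhood theorem is symplectomorphic to a neighbourhood of the zero section in $T^*L$. For $f$ sufficiently $C^2$-small, the time-one image $\phi_f^1(L)$ can be arranged to lie in $\nu(L)$ and, under the identification with $T^*L$, to coincide with (a $C^1$-small perturbation of) the graph $\Gamma_{df}$. Hamiltonian chords from $L$ to $\phi_f^1(L)$ of bounded action then biject with critical points of $f$ via evaluation at $t=0$. I would choose $J$ to be (the restriction of) an almost complex structure that on $\nu(L)$ is the standard Sasaki-type almost complex structure coming from a Riemannian metric on $L$ which is Euclidean in Morse charts around the critical points of $f$, extended to a taming $J$ on all of $X$; the smallness of $f$ is what will ultimately force Floer strips to stay inside $\nu(L)$.

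With this set-up, the core of the argument is the classical Floer/Fukaya--Oh comparison: for sufficiently small $f$, the moduli space $\cM^{LL}_{xy}$ of Floer strips between the chords corresponding to critical points $x,y$ is diffeomorphic (as a manifold with corners) to the Morse moduli space $\cM(f)_{xy}$ of negative gradient flowlines. The direction ``Morse $\to$ Floer'' takes a flowline $\gamma$ to the strip $u(s,t) = (\gamma(s), t\,df_{\gamma(s)}) \in T^*L$, suitably reparametrised; the hard direction constructs a bijective inverse via a contraction mapping / Newton iteration argument, using uniform energy and $C^0$-estimates that confine strips to $\nu(L)$. All of this is well documented in the literature; the point is that I would invoke the sharpened smoothness version from \cite{FO3:smoothness} so that the bijection is actually a diffeomorphism of manifolds with corners at all strata, compatible with the product/gluing maps of the flow categories. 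This is the only real analytic input, and is the main obstacle; everything else is essentially bookkeeping.

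To upgrade this to an equivalence of flow categories \emph{over} $L$, I would check that the evaluation maps match up to homotopy: a Morse flowline $\gamma$ evaluates to the Moore path $\gamma$ itself in $\cP L$, while a Floer strip $u$ evaluates to the boundary restriction $u|_{\bR \times \{0\}}$ (viewed in $L$ via projection in the Weinstein chart), and under the Floer-to-Morse diffeomorphism these maps are $C^0$-close, hence homotopic, giving an isomorphism in $\Flow_{/L}$ via Example \ref{ex: ev hom iso flow}. Finally, the degree shift $[d]$ is a purely linear-algebra computation with Maslov indices: with the grading convention $|x| = \mathrm{ind}(x)$ of \eqref{eqn:damned conventions}, the Floer/Maslov grading of the chord corresponding to $x$ under the graph Lagrangian construction for a $C^2$-small Morse function is $\mathrm{ind}(x) - d$ (see e.g.\ the standard computation for $T^*L$), so shifting $\cM^{LL}$ by $d$ restores the Morse grading. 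Since we are only asserting an \emph{unoriented} equivalence we do not need to verify coherent orientation data.
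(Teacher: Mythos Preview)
Your outline follows the classical Fukaya--Oh ``graph Lagrangian'' route, whereas the paper takes a slightly different and cleaner path: it uses the autonomous Hamiltonian $H=f$ directly (no graph, no Weinstein neighbourhood), chooses a time-independent $J_f$, and observes that for $C^2$-small $f$ every $J_f$-Floer solution is regular and $t$-independent, hence literally \emph{equal} to a negative gradient flowline $u'(s)=J_fX_H=\nabla f$. This gives an exact bijection $\cM(f)_{xy}\leftrightarrow\cM^{LL}_{xy}$ on the nose; your explicit map $u(s,t)=(\gamma(s),\,t\,df_{\gamma(s)})$ is only an approximate Floer solution and would require the Newton iteration you mention, which the paper's setup avoids entirely.

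The more substantive gap is in your smooth-structure step. You invoke \cite{FO3:smoothness}, but the smooth structure on Floer moduli spaces used throughout this paper is the one from \cite{Large,PS}, which depends on a choice of gluing profile and requires that the chosen $J$ admit a dense set of ``regular points'' on each strip. The paper checks that $J_f$ meets this condition, so the Large--PS charts apply; it then argues that since the Morse and Floer linearisations along any solution have the same kernel, the tangent microbundles of the two moduli spaces admit compatible vector-bundle lifts (coherently under breaking), and the existence of a diffeomorphism follows from general smoothing theory. This abstract microbundle argument is the key idea you are missing: simply citing \cite{FO3:smoothness} does not explain why the \emph{specific} smooth structure used here matches the Morse one, nor does a contraction-mapping bijection automatically upgrade to a diffeomorphism of manifolds with corners without comparing charts or tangent bundles. (Your paragraph on evaluation maps over $L$ is fine but superfluous---the lemma as stated is only about unoriented flow categories, not flow categories over $L$.)
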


\begin{rmk}
    We will construct an equivalence (i.e. invertible bimodule) between the flow categories, rather than a diffeomorphism, to avoid having to directly compare the charts on the Morse and Floer sides, which are constructed somewhat differently. 
\end{rmk}
\begin{proof}
    We take the $C^2$-small autonomous Hamiltonian $H = f$ on a Weinstein neighbourhood of $L$, extended to the rest of $L$ via a cut-off function. The fact that, under the $C^2$-smallness condition, $f$ determines a time-independent almost complex structure $J_f$ for which the Morse and Floer moduli spaces are homeomorphic as stratified topological manifolds is standard. We remark that the proof has three ingredients: (i) all the time one $H$-chords from $L$ to itself are constant; (ii)  all $J_f$-Floer solutions are regular and $t$-independent and hence solve a Morse flowline  equation $u'(s) = J_fX_H = \grad(H)$; and (iii) the linearisations of the Morse and Floer equations along any such solution have the same kernel. 
    
The smooth structure on each side depends on a choice, e.g. of flat Riemannian metric near the critical points on the Morse side and of gluing profile on the Floer side. Indeed, the construction of smooth charts on Floer moduli spaces in \cite{Large,PS} uses a $J$ for which each Floer strip $u$ admits a dense set of `regular points' $z=(s,t) \in \bR\times [0,1]$, i.e points  where $u^{-1}(u(\bR\times\{t\})) = z$, $\partial_s u|_z \neq 0$ and $u(z)$ does not belong to any chord. By inspection, $J_f$ satisfies these conditions, so the construction of smooth charts on the Floer moduli spaces applies in this case.  The compatibilities of linearisations, (iii) above, amounts to saying that having fixed these  choices, the tangent microbundles of the Morse and Floer moduli spaces admit compatible vector bundle lifts (furthermore compatible under breaking).

It then follows from general smoothing theory  that the flow categories are \emph{stably diffeomorphic}, in the following sense.  For objects $x,y$ of the flow categories, we can find vector spaces $E_{xy}$, with embeddings $E_{xz} \oplus E_{zy} \to E_{xy}$, and smooth structures on the products $\cB_{xy} := \cM_{xy} \times [0,1] \times E_{xy}$, which interpolate between the given smooth structures on $\cM(f)_{xy}$ respectively $\cM^{LL}_{xy}$ at the ends.  (The need for the stabilising vector spaces $E_{xy}$ is to avoid subtleties in smoothing theory for manifolds of dimension $4$; in all other dimensions, concordance classes of smoothing of a topological manifold biject with homotopy classes of vector bundle lift of the tangent microbundle, whilst in dimension four the vector bundle lift only gives a smooth structure on the stabilisation of the manifold given by taking a product with a positive-dimensional Euclidean space.) The original smoothing theory is due to Lashof \cite{Lashof}, and was extended to the required relative setting and to systems of flow category moduli spaces by Bai and Xu \cite{Bai-Xu} and by Rezchikov \cite{Rezchikov}. 

We now have `derived manifold' presentations of the moduli spaces $\cM_{xy}$, i.e. we have bundles $E_{xy} \to \cM_{xy}$ which come with sections whose zero-set defines the original moduli space. (For comparison to \cite{Bai-Xu}, we have effected the `stabilisation move' on derived orbifold presentations from \cite{AMS,Bai-Xu} in the special case of trivial groups, so no orbifold points.) On the space $\cB_{xy}$, the tautological section is smooth over the ends $\{0,1\}$ of the interval factor -- because the smooth structure is constructed relative a given structure at the ends -- but only continuous globally. However, it can be replaced by a homotopic smooth section, whose zero-set $\cW_{xy}$ defines a cobordism between $\cM(f)_{xy}$ and $\cM^{LL}_{xy}$.  The fact that the stable moothings are constructed for all the flow category moduli spaces compatibly with breaking gives rise, upon constructing such smooth sections inductively in dimension, to a collection of cobordisms which defines a morphism $\cW$ from $\cM(f)$ to $\cM^{LL}$. This morphism is invertible when viewed as a morphism in $\mathrm{Flow}$, because the smooth structure on the concatenation of $\cB_{xy}$ with its reversal in the $[0,1]$-direction gives a smooth structure on $\cM(f) \times [0,2] \times E_{xy}$ which is concordant to the product smooth structure, and thus one can homotope the composition smooth section to the trivial tautological section. 

The existence of an invertible morphism gives the equivalence between $\cM(f)$ and $\cM^{LL}$ in the category of unoriented flow categories.
Finally, note that our Floer grading conventions are homological with unit in degree zero, cf.  \cite[Remark 1.4]{PS}, but our Morse gradings reproduce classical singular homology (with unit in degree $d$), so $|x|_{Floer} = |x|_{Morse} -d $.
\end{proof}

\begin{rmk}
    Replacing $f$ by $\varepsilon\cdot f$ for $\varepsilon>0$ sufficiently small, which does not change any of the Morse moduli spaces, shows that we can compute (a category equivalent to) $\cM^{LL}$ via any choice of Morse function on $L$.
\end{rmk}

\begin{rmk}
    The Morse flow category $\cM(f)$ admits the structure of a framed flow category, and \cite[Section 6]{Blakey} relates the index bundles on the two sides of the equivalence of Lemma \ref{lem:morse-floer} in that case. Given a $\Psi$-brane structure on $L$, along with stabilisation and index data, we may construct an $E_\Psi$-orientation on both the Morse flow category $\cM(f)$ and the invertible bimodule in Lemma \ref{lem:morse-floer}. Indeed one can push an $E_\Psi$-orientation through a stable diffeomorphism, and then the `thickenings' $\cB_{xy}$ are all $E_\Psi$-oriented, and so are the resulting bordisms.
\end{rmk}

 \begin{lem} \label{lem:change morse} Let $L$ be a $\Psi$-oriented Lagrangian.  
        Up to isomorphism in $\Flow^{E_\Psi}_{/L}$ the flow category $\cM(f)$ is independent of the choice of Morse function $f$ on $L$. When $\partial L \neq \emptyset$, the same result holds if one varies the Morse function through functions which satisfy that $-\nabla f$ is inwards-pointing along the boundary for all time.
        \end{lem}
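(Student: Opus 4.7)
The plan is to reduce the independence statement to the already-established well-definedness of the Floer flow category $\cM^{LL}$ (up to isomorphism in $\Flow^{E_\Psi}_{/L}$) within the spectral Fukaya category construction from \cite{PS3}. The key observation is that the Morse flow category $\cM(f)$ depends very weakly on $f$ once $f$ is rescaled to be $C^2$-small, so one can transport the question to a setting where Lemma \ref{lem:morse-floer} applies on the nose.

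First, given any Morse function $f$ on $L$ and any $\lambda > 0$, the rescaled function $\lambda f$ has the same critical points, and its parameterised negative gradient flow lines differ from those of $f$ only by a linear reparameterisation of the $\bR$-domain. Thus $\cM(\lambda f)$ and $\cM(f)$ have identical underlying flow categories. The evaluation maps to $\cP L$ differ only by rescaling the Moore-path length by a factor of $1/\lambda$; a linear homotopy of lengths between these two choices gives a homotopy through evaluation maps, so by Example \ref{ex: ev hom iso flow} we obtain a canonical isomorphism $\cM(f) \cong \cM(\lambda f)$ in $\Flow^{E_\Psi}_{/L}$, functorial in the isomorphism data. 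The $E_\Psi$-orientations match under this identification because the linearised operators are unchanged up to reparameterisation. In the case $\partial L \neq \emptyset$, the inward-pointing condition on $-\nabla f$ is preserved by rescaling, so the construction applies verbatim.

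Next, choosing $\lambda$ small enough that $\lambda f$ is $C^2$-small in the sense of Lemma \ref{lem:morse-floer}, the diffeomorphism constructed there identifies $\cM(\lambda f)$ with the Floer flow category $\cM^{L,\phi^1_{\lambda f}(L)}[d]$; as noted in the remark preceding the present lemma, this identification is compatible with the $E_\Psi$-orientation (which on the Morse side is \emph{defined} via this identification) and with the evaluation to $\cP L$. The Floer flow category for the pair $(L, \phi^1_{\lambda f}(L))$ is independent up to isomorphism in $\Flow^{E_\Psi}_{/L}$ of the auxiliary perturbation data (including the choice of Hamiltonian push-off), as part of the construction of $\scrF^{\loc}(X;\Psi)$ recalled in Section \ref{sec:Fuk and OC}. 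Combining these three isomorphisms for two different Morse functions $f_0, f_1$ yields the desired isomorphism $\cM(f_0) \cong \cM(f_1)$ in $\Flow^{E_\Psi}_{/L}$.

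The main obstacle is verifying the second step — that the Morse-to-Floer diffeomorphism from Lemma \ref{lem:morse-floer} is compatible with both the evaluation map to $\cP L$ and the $E_\Psi$-orientation. The evaluation compatibility is immediate since a Floer strip for a $C^2$-small Hamiltonian is $t$-independent and coincides, as a path in $L$, with the underlying Morse trajectory. The $E_\Psi$-orientation compatibility is what justifies \emph{defining} the $E_\Psi$-orientation on $\cM(f)$ via the equivalence, so is essentially tautological once one has fixed the conventions; the substantive content (that the index bundles match in families, including under breaking) is precisely the input cited from \cite{Blakey} in the remark preceding the lemma. A purely intrinsic alternative would be to construct continuation morphisms directly and show invertibility by a neck-stretching bordism to $\bD\cM(f_0)$, but this would require independently verifying $E_\Psi$-orientation compatibility across bifurcations in a generic path of Morse functions, which is appreciably more technical than the reduction route above.
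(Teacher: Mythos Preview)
Your argument is correct and, once unpacked, is essentially the paper's own proof. The paper simply says: the continuation morphism associated to a homotopy of Morse functions (equivalently, of $C^2$-small Floer data) naturally carries $E_\Psi$-orientation data and lives over $L$, hence gives the required isomorphism. Your ``Floer well-definedness of $\cM^{LL}$'' is precisely that continuation statement, so the rescaling step and the detour through Section~\ref{sec:Fuk and OC} just re-derive what the paper invokes in one line.

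One comment on your final paragraph: the ``purely intrinsic alternative'' you set aside as more technical is exactly what the paper does, and it is not more technical. Continuation morphisms do not require tracking orientations through bifurcations of a generic path of Morse functions; one simply takes the moduli spaces of the time-dependent gradient equation (which for $C^2$-small data \emph{are} Floer continuation moduli spaces) and equips them with $E_\Psi$-orientations from the general Floer machinery of \cite{PS3}. So your worry is misplaced, and the route you chose and the one you rejected coincide.
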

        
        \begin{proof}
            This follows from the fact that the morphism of flow categories associated to a continuation of Morse functions (or $C^2$-small Floer data) naturally admits $E_{\Psi}$-orientation data and  lives over $L$.
        \end{proof}

\subsection{An endomorphism computation}

In a small number of cases, we can completely identify the endomorphisms of an object of the spectral Fukaya category.

Assume $\Psi=fr$. Abouzaid-Blumberg \cite{AB} construct an equivalence (of homotopy categories) $\Flow^{fr} \to \bS$-mod, and \cite{Blakey} shows (using the Morse-theoretic model discussed above) that under this equivalence $\cM^{LL}$ is taken to the suspension spectrum $\Sigma^{\infty}_+ L$. A variation on Blakey's argument shows:

\begin{lem}\label{lem:22}
    Let $(L, \xi)$ be an object in $\scrF^{loc}(X; fr)$. Under the equivalence $\Flow^{fr} \to \bS$-mod, the flow category $\cM^{\xi_L,\xi_L}$ is also taken to $\Sigma^{\infty}_+L$.
\end{lem}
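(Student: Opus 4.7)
The plan is to reduce to a Morse model and observe that the twisting by the same rank-one local system $\xi$ on both branes cancels, so that Blakey's identification of $\cM^{LL}$ with $\Sigma^\infty_+L$ carries through essentially unchanged.

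First I would apply Lemma \ref{lem:morse-floer} to choose a $C^2$-small Morse function $f: L \to \bR$ identifying $\cM^{LL}$, up to the standard Floer degree shift, with the Morse flow category $\cM(f)$ in $\Flow^{fr}$. The twisting construction of Section \ref{sec:twist me} is natural under morphisms of flow categories over $L$, so this identification lifts to an isomorphism $\cM^{\xi_L,\xi_L} \simeq \cM(f)^\xi$ in $\Flow^{fr}$, where $\cM(f)^\xi$ denotes the Morse flow category with twisting data pulled back from $\xi$. Blakey's identification of the untwisted $\cM(f)$ with $\Sigma^\infty_+ L$ under the Abouzaid--Blumberg equivalence will then be the target of the argument.

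Second, I would analyse the twisting maps $\Xi_{\cM}: \cM^{\xi_L,\xi_L}_{xy} \to GL_1(n_{xy})$ constructed in \cite[Section 7]{PS3}. They are built from the monodromies of $\xi$ along the two boundary arcs of each Floer strip. Since both branes carry the same local system, and since the two branes enter asymmetrically (one as ``input'', the other as ``output''), the two monodromy contributions combine multiplicatively in the group-like $\cI$-monoid $GL_1(\bS)$ with opposite conventions. For $f$ sufficiently $C^2$-small, the two boundary arcs of any strip are $C^0$-close to the same Hamiltonian chord and admit a canonical homotopy rel endpoints inherited from the strip itself. I would use this homotopy to exhibit $\Xi_{\cM}$ as canonically null-homotopic as a twisting datum, yielding an isomorphism $\cM(f)^\xi \simeq \cM(f)$ in $\Flow^{fr}$ and hence the conclusion via Blakey.

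The main obstacle will be step two: making the ``two monodromies cancel'' argument rigorous at the level of the actual construction of $\Xi_{\cM}$, which is performed coherently in \cite[Section 7]{PS3} with respect to a specific choice of spider, stabilisation data and transverse sections for each moduli space. A clean way is likely to arrange the spider and parametrisations associated to the two copies of the brane $(L,\xi)$ to coincide, so that the two boundary contributions to $\Xi_{\cM}$ become literal evaluations of $\xi$ along the same path in $L$, and therefore multiply canonically to the unit in $GL_1(\bS)$. Checking that this cancellation is compatible with breaking and concatenation of Floer strips should follow from the compatibility of the coherent spider with the flow-category composition maps, but will require bookkeeping analogous to Blakey's in the untwisted setting.
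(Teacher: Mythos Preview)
Your overall route matches the paper's: pass to the Morse model via Lemma~\ref{lem:morse-floer}, observe that in that model both boundary components of a strip are the same Morse flow line, and use this to identify the double-twisted category with the untwisted one so that Blakey's result applies. That part is fine.

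The gap is in your cancellation mechanism. You argue that the two monodromy contributions ``combine multiplicatively \ldots\ with opposite conventions'' and ``multiply canonically to the unit in $GL_1(\bS)$''. But the twisting construction of \cite[Section~7]{PS3} does \emph{not} multiply the two boundary contributions in $GL_1$: it uses them as two separate maps $\cF_{xy} \to GL_1(n_{xy})$ and $\cF_{xy} \to GL_1(n_{yx})$, and the twisted moduli space is the preimage of $(0,0)$ under the resulting map
\[
\cF_{xy} \times S^{N_{xy}} \times S^{N_{yx}} \longrightarrow S^{N_{xy}} \times S^{N_{yx}}.
\]
That the pair of maps lands in the anti-diagonal $\{(g,g^{-1})\}$ does not by itself trivialise this preimage construction; neither individual map to $GL_1$ is null-homotopic when the underlying loop in $L$ is homotopically non-trivial.

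The paper instead uses that in the Morse model the evaluation $\cF_{xy} \to \cP_{xy}L \times \cP_{yx}L$ lands in the (anti-)diagonal and can be retracted, coherently under breaking, to a space of constant paths. Constant paths map to the unit in $GL_1$, so this retraction of the \emph{evaluation maps themselves} (rather than an algebraic cancellation of their product) is what produces a framed bordism between $\cM^{\xi_L\xi_L}$ and $\cM^{LL}$. You correctly flag coherence under breaking as the main obstacle, but your proposed resolution via multiplication in $GL_1$ is not the one that works; you need to homotope each evaluation map to a constant, not to argue that the two together have trivial product.
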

\begin{proof}
    We can identify $\cM^{LL}$ and $\cM(f)$ as $E_{fr}$-oriented flow categories, since the $\Theta$-orientation on $L$ is induced from a stable framing.  For critical points $x,y$ there are maps
    \begin{equation}\label{eqn:twisting morse flow cats}
    \cF_{xy} \to \cP_{xy}(L) \times \cP_{yx}(L) \stackrel{\sim}{\longrightarrow} \Omega L \times \Omega L \to GL_1(\bS) \times GL_1(\bS)
    \end{equation}
    where the first arrow evaluates along the two boundary components. Such a map yields, given the boundary stabilisation data underlying the $E_{fr}$-orientations, maps
    \[
    \cF_{xy} \times (S^{N_{xy}} \times S^{N_{yx}}) \to S^{N_{xy}} \times S^{N_{yx}}
    \]
    (where the two $S^N$-factors arise from the two boundaries of the strip, which may have different rank boundary stabilisations). By definition the moduli spaces of the twisted flow category $\cM_{xy}^{\xi_L\xi_L}$ are given by preimages of $(0,0)$ under such.
    
    Since we start with a Morse flow category, the first map in \eqref{eqn:twisting morse flow cats} lands in the diagonal, and can be retracted back to a space of constant paths. A coherent choice of such retractions under breaking yields a (framed) bordism and thence an equivalence between $\cM^{LL}$ and $\cM^{\xi_L\xi_L}$.
\end{proof}

\subsection{Ascending and descending manifolds}\label{Sec:ascending descending}

  We recall some conventions on ascending (compactified unstable) and descending (compactified stable) manifolds of a Morse function, and their Floer analogues in the setting of Lemma \ref{lem:morse-floer}, where they are moduli spaces of perturbed holomorphic half-planes.

By definition,  the ascending manifold $\cA(x)$ is 
a compactification of a space of half flow-lines $\gamma: (-\infty,0] \to L$, which are asymptotic to $x$ as $s \to -\infty$ under \emph{positive} gradient flow. Thus $\cA(x)$ is a space of \emph{positive} half-flow-lines asymptotic \emph{to} $x$ from the viewpoint of negative gradient flow.  Thus, we have 
\begin{equation} \label{eqn:break ascending}
\partial \cA(x) = \cA(x') \times \cM(f)_{x'x}.
\end{equation}
Similarly, the descending manifold is a space of maps $[0,\infty) \to X$ which are positive gradient flow half-lines, asymptotic to $y$, equivalently a space of maps from the negative half-line originating from $y$ under negative gradient flow, so
\begin{equation} \label{eqn:break descending}
\partial \cD(y) = \cM(f)_{yy'} \times \cD(y').
\end{equation}
See Figure \ref{Fig:ascending_descending}.

\begin{figure}[ht]
\begin{center} 
\scalebox{0.75}{
\begin{tikzpicture}

\begin{scope}[decoration={
    markings,
    mark=at position 0.5 with {\arrow{>}}}
    ] 
\draw[semithick,postaction={decorate}] (-2,2) -- (-2,-2);
\draw[semithick,postaction={decorate}] (-3,2) -- (-4,0);
\draw[semithick,postaction={decorate}] (-4,0) -- (-3,-2);
\end{scope}

\draw[fill] (-2,2) circle (0.1);
\draw[fill,gray!30] (-2,-2) circle (0.1);
\draw (-2,2.5) node {$y$};
\draw (-1.25,0) node {$-\nabla f$};
\draw (1.25,0) node {$-\nabla f$};

\draw[fill] (-3,2) circle (0.1);
\draw[fill] (-4,0) circle (0.1);
\draw[fill,gray!30] (-3,-2) circle (0.1);
\draw (-3,2.5) node {$y$};
\draw (-4.5,0) node {$y'$};

\begin{scope}[decoration={
    markings,
    mark=at position 0.5 with {\arrow{>}}}
    ] 
\draw[semithick,postaction={decorate}] (2,2) -- (2,-2);
\draw[semithick,postaction={decorate}] (3,2) -- (4,0);
\draw[semithick,postaction={decorate}] (4,0) -- (3,-2);
\end{scope}

\draw[fill,gray!30] (2,2) circle (0.1);
\draw[fill] (2,-2) circle (0.1);
\draw (2,-2.5) node {$x$};
\draw[fill,gray!30]  (3,2) circle (0.1);
\draw[fill] (4,0) circle (0.1);
\draw[fill] (3,-2) circle (0.1);
\draw (3,-2.5) node {$x$};
\draw (4.5,0) node {$x'$};

\end{tikzpicture}
}
\end{center}
\caption{Descending manifold $\cD(y)$ of dimension $\ind(y)$ and ascending manifold $\cA(x)$ of dimension $d-\ind(x)$, with arrows labelling negative gradient flow}
\label{Fig:ascending_descending}
\end{figure}

There are canonical evaluation maps $ev$ from $\cA(x)$ and $\cD(y)$ to $L$, via evaluation at the end-point  $\gamma \mapsto \gamma(0)$. By taking the metric to be standard near the critical points, we always assume that the ascending and descending manifolds $\cA(x)$ and $\cD(y)$ admit the structure of smooth manifolds with corners, see \cite{Wehrheim} (see also \cite[Theorem 3.7]{Cip} for closely related discussions).

\begin{lem}
    The descending manifolds $\cD(y)$ define a morphism $\cM(f) \to \ast[0]$. The ascending manifolds $\cA(x)$ define a morphism $\ast[d] \to \cM(f)$.
\end{lem}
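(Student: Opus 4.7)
The plan is to verify that $\{\cD(y)\}$ and $\{\cA(x)\}$ satisfy each axiom of a morphism in $\Flow^{E_\Psi}_{/L}$, identifying the corresponding feature of the ascending/descending manifolds for each piece of data. First, the numerics match: a morphism $\cW: \cF \to \cG$ requires spaces $\cW_{xy}$ of dimension $|x|-|y|$, and here $\dim \cD(y) = \ind(y) = |y|$ matches $\cW_{y,*}$ with $* \in \ast[0]$, while $\dim \cA(x) = d - \ind(x) = d - |x|$ matches $\cW_{*,x}$ with $* \in \ast[d]$. The boundary formulas \eqref{eqn:break descending} and \eqref{eqn:break ascending} supply exactly the codimension-one strata required, the target $\ast[0]$ (resp.\ source $\ast[d]$) contributing no additional breaking in the $\cD$ (resp.\ $\cA$) case. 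The smooth structure with corners follows from \cite{Wehrheim}, given the standard form of the metric near critical points.

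Next, to promote to morphisms over $L$, I would construct the evaluation to $\cP L$. For the descending case, endpoint evaluation yields $ev: \cD(y) \to L$; to produce a Moore path from $y$ to the basepoint $\ell_0$, concatenate the flow-line with a continuously chosen arc from $ev(\gamma)$ to $\ell_0$. Such a choice exists by enlarging the spider $Y$ to include a trivialisation of $\cP_{\cdot, \ell_0} L \to L$ over a neighbourhood of $\mathrm{image}(ev)$; any two such choices are homotopic and hence give isomorphic morphisms over $L$ by Example \ref{ex: ev hom iso flow}. The case of $\cA(x)$ is analogous, concatenating on the initial end.

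Next, I would equip $\cD(y)$ and $\cA(x)$ with $E_\Psi$-orientation data. Under the Morse-Floer identification of Lemma \ref{lem:morse-floer}, each half-flowline in $\cD(y)$ corresponds to a Cauchy-Riemann half-plane with one asymptotic marker at $y$ and one free boundary endpoint on $L$. The $\Theta$-orientation of $L$ then endows this datum with a classifying map into the space of abstract discs $\bU^\Psi$ of Section \ref{sec:tp and ad}, and the tangent bundle of $\cD(y)$ identifies, after a choice of stabilisation and index data, with the pulled-back index bundle $\bV^\Psi$, producing the required map of Thom pairs. The same argument applies to $\cA(x)$.

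The main obstacle will be coherence: the $E_\Psi$-orientations on $\cD(y)$ and $\cA(x)$ must be compatible, under the boundary strata \eqref{eqn:break descending} and \eqref{eqn:break ascending}, with the $E_\Psi$-orientation of $\cM(f)$, so that the addition maps of the Thom $\cI$-monoid $E_\Psi$ translate correctly across the corners of the ascending and descending manifolds. This is an index-theoretic gluing argument of the same flavour as the construction of the $E_\Psi$-orientation on $\cM(f)$ itself, and so can be carried out following the template of \cite{Large, PS, PS3}.
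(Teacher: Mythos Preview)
Your first paragraph is correct and is essentially the paper's proof: the lemma as stated is only claiming a morphism in $\Flow$, and the paper's argument reduces to exactly the dimension count and the boundary formulas \eqref{eqn:break ascending}, \eqref{eqn:break descending} that you give.

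The remainder of your proposal overshoots the statement. The lemma does not assert an $E_\Psi$-oriented morphism, nor a morphism over $L$; both refinements are handled separately later in the paper. For the $E_\Psi$-orientations, the paper takes a different route from the one you sketch: rather than classifying each half-flowline directly into $\bU^\Psi$, it first proves (Lemma \ref{lem:morse-floer-ascending-descending}) a diffeomorphism $\cA \cong \cE$, $\cD \cong \Upsilon$ with the Floer-theoretic unit and counit modules, and then \emph{pulls back} the $E_\Psi$-orientations already constructed on $\cE, \Upsilon$ in \cite{PS3}. This sidesteps the coherence problem you flag in your final paragraph, since the compatibility with breaking is inherited from the Floer side rather than verified by hand on the Morse side. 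Your direct approach is plausible but is precisely what the paper chooses to avoid. Regarding the over-$L$ structure: the paper never equips $\cA$ or $\cD$ with maps to $\cP L$ via spider extension; the evaluation maps $ev: \cA(x), \cD(y) \to L$ are used only as point-evaluations for fibre products (e.g.\ $\cD(y) \times_L \cA(x)$), not to manufacture Moore paths to $\ell_0$.
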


\begin{proof}
In our grading convention \eqref{eqn:damned conventions}, $|q| = \ind(q)$ and the moduli spaces are spaces of negative gradient flow-lines.  The fact that the descending manifolds form a left module and the ascending manifolds a right module follows from \eqref{eqn:break ascending} and \eqref{eqn:break descending}.   To check the claimed gradings, note that the open stratum of $\cD(y)$ is a cell of dimension $\ind(y) = |y|$ whilst that of $\cA(x)$ has dimension $\dim_{\bR}(L)-\ind(x) = d - |x|$. 
\end{proof}

  \begin{lem} 
          The fibre products $\cD(y) \times_L \cA(x)$ are smooth compact manifolds with corners for generic data.
      \end{lem}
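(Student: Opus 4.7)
My plan is to reduce the statement to a well-known Morse-Smale transversality result together with the already-established manifold-with-corners structure on $\cA(x)$ and $\cD(y)$.

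First I would note that the image of $\cA(x) \to L$ (resp.\ $\cD(y) \to L$) is the stable (resp.\ unstable) manifold of $x$ (resp.\ $y$) for $-\nabla f$, of dimension $d-\ind(x)$ (resp.\ $\ind(y)$), so the expected dimension of the fibre product is $\ind(y)-\ind(x)$. For a generic Riemannian metric compatible with the fixed standard form near the critical points, the classical Morse-Smale condition holds: the stable and unstable manifolds of any pair of critical points intersect transversely. This is a standard Sard-Smale perturbation argument and applies not only to the top stratum of $\cA(x)$ and $\cD(y)$ but, inductively, to all their boundary strata, which by \eqref{eqn:break ascending} and \eqref{eqn:break descending} are products of lower-index ascending/descending manifolds with flow moduli spaces and evaluate to $L$ through the corresponding lower (un)stable manifold. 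Thus, for generic data, the evaluation maps are mutually transverse stratum-by-stratum.

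Second, once this transversality is in place, the open stratum of $\cD(y) \times_L \cA(x)$ is smooth of the expected dimension as a transverse preimage of the diagonal in $L \times L$. Compactness is automatic because $\cA(x)$ and $\cD(y)$ are themselves compact and the fibre product over the Hausdorff space $L$ is closed in $\cD(y) \times \cA(x)$. The boundary decomposition is inherited from those of the factors,
\[
\partial\bigl(\cD(y) \times_L \cA(x)\bigr) = \bigsqcup_{x'} \cD(y) \times_L \bigl(\cA(x') \times \cM(f)_{x'x}\bigr) \ \sqcup\ \bigsqcup_{y'} \bigl(\cM(f)_{yy'} \times \cD(y')\bigr) \times_L \cA(x),
\]
and each piece is again a transverse fibre product by the stratified transversality from the first step.

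The main obstacle is promoting this stratum-by-stratum smoothness to a genuine global smooth manifold-with-corners structure, rather than merely a stratified topological one. This requires that the smooth charts on $\cA(x)$ and $\cD(y)$ at broken configurations interact compatibly with the fibre product construction. This is precisely what is provided by the smooth manifold-with-corners structures on $\cA(x)$ and $\cD(y)$ constructed by Wehrheim (and also treated by Cipriani), already cited in Section~\ref{Sec:ascending descending}: given those charts, the standard result that a fibrewise transverse fibre product of manifolds with corners is itself a manifold with corners closes out the argument.
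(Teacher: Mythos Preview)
Your argument is correct, but the paper takes a shorter route. Rather than invoking the Morse--Smale condition and building up transversality stratum-by-stratum, the paper simply cites Joyce's general result (Proposition 6.7 in \emph{op.\ cit.}) that fibre products of manifolds with corners over a \emph{closed} manifold are generically transverse and inherit a manifold-with-corners structure; this subsumes both your transversality step and your final ``standard result'' in one stroke. Your approach has the virtue of making the Morse-theoretic origin of transversality explicit, but the paper's abstract viewpoint avoids having to verify the compatibility of charts at broken configurations by hand. One point you do not address, which the paper handles in a sentence, is the case $\partial L \neq \emptyset$: since $-\nabla f$ is inward-pointing along $\partial L$, the evaluation map $ev \colon \cA(x) \to L$ never hits $\partial L$, so the fibre product is effectively taken over the interior and Joyce's result still applies.
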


\begin{proof}
     See e.g. \cite[Proposition 6.7]{Joyce}); in general  fibre products of manifolds with corners over closed manifolds (rather than over more general manifolds with corners) are generically transverse. If $L$ has boundary, note that $ev: \cA(x) \to L$ never hits $\partial L$.
\end{proof}

There are moduli spaces $\cE_{*x}$ of perturbed holomorphic half-planes with a single outgoing boundary puncture, asymptotic to $x$, and the spaces $\Upsilon_{x*}$ of half-planes with a single incoming boundary puncture, constructed for instance in \cite[Theorems 5.2(3) and 5.6(1)]{PS3}. We view these moduli spaces as maps of domains equipped with a boundary marked point as well as the incoming / outgoing puncture.

The proof of Lemma \ref{lem:morse-floer} adapts to relate moduli spaces of gradient half-lines with such spaces of perturbed holomorphic half-planes. It is helpful to add a further boundary puncture to these half-planes, so their domains are conformally strips.

\begin{defn}\label{defn:weighted puncture}
    An outgoing respectively incoming  \emph{weighted strip-like end} at a boundary puncture of an abstract disc $D$ is a strip-like end $[-2T,\infty) \times [0,1]$ respectively $(-\infty, 2T] \times [0,1]$ (with co-ordinate $s+it$) at outgoing respectively incoming punctures, equipped with a \emph{weight function} $e^{out/in}_{\delta}$. Writing `interior' for the complement of slightly larger strip-like ends, the weight is given by
    \[
    e^{out}_\delta = \begin{cases} e^{\delta (s + 2T)} & s > 1-2T \\ 1 & (s,t) \, \mathrm{interior} \end{cases}  \qquad e^{in}_{\delta} = \begin{cases} e^{\delta (-s + 2T)} & s < 2T-1 \\ 1 & (s,t) \, \mathrm{interior} \end{cases} 
    \]
    \end{defn}

Suppose now we have an abstract disc $\bD = (D,\bC^N,\Lambda)$ comprising a disc $D$, a complex vector bundle $\bC^N \to D$ and a totally real boundary condition $\Lambda \subset \bC^N|_{\partial D}$. When the boundary conditions either side of a puncture agree (as opposed to being transverse, as we usually assume), the Cauchy-Riemann operator on our usual Sobolev space $W^{2,\kappa}$  is not Fredholm. 
    Instead, for a $\bC^N$-valued $1$-form $Y$ and choice of $\delta >0$, there is an associated Fredholm operator 
    \begin{equation} \label{eqn:weighted dbar}
    \overline{\partial}_J + Y: W^{2,\kappa}_{\delta}(D,\bC^N,\Lambda) \oplus \oplus_{\xi} V_{\xi} \to \Omega^{0,1}_{W^{2,\kappa-1}_{\delta}}(D,\bC^N)
    \end{equation}
    on a  weighted Sobolev space $W^{2,\kappa}_{\delta}$ of functions which have  finite norm after multiplication by the exponentially increasing weight function in the end.  Since the weight forces all sections to vanish at the puncture, we incorporate further finite-dimensional vector spaces $V_{\xi}$ in the domain of the operator,  indexed by the weighted punctures $\xi$, which one can view as spanned by cut-off constant solutions or by basis vectors for the (unique!) totally real space $\Lambda_{\xi}$ at the given puncture.  Assuming for simplicity there is a unique weighted puncture, the associated norm on a pair 
    \[
    (s,v) \in W^{2,\kappa}_{\delta}(D,\bC^N, \Lambda) \oplus V_\xi
    \]
    is then given by
    \[
\sum_{j=0}^{\kappa} \| \nabla^j s\|^2_{\mathrm{interior}} + \sum_{j=0}^{\kappa} (e_\delta^{out/in})^2  \| \nabla^j(s-v^{\parallel})\|^2_{\mathrm{neck}} + \|v\|^2
    \]
   where $v^{\parallel} \in V_{\xi}$ denotes the extension of $v \in \Lambda_\xi$  to a local section through parallel transport (with respect to a fixed background metric).

\begin{rmk}
    Extending the discussion of abstract discs in Section \ref{sec:tp and ad}, one can introduce simplicial spaces $\bU^{\Psi}_{\dagger 1}$ and $\bU^{\Psi}_{1\dagger}$ of 
$\Psi$-oriented abstract discs $\bD$ of rank $0 \leq N \leq \infty$ with:

\begin{itemize}
    \item two boundary punctures, one incoming and one outgoing, exactly one of which $(\dagger)$ is labelled as weighted;
    \item a complex bundle $E \to \bD$ of rank $N$ with totally real subbundles $\Lambda$ over each boundary component, and lifts of the corresponding elements of $BU(N)$ respectively $BO(N)$ to $\Psi$; 
    \item $\Psi$-oriented puncture data, \emph{which at weighted punctures comprises paths to totally real subspaces $\Lambda$ of the given complex bundle $E$ which agree on either side of the puncture} (rather than being transverse as previously);
    \item strip-like ends $[-2T,\infty) \times [0,1]$ respectively $(-\infty, 2T] \times [0,1]$ (with co-ordinate $s+it$) at outgoing respectively incoming punctures, equipped with weight functions as in Definition \ref{defn:weighted puncture}; 
    \item a Hermitian connection $\nabla$ and an $E$-valued $1$-form $Y$, and a virtual perturbation of the associated Fredholm operator $\overline{\partial}^{\nabla}_J + Y$ from \eqref{eqn:weighted dbar}     (meaning a surjection to the cokernel from a finite-dimensional vector space of fields which we assume are supported in the interior).
    \end{itemize}

As in the case without weighted punctures, the kernels of the stabilised operators define an index bundle on the associated simplicial space. Note that the simplicial spaces $\bU^\Psi_{\dagger 1}$ and $\bU^\Psi_{1\dagger}$ are spaces over $\Theta \to BO$ by taking the (unique) totally real boundary condition at the weighted puncture. Gluing of discs with weighted punctures, which is briefly reviewed before Lemma \ref{lem:identify identity} below, constructs a map
\[
\bU^{\Psi}_{1\dagger} \times_{\Theta} \bU^{\Psi}_{\dagger 1} \to \bU^{\Psi}_{11}
\]
on the subspace of pairs of discs where the $\Theta$-lifts of the real boundary conditions on the two discs co-incide at the weighted punctures; this is needed to equip the glued strip with a totally real boundary condition and a $\Theta$-lift thereof.

 Given a Floer solution $u: (\Sigma,\partial \Sigma) \to (X,L)$  with a weighted puncture at $\xi \in \partial\Sigma$, the map $u$ extends smoothly across the puncture.  There is a canonical isomorphism between the kernel of the usual $\overline{\partial}$-operator on the domain with a boundary marked point at $\xi$ and the kernel of the corresponding operator with the weighted puncture, defined on a weighted Sobolev space augmented with a finite-dimensional space of cut-off solutions to span the tangent space of the Lagrangian at the puncture as in \eqref{eqn:weighted dbar}, cf. for instance \cite[Section 7.2]{Ekholm-Smith}. Thus, the forgetful map
 \[
 \bU_{\dagger 1}^\Psi \to \bU_{01}^\Psi
 \]
 which forgets the weighted puncture altogether induces an isomorphism on index bundles. 
 \end{rmk}

\begin{lem}\label{lem:morse-floer-ascending-descending}
    In the setting of Lemma \ref{lem:morse-floer}, there are bordisms (compatible with breaking) $\cA_{\ast x}\cong \cE_{\ast x}$ and $\cD_{y*} \cong \Upsilon_{y*}$. 
\end{lem}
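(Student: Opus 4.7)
The plan is to extend the three-ingredient argument of Lemma \ref{lem:morse-floer} from the closed-strip case to the half-plane case. Fix the Hamiltonian $H = f$ for $f: L \to \bR$ sufficiently $C^2$-small, and the associated taming $J_f$ of that lemma. Replace the disc domain of elements of $\cE_{\ast x}$ (respectively $\Upsilon_{y\ast}$) by a weighted strip in the sense of Definition \ref{defn:weighted puncture}: the boundary marked point becomes a weighted puncture with equal totally real boundary conditions on either side, and the forgetful map to $\bU^{\Psi}_{01}$ identifies the associated index bundles, so the $E_{\Psi}$-orientation of $\cE_{\ast x}$ (respectively $\Upsilon_{y\ast}$) as a module is recovered from this weighted-puncture model.

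First, I would show that with $H = f$ autonomous and $C^2$-small, every finite-energy $J_f$-holomorphic strip on $(X,L)$ with one genuine asymptote at the Hamiltonian chord corresponding to $x$ and one weighted puncture at an endpoint on $L$ is automatically $t$-independent and regular: the standard $C^2$-smallness argument (using that all time-$1$ orbits are constant, plus a maximum principle on the weighted end) carries over because the weighted puncture imposes a Lagrangian condition that matches the $t$-independent Lagrangian boundary. Each such solution therefore solves $u'(s) = \grad(f)(u(s))$ on a half-line, asymptotic to $x$ at the non-weighted end and with free endpoint at the weighted puncture. This matches $\cE_{\ast x}$ bijectively with $\cA(x)$ (with the weighted puncture evaluating to $\gamma(0)$), and dually $\Upsilon_{y\ast}$ with $\cD(y)$.

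Second, I would verify that the linearisations agree: the Cauchy--Riemann operator of \eqref{eqn:weighted dbar} on the weighted Sobolev space $W^{2,\kappa}_{\delta} \oplus V_{\xi}$ has the same kernel along a $t$-independent solution as the linearised gradient half-flow equation on $L$, because the weight forces the sections to vanish at the puncture and the extra factor $V_{\xi}$ is exactly the tangent space $T_{\gamma(0)}L$ used in the Morse boundary condition. The same three points as in Lemma \ref{lem:morse-floer} hold: all solutions are regular and $t$-independent; they solve the Morse half-flow equation; the linearisations have identical kernels, in families and compatibly under breaking. This yields compatible vector bundle lifts of the tangent microbundles of $\cE_{\ast x}$ and $\cA(x)$ (and of $\Upsilon_{y\ast}$ and $\cD(y)$), so by the same abstract appeal to topological/smoothing theory a diffeomorphism exists.

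Third, I would check compatibility with breaking. The boundary strata of $\cE_{\ast x}$ are $\cE_{\ast x'} \times \cM^{LL}_{x' x}$, which under Lemma \ref{lem:morse-floer} (applied to the closed-strip factor) match $\cA(x') \times \cM(f)_{x'x}$; these are precisely the facets of $\cA(x)$ by \eqref{eqn:break ascending}. The descending case is identical using \eqref{eqn:break descending}. The regular-point hypothesis needed for the smooth charts of \cite{Large,PS} survives the passage to weighted ends (the regular points lie in the interior, away from the weighted neck), so the family of smooth structures is coherent under breaking. The hardest point to pin down is the last one: ensuring that the smooth structure on $\cE_{\ast x}$ coming from the weighted-Sobolev gluing profile matches the one on $\cA(x)$ coming from the Morse gluing at critical points. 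As in Lemma \ref{lem:morse-floer}, a direct comparison of charts is avoided by passing to the abstract tangent-microbundle viewpoint, where the compatibility of linearisations already established is enough.
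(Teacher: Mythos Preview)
Your approach shares the paper's weighted-puncture reformulation and the three-ingredient strategy of Lemma \ref{lem:morse-floer}, but there is a genuine gap in step one: you keep $H=f$ on the entire weighted strip, whereas the paper uses a PSS-type cut-off Hamiltonian $H^{PSS} = (\varepsilon-\rho(s)) + \rho(s)f$ whose Hamiltonian vector field vanishes for $s\le 0$, i.e.\ near the weighted end. This cut-off is not cosmetic. With your choice, a $t$-independent solution satisfies $\gamma'(s)=\grad(f)(\gamma(s))$ on \emph{all} of $\bR$, so it is a full gradient trajectory between two critical points; the asymptotic value at the weighted puncture is then forced to be a critical point, and you recover (parametrised copies of) $\cM(f)_{yx}$ rather than $\cA(x)$. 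The phrase ``free endpoint at the weighted puncture'' is exactly what fails: nothing in your set-up lets the gradient flow stop at a non-critical point. The invoked ``maximum principle on the weighted end'' does not substitute for this.

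With the PSS cut-off, $\gamma$ is constant for $s\le 0$ (giving the free endpoint $\gamma(0)$) and follows the gradient flow for $s\ge 1$ converging to $x$; this is precisely what yields the bijection with $\cA(x)$. A related technical point is that the weighted-Sobolev Fredholm setup of \eqref{eqn:weighted dbar} is formulated for the case where the asymptotic operator at the weighted puncture is the unperturbed $J\partial_t$; keeping $X_f$ switched on there alters the asymptotic operator by the Hessian of $f$ and would require re-examining the choice of weight $\delta$. Once you insert the cut-off, the remainder of your argument (regularity via $C^2$-smallness, identification of kernels via the $V_\xi\cong T_{\gamma(0)}L$ factor, compatibility with breaking, and the tangent-microbundle route to a diffeomorphism) is correct and matches the paper's proof.
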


\begin{proof}
    We follow the usual construction of the unit and counit moduli spaces from \cite[Theorem 5.13]{PS3}, but equip the domains with a boundary marked point, which we then replace by a puncture with a weighted strip-like end. This means that the domains for the unit and counit moduli spaces can now be viewed as strips $\bR\times [0,1]$ rather than half-planes. 
    
  Given a time-dependent Hamiltonian $H_t$ and a cut-off function $\rho(s)$ with $\rho(s) = 0$ for $s\leq 0$ and $\rho(s) = \epsilon$ for $s\geq 1$, there is a `PSS-type' Hamiltonian
\[
H^{PSS} = (\varepsilon-\rho(s)) + \rho(s)H_t(x)
\]
and a corresponding Hamiltonian vector field $X_{H^{PSS}} \otimes dt$ on the strip which vanishes for $s \leq 0$. 
    Taking $H_t = f$ to be our given $C^2$-small autonomous Hamiltonian $H=f$, Floer solutions to the equation are $t$-independent and co-incide with gradient flow lines, by the same argument as underlies Lemma \ref{lem:morse-floer}.
Moreover, all solutions extend smoothly over the weighted puncture, so the moduli spaces agree with the moduli spaces originally used to define the (co)unit. One then runs the same argument (passing via stable diffeomorphisms of flow modules, rather than flow categories, to cobordisms of such) as in the proof of Lemma \ref{lem:morse-floer}.
\end{proof}

Given that the $E_{\Psi}$-orientation on $\cM(f)$ is itself obtained by pullback under its isomorphism with $\cM^{LL}$, we may now equip the ascending and descending modules $\cA$ and $\cD$ with $E_{\Psi}$-orientations by pullback from the $E_\Psi$-orientations on $\cE$ and $\Upsilon$.  Summarising: 

\begin{cor} \label{cor:morse-floer-summary}
    There are isomorphisms \begin{equation} \label{eqn:morse floer bordism groups}
\Omega_*^{E_{\Psi}}(\cM(f)) \cong \Omega_*^{E_{\Psi}}(\cM^{LL}) \qquad \mathrm{and} \qquad \Omega^*_{E_{\Psi}}(\cM(f)) \cong \Omega^*_{E_{\Psi}}(\cM^{LL})
\end{equation}
taking the ascending manifolds module $\cA$ to the unit module $\cE$, and the descending manifolds module $\cD$ to the counit module $\Upsilon$.
\end{cor}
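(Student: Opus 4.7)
The plan is to observe that the corollary is essentially a bookkeeping exercise once Lemmas \ref{lem:morse-floer} and \ref{lem:morse-floer-ascending-descending} are in hand, with all the real work having been done there; the only content is to promote those unoriented identifications to isomorphisms in the category $\Flow^{E_\Psi}$ (or of modules over objects therein) and then pass to bordism groups.

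First I would invoke Lemma \ref{lem:morse-floer} to obtain a diffeomorphism of unoriented flow categories $\cM(f) \simeq \cM^{LL}[d]$. The paragraph preceding Corollary \ref{cor:morse-floer-summary} declares that the $E_\Psi$-orientation on $\cM(f)$ is \emph{defined} to be the one obtained by pullback along this diffeomorphism (after fixing stabilisation and index data), so this promotion is tautological: we get an isomorphism in $\Flow^{E_\Psi}$. Applying the functor $[\ast[i],-]$, which by definition gives $\Omega_i^{E_\Psi}(-)$, yields the first isomorphism in \eqref{eqn:morse floer bordism groups}; dualising (i.e.\ applying $[-,\ast[i]]$) yields the cohomological version.

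Next I would address the claim about the modules. Lemma \ref{lem:morse-floer-ascending-descending} provides identifications $\cA_{\ast x} \cong \cE_{\ast x}$ and $\cD_{y\ast} \cong \Upsilon_{y\ast}$ as stratified smooth manifolds, and the assertion of compatibility with breaking means that the facet decompositions \eqref{eqn:break ascending} and \eqref{eqn:break descending} are matched with the corresponding facets of $\cE$ and $\Upsilon$ under the flow category isomorphism of Lemma \ref{lem:morse-floer}. Since the $E_\Psi$-orientations on $\cA$ and $\cD$ are, by construction (as noted just before the statement of the corollary), the pullbacks of those on $\cE$ and $\Upsilon$, the identifications promote directly to isomorphisms of $E_\Psi$-oriented left respectively right modules. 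Therefore, under the bordism isomorphisms of \eqref{eqn:morse floer bordism groups}, the class $[\cA]$ maps to $[\cE]$ and $[\cD]$ maps to $[\Upsilon]$.

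The only place something could plausibly go wrong is whether ``compatibility with breaking'' in Lemma \ref{lem:morse-floer-ascending-descending} is strong enough to give a strict isomorphism of modules in $\Flow^{E_\Psi}$ as opposed to a weaker bordism-class equality, but the same identification of linearisations (and of tangent microbundles in families) that underlies the proof of Lemma \ref{lem:morse-floer} applies to the half-plane/half-line moduli spaces, so no new analytic input is required. Consequently I do not expect any serious obstacle; the proof is essentially a one-paragraph assembly of the preceding two lemmas together with the definition of the $E_\Psi$-orientation on $\cM(f)$, $\cA$, $\cD$ by pullback.
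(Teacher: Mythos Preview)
Your proposal is correct and matches the paper's approach exactly: the paper presents this corollary simply as ``Summarising'' the preceding discussion, with no separate proof, having just observed that the $E_\Psi$-orientations on $\cM(f)$, $\cA$, and $\cD$ are defined by pullback from $\cM^{LL}$, $\cE$, and $\Upsilon$ via Lemmas~\ref{lem:morse-floer} and~\ref{lem:morse-floer-ascending-descending}. Your write-up is if anything more careful than the paper's one-line summary.
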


    One can glue an outgoing weighted puncture to an incoming weighted puncture, and the glued strip inherits a weight function.  Given a pair of Floer solutions $\textbf{u}=(u_1,u_2)$ with one outgoing respectively incoming weighted puncture $\xi_i$, $i=1,2$, and with a common value $u_i(\xi_i) = p \in L$ and any $T$ sufficiently large, there is a pregluing $u_1 \#_T u_2 = u_T^{(0)}$ defined on a domain which contains a long neck region $Z(T) :=[-2T,2T]\times [0,1]$. Fukaya, Oh, Ohta and Ono \cite{FO3:smoothness} make explicit the Newton-Picard iteration scheme which defines a convergent sequence $u_T^{(k)}$ with limit $u_T = \mathrm{exp}_{u_T^{(0)}}(\xi_{\textbf{u},T})$ given by the exponential of a vector field over the preglued solution.  They crucially prove the exponential decay estimate
\[
\| \nabla^j_{\textbf{u}} (\partial / \partial T)^{\ell} \xi_{\textbf{u},T} \|_{L^{2,\kappa-\ell}(Z(T))} \, < \, C\cdot e^{-\delta T}
\]
(where $C$ depends only on $\kappa$ and the ambient geometry).  Given this, the construction of smooth charts for moduli spaces of curves perhaps with weighted punctures can be carried out exactly as in \cite{Large, PS}.  In particular, there is a smooth structure on a moduli space of strips which contains fibre product boundary components $\Upsilon_{x*} \times_L \cE_{*y}$.

Recall the unit morphism $\bD\cF$ of a flow category $\cF$, introduced in  Lemma \ref{lem:flow unit}.

\begin{lem}\label{lem:identify identity}
    The identity morphism $\cM^{LL} \to \cM^{LL}$ is bordant to a morphism $\cW: \cM^{LL} \to \cM^{LL}$ with moduli spaces $\cW_{xy} = \Upsilon_{x*} \times_L \cE_{*y}$; equivalently the identity of $\cM(f)$ is bordant to a morphism with moduli spaces $\cD_{x*}  \times_L \cA_{*y}$.
\end{lem}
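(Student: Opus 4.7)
The plan is to build a bordism of morphisms in $\Flow^{E_\Psi}_{/L}$ between the identity $\bD\cM^{LL}$ and $\cW$ via a one-parameter family of Floer strip moduli equipped with a weighted boundary puncture whose gluing parameter $T$ interpolates between an inert marked point (at $T=0$) and a fully broken configuration (at $T=\infty$).

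For each pair of chords $x,y$, I would define the bordism $\cZ_{xy}$ as the moduli space of Floer strips from $x$ to $y$ carrying a weighted boundary puncture at $z_0 \in \partial_0(\bR\times[0,1])$, with weight $T \in [0,\infty]$, and with the $\bR$-translation symmetry broken by pinning $z_0 = (0,0)$. By the exponential decay estimates from \cite{FO3:smoothness} recalled in the text preceding the statement, $\cZ_{xy}$ carries a smooth structure as a $(|x|-|y|+1)$-dimensional manifold with corners, including at the face $T=\infty$ where the domain degenerates at the weighted puncture into a pair of weighted half-planes glued along $L$.

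The codimension-one boundary of $\cZ_{xy}$ decomposes as follows. The face $T=\infty$ is $\Upsilon_{x*} \times_L \cE_{*y} = \cW_{xy}$, by the gluing construction for $\Psi$-oriented abstract discs with weighted punctures together with \cref{lem:morse-floer-ascending-descending}. The face $T=0$, which I will denote $\cC_{xy}$, is naturally $\cM^{LL}_{xy} \times \bR$: the moduli of parametrised strips carrying an ordinary boundary marked point. The remaining strata arise from Floer breaking of the underlying strip at an intermediate chord $z$, contributing morphism-bordism terms of the form $\cZ_{xz} \times \cM^{LL}_{zy}$ and $\cM^{LL}_{xz} \times \cZ_{zy}$, as required for $\cZ$ to define a bordism of morphisms. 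The $E_\Psi$-orientation is transported across the family using the gluing isomorphism of index bundles at weighted punctures, together with the fact that the forgetful map $\bU^\Psi_{\dagger 1} \to \bU^\Psi_{01}$ induces an isomorphism on index bundles (so weighting a puncture preserves the orientation). The path-space evaluation to $\cP L$ is inherited from the boundary of the underlying strip and varies continuously in all parameters.

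It then remains to identify $\cC$ with the identity morphism $\bD\cM^{LL}$ in $\Flow^{E_\Psi}_{/L}$. The natural compactification of $\cC_{xy} \cong \cM^{LL}_{xy} \times \bR$ obtained by letting the marked-point parameter tend to $\pm\infty$ produces boundary faces $\{\mathrm{pt}\}_{xx} \times \cM^{LL}_{xy}$ and $\cM^{LL}_{xy} \times \{\mathrm{pt}\}_{yy}$, matching the boundary structure of the conic degeneration $\bD\cM^{LL}_{xy}$. Since the truncations $\tau_{\leq 0}\cC$ and $\tau_{\leq 0}\bD\cM^{LL}$ are manifestly isomorphic in $\tau_{\leq 0}\Flow^{E_\Psi}_{/L}$, \cref{lem: 0-modification} together with \cref{prop: cone prop over L} yields an isomorphism of morphisms in $\Flow^{E_\Psi}_{/L}$. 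The main technical obstacle lies in this step: verifying that $\cC$ and $\bD\cM^{LL}$ carry compatible $E_\Psi$-orientation and path-space evaluation data, not merely agreeing on underlying smooth structure. This is precisely the bookkeeping developed for the unit morphism in \cite[Section 4.5]{PS3}, and I expect to handle it by direct transport of orientation and path-space data through a continuation argument, in parallel with that construction.
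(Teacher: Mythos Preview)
Your approach is considerably more elaborate than the paper's, and the extra machinery obscures the key point. The paper's proof is essentially one observation: pass to the Morse side via \cref{cor:morse-floer-summary}, and note that the open stratum of $\cD(x)\times_L\cA(y)$ is literally the space of parametrised negative gradient flow lines from $x$ to $y$ (a half-line down from $x$ glued at its endpoint to a half-line up from $y$). Its compactification is then the compactified space of flow lines with one marked point, whose combinatorics is exactly the definition of the conic degeneration $\bD\cM(f)_{xy}$ from \cite{AB2}. So $\cW$ is not merely bordant to the identity; on the Morse side it \emph{is} the identity morphism on the nose.

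Your construction introduces a one-parameter family over $T\in[0,\infty]$, extracts $\cW$ at $T=\infty$ and an auxiliary morphism $\cC$ at $T=0$, and then proposes to identify $\cC$ with $\bD\cM$ via \cref{lem: 0-modification} and \cref{prop: cone prop over L}. But the ``main technical obstacle'' you flag at the end, namely matching $\cC$ to $\bD\cM$, is dissolved by the paper's direct observation: the compactified marked-point space \emph{is} the conic degeneration, not merely isomorphic to it after further argument. Your $T$-family is therefore a trivial bordism in disguise. There is also a genuine wrinkle in your setup: you describe $\cC_{xy}$ as $\cM^{LL}_{xy}\times\bR$ and then separately compactify, but for $\cZ$ to be a manifold with corners the $T=0$ face must already be compact; the diagonal terms $\{\mathrm{pt}\}_{xx}\times\cM_{xy}$ etc.\ must arise as honest corners of $\cZ$, not be added by hand afterwards.

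Finally, note that the paper defers the $E_\Psi$-orientation compatibility to the next lemma (\cref{lem:identify identity 2}); the present statement is only about unoriented bordism, so your discussion of transporting orientations through the gluing isomorphism is doing work that belongs elsewhere.
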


\begin{proof}
By Corollary \ref{cor:morse-floer-summary}, we can just work with $\cM(f)$. 
The interior (top open stratum) of the space $\cD(p) \times_L \cA(q)$ is exactly the space of parametrized negative gradient flow lines from $p$ to $q$.  The whole space  is the compactification of the space of gradient flow lines with a single marked point, the combinatorics of which exactly reproduces (and motivated) the definition of the conic degeneration, cf. \cite{AB2}.
\end{proof}

\begin{rmk}\label{rmk:PSS}
    In general, meaning for any not necessarily small or autonomous  Hamiltonian, the PSS construction gives rise to morphisms 
    \[
    PSS: \cM(f) \to \cM^{LL} \qquad \mathrm{and} \qquad SSP: \cM^{LL} \to \cM(f)
    \]
    with moduli spaces 
    \[
    PSS_{px} = \cD(p) \times_L \cE_{*x}, \qquad SSP_{yq} = \Upsilon_{y*} \times_L \cA(q)
    \]
    which define inverse equivalences, so define isomorphisms of groups of (unoriented) modules.  Consider 
    \[
    (PSS \circ \cA)_{*y} = \cup_p \, \cA(p) \times PSS_{py} = \cup_p \, \cA(p) \times (\cD(p) \times_L \cE_{*y})
    \]
    (where we have abusively written $\cup_p$ for a smoothing of the coequaliser which identifies certain faces in the disjoint union), and rewrite the last term as 
    \[
    \left( \bigcup_p \cA(p) \times \cD(p) \right) \times_L \cE_{*y}.
    \]
    The closure in $L \times L \times [0,\infty)$ of the forward graph $\{(x,y,t) \, | \, t\geq 0, y = \phi_{-\nabla f}^t(x)\}$ of the negative gradient field on $L \times L$ defines a manifold with corners in $L \times L \times [0,\infty]$ whose boundary strata are given by the  diagonal in $L \times L$ and the products $\cA(p) \times \cD(p)$, see \cite[Proposition 2.9]{Abbaspour-Laudenbach}, cf. also \cite{Harvey-Lawson} for an earlier result giving the relationship as currents. (Note that \cite{Abbaspour-Laudenbach} show that the closure has `conic singularities' but they enumerate all strata and in our case these are all manifolds with corners due to our stronger hypotheses on the Morse function.) The forward graph of the flow thus defines a bordism between the indicated module and the module with moduli spaces $\Delta_L \times_L \cE_{*y} = \cE_{*y}$. In other words, the PSS and SSP morphisms always relate the ascending / descending modules with the unit / counit modules.  However, our treatment of $E_{\Psi}$-orientations has been exclusively based on Floer theory, and one cannot straightforwardly pull back an $E_{\Psi}$-orientation on $\cE$ to one on $\cA$ when their moduli spaces are only bordant, but not necessarily diffeomorphic. 
\end{rmk}

\subsection{$E_\Psi$-orientations on fibre products}

The orientation on the identity morphism $\bD\cF$ of a flow category $\cF$ appeals to the composition $(\bD\cF)_{xy} \to \cF_{xy} \to \bU^{\Psi} = E_{\Psi}$, cf. \cite[Lemma 4.21]{PS3}. We wish to show that this $E_{\Psi}$-orientation on the identity morphism is compatible with its fibre product description in Lemma \ref{lem:identify identity}.

Let $\cM, \cN$ denote spaces of discs with respectively one incoming and one outgoing puncture, each with a boundary marked point, and with boundary on a Lagrangian $L$. (In the discussion of $E_{\Psi}$-orientations in the proof of \cite[Theorem 5.13]{PS3}, a moduli space $\cM$ of discs with no ouput puncture is equipped with an interior marked point with asymptotic marker. We now choose the boundary marked point to be the point on the boundary to which the asymptotic marker points.)   There are thus natural evaluation maps
\[
\cM \to L \qquad \mathrm{and} \qquad \cN \to L
\]
and we will assume the fibre product $\scrX :=\cM \times_L \cN$ is transverse. Note there is an induced map 
\[
\ev_{\scrX}: \scrX \to L.
\]

By \cite[Theorem 5.13]{PS3}  (appealing for $\cN$ to the special case for moduli spaces with no outputs), we have maps
\[
\cM \to \Base(E_{\Psi})\times\Theta \qquad \mathrm{and} \qquad \cN \to \Base(E_{\Psi})
\]
the latter of which factors through a map which forgets the boundary marked point; the $\Theta$-term comes from the (interior or boundary) marked point on $\cM$, which is oriented relative to the classifying map of the tangential structure $\phi$. 

The kernel of the linearised Cauchy-Riemann operator on a nodal disc is (by definition) the subspace of the kernel of the operator on the normalisation of pairs of vector fields which agree at the marked points arising from normalising the nodes. In this picture, the index bundle $I^{\scrX}$ is not directly related to the pullback of $E_{\Psi} \oplus E_{\Psi}$ under the map $\scrX \to \Base(E_{\Psi}) \times \Base(E_\Psi)$, but should be corrected by a factor of $TL$.  

Indeed, assume that we have stabilisation data $\nu_{\cM}, \nu_{\cN}$ for $\cM$ and $\cN$ and index data $d_{\cM},d_{\cN}$, giving rise to $E_{\Psi}$-orientations as in Section \ref{sec:E-orientations recap} and constructed in detail in \cite[Theorem 5.13]{PS3}. The classifying maps
$\cM \to \Base(E_{\Psi})$ and $\cN \to \Base(E_{\Psi})$ factor through $\Base(E^*_{\Psi})$ where $E^*_{\Psi}$ is the corresponding Thom $\cI$-space built from abstract discs with an additional boundary marked point. The induced map
\[
\scrX \to \Base(E^*_{\Psi}) \times \Base(E^*_{\Psi})
\]
lands in the fibre product over  evaluation maps $\ev: \Base\,E^*_{\Psi}(\nu) \to BO(\nu)$,  taking the value of the real boundary condition at the marked point, which on a Floer moduli space $\cM$ or $\cN$ becomes a map valued in a stabilisation of $TL$.

\begin{defn}
We will say that $\scrX$ is $E_{\Psi}$-oriented if the classifying map $\cM \times \cN \to \Base(E_{\Psi}) \times \Base(E_{\Psi})$ is covered by maps of Thom spaces 
\begin{equation} \label{eqn:bott gluing}
(\scrX,I^{\scrX})(\nu_{\cM} + \nu_{\cN}) + \ev_{\scrX}^*TL \to E(\nu_{\cM} + d_{\cM}) \times E(\nu_{\cN}+d_{\cN}).
\end{equation}
\end{defn}

Given a morphism $\cW$ of flow categories with one boundary component such a fibre product, an $E$-orientation on  $\cW$ will comprise maps $\cW_{xy} \to \Base(E_{\Psi})$ which on any fibre product boundary component satisfy \eqref{eqn:bott gluing}.

\begin{lem}\label{lem:identify identity 2}
     The identity morphism $\cM(f) \to \cM({f})$ is $E_{\Psi}$-oriented bordant to a morphism $\cW: \cM({f}) \to \cM(f)$ with moduli spaces $\cW_{xy} = \cD_{x*}  \times_L \cA_{*y}$.
\end{lem}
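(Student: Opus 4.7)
The plan is to upgrade the (unoriented) bordism constructed in Lemma \ref{lem:identify identity} to one that respects $E_\Psi$-orientations, using the gluing map on abstract discs with weighted punctures introduced in the remark preceding that lemma. By Corollary \ref{cor:morse-floer-summary} together with Lemma \ref{lem:morse-floer-ascending-descending}, I may transport all $E_\Psi$-orientations from the Floer side (where everything was set up in \cite{PS3}) to the Morse side, so I shall think of $\cA$ and $\cD$ as being $E_\Psi$-oriented modules over $\cM(f)$ via their Floer counterparts $\cE$ and $\Upsilon$.

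First I would define the candidate morphism $\cW$ with moduli spaces $\cW_{xy}=\cD_{x*}\times_L \cA_{*y}$ and equip it with an $E_\Psi$-orientation via the Bott-style formula \eqref{eqn:bott gluing}: the classifying maps $\cD_{x*}\to\Base(E^*_\Psi)$ and $\cA_{*y}\to\Base(E^*_\Psi)$ (factoring through the moduli spaces with a boundary marked point) give a map $\cW_{xy}\to \Base(E^*_\Psi)\times_{BO}\Base(E^*_\Psi)$, and this lifts to the required map of Thom spaces with the $\ev_{\cW}^*TL$-correction. On the other side, the identity of $\cM(f)$ is by Lemma \ref{lem:flow unit} the canonical unit $\bD\cM(f)$ whose $E_\Psi$-orientation is obtained from the composition $(\bD\cM(f))_{xy}\to\cM(f)_{xy}\to E_\Psi$.

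Next, I would realise both $E_\Psi$-oriented morphisms inside a single parametrised family of abstract discs. Concretely, for each gluing parameter $T\in[0,\infty]$ I pre-glue a co-unit half-plane (= element of $\bU^\Psi_{1\dagger}$) and a unit half-plane (= element of $\bU^\Psi_{\dagger 1}$) along their weighted punctures when the $\Theta$-lifts of their totally real boundary conditions agree there; this is exactly the gluing map $\bU^\Psi_{1\dagger}\times_\Theta \bU^\Psi_{\dagger 1}\to \bU^\Psi_{11}$ from the discussion preceding Lemma \ref{lem:identify identity}. At $T=\infty$ one recovers the fibre product $\cD_{x*}\times_L\cA_{*y}$ with its Bott-corrected $E_\Psi$-orientation, while for finite $T$ the FOOO exponential decay estimate produces, via the Newton--Picard iteration of \cite{FO3:smoothness}, an honest glued Floer strip with a weighted puncture; forgetting the weighted puncture identifies this with an element of $\bD\cM(f)$ (the weighted/unweighted equivalence of index bundles noted in the remark means the $E_\Psi$-orientations on both pictures agree). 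The resulting $[0,\infty]$-parametrised moduli space furnishes the required $E_\Psi$-oriented bordism over $L$ (the evaluation to $L$ varies continuously since the matched weighted puncture point \emph{is} the fibre product condition).

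The main obstacle will be verifying that, under the gluing map on index bundles, the $E_\Psi$-orientation $\bD\cM(f)\to E_\Psi$ agrees with the fibre product $E_\Psi$-orientation \emph{including the $\ev^*TL$-correction}. This is however precisely the content of the construction of the Thom-space-level gluing $\bU^\Psi_{1\dagger}\times_\Theta\bU^\Psi_{\dagger 1}\to\bU^\Psi_{11}$: the kernel of the Cauchy--Riemann operator on the glued strip differs from the direct sum of the kernels on the two pieces by an exact sequence whose error is exactly the tangent space to the totally real boundary condition at the matched weighted puncture, i.e.\ $\ev^*TL$. Checking this compatibility, in families and for the stabilisation and index data inherited from $\cE$ and $\Upsilon$, is the one computation that the argument truly requires; everything else is formal consequence of the smooth-chart construction of \cite{Large,PS,PS3} together with Lemmas \ref{lem:morse-floer} and \ref{lem:morse-floer-ascending-descending}.
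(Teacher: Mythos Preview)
Your identification of the key technical point is correct: the whole content of the lemma is that the two ways of $E_\Psi$-orienting the same space agree, and this comes down to comparing the index of a glued disc with the direct sum of the indices of the pieces (corrected by $\ev^*TL$). However, your packaging of this as an ``$[0,\infty]$-parametrised bordism'' is confused and does not produce what you want.

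The gluing parameter $T$ is not a bordism direction; it is an internal coordinate on $\cW_{xy}$ itself. Recall from Lemma~\ref{lem:identify identity} that the interior of $\cD_{x*}\times_L\cA_{*y}$ \emph{is} the space of parametrised flow lines from $x$ to $y$, and the whole fibre product is diffeomorphic to $(\bD\cM(f))_{xy}$ as a manifold with corners. Your map $(u_1,u_2,T)\mapsto$ (glued strip with marked point) is this diffeomorphism: varying $T$ moves you through the interior of $(\bD\cM(f))_{xy}$, and sending $T\to\infty$ lands you on its fibre-product boundary stratum. So your $[0,\infty]$-family traces out $\cW_{xy}\cong(\bD\cM(f))_{xy}$, not a bordism between them; dimensionally there is no extra direction to spare. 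The bordism you actually need is the trivial cylinder on this diffeomorphism, and the work is to show the two $E_\Psi$-orientations on the ends agree (up to homotopy).

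The paper's proof does exactly this comparison, but at the level of maps to spaces of abstract discs rather than via any geometric bordism. It writes down two compositions landing in $\bU^\Psi_{01}$: one (your route) going through $\bU^\Psi_{1\dagger}\times_\Theta\bU^\Psi_{\dagger 1}\to\bU^\Psi_{11}$ via gluing at the weighted puncture and then attaching a framed cap, the other (the fibre-product route, Equation~\eqref{eqn:bott gluing}) going through $\bU^\Psi_{01}\times\bU^\Psi_{01}\to\bU^\Psi_{01}$ by attaching a framed cap \emph{and} cocap to the co-unit factor and then direct-summing. The cap/cocap bookkeeping is essential here and you do not mention it: the $E_\Psi$-orientation on a moduli space with no output puncture (like $\Upsilon_{x*}$) was \emph{defined} in \cite{PS3} by gluing on both a cap and a cocap, so any comparison has to track those attachments. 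The paper then argues that attaching the cap commutes with gluing, reducing to a comparison of ``glue at weighted puncture'' versus ``attach cocap, then $\oplus$'', and finishes by observing that for large gluing parameter orthogonal projection canonically identifies the kernel of the glued operator with the direct sum of the kernels. Your exact-sequence remark about the $TL$ correction is the right heuristic for this last step, but the full argument needs the cap/cocap compatibility to connect it to the actual $E_\Psi$-orientations in play.
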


 \begin{proof} This is a variation, incorporating weighted punctures, of the steps underlying the rather involved proof of \cite[Theorem 5.13]{PS3}, so we will just give a sketch. 
In general, we have two constructions of an $E_{\Psi}$-orientation on a space  $\scrX = \cM \times_L \cN$.  First, viewing $\scrX$ as a fibre product whose $E_{\Psi}$-orientation is induced from that on the factors, we  have maps
\begin{equation} \label{eqn:orient as fibre product}
\xymatrix{
\cM \times_L \cN \ar[r]_-{\mathrm{incl}} & \cM\times \cN \ar[r]_-{\mathrm{taut}} & \bU^{\Psi}_{10} \times \bU^{\Psi}_{01} \ar[r]_-{(co)cap} & \bU^{\Psi}_{01} \times \bU^{\Psi}_{01} \ar[r]_-{\oplus} & \bU^{\Psi}_{01}
}
\end{equation}
where the arrow labelled $(co)cap$ acts by attaching a framed cap and then a framed `cocap' on the first factor and does nothing to the second, reflecting the proof of \cite[Theorem 5.13]{PS3}. 

Separately, viewing $\scrX$ as a boundary stratum of a space of strips (which is relevant to the $E_{\Psi}$-orientation arising from the composition $\bD\cF \to \cF \to \bU_{01}$ coming from the identity morphism), we have maps
\begin{equation}\label{eqn:orient as boundary}
\xymatrix{
\cM\times_L \cN \ar[r]_-{\mathrm{taut}} & \bU^{\Psi}_{1\dagger} \times_{\Theta} \bU^{\Psi}_{\dagger1} \ar[r]_-{\mathrm{glue}} & \bU^{\Psi}_{11} \ar[r]_-{cap} & \bU^{\Psi}_{01}
}
\end{equation}
where now the arrow labelled $cap$ attaches a framed cap to the strip. (In both these schematics, we have simplified the steps from the proof of \cite[Theorem 5.13]{PS3} by omitting the steps separating out framed components with trivial index bundle and steps related to the inclusion of spaces of discs with fixed domain in all abstract discs.) Note also that since all discs here have boundary marked points, we are dealing with cousins of the simplicial spaces which both live over $\Theta$, and the `tautological' domain arrow in the top line does naturally land in a fibre product over $\Theta$. In the discussion before Lemma \ref{lem:morse-floer-ascending-descending} we compared  the spaces of discs with a weighted puncture and with just a boundary marked point.
     
The key point is therefore that the two operations
\[
\mathrm{(attach\, cap)} \circ \mathrm{(glue)}  \qquad \mathrm{and} \qquad \mathrm{(attach \, cap \, and \, cocap, identity)} \circ \mathrm{(sum)}
\]
(where the attached caps and cocap are framed) induce the same isomorphism on index bundles.  In \eqref{eqn:orient as boundary}, the gluing step and attaching the framed cap commute, so one gets the same isomorphism from
\[
\xymatrix{
\bU^{\Psi}_{1\dagger} \times_{\Theta} \bU^{\Psi}_{\dagger1} \ar[r]_-{cap} & \bU^{\Psi}_{0\dagger} \times_{\Theta} \bU^{\Psi}_{\dagger 1} \ar[r]_-{\mathrm{glue}} &  \bU^{\Psi}_{01}
}
\]
whereas on \eqref{eqn:orient as fibre product} one can separate out the gluing on a framed cap from gluing on a framed cocap, to make the essential step come from a map
\[
\xymatrix{
\bU^{\Psi}_{10} \times \bU^{\Psi}_{01} \ar[r]_-{cap} & \bU^{\Psi}_{00} \times \bU^{\Psi}_{01} \ar[rr]_-{cocap\,  \mathrm{and} \, \oplus} && \bU^{\Psi}_{01}
}
\]
The first steps in the two compositions above both involve gluing a framed cap to the disc in $\cM$, so commute under the natural maps which replace weighted punctures by marked points.

On the image of a moduli space $\cM\times_L \cN$ we can homotope both maps to land in the subspace $\bU^{\Psi} \subset \bU_{01}^{\Psi}$ corresponding to fixing the domain. Compatibility with the maps on Thom spaces now follows from the fact that under gluing (for sufficiently large gluing parameter, which we always assume holds over our finite set of finite-dimensional moduli spaces), orthogonal projection induces a canonical isomorphism between the index of the glued disc and the direct sum of the indices of the components. Similarly,  direct summing abstract disc data over a fixed domain also induces an isomorphism of the index to the direct sum of the indices of the constituents. 
 \end{proof}

\section{Evaluation local systems\label{sec:ELS}}

Throughout this section we fix a commutative tangential pair $\Psi$. The hypothesis that $\Psi$ is commutative  ensures that there is a Fukaya category which contains $\Psi$-oriented branes equipped with monodromy local systems. 

We will introduce `evaluation local systems', namely flow categories $\cF \in \Flow^{E_{\Psi}}_{/L}$ whose $0$-th truncation is isomorphic to the flow category defined by a $\bZ$-local system on $L$.  The main goal of the section will be to prove that every such evaluation local system arises naturally from a $\Psi$-oriented monodromy local system on $L$, so the two notions of local system are essentially equivalent.  We emphasise that this section is entirely independent of the discussion of the open-closed map and the $\cO\cC$-fundamental class associated to a monodromy local system and twisted flow category from \cite[Section 8]{PS3}, as reviewed in Section \ref{sec:Fuk and OC}.

\begin{rmk}
    Throughout the section we work over $\bZ$, but the proofs carry over  with minimal change for the unoriented tangential structure $\Psi_{unor} = (BO \to BU)$ if we work over $\bZ/2$. In particular, the analogue of Theorem \ref{thm: WELS} holds in that setting.
\end{rmk}

\subsection{A Viterbo restriction functor} \label{sec: Vit}
Let $L\subset X$ be a $\Psi$-oriented compact exact Lagrangian.  We construct a `restriction functor'
\[
\cR: \scrF(X;\Psi) \longrightarrow {\Flow}^{E_\Psi} _{/L}
\]

\begin{rmk}
    If $L$ is graded and exact but does not  have a $\Psi$-orientation, there is still a restriction functor to $\Flow_{/L}$.  This reflects the fact that Viterbo restriction should exist to any convex open subset of $X$, and $\Flow_{/L}$ is playing the role of a `categorical open neighbourhood' of $L \subset D(T^*L) \subset X$.  We work in the $\Psi$-oriented case since that is most relevant to our intended applications.
\end{rmk}

Let $L$ be a manifold, perhaps with non-empty boundary.  If $\xi: L \to B(GL_1(R_{\Psi}))$ is a monodromy local system, we will write $\cM(f)^\xi$ for the twist of the flow category $\cM(f)$ by $\xi$. This is constructed with respect to a particular choice of  boundary stabilisation data, and its $E_{\Psi}$-orientation further invokes the given stabilisation and index data entering into the $E_{\Psi}$-orientation on $\cM(f)\sim \cM^{LL}$, however it is well-defined up to isomorphism.

\begin{lem}\label{lem:melon to flow}
    There is a function 
    \begin{equation} \label{eqn:restrict L to L}
    [L,BGL_1(R_\Psi)_{h\cI}] \to \mathrm{Ob}\, {\Flow}^{E_{\Psi}}_{/L}, \qquad \xi \mapsto \cM(f)^{\xi}
    \end{equation}
    from the set of (homotopy classes of) spectral local systems to the set of (isomorphism classes  of) $E_{\Psi}$-oriented flow categories over $L$.
\end{lem}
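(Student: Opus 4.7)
The plan is to build the assignment in three steps: construction, verifying the output lies in $\Flow^{E_\Psi}_{/L}$, and checking independence of the various auxiliary choices.

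First, I would fix a Morse function $f$ on $L$ (admitting boundary if needed, with $-\nabla f$ inwards-pointing as in Remark \ref{rmk: boundary ok}). The Morse flow category $\cM(f)$ comes equipped with evaluation maps $\cM(f)_{xy} \to \cP_{xy} L$ built from the (compactified) flowlines viewed as Moore paths, exhibiting $\cM(f)$ as an object of $\Flow_{/L}$. The $\Psi$-orientation on $L$ combined with Lemma \ref{lem:morse-floer} and the discussion following it equips $\cM(f)$ with a canonical $E_\Psi$-orientation (pulled back from the Floer-theoretic $E_\Psi$-orientation on $\cM^{LL}$). Thus $\cM(f)$ is a well-defined object of $\Flow^{E_\Psi}_{/L}$.

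Second, given a representative of $\xi: L \to BGL_1(R_\Psi)_{h\cI}$, I would apply the twisting construction of Section \ref{sec:twist me} to $\cM(f)$. Fixing a spider $Y \subset L$ to convert evaluation maps into elements of $\Omega L$ and composing with $\xi$ produces maps $\cM(f)_{xy} \to GL_1(R_\Psi)$; these assemble (compatibly with flow composition) into the maps $\Xi_\cM$ and their Thomified cousins $\hat\Xi_\cM$, and one forms the twisted moduli spaces $\cM(f)^\xi_{xy}$ as transverse preimages of zero-sections. By Lemma \ref{lem:twists are oriented} these twisted moduli spaces carry a canonical $E_\Psi$-orientation inherited from $\cM(f)$. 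Moreover, since the construction provides canonical maps $\cM(f)^\xi_{xy} \to \cM(f)_{xy}$ compatible with composition, postcomposition with the evaluation $\cM(f)_{xy} \to \cP_{xy} L$ equips $\cM(f)^\xi$ with the structure of a flow category over $L$; all compatibilities (associativity of composition, the map to $\cP L$ being functorial) follow from the analogous properties of the untwisted $\cM(f)$.

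Third, I would verify well-definedness up to isomorphism in $\Flow^{E_\Psi}_{/L}$. Independence of $f$ follows from Lemma \ref{lem:change morse}, combined with the functoriality of the twisting construction under morphisms of flow categories over $L$. Independence of the chosen spider $Y$ (and the homotopy inverse $\theta$) is handled by Example \ref{ex: ev hom iso flow}: different spiders produce homotopic evaluation maps to $\Omega L$, and hence homotopic data $\Xi_\cM$, which assemble into an isomorphism of the resulting twisted flow categories in $\Flow^{E_\Psi}_{/L}$. Most importantly, independence of the homotopy class of $\xi$ requires that a homotopy $\xi_t$ between representatives $\xi_0, \xi_1$ produces an isomorphism $\cM(f)^{\xi_0} \cong \cM(f)^{\xi_1}$: the family $\xi_t$ gives rise to a family of maps $\Xi_\cM^t$, whose Thomified preimages assemble (by a standard transversality and parametrised cobordism argument, closely paralleling the bordism-invariance established in the proofs of \cite[Section 7]{PS3}) into an $E_\Psi$-oriented bordism over $L$ between $\cM(f)^{\xi_0}$ and $\cM(f)^{\xi_1}$, which exhibits the required isomorphism via the usual cone/mapping cylinder recipe and Proposition \ref{prop: cone prop over L}.

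The main obstacle is the last step: one must set up the twist of $\cM(f)$ by a \emph{homotopy} $\xi_t$ of monodromy local systems coherently enough to produce a genuine isomorphism (not just an abstract bordism) in $\Flow^{E_\Psi}_{/L}$, and to check that this is compatible with the evaluation to $\cP L$; this amounts to re-running the coherence bookkeeping of the twisting construction of Section \ref{sec:twist me} in a one-parameter family, exactly in the spirit of how bordism-invariance of analogous twisted constructions is established in \cite{PS3}.
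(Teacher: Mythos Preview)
Your proposal is essentially correct and follows the same overall approach as the paper. The one place where your argument is less clean than the paper's is the final step, independence of the homotopy class of $\xi$. You describe the family $\xi_t$ as producing an ``$E_\Psi$-oriented bordism over $L$ between $\cM(f)^{\xi_0}$ and $\cM(f)^{\xi_1}$,'' and then appeal to Proposition~\ref{prop: cone prop over L}. But that proposition concerns invariance of cones, not extraction of isomorphisms from bordisms between flow categories (and a ``bordism between flow categories'' is not a well-defined notion here; morphisms in $\Flow^{E_\Psi}_{/L}$ are already bordism classes of morphisms). The paper's argument is more direct: for fixed $f$, a homotopy $\xi_t$ of local systems induces a genuine \emph{morphism} $\cM(f)^{\xi_0} \to \cM(f)^{\xi_1}$ of flow categories over $L$ (the parametrised preimages of zero under the family $\hat\Xi^t_\cM$ assemble into morphism moduli spaces, not merely a bordism of objects), and this morphism is checked to be an isomorphism by concatenating with the morphism associated to the reverse homotopy---the usual continuation-map argument. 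This avoids the detour through cones entirely.
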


\begin{rmk}
    We note that \eqref{eqn:restrict L to L} depends only on a small Stein neighbourhood of $L$ in $X$, which we can take to be a sufficiently small disc cotangent bundle.  
\end{rmk}

\begin{proof}
    For a fixed Morse function, a homotopy of local systems induces a morphism  of flow categories over $L$, which one checks is an isomorphism by concatenating with the morphism associated to the inverse homotopy. Changing the Morse function is dealt with by Lemma \ref{lem:change morse}. More precisely, noting that our $E_{\Psi}$-orientation arises from identifying $\cM(f) \sim \cM^{LL}$ with that arising from Floer theory for the Hamiltonian $\varepsilon f$, we can compare the categories associated to different Morse functions by working inside a small disc cotangent bundle of $L$. This works equally well when $L$ has boundary and the cotangent bundle is naturally a Liouville sector (or one could reduce to the closed case by doubling).
\end{proof}

When $\Psi$ is oriented, a monodromy local system induces a classical $\bZ$-local system, by using the existence of a map $R_{\Psi} \to H\bZ$ and the composition
\[
L \to BGL_1(R_{\Psi}) \to BGL_1(H\bZ) \simeq K(\bZ/2,1)
\]
cf. \cite[Lemma 7.4]{PS3}. 
Either using that and Lemma \ref{lem:melon to flow}, or just directly by considering Floer theory with local coefficients, one has:

\begin{cor}\label{cor: efoguhr9ouhg}
    There is a function
    \[
    \left\{ \bZ\mhyphen\textrm{local  systems on\,} L \right\} \longrightarrow \mathrm{Ob}\,\tau_{\leq 0} {\Flow} _{/L}, \qquad \xi_{\bZ} \mapsto \cM(f)^{\xi_{\bZ}}
    \]
\end{cor}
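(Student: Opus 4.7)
The plan is to imitate the proof of Lemma \ref{lem:melon to flow} one categorical level down, and there are two natural routes, corresponding to the two clauses of the sentence immediately preceding the corollary. I would present the direct route (b) below as the primary argument, but outline (a) first because it makes the relation to the spectral story transparent.

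Route (a): spectral lift. A rank-one $\bZ$-local system on $L$ is classified by a class in $H^1(L;\bZ/2) = [L, K(\bZ/2,1)]$, and since $\Psi$ is oriented, $\pi_0 R_\Psi \cong \bZ$, so $\pi_1 BGL_1(R_\Psi) \cong \pi_0 GL_1(R_\Psi) \cong \bZ/2$ maps isomorphically to $\pi_1 K(\bZ/2,1)$ under the map induced by $R_\Psi \to H\bZ$. Cell-by-cell obstruction theory then produces a (non-canonical) lift $\tilde\xi : L \to BGL_1(R_\Psi)$ of $\xi_\bZ$; the obstructions lie in $H^{\geq 2}(L; \pi_{\geq 1} \hofib(BGL_1(R_\Psi)\to K(\bZ/2,1)))$ and can be killed by successive modification because we only care about producing some lift, not a canonical one. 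Applying Lemma \ref{lem:melon to flow} to $\tilde\xi$ yields an object of $\Flow^{E_\Psi}_{/L}$, and applying $\tau_{\leq 0}$ followed by the forgetful functor $\tau_{\leq 0}\Flow^{E_\Psi}_{/L} \to \tau_{\leq 0}\Flow_{/L}$ produces the desired object. One then has to check that the truncated isomorphism class is independent of the choice of lift; this is a consequence of the fact that two lifts differ by a homotopy class into $\hofib(BGL_1(R_\Psi) \to K(\bZ/2,1))$, which twists the spectral Morse flow category by a local system that becomes trivial after reducing to $\bZ/2$-signs and discarding higher-dimensional moduli.

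Route (b): direct construction. Fix a Morse function $f$ on $L$ and work with $\cM(f) \in \tau_{\leq 0}\Flow_{/L}$. An object of $\tau_{\leq 0}\Flow_{/L}$ only records moduli spaces of dimension $\leq 1$ up to the appropriate equivalence, together with evaluation maps to $\cP L$. Given $\xi_\bZ$, define $\cM(f)^{\xi_\bZ}$ to have the same objects and underlying $0$- and $1$-dimensional moduli spaces as $\cM(f)$, with the same evaluation maps to $\cP L$, but with the $\bZ/2$-valued orientation signs of the $0$-dimensional components twisted by the composition $\pi_1(L) \xrightarrow{\xi_\bZ} \bZ^\times$ applied to the class in $\pi_1(L)$ determined by the corresponding evaluation path. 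Multiplicativity of $\xi_\bZ$ on concatenations ensures that the facet identifications of the $1$-dimensional moduli spaces are compatible with the new signs, which is precisely the data required for an object of $\tau_{\leq 0}\Flow_{/L}$. Independence of the Morse function $f$ then follows by the same continuation argument as Lemma \ref{lem:change morse}, applied at the truncated level.

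The main obstacle is a bookkeeping one rather than a genuine difficulty: in route (b), one must verify that the prescription above genuinely defines a morphism in the category $\tau_{\leq 0}\Flow_{/L}$, i.e.\ that the twisted signs assemble into a consistent set of $0$-dimensional data whose $1$-dimensional boundaries cancel. This is exactly the chain-level statement that the twisted Morse complex $CM_*(\cF;\bZ)$ with local-system coefficients in $\xi_\bZ$ has square-zero differential, which is classical and corresponds under Lemma \ref{lem:morse chains over group rings} to the fact that tensoring the free $\bZ[\pi_1 L]$-complex $CM_*(\cF;\bZ[\pi_1 L])$ over $\bZ[\pi_1 L]$ with the module $\xi_\bZ$ yields a chain complex. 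Once this is in hand, functoriality and well-definedness of the resulting assignment $\xi_\bZ \mapsto \cM(f)^{\xi_\bZ}$ are formal, and the corollary follows.
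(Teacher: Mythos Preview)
Your route (b) is correct and is precisely one of the two arguments the paper gestures at in the sentence preceding the corollary (``or just directly by considering Floer theory with local coefficients''). Since that is your primary argument, the proposal is sound.

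Route (a) as written has a gap in the obstruction-theory step. You assert that the lifting obstructions ``can be killed by successive modification because we only care about producing some lift, not a canonical one.'' That is not how obstruction theory works: non-canonicity of partial lifts lets you modify the cochain representing a \emph{higher} obstruction once the lower ones already vanish, but the primary obstruction class is a well-defined invariant of $\xi_\bZ$, and if it were nonzero no lift would exist at all. Here the fibre of $BGL_1(R_\Psi)\to K(\bZ/2,1)$ is simply connected with $\pi_2 \cong \pi_1 R_\Psi$, so for general $R_\Psi$ there is a potential primary obstruction in $H^3(L;\pi_1 R_\Psi)$ which your argument does nothing to address. (A lift may well always exist for other reasons, e.g.\ via a section $K(\bZ/2,1)\to BGL_1(R_\Psi)$ coming from the sign unit $-1\in \pi_0 R_\Psi^\times$, but that is a different argument.) The paper's first clause is in any case more naturally read not as lifting $\xi_\bZ$ to an $R_\Psi$-local system, but as running the twisting construction of Section \ref{sec:twist me} with $H\bZ$ in place of $R_\Psi$: then $BGL_1(H\bZ)\simeq K(\bZ/2,1)$, a $\bZ$-local system \emph{is} already a monodromy local system, and the proof of Lemma \ref{lem:melon to flow} applies verbatim after truncation.
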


Recall that if $(K,\xi_K)$ and $(K',\xi_{K'})$ are branes with local systems, we write $\cM^{\xi_K,\xi_{K'}}$ for the twisted flow category. If $\xi_K$ is a trivial local system, we continue to write $K$ for $\xi_K$. 

The graded Lagrangian $L$ defines, for any $\xi_K$ and $\xi_{K'}$, a bilinear map 
\[
\cM^{L\xi_K  \xi_{K'}}: \cM^{L,\xi_K} \times \cM^{\xi_K \xi_{K'}} \to \cM^{L,\xi_{K'}} 
\]
(obtained from moduli spaces of holomorphic triangles), 
which admits an $E_\Psi$-orientation (cf. \cite[Theorem 5.13]{PS3}. If $\beta \in \scrF_*(\xi_K,\xi_{K'})$ is a morphism, then by gluing a disc with a unique outgoing puncture labelled by $\beta$ to the 3-punctured disc underlying the bilinear map (which has two incoming and one outgoing punctures), we obtain a strip with one incoming and one outgoing puncture with associated boundary conditions $(L, \xi_K)$ and $(L,\xi_{K'})$ respectively. This defines a morphism $L \circ \beta$ of flow categories $\cM^{L,\xi_K} \to \cM^{L,\xi_{K'}}$.

\begin{lem}
    The morphism $L \circ \beta$ admits a canonical homotopy class of $E_\Psi$-orientations. 
\end{lem}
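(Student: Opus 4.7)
The plan is to construct the $E_\Psi$-orientation on $L \circ \beta$ by composing the $E_\Psi$-orientation data on the bilinear map $\cM^{L\xi_K\xi_{K'}}$ (provided by the analog of \cite[Theorem 5.13]{PS3}) with that of $\beta$ itself (which, as a morphism $\ast[i] \to \cM^{\xi_K,\xi_{K'}}$, is by definition an $E_\Psi$-oriented right module). A representative of $\beta$ consists of families of compact manifolds $\beta_z$ over $z \in \cM^{\xi_K,\xi_{K'}}$, equipped with stabilisation and index data and classifying maps $\beta_z \to \bU^\Psi$ compatible with the module structure. The moduli spaces underlying $L \circ \beta$ are then fibre products
\[
(L \circ \beta)_{xy} = \bigsqcup_{z \in \cM^{\xi_K,\xi_{K'}}} \cM^{L\xi_K\xi_{K'}}_{xzy} \times_z \beta_z,
\]
realised geometrically by gluing the disc representing $\beta$ onto the second input puncture of the three-punctured disc underlying the bilinear map.

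To produce an $E_\Psi$-orientation, I would take stabilisation and index data to be the sum of those on $\cM^{L\xi_K\xi_{K'}}$ and those on $\beta$, and the classifying map $(L \circ \beta)_{xy} \to \bU^\Psi$ to be the composition of the product of the two constituent classifying maps with the sum operation $\bU^\Psi \times \bU^\Psi \to \bU^\Psi$ on abstract discs. The fact that this actually lifts the index bundle of the glued Cauchy--Riemann operator (as opposed to merely covering the classifying space up to bordism) follows, for sufficiently large gluing parameter, from the canonical index identification under gluing discussed in the proof of Lemma \ref{lem:identify identity 2} and throughout Section \ref{sec:tp and ad}. Compatibility of this $E_\Psi$-orientation data with breaking on either boundary stratum of the strip --- i.e.\ with facets $\cM^{L,\xi_K}_{xw} \times (L \circ \beta)_{wy}$ and $(L \circ \beta)_{xw} \times \cM^{L,\xi_{K'}}_{wy}$, as well as internal degenerations of the $\beta$-factor --- follows from associativity of the sum on $\bU^\Psi$ together with the compatibility of the $E_\Psi$-orientation on the bilinear map with that on $\cM^{\xi_K,\xi_{K'}}$-module structures.

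For canonicity up to homotopy I would argue as follows. If $\beta_0$ and $\beta_1$ are two $E_\Psi$-oriented representatives of $\beta$ connected by an $E_\Psi$-oriented bordism $\cR$, then applying the same gluing construction to $\cR$ in place of $\beta$ produces an $E_\Psi$-oriented bordism between the morphisms $L \circ \beta_0$ and $L \circ \beta_1$. Similarly, different choices of auxiliary data (gluing profile, additional stabilisations, Floer perturbation data for the bilinear map) induce $E_\Psi$-oriented bordisms between the resulting orientation data, so the $E_\Psi$-orientation on $L \circ \beta$ is canonical up to $E_\Psi$-oriented bordism.

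The main obstacle is simply the combinatorics of checking that the classifying data to $\bU^\Psi$ and the accompanying index-bundle trivialisations are compatibly matched across \emph{all} boundary and corner strata of $(L \circ \beta)_{xy}$ --- this involves facets coming from Floer breaking on each of the two strip ends, from degenerations of the representative of $\beta$ into $\cM^{\xi_K,\xi_{K'}}$-module strata, and, in codimension two, from pairs of such degenerations simultaneously. However, once the key input (compatibility of the index identification with the gluing map $\bU^\Psi \times \bU^\Psi \to \bU^\Psi$ and with the operadic composition used to define the bilinear map and the module action) is in hand, the verification reduces to the same bookkeeping carried out in \cite[Sections 4--5]{PS3} for ordinary morphisms and bilinear maps, and no genuinely new analytical input is needed.
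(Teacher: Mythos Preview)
Your proposal is correct and aligns with the paper's approach: the paper's proof is simply the one-line statement that this is a special case of \cite[Theorem 5.13]{PS3}, and what you have written is an accurate unpacking of how that general machinery applies here. Your sketch of gluing the $E_\Psi$-orientation data from the bilinear map with that of the right module $\beta$, via the sum operation on $\bU^\Psi$, is exactly the content of that theorem in this instance.
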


\begin{proof} This is a special case of \cite[Theorem 5.13]{PS3}.
\end{proof}

\begin{defn}
    Define $\cR$ to act as follows:
    \begin{itemize}
        \item on objects: $(K,\xi_K) \mapsto \cM^{L,\xi_K}$ (with the $E_\Psi$-orientation from Lemma \ref{lem:twists are oriented}, cf. \cite[Section 7.5]{PS3});
        \item on morphisms:  $\beta \in \scrF_i(\xi_K, \xi_{K'})$ is mapped to $L \circ \beta \in [\cM^{L,\xi_K}, \cM^{L,\xi_{K'}}]$, where $[\cdot,\cdot]$ denotes the morphism group in ${\Flow^{E_{\Psi}}_{/L}}$.
    \end{itemize}
\end{defn}

\begin{prop}\label{prop:viterbo functor}
    $\cR$ defines a functor.
\end{prop}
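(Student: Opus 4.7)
There are three items to verify: that $\cR$ is well-defined on bordism classes of morphisms, that it preserves composition, and that it sends identities to identities. All three arguments are variations on the standard theme that moduli spaces of discs with $n$ boundary punctures, equipped with their $E_\Psi$-orientations from \cite[Theorem 5.13]{PS3}, assemble into bilinear / higher operations that are associative and unital up to coherent $E_\Psi$-oriented bordism. The plan is to run these arguments in the setting of flow categories over $L$, verifying at each step that the evaluation maps to $\cP L$ and the $E_\Psi$-orientation data assemble consistently.

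First, if $\beta, \beta' \in \Omega_i^{E_\Psi}(\cM^{\xi_K \xi_{K'}})$ are bordant via some $\cR: *[i] \times [0,1] \to \cM^{\xi_K \xi_{K'}}$, then gluing the triangle moduli space $\cM^{L \xi_K \xi_{K'}}$ onto $\cR$ produces a morphism $\cM^{L,\xi_K} \to \cM^{L,\xi_{K'}}$ over $[0,1]$; restricting to the two endpoints yields an $E_\Psi$-oriented bordism over $L$ between $L \circ \beta$ and $L\circ \beta'$. For composition, given $\beta \in \scrF_i(\xi_K,\xi_{K'})$ and $\beta' \in \scrF_j(\xi_{K'},\xi_{K''})$, consider a parametrised moduli space of $4$-punctured discs with boundary labels $(L, \xi_K, \xi_{K'}, \xi_{K''})$, where a real parameter records which of the two internal edges is long. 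One end of the parameter recovers $L \circ (\beta' \circ \beta)$, via the triangle product $\cM^{\xi_K \xi_{K'} \xi_{K''}}$ glued to $\cM^{L \xi_K \xi_{K''}}$; the other end recovers $(L\circ\beta')\circ(L\circ\beta)$, via iterated gluing of $\cM^{L \xi_K \xi_{K'}}$ and $\cM^{L \xi_{K'} \xi_{K''}}$ along their $L$-boundary. The coherence of $E_\Psi$-orientations under gluing abstract discs of all relevant types (established in \cite[Theorem 5.13]{PS3}) gives an $E_\Psi$-oriented bordism over $L$ between the two.

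The identity is the step where one has to work hardest. The identity $\mathrm{id}_{(K,\xi_K)}$ in the Fukaya category is represented by the unit/counit construction for $\cM^{\xi_K \xi_K}$, and the composition $L \circ \mathrm{id}_{(K,\xi_K)}$ is then given by gluing $\cM^{L \xi_K \xi_K}$ to this unit module. After simplification via a standard neck-stretching degeneration, the resulting morphism $\cM^{L,\xi_K} \to \cM^{L,\xi_K}$ is represented by fibre products $\Upsilon_{x*} \times_{\xi_K} \cE_{*y}$ (now taken with respect to evaluation at the $\xi_K$-boundary of a strip with boundary on $L$ and $\xi_K$), compatible with the $E_\Psi$-orientation convention used in Lemma \ref{lem:identify identity 2}. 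By the Morse-Floer comparison (Lemma \ref{lem:morse-floer}, Corollary \ref{cor:morse-floer-summary}, Lemma \ref{lem:identify identity 2}) applied to $\cM^{L,\xi_K}$, this fibre-product morphism is $E_\Psi$-bordant over $L$ to the unit $\bD\cM^{L,\xi_K}$, which is the identity in $\Flow^{E_\Psi}_{/L}$ by Lemma \ref{lem:flow unit}. The main obstacle I anticipate is precisely the verification that the $E_\Psi$-orientation arising from gluing the triangle $\cM^{L \xi_K \xi_K}$ onto the unit matches the fibre-product $E_\Psi$-orientation appearing in Lemma \ref{lem:identify identity 2}; but this is exactly the ``cap versus cap-and-cocap'' compatibility established during the proof of that lemma, so the same argument applies essentially verbatim in the presence of monodromy local system twists.
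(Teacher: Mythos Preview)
Your treatment of composition is essentially the paper's own argument: the paper also uses the $4$-punctured disc moduli space with boundary labels $(L,\xi_K,\xi_{K'},\xi_{K''})$ as an associator over $L$, invoking \cite[Theorem~5.13]{PS3} for the $E_\Psi$-orientation on the bordism, and notes that breakings are compatible with evaluation to $\cP L$. Your well-definedness argument is a reasonable addition that the paper leaves implicit.

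The identity step, however, has a genuine gap. You invoke the Morse--Floer comparison (Lemma~\ref{lem:morse-floer}, Corollary~\ref{cor:morse-floer-summary}, Lemma~\ref{lem:identify identity 2}) ``applied to $\cM^{L,\xi_K}$'', but those results are specific to the self-Floer category $\cM^{LL}$, which is identified with a Morse flow category $\cM(f)$ for a Morse function on $L$. When $K\neq L$ there is no such Morse model, the half-plane moduli spaces $\Upsilon_{x*}$ and $\cE_{*y}$ in question have boundary on $K$ (not $L$), and the fibre product is taken over evaluation into $K$; none of the cited lemmas say anything about that situation, and the $E_\Psi$-orientation compatibility of Lemma~\ref{lem:identify identity 2} is proved only for the $L$-self case.

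The correct argument is both simpler and more general. The unit $e_{\xi_K}$ is, by construction in \cite{PS3}, a two-sided unit for the bilinear operations: the moduli space of $3$-punctured discs with boundary $(L,\xi_K,\xi_K)$ and one input capped by $e_{\xi_K}$ admits a one-parameter degeneration whose other end is the identity strip for $\cM^{L,\xi_K}$. This produces directly an $E_\Psi$-oriented bordism over $L$ between $L\circ e_{\xi_K}$ and $\bD\cM^{L,\xi_K}$, without any appeal to Morse theory. (The paper's proof in fact omits the unit check entirely, presumably for this reason.)
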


\begin{proof}
    This is a direct application of gluing, involving a moduli space of discs with four marked points which plays the role of an associator living over $L$. For inputs $\beta_1 \in \scrF_*(K,K';\Psi)$ and $\beta_2 \in \scrF_*(K',K'';\Psi)$, the moduli spaces defining $\cR(\beta_1 \circ \beta_2)$ and those defining $
    \cR(\beta_1) \circ \cR(\beta_2)$ are given by the top left respectively right of Figure \ref{Fig:Viterbo} (where we have suppressed labelling the local systems for simplicity), and both are bordant to the moduli space shown in the bottom layer of Figure \ref{Fig:Viterbo}. The various breakings are manifestly compatible with evaluation to the path spaces of $L$. On the other hand, the bordism admits an $E_{\Psi}$-orientation compatible with those already associated to the moduli spaces appearing in the top row as a particular instance of \cite[Theorem 5.13]{PS3}.
\end{proof}

\begin{figure}[ht]
\begin{center}
\begin{tikzpicture}

\draw[semithick] (-8,1) -- (-3,1);
\draw[semithick] (-8,2.5) -- (-3,2.5);
\draw[semithick] (-3.5,1.5) -- (-3,1.5);
\draw[semithick] (-3.5,2) -- (-3,2);
\draw[semithick] (-3.5,1.5) arc (270:90:0.25);
\node[left = 1mm of {(-3.75,1.75)}] {$K$};
\node[below = 1mm of {(-6,1)}] {$L$};
\node[above = 1mm of {(-6,2.5)}] {$K''$};

\draw[semithick] (-2.75,2.5) -- (-2.5,2.5);
\draw[semithick] (-2.5,2.5) arc (-90:0:0.25);
\draw[semithick] (-2.25,2.75) arc (180:90:0.25);
\draw[semithick] (-2,3) -- (-1,3);

\draw[semithick] (-2.75,2) -- (-2.5,2);
\draw[semithick] (-2.5,2) arc (90:0:0.25);
\draw[semithick] (-2.25,1.75) arc (180:270:0.25);
\draw[semithick] (-2,1.5) -- (-1,1.5);

\draw[semithick] (-1.25,2) -- (-1,2);
\draw[semithick] (-1.25,2.5) -- (-1,2.5);
\draw[semithick] (-1.25,2.5) arc (90:270:0.25);
\node[left = 1mm of {(-1.35,2.25)}] {$K'$};

\draw[semithick,dotted] (-0.75,3) arc (90:-90:0.25);
\draw[semithick,dotted] (-0.75,2) arc (90:-90:0.25);
\node[right = 0.25 mm of {(-0.5,2.75)}] {$\beta_2$};
\node[right = 0.25 mm of {(-0.5,1.75)}] {$\beta_1$};

\draw[semithick] (0.5,3) -- (2.5,3);
\draw[semithick,dotted] (2.75,3) arc (90:-90: 0.25);
\draw[semithick] (2,2.5) -- (2.5,2.5);
\draw[semithick] (2,2.5) arc (90:270:0.25);
\draw[semithick] (2,2) -- (3.25,2);
\draw[semithick] (0.5,0.5) -- (3.25,0.5);

\draw[semithick] (3.5,0.5) -- (6,0.5);
\draw[semithick] (3.5,2) -- (6,2);
\draw[semithick, dotted] (6.25,2) arc (90:-90:0.25);
\draw[semithick]  (5.75, 1.5) -- (6,1.5);
\draw[semithick] (5.75,1.5) arc (90:270:0.25);
\draw[semithick] (5.75,1) -- (6,1);

\node[below =1 mm of {(2,0.5)}] {$L$};
\node[above = 1 mm of {(1.5,3)}] {$K''$};
\node[above = 1 mm of {(5,2)}] {$K'$};
\node[left = 1 mm of {(5.5,1.25)}] {$K$};

\node[right = 0.25 mm of {(6.5,1.75)}] {$\beta_1$};
\node[right = 0.25 mm of {(3,2.75)}] {$\beta_2$};

\draw[semithick] (-3,-2) -- (3,-2);
\draw[semithick] (-3,-4.5) -- (3,-4.5);
\node[below = 1mm of {(0,-4.5)}] {$L$}; 
\node[above = 1mm of {(0,-2)}] {$K''$}; 
\node[left = 1mm of {(2,-2.75)}] {$K'$};
\node[left = 1mm of {(2,-3.75)}] {$K$};

\begin{scope}
\clip (2.5,-3) rectangle (0.5,0.5);
\draw[semithick] (2.5,-2.75) circle(0.25); 
\end{scope}

\draw[semithick] (2.5,-2.5) -- (3,-2.5);
\draw[semithick] (2.5,-3) -- (3,-3);
\draw[semithick] (2.5,-3.5) -- (3,-3.5);
\draw[semithick] (2.5,-4) -- (3,-4);

\begin{scope}
\clip (2.5,-4) rectangle (0.5,0.5);
\draw[semithick] (2.5,-3.75) circle(0.25); 
\end{scope}

\draw[semithick, dotted] (3.25,-2.5) arc (-90:90:0.25);
\node[right = 1mm of {(3.5,-2.25)}] {$\beta_2$};

\draw[semithick, dotted] (3.25,-3.5) arc (-90:90:0.25);
\node[right = 1mm of {(3.5,-3.25)}] {$\beta_1$};

\end{tikzpicture}
\end{center}
\caption{Functoriality of restriction to $\mathrm{Flow}^{\Psi} _{/L}$\label{Fig:Viterbo}}
\end{figure}

\subsection{Evaluation local systems from restriction}

Fix $L$ a compact manifold of dimension $n$. Let $f: L \to \bR$ be Morse (with $-\nabla f$ inwards-pointing along $\partial L$). 
 Lemma \ref{lem:morse-floer} says that the untwisted Morse and Floer flow categories $\cM(f)$ and $\cM^{LL}$ are unoriented equivalent, and the $E_{\Psi}$-orientation on $\cM(f)$ is by definition inherited from that equivalence.

    \begin{defn}\label{def: ELS}
        A ($\Psi$-oriented) \emph{evaluation local system} (ELS) is an $\cF \in \operatorname{Flow}^{E_\Psi}_{/L}$ such that $\tau_{\leq 0} \cF \in \tau_{\leq 0}\operatorname{Flow}^{E_\Psi}_{/L}$ is isomorphic to $\cM(f)^{\xi_{\bZ}}$ for some $\bZ$-local system $\xi_{\bZ}$ on $L$. 
    \end{defn}

\begin{lem} \label{lem:MLS to WELS}
    There is a canonical map
    \[
    \left\{\Psi\mhyphen \mathrm{MLS} \, \mathrm{on}\, L\right\} \longrightarrow \left\{\Psi\mhyphen \mathrm{ELS} \, \mathrm{on}\, L \right\}, \qquad \xi \mapsto \cM^{\xi_L,L} \simeq \cM(f)^{\xi}
    \]
\end{lem}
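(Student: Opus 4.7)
The plan is to realise $\xi \mapsto \cM^{\xi_L,L}$ as an element of $\Flow^{E_\Psi}_{/L}$ via the twisting construction of Section \ref{sec:twist me}, and then identify its truncation with $\cM(f)^{\xi_\bZ}$ using the Morse-Floer comparison of Lemma \ref{lem:morse-floer}.

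First I would produce $\cM^{\xi_L,L}$ as an object of $\Flow^{E_\Psi}_{/L}$. The untwisted Floer flow category $\cM^{LL}$ is already an $E_\Psi$-oriented flow category (cf.\ Section \ref{sec:Fuk and OC}) and carries a $\cP L$-valued evaluation map on each of the two boundary components of the underlying strip domains. I apply the twisting construction of Section \ref{sec:twist me} on the boundary labelled by $\xi$; Lemma \ref{lem:twists are oriented} equips the result with a canonical $E_\Psi$-orientation, while composing the canonical maps $\cM^{\xi_L,L}_{xy}\to\cM^{LL}_{xy}$ with the untwisted boundary evaluation yields a compatible evaluation to $\cP L$. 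Thus $\cM^{\xi_L,L}\in\Flow^{E_\Psi}_{/L}$.

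Next I would produce the claimed isomorphism $\cM^{\xi_L,L}\simeq\cM(f)^{\xi}$ in $\Flow^{E_\Psi}_{/L}$. By Lemma \ref{lem:morse-floer} there is a diffeomorphism of unoriented flow categories between $\cM(f)$ and a shift of $\cM^{LL}$, and by construction the $E_\Psi$-orientation on $\cM(f)$ is pulled back along this diffeomorphism so that it promotes to an equivalence in $\Flow^{E_\Psi}$. The Morse evaluations to $\cP L$ and the restrictions of the Floer boundary evaluations agree up to homotopy (both track the projection of a gradient half-line onto $L$), and Example \ref{ex: ev hom iso flow} upgrades this to an isomorphism in $\Flow^{E_\Psi}_{/L}$. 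Since the twisting operation of Section \ref{sec:twist me} is natural in its input flow category over $L$, depending only on the evaluation map and $\xi$, applying it termwise gives the desired $\cM(f)^\xi\simeq\cM^{\xi_L,L}$.

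Finally I would verify that $\cM^{\xi_L,L}$ is a $\Psi$-ELS. The twist $\cM(f)^\xi$, on the truncation $\tau_{\leq 0}$, only records the $\pi_0$-information of the twisting maps; under the canonical map $R_\Psi\to H\bZ$ this reduces to a map $L\to B(\pi_0 GL_1(H\bZ))=B(\bZ/2)$, which is exactly the induced $\bZ$-local system $\xi_\bZ$ from \cite[Lemma 7.4]{PS3}. Hence $\tau_{\leq 0}\cM^{\xi_L,L}\cong\cM(f)^{\xi_\bZ}$ in the notation of Corollary \ref{cor: efoguhr9ouhg}, satisfying Definition \ref{def: ELS}. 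Canonicity (homotopy invariance in $\xi$, independence of $f$ and of the spider, stabilisation and index data) then follows by the same continuation-style arguments as in Lemmas \ref{lem:melon to flow} and \ref{lem:change morse}. The main subtlety I expect to require care is ensuring that the twisting construction of Section \ref{sec:twist me}, which depends on auxiliary spider and stabilisation/index data, transports naturally through the Morse-Floer identification; once this coherence is in place the rest of the argument is routine bookkeeping.
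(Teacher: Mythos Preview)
Your proposal is correct and follows essentially the same approach as the paper, just far more explicitly: the paper's own proof is a single sentence deferring to Lemma~\ref{lem:melon to flow}, whereas you unpack the Morse--Floer identification, the naturality of twisting, and the truncation computation that justify why $\cM(f)^\xi$ lands in the set of ELSs and agrees with $\cM^{\xi_L,L}$. The extra detail you supply (particularly the verification that $\tau_{\leq 0}\cM(f)^\xi = \cM(f)^{\xi_\bZ}$ via the map $R_\Psi \to H\bZ$) is implicit in the paper's statement but not spelled out there, so your version is in fact more complete.
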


\begin{proof}
     This is the content of Lemma \ref{lem:melon to flow}  in this language. 
\end{proof}
    
    \begin{prop}\label{prop:quis over Z means WELS}
        Let $(K, \zeta)$ be an object in $\scrF(X; \Psi)$, and let $L \subseteq X$ be a $\Psi$-oriented Lagrangian, equipped with the trivial local system.

        If the images of $(K, \zeta)$ and $L$ in $\scrF(X; \bZ)$ are isomorphic, then $\cR(K, \zeta) \in \Flow^{E_\Psi}_{/L}$ is a ELS. 
    \end{prop}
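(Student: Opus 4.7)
The plan is to combine the full faithfulness of the truncation functor $\tau_{\leq 0}\scrF(X;\Psi)\to\scrF(X;\bZ)$ from Proposition \ref{prop:truncate to ordinary}, the functoriality of the Viterbo-type restriction $\cR$ from Proposition \ref{prop:viterbo functor}, and the Morse--Floer comparison of Lemma \ref{lem:morse-floer}. Since the ELS condition concerns only the $0$-truncation in $\Flow^{E_\Psi}_{/L}$, the entire argument takes place after passing to truncations.

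First I would enlarge the distinguished set $\cL$ of branes to include both $(K,\zeta)$ and $(L,\mathrm{triv})$, so that Proposition \ref{prop:truncate to ordinary} furnishes a fully faithful functor
\[
\tau_{\leq 0}\scrF^{loc,\cL}(X;\Psi) \longrightarrow \scrF^{loc,\cL}(X,b^{can};\bZ).
\]
Because fully faithful functors reflect isomorphisms, the assumed integral isomorphism between the images of $(K,\zeta)$ and $L$ lifts to a degree-$0$ isomorphism $\tilde\alpha\colon (K,\zeta) \simeq L$ in $\tau_{\leq 0}\scrF^{loc,\cL}(X;\Psi)$. Next, $\cR$ preserves morphism degrees (sending a degree-$i$ morphism $\beta$ to $L\circ\beta$ of the same degree), so it descends to a functor $\tau_{\leq 0}\cR\colon \tau_{\leq 0}\scrF(X;\Psi) \to \tau_{\leq 0}\Flow^{E_\Psi}_{/L}$; applying this to $\tilde\alpha$ yields $\tau_{\leq 0}\cR(K,\zeta) \simeq \tau_{\leq 0}\cR(L) = \tau_{\leq 0}\cM^{LL}$ in $\tau_{\leq 0}\Flow^{E_\Psi}_{/L}$. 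Finally, Lemma \ref{lem:morse-floer} (together with the $E_\Psi$-orientation on $\cM(f)$ inherited from its diffeomorphism with the Floer side) and the change-of-function result Lemma \ref{lem:change morse} identify $\cM^{LL}$ with the Morse flow category $\cM(f)$ in $\Flow^{E_\Psi}_{/L}$ for any sufficiently $C^2$-small Morse function $f\colon L \to \bR$. Combining, $\tau_{\leq 0}\cR(K,\zeta) \simeq \cM(f)^{\xi_\bZ}$ with $\xi_\bZ$ the trivial $\bZ$-local system, so $\cR(K,\zeta)$ is an ELS in the sense of Definition \ref{def: ELS}.

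The main subtlety I expect is ensuring that the Morse--Floer comparison really descends to an isomorphism in $\Flow^{E_\Psi}_{/L}$, rather than merely to an unoriented equivalence up to a shift. Compatibility with the evaluation maps to the Moore path space $\cP L$ is automatic since the comparison is a stratified diffeomorphism of moduli spaces and the path-space evaluations on either side arise from restriction to the boundary; compatibility with $E_\Psi$-orientations is handled by the remark following Lemma \ref{lem:morse-floer}, which \emph{defines} the $E_\Psi$-orientation on $\cM(f)$ precisely so as to make the comparison an $E_\Psi$-oriented equivalence. The grading shift $[d]$ appearing in Lemma \ref{lem:morse-floer} is absorbed into the convention used for $\cM(f)$ in Definition \ref{def: ELS}, as reflected in the opening discussion of Section \ref{sec:ELS}.
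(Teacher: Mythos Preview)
Your argument is correct and reaches the same conclusion as the paper, but the route differs slightly. The paper does not lift the integral isomorphism back to $\tau_{\leq 0}\scrF(X;\Psi)$ via full faithfulness; instead it invokes a commutative square
\[
\xymatrix{
\scrF(X;\Psi) \ar[r]^-{\cR} \ar[d]_{\tau_{\leq 0}} & \Flow^{E_{\Psi}}_{/L} \ar[d]_{\tau_{\leq 0}} \\
\scrF(X;\bZ) \ar[r]^-{\cR} & \bZ[\pi_1(L)]\textrm{-mod}
}
\]
with classical Viterbo restriction along the bottom, and simply pushes the given integral isomorphism across the lower row. This lands in $\bZ[\pi_1(L)]$-mod, which by Lemma~\ref{lem:morse chains over group rings} is the same as $\tau_{\leq 0}\Flow^{E_\Psi}_{/L}$, and commutativity of the square then forces $\tau_{\leq 0}\cR(K,\zeta)\cong \tau_{\leq 0}\cR(L)$. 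Your detour through Proposition~\ref{prop:truncate to ordinary} is logically fine but unnecessary: since the ELS condition only concerns the $0$-truncation, one never needs the isomorphism to exist in the spectral category, even after truncation. The paper's argument is thus marginally more direct, while yours has the virtue of only using the spectral restriction functor $\cR$ rather than also its classical counterpart. Your discussion of the grading shift and the $E_\Psi$-orientation on $\cM(f)$ is accurate and matches how the paper handles those points.
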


    \begin{proof}
        There is a commutative diagram
        \[
        \xymatrix{
        \scrF(X;\Psi) \ar[r]^-{\cR} \ar[d]_{\tau_{\leq 0}} & \Flow^{E_{\Psi}}_{/L} \ar[d]_{\tau_{\leq 0}} \\ 
        \scrF(X;\bZ) \ar[r]^-{\cR} & \bZ[\pi_1(L)]\mhyphen\textrm{mod}
        }
        \]
        where $\cR$ denotes spectral Viterbo restriction on the upper line and classical Viterbo restriction on the lower line, and the vertical arrows are truncation functors. Since $(K,\zeta)$ and $L$ have quasi-isomorphic images in $\scrF(X;\bZ)$, they also have quasi-isomorphic images in $\bZ[\pi_1(L)]$-mod, which implies that $\cR(K,\zeta)$ satisfies the conditions to define an ELS. 
    \end{proof}

    \subsection{Classification of ELSs}\label{sec: clas els}

    In this section we prove a classification result for ELSs which amounts to saying that the map of Lemma \ref{lem:MLS to WELS} is surjective. Let $L \subset X$ be a $\Theta$-oriented Lagrangian submanifold; we allow the case in which $L$ has non-empty boundary.
    
    \begin{thm}\label{thm: WELS}
        Any $\Psi$-oriented evaluation local system $\cF$ is isomorphic in $\Flow^{E_\Psi}_{/L}$ to $\cM(f)^{\zeta}$ for some $\Psi$-monodromy local system $\zeta$ on $L$. 
    \end{thm}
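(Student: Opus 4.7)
The plan is to induct along a handle decomposition of $L$ modelled by a Morse function $f$. Order the critical points $x_1, \ldots, x_N$ of $f$ by increasing critical value, write $L^{(j)}$ for a sublevel set just above $f(x_j)$, and let $\cF^{(j)}$ denote the restriction of $\cF$ to $\{x_1,\ldots,x_j\}$; this restriction naturally lives in $\Flow^{E_\Psi}_{/L^{(j)}}$ since evaluation maps only see the path space of the sublevel set. The inductive hypothesis asserts the existence of a $\Psi$-monodromy local system $\zeta_j : L^{(j)} \to BGL_1(R_\Psi)_{h\cI}$ together with an isomorphism $\cF^{(j)} \cong \cM(f|_{L^{(j)}})^{\zeta_j}$ in $\Flow^{E_\Psi}_{/L^{(j)}}$. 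The base case involves only a 0-handle, where both sides are determined up to canonical isomorphism by a single trivial generator and any local system over a contractible set is trivial; I would handle each 0-handle of $L$ independently.

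For the inductive step, suppose that crossing the next critical value attaches a $k$-handle along $\phi : S^{k-1} \to L^{(j)}$, adding a critical point $x$ of index $k$. Combining Lemma \ref{lem: 0-modification} and Lemma \ref{lem:morse chains over group rings} with the ELS hypothesis (which supplies the required chain-level match on the new generator), I would replace $\cF^{(j+1)}$ by an isomorphic model whose 0-truncation equals that of $\cM(f|_{L^{(j+1)}})^{\tilde\zeta_j}$ for some chain-level extension $\tilde\zeta_j$ of $\zeta_j$ obtained from the induced $\bZ$-local system (via $BGL_1(R_\Psi) \to BGL_1(H\bZ)$). The remaining task is to lift $\tilde\zeta_j$ to a spectral local system $\zeta_{j+1}$ and to promote the 0-truncation equality to an $E_\Psi$-oriented isomorphism in $\Flow^{E_\Psi}_{/L^{(j+1)}}$. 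The primary obstruction to the spectral lift is the homotopy class of $\zeta_j \circ \phi \in \pi_{k-1} BGL_1(R_\Psi)$.

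To kill this obstruction I would exploit the higher-dimensional data in $\cF^{(j+1)}$ that is invisible to the 0-truncation. Choosing an index-$0$ critical point $y$ of $f$, the moduli space $\cF_{xy}$ is a compact $(k-1)$-manifold with corners, equipped with an $E_\Psi$-orientation and an evaluation map to $\cP_{xy} L$; its relative class yields an element of $\Omega_{k-1}^{E_\Psi}(\cP_{xy} L)$. By sliding $x$ down the descending manifold toward the attaching sphere, one introduces a Morse-Bott degeneration along $\phi(S^{k-1})$ whose analytical setup is handled via the deeper smoothness estimates of Section \ref{sec:morse-floer}. Under this degeneration the above class becomes computable in terms of the image of $[\phi] \in \pi_{k-1}\Omega L$ under $\zeta_j$, and the fact that $\cF$ is a genuine $E_\Psi$-oriented flow category (not merely up to obstruction) forces this class, and hence the extension obstruction, to vanish. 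With $\zeta_{j+1}$ constructed, the isomorphism is extended using Proposition \ref{prop: cone prop over L} together with the freedom to re-parametrise evaluation maps up to homotopy (Example \ref{ex: ev hom iso flow}).

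The main obstacle is the identification in the previous paragraph: translating the vanishing of a relative $E_\Psi$-oriented bordism class over $\cP_{xy} L$ into a concrete nullhomotopy in $\pi_{k-1} BGL_1(R_\Psi)$. This requires tracking the twisting construction of Section \ref{sec:twist me} across the Morse-Bott limit and understanding how the $E_\Psi$-orientation on a twisted moduli space records the twist itself (essentially an inverse to the twisting construction at the level of bordism). Once this identification is in place, the subsequent obstruction to upgrading the spectral extension to an $E_\Psi$-oriented isomorphism of flow categories over $L^{(j+1)}$ is of strictly lower complexity, and is absorbed into the re-parametrisation freedom provided by the evaluation homotopies; the induction then proceeds.
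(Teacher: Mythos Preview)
Your overall architecture is correct and matches the paper: induct over a handle decomposition, use Lemma \ref{lem: 0-modification} to normalise the $0$-truncation, locate the obstruction to extending the MLS over the new handle in $\pi_{k-1}BGL_1(R_\Psi)$, and kill it using the higher-dimensional moduli spaces of $\cF$. But the mechanism you propose for the killing step is both vague and not the one that actually works in the paper, and you yourself flag it as the ``main obstacle''.

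The paper does \emph{not} degenerate the new critical point $P$ toward the attaching sphere via a Morse--Bott family. Instead it (i) writes $\cF \cong \Cone(\cW: *[i]\to\cH)$ where $\cH$ lives over $L_i$; (ii) modifies the Morse function to plant two auxiliary critical points $Q$ (index $i$) and $R$ (index $0$) in a collar of the attaching sphere; and (iii) uses the fibre-product model $\bD\cH_{qz}\simeq \cD(q)\times_{L_i}\cA(z)$ for the identity of $\cH$ (this is where the Morse--Bott input from Section \ref{sec:morse-floer} enters) to ``slide'' $\cW$ so that $\cW'_{Pz}=\emptyset$ unless $z\in\{Q,R\}$. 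After this, the obstruction is exactly the $E_\Psi$-bordism class of $\cF_{QR}$, and the cone structure gives $\partial\cW'_{PR}=\cW'_{PQ}\times\cH_{QR}\cong\cF_{QR}$; so the nullbordism is literally $\cW'_{PR}$. No translation between bordism over $\cP_{xy}L$ and $\pi_{k-1}BGL_1(R_\Psi)$ is needed: the identification of Lemma \ref{lem:obstruction to extension over top handle} already places the obstruction in $\Omega_{i-1}^{E_\Psi}\cong\pi_{i-1}GL_1^\Psi$, and the bounding manifold is handed to you.

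Your final step is also too loose. The paper does not appeal to ``re-parametrisation freedom'' to absorb a secondary obstruction; rather, it observes that the specific nullbordism $\cW'_{PR}$ \emph{pins down} the extension $\zeta_{i+1}$ uniquely (Remark \ref{rmk:characterise extension over handle from nullbordism}), and with that choice one gets $\cM(f_{i+1})^{\zeta_{i+1}}_{Pz}=\cW'_{Pz}$ for $z\in\{Q,R\}$ on the nose, hence $\cM(f_{i+1})^{\zeta_{i+1}}=\cF$. The isomorphism is thus an equality after the right normalisations, not something assembled from Proposition \ref{prop: cone prop over L} and evaluation homotopies.
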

    \begin{rmk}
        We allow $L$ to have boundary because the proof involves an induction over a handle decomposition of $L$.
    \end{rmk}
    \begin{rmk}\label{rmk:no uniqueness claimed}
        It is also true that $\zeta$ is unique up to homotopy, but we do not prove or use this.
    \end{rmk}

    \begin{proof}[Set-up for the proof of Theorem \ref{thm: WELS}]

        The proof will be carried out by an induction over the handles in a handle decomposition of $L$, and will occupy the next two sections.  We set-up the induction here. 
        Let $f: L \to \bR$ be a self-indexing Morse function, and let $L_i = f^{-1}(-\infty, i+\frac12]$ for all $i$. Let $\cF$ be a ELS on $L$; let $\xi$ be the corresponding $\bZ$-local system.

        Fix some $i$. Assume (for simplicity) that $L_{i+1} = L_i \cup H$ where $H$ is a single $(i+1)$-handle; the general case is identical (except where otherwise indicated). Let $P$ be the (unique) critical point of $f$ in $H$; note $P$ has index $i+1$. Write $f_j$ for $f|_{L_j}$ and $\xi_j$ for $\xi|_{L_j}$ for all $j$.

        By induction on $i$, we may assume Theorem \ref{thm: WELS} holds for $L_i$.
    \end{proof}

    \subsection{Simplifying the ELS}\label{sec: simp els}

        By Lemma \ref{lem: 0-modification}, replacing $\cF$ with a shift of an isomorphic object if necessary, we may assume that $\tau_{\leq 0} \cF = \cM(f_{i+1})^{\xi_{i+1}}$, for a $\bZ$-local system $\xi_{i+1} \to L_{i+1}$ with the property that $(\xi_{i+1})|_{L_i}$ is the $\bZ$-local system canonically obtained from a $\Psi$-MLS on $L_i$. 

        \begin{rmk}
            Now $Ob(\cF)=\Crit(f_{i+1})$, with the same gradings. In particular, for $x \in \cF$, either $x= P$ and $|x|=i+1$, or $0 \leq |x| \leq i$.
        \end{rmk}

        \begin{lem}\label{lem: wels simp 1}
            Replacing $\cF$ with an isomorphic object if necessary, we may assume that for $x, y \in \cF \setminus \{P\}$, the image of $\Gamma_{xy}: \cF_{xy} \times I \to L_{i+1}$ does not intersect the handle $H$.
        \end{lem}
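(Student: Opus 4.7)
The plan is to build the required isomorphism via Example \ref{ex: ev hom iso flow}: it suffices to construct a coherent family of homotopies $\{Z^t_{xy}: \cF_{xy} \times I \to L_{i+1}\}_{t\in[0,1]}$ from $Z^0_{xy} = \Gamma_{xy}$ to new evaluations $\Gamma'_{xy}$, compatible with path concatenation on the facets $\cF_{xz}\times\cF_{zy} \subset \partial\cF_{xy}$, such that for $x, y \neq P$ the target $\Gamma'_{xy}$ factors through $L_i$.

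The key preliminary observation is that for $x, y \in \cF\setminus\{P\}$, no boundary face of $\cF_{xy}$ involves breaking through $P$: such a breaking would require $|x| > |P| = i+1$, contradicting $|x| \leq i$. Hence the stratum of pairs disjoint from $P$ is closed under boundary, and can be treated by a self-contained induction on $|x|-|y|$. On such a pair, the inductive hypothesis prescribes $Z^t_{xy}$ on $\partial \cF_{xy} \times I$ via concatenation, and the resulting boundary data ends at $t = 1$ in $L_i$. Since $L_{i+1}$ is built from $L_i$ by attaching a single $(i+1)$-handle, the pair $(L_{i+1}, L_i)$ is $i$-connected. The dimension bound $\dim(\cF_{xy} \times I) = |x|-|y| \leq i$ then ensures that the obstruction to extending the boundary data to a homotopy $Z^t_{xy}$ on all of $\cF_{xy}\times I$ whose $t=1$-value lands in $L_i$ lies in relative homotopy groups $\pi_k(L_{i+1},L_i)$ of degree $\leq i$, which all vanish. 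In a second round of induction I would then extend the homotopies to pairs involving $P$: here the concatenation forces $\Gamma'_{Py}$ to differ from $\Gamma_{Py}$ on boundary faces $\cF_{Pz}\times\cF_{zy}$ (since the $\Gamma'_{zy}$ have changed), but no avoidance condition need be imposed, so the boundary data extends without obstruction across the cofibration $\partial\cF_{xy}\hookrightarrow\cF_{xy}$.

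The main obstacle will be the bookkeeping needed to keep everything coherent: the homotopies must match the full concatenation structure at every intermediate time $t$, not merely at the endpoints, and the interior extension must be organised consistently with a chosen deformation retraction of $L_{i+1}\setminus(\textrm{cocore of }H)$ onto $L_i$. Pointwise application of a map commutes with Moore path concatenation, which streamlines the construction, but care is required to ensure that on every boundary face of $\cF_{xy} \times I \times [0,1]$ the prescribed homotopy is consistent before any interior extension is performed. Once this is achieved, Example \ref{ex: ev hom iso flow} packages the family $\{Z^t_{xy}\}$ into the desired isomorphism $\cF \cong \cF'$ in $\Flow^{E_\Psi}_{/L_{i+1}}$, where $\cF'$ has the required avoidance property by construction.
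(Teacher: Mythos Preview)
Your proposal is correct, but the paper takes a more geometric route. Instead of invoking the $i$-connectedness of the pair $(L_{i+1},L_i)$ and running an obstruction-theoretic induction, the paper argues in two steps: first, since the cocore of $H$ has dimension $d-i-1$ and $\dim(\cF_{xy}\times I)\leq i$ for $x,y\neq P$, a generic perturbation of the $\Gamma_{xy}$ makes them miss the cocore; second, postcomposing \emph{all} evaluation maps with the large-time flow of $-\nabla f_{i+1}$ pushes everything that missed the cocore into $L_i$. The second step is particularly clean for coherence, since applying a single self-map of $L_{i+1}$ pointwise to Moore paths automatically respects concatenation. The first step still needs an inductive coherence argument of the same shape as yours (perturb on facets first, then extend), and your ``second round'' for pairs involving $P$ is likewise implicit in the paper's approach. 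Your method has the advantage of not using the ambient Morse function or any transversality, so it would survive in a purely homotopy-theoretic setting; the paper's method is shorter and makes the avoidance completely explicit via the retraction.
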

        \begin{proof}
            The cocore of the handle $H$ has dimension $n-i-1$. Since $0 \leq |x|,|y| \leq i$, $(\cF_{xy} \times I)$ has dimension $\leq i$. Therefore by generically perturbing all $\Gamma_{xy}$, we may assume that the cocore of the handle (which has dimension $d-i-1$) is not in their image. By Example \ref{ex: ev hom iso flow} the isomorphism type of the resulting flow category is unchanged in $\Flow^{E_\Psi}_{/L_{i+1}}$.

            Then postcomposing all evaluation maps with the large-time flow of $-\nabla f_{i+1}$, we find that the images of all $\Gamma_{xy}$ (for $x,y \neq P$) are disjoint from all of $H$, not just $P$.
        \end{proof}
        \begin{rmk}
            If there are several $(i+1)$-handles and $f$ is chosen to be self-indexing, then this argument adapts to let one push the $\Gamma_{xy}$ off all the top index handles.
        \end{rmk}
        Let $\cH$ be the flow category with objects $Ob(\cF) \setminus \{P\}$, and $\cH_{xy} = \cF_{xy}$. By Lemma \ref{lem: wels simp 1}, the $\Gamma_{xy}$ make $\cH$ into an object of $\Flow^{E_\Psi}_{/L_i}$ (as opposed to $\Flow^{E_\Psi}_{/L_{i+1}}$).

        \begin{lem} \label{lem: cF is a cone}  
            $\cF$ is isomorphic to $\Cone(\cW: *[i] \to \cH)$, for some morphism $\cW$ in $\Flow^{E_\Psi}_{/L_{i+1}}$.
        \end{lem}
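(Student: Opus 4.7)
The plan is to read off the cone structure directly from $\cF$, exploiting that $P$ is the unique object of top grade $i+1$, with every other object of $\cF$ having grade at most $i$. Since moduli spaces in a flow category satisfy $\dim \cF_{xy} = |x|-|y|-1 \geq 0$, the gradings force $\cF_{xP} = \emptyset$ for every $x \neq P$, so $\cF$ is already ``upper-triangular'' with $P$ at the top; only the moduli spaces $\cF_{Py}$ for $y \in \cH$ and the moduli spaces internal to $\cH$ carry content.

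Explicitly, I would define a morphism $\cW: *[i] \to \cH$ in $\Flow^{E_\Psi}_{/L_{i+1}}$ by setting $\cW_{*,y} := \cF_{Py}$ for every $y \in \cH$. The dimensions match: $\dim \cW_{*,y} = |*| - |y| = i - |y| = \dim \cF_{Py}$. The right $\cH$-action $\cW_{*,y_1} \times \cH_{y_1 y_2} \to \partial \cW_{*,y_2}$ is given by the corresponding facets in the boundary stratification of $\cF_{Py_2}$ (which are exhausted by such products precisely because $\cF_{xP} = \emptyset$ for $x\ne P$). The evaluation maps $\Gamma_{Py}: \cF_{Py} \to \cP_{Py} L_{i+1}$ inherited from $\cF$ equip $\cW$ with its over-$L_{i+1}$ structure; this is the piece that ``sees'' the handle $H$, since the inclusion $\cH \hookrightarrow \Flow^{E_\Psi}_{/L_{i+1}}$ was arranged (via Lemma \ref{lem: wels simp 1}) to factor through $\Flow^{E_\Psi}_{/L_i}$. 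Finally, the $E_\Psi$-orientation on $\cW$ is inherited tautologically from that on $\cF$.

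Granted $\cW$, the comparison to the cone is essentially formal: by construction (cf.\ the conic degeneration underlying Lemma \ref{lem:flow unit} and \cite[Section 4.5]{PS3}), $\Cone(\cW: *[i] \to \cH)$ has object set $\{*_0\} \sqcup \mathrm{Ob}(\cH)$ with $*_0$ in grade $i+1$, moduli spaces $\cH_{xy}$ for $x,y \in \cH$ and $\cW_{*,y}$ for pairs emanating from $*_0$, together with the $\bD(*[i])_{**}$ stratum supplying the unit at $*_0$. Identifying $*_0$ with $P$, this data matches $\cF$ on the nose, and the tautological identifications in both directions assemble into inverse isomorphisms in $\Flow^{E_\Psi}_{/L_{i+1}}$. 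The only real bookkeeping is to check compatibility of the inherited $E_\Psi$-orientations and evaluation maps to $\cP L_{i+1}$ on the cone with those on $\cF$, which is immediate since on both sides these structures are defined by restriction from the same data on $\cF$; I expect this to be the most tedious step but it presents no conceptual obstacle, and in particular no new moduli spaces, transversality arguments, or analytic constructions are required.
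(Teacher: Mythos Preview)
Your proposal is correct and is exactly the approach the paper takes: the paper's proof simply says the result is ``essentially tautological'' (noting that there is no data in an $E_\Psi$-orientation of $*[i]$), and you have spelled out precisely this tautology by setting $\cW_{*,y} := \cF_{Py}$ and reading off all structure from $\cF$. The mention of a $\bD(*[i])_{**}$ stratum is unnecessary since $*[i]$ has a single object and no morphisms, but this does not affect correctness.
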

        \begin{proof}
        There is no data in an $E_{\Psi}$-orientation of the flow category $\ast[i]$ (the grading determines the value of $i$), so the result is essentially tautological.  

        \end{proof}

        By construction, $\tau_{\leq 0} \cH = \cM(f_i)^{\xi_i}$, and so $\cH$ is an ELS over $L_i$. Therefore by induction, $\cH$ is isomorphic in $\Flow_{/L_i}^{E_\Psi}$ to $\cM(f)^{\zeta_i}$, for $\zeta_i$ some monodromy local system over $L_i$. By Lemma \ref{lem: 0-modification}, we may assume that $\cH = \cM(f_i)^{ \zeta_i}$.

        Let $S \subset \partial L_i$ be the attaching sphere for the handle $H$, so $S \cong S^i$. Let $g: S \to \bR$ be a standard Morse function with two critical points $Q$ and $R$, of index $i$ and $0$ respectively. Choose a tubular neighbourhood $\iota': S \times D^{d-i-1} \to \partial L_i$ of $S$ (recall that the normal bundle of an attaching sphere is always trivial), and extend this to an embedding $\iota: S \times D^{d-i-1}_t \times [-1,1]_s \to L_i$ restricting to $\iota'$ along $(\ldots) \times \{s=0\}$. 
        \begin{lem}
            There is a Morse function $f': L_{i+1} \to \bR$, with $-\nabla f'$ inwards-pointing along $\partial L_i$, such that on $\operatorname{Im}(\iota)$, $f'$ is given by $g+|t|^2+s^2+c$ (where $c$ is some constant) and agreeing with $f$ over $H$.
        \end{lem}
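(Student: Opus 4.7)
Since the prescribed model function $\phi(s_S, t, s) := g(s_S) + |t|^2 + s^2 + c$ is itself a Morse function whose only critical points are $Q, R \in S \subset \partial L_i$ (of indices $i$ and $0$ respectively), and since $S$ is regular for $f|_{L_i}$, the plan is to construct $f'$ by replacing $f$ with $\phi$ (pulled back via $\iota$) on a neighbourhood of $S \times \{0\} \times \{0\}$, interpolating smoothly back to $f$ in an outer collar, and leaving $f$ untouched on $H$ and near $\partial L_{i+1}$. The resulting $f'$ will have exactly the original critical points of $f$ in $L_i \setminus \operatorname{Im}(\iota)$, together with $P \in H$ and the two new critical points $Q, R$ on $\partial L_i$.

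\textbf{Main steps.} First I would shrink the image of $\iota$, if necessary, so that it is disjoint from the (finite) critical set of $f|_{L_i}$; this gives a positive lower bound $|\nabla f| \geq \delta$ on the compact set $\operatorname{Im}(\iota)$. After rescaling the collar coordinates, I may assume $\phi$ has $C^1$-norm as small as desired relative to $\delta$, and choose the constant $c$ so that $\phi$ is roughly compatible in value with $f$ at the edge of the collar. I would then fix a smooth bump function $\beta \colon L_{i+1} \to [0,1]$ equal to $1$ on a small neighbourhood $V$ of $S \times \{0\} \times \{0\}$ and supported in a slightly larger neighbourhood $V' \subset \operatorname{Im}(\iota)$, and set
\[
f' := \beta \cdot (\phi \circ \iota^{-1}) + (1-\beta) \cdot f \quad \text{on } \operatorname{Im}(\iota), \qquad f' := f \quad \text{elsewhere}.
\]
On $V$ we have $f' = \phi \circ \iota^{-1}$ with critical points exactly $Q$ and $R$; on the transition annulus $V' \setminus V$, $C^1$-smallness of $\phi$ combined with $|\nabla f| \geq \delta$ guarantees $|\nabla f'| > 0$. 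Outside $\operatorname{Im}(\iota)$, $f' = f$, so the critical points of $f|_{L_i}$ and $P \in H$ are preserved, the inward-pointing boundary condition along $\partial L_{i+1}$ is inherited from $f$, and the identity $f' = f$ over $H$ holds by construction.

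\textbf{Main obstacle.} The technical crux is verifying that no spurious critical points appear in the interpolation annulus $V' \setminus V$; the smallness argument above suffices but is somewhat coordinate-dependent, and one must check that the indices of $Q$ and $R$ come out correctly by reading off the Hessian of $\phi$. A cleaner alternative is to invoke Cerf theory: view the passage $f \rightsquigarrow f'$ as a generic path in the space of smooth functions undergoing a single birth move that introduces the cancelling pair $(Q, R)$ on $\partial L_i$, followed by a further local adjustment, supported in a tiny neighbourhood of $S \times \{0\} \times \{0\}$, to realise the exact prescribed form. Either approach is essentially a standard handle-theoretic construction, and the only real work is bookkeeping rather than any deeper new ingredient.
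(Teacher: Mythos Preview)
Your interpolation aims at more than the lemma requires and more than the paper attempts: you try to keep $f' = f$ on all of $L_i \setminus \operatorname{Im}(\iota)$, thereby preserving the original critical points of $f|_{L_i}$. The paper does not do this. It discards $f|_{L_i}$ altogether: starting from the standard handle model on $H$ and the prescribed model $\phi$ on $\operatorname{Im}(\iota)$ (which it asserts fit together to a Morse function on $H \cup \operatorname{Im}(\iota)$, with $-\nabla$ inward along the relevant part of $\partial L_i$), it then extends \emph{arbitrarily} to a Morse function on the rest of $L_i$ subject only to the boundary condition. This is legitimate because the very next step in the paper is to invoke independence of the Morse flow category from the choice of function, so nothing is lost by replacing $f|_{L_i}$ completely. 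Your route is not wrong in spirit, but the extra constraint of preserving $f$ outside $\operatorname{Im}(\iota)$ forces you to match $\phi$ smoothly with $f|_H$ across the attaching region (since $S \times \{0\} \times \{0\} \subset \partial L_i$ and your neighbourhood $V$ meets it), and you do not address this; note that $\phi|_{s=0} = g + |t|^2 + c$ varies along $S$ while the standard handle model is constant in the $S$-direction at $|r|=1$. Your construction also only achieves $f' = \phi$ on the smaller set $V$, not on all of $\operatorname{Im}(\iota)$ as stated.

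Your Cerf-theory alternative contains a genuine error: a birth move creates a cancelling pair of critical points of \emph{consecutive} Morse indices, but $Q$ and $R$ have indices $i$ and $0$. For $i > 1$ these are not adjacent, so the pair $(Q, R)$ cannot arise from a single birth.
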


        \begin{proof}
            We assume $f$ looks like the ``standard'' Morse function on $H = D^{i+1}_r \times D^{d-i-1}_t$; this is $h = |t|^2-|r|^2+c$. It's straightforward that one can choose $f'$ to agree with $h$ over $H$ and $f_i$ over $L_i$.

            Then we can choose any Morse function $f'$ on $L_i$ which has $-\nabla f'$ inwards-pointing on $\partial L_i$, and glue it to this local model, to get a Morse function on $L_{i+1}$.

            We choose the local model in the statement in a neighbourhood (in $L_i$) of the attaching sphere $S$ of $H$; this is inwards along where it touches $\partial L_i$. Combined with the local model on $H$, this gives a Morse function on the union of $H$ and a neighbourhood of $S$ in $L_i$. Then extend this arbitrarily to a Morse function (inwards on $\partial L_i$) on the rest of $L_i$.
        \end{proof}

        By Lemma \ref{lem:change morse}, we may assume henceforth that $f_{i+1} = f'$.

        Now by construction, $\cM(f_{i+1})
        _{QR} = \cM(g)_{QR} \cong S^{i-1}$, and 
         \begin{equation}
            \cM(f_{i+1})_{Pz} \cong \begin{cases}
                \operatorname{pt} & \textrm{ if } z=Q\\
                D^i & \textrm{ if } z=R\\
                \emptyset & \textrm{ otherwise}
            \end{cases}
        \end{equation}
        \begin{lem} \label{lem:slide bott gluing}
            $\cW$ is $E_{\Psi}$-oriented bordant to a morphism $\cW': *[i] \to \cH$ in $\Flow^{E_\Psi}_{/L_{i+1}}$ for which 
            \[
            \cW'_{Pz} = \emptyset \qquad \mathrm{for} \quad  z \not \in \{Q,R\}
            \]
            and with $\tau_{\leq 0}\cW = \tau_{\leq 0}\cW'$.
        \end{lem}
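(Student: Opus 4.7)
The plan is to construct the required $E_{\Psi}$-oriented bordism by downward induction on $|z|$, exploiting the Morse-theoretic structure of both $\cH = \cM(f_i)^{\zeta_i}$ and the $0$-truncation of $\cW$. First I would observe that for $z \notin \{Q,R\}$ with $|z| \in \{i, i-1\}$ the moduli spaces $\cW_{Pz}$ are already empty: these are the $0$- and $1$-dimensional moduli recorded by $\tau_{\leq 0}\cW$, and by hypothesis $\tau_{\leq 0}\cW = \tau_{\leq 0}\cM(f_{i+1})^{\xi_{i+1}}$, in which $Q$ and $R$ are the only critical points near the attaching sphere contributing at these levels. So the genuine content of the lemma lies in the higher-dimensional $\cW_{Pz}$ with $|z| \leq i - 2$, where we must use bordism to kill them without disturbing the truncation.

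For the inductive step I would post-compose $\cW$ with the identity morphism on $\cH$, rewritten via Lemma \ref{lem:identify identity 2} as a fibre-product morphism with moduli spaces $\cD_{x*} \times_{L_i} \cA_{*z}$ for the Morse function $f_i$. Up to $E_{\Psi}$-oriented bordism this replaces $\cW_{Pz}$ by $\bigcup_y \cW_{Py} \times_{L_i} \cA(z)$, whose evaluation maps now factor through $L_i$ and, after the descending-manifold projection, concentrate near the critical set of $f_i$. A morphism-level extension of Lemma \ref{lem: wels simp 1} then lets me push all $\cW_{Py}$-evaluations for $y \notin \{Q,R\}$ off the cocore of $H$; a further $-\nabla f_i$-flow homotopy concentrates the images near the attaching sphere $S$. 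Since $\cA(z)$ for $z \notin \{Q,R\}$ misses $\{Q,R\}$ in the local model $f' = g + |t|^2 + s^2 + c$ provided by the previous lemma, the fibre product with $\cA(z)$ becomes null-bordant rel boundary, supplying the inductive bordism.

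The main obstacle is coherence of $E_{\Psi}$-orientations across these modifications. Lemma \ref{lem:identify identity 2} is the key tool here: it establishes that the two a priori distinct $E_{\Psi}$-orientations on a fibre product $\cM \times_L \cN$---one coming from gluing abstract discs along a weighted puncture, the other from attaching a framed cap and cocap followed by direct summation as in the proof of \cite[Theorem 5.13]{PS3}---agree, so the boundary rule \eqref{eqn:bott gluing} is respected throughout the fibre-product rewriting. The remaining formal ingredients are standard: Example \ref{ex: ev hom iso flow} handles the evaluation-path homotopies that appear when flowing by $-\nabla f_i$, Proposition \ref{prop: cone prop over L} ensures the cone on the modified morphism is still isomorphic to $\cF$ in $\Flow^{E_\Psi}_{/L_{i+1}}$, and the equality $\tau_{\leq 0}\cW = \tau_{\leq 0}\cW'$ is automatic because only higher-dimensional moduli are altered by the construction.
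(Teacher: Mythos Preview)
Your starting move---composing $\cW$ with the identity of $\cH$ rewritten as a fibre product of ascending and descending manifolds---matches the paper, and your observation about $|z|\in\{i,i-1\}$ is fine. But the middle of your argument has real gaps. The formula $\bigcup_y \cW_{Py}\times_{L_i}\cA(z)$ drops a factor: the composition has moduli $\bigcup_q \cW_{Pq}\times\bD\cH_{qz}$, a three-fold product involving both an ascending and a descending manifold. More seriously, you cannot push the $\cW_{Py}$-evaluations off the cocore of $H$ in the manner of Lemma~\ref{lem: wels simp 1}: those evaluations are paths \emph{starting at $P\in H$}, so they must traverse the handle, and the dimension count of that lemma does not apply here. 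Flowing by $-\nabla f_i$ moves points further \emph{into} $L_i$, away from the attaching sphere, so it cannot concentrate anything near $S$. And ``$\cA(z)$ misses $\{Q,R\}$'' is neither the relevant condition nor clearly true; you end up asserting a nullbordism without supplying one.

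The paper's trick is different and direct. Group the composition as $\cX_P\times_{L_{i+1}}\cD(z)$ where $\cX_P:=\bigcup_q \cW_{Pq}\times\cA(q)$, and note that $\cX_P$ carries an evaluation to the space of paths \emph{from $P$ to points of $L_i$}. Every such path must cross $\partial L_i$ inside the attaching region of $H$. Now \emph{slide the point at which the fibre product with $\cD(z)$ is taken} back along these paths: this one-parameter family is an honest $E_\Psi$-oriented bordism over $L_{i+1}$. After sliding to $\partial L_i$, the fibre product is taken over a neighbourhood of $S$; in the local model $g+|t|^2+s^2+c$ the only critical points of $f_i$ whose descending manifolds meet that region are $Q$ and $R$. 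Hence for $z\notin\{Q,R\}$ the resulting moduli space is \emph{empty}, not merely nullbordant, and no downward induction on $|z|$ is needed.
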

        \begin{proof}
           Let $\bD\cH$ be the identity morphism of $\cH$, which inherits an $E_{\Psi}$-orientation from that on $\cH$ by Lemma \ref{lem:flow unit}. Certainly $\cW$ is $\Psi$-oriented bordant over $L_{i+1}$ to $\cW^\# := \bD\cH \circ \cW$.  Then
           \begin{equation}\label{eq: reouhgeoudbg}
               \cW^{\#}_{Pz} = \bigcup_q \cW_{Pq} \times \bD\cH_{qz} = \, \bigcup_q \cW_{Pq} \times \cA(q) \times_{L_{i+1}} \cD(z)
           \end{equation}
           using the description of the identity morphism of a Morse flow category from Section \ref{Sec:ascending descending}. Write
           \[
           \cX_P = \bigcup_q \cW_{Pq} \times \cA(q).
           \]
           Since we are working with flow categories over a target, there is a map $\ev: \cX_P \to \cP_{P \to L_i}L$, where the codomain indicates the space of paths which begin at $P$ and end somewhere in $L_i$.  We now consider a bordism of the morphism $\cW^{\#}$ obtained by sliding the point at which we evaluate for forming the fibre product with $\cD(z)$ along the paths defined by $\ev$. Once we have deformed this back to the boundary $\partial L_i$, the evaluation lands in the attaching sphere $S$ for the handle containing $P$. It follows that the only descending manifolds which can be in the target of the deformed evaluation are those for $z \in \{Q,R\}$. This yields both the required morphism $\cW'$ and the bordism from $\cW'$ to $\cW^{\sharp}$. It follows from the construction that we do not change the $0$-th truncation in this process.
        \end{proof}
        \begin{rmk}\label{rmk: live over disc}
            Note that the morphism $\cW'$ constructed in Lemma \ref{lem:slide bott gluing} also satisfies the property that the map $ev: \cW_{PR} \to \cP_{PR} L$ land in the space of gradient flows $\cM(f)_{PR} \cong D^{i-1}$. 
            
            It sends the boundary $\partial \cW_{PR} \cong \cW_{PQ} \times \cF_{QR}$ to the boundary $S^{i-2}$, and is degree 1: on the boundary, it is degree 1 since $\cW_{PQ} \cong \pt$ and $\cF_{QR}$ is obtained from $\cM(f)_{QR}$ by twisting; for the map $\cW_{PR} \to D^i$ it follows from the fact that a boundary-preserving map between compact manifolds has the same degree as its restriction to boundaries.
        \end{rmk}

         \begin{rmk}\label{rmk:take one space to be just a point}
            The description of $\cF$ as a cone in Lemma \ref{lem: cF is a cone} determines $\tau_{\leq 0}\cW$. That in turn determines $\tau_{\leq 0}\cW'$, and in particular we may assume that $\cW'_{PQ} = \{pt\}$.
        \end{rmk}

\begin{rmk}\label{rmk:relabel cW}
    By Proposition \ref{prop: cone prop over L}, we can replace $\cW$ by $\cW'$.  From now on, we may therefore assume that $\cF$ is equal to the cone on a morphism $\cW: \ast[i] \to \cH$ which satisfies the conclusions of Lemma \ref{lem:slide bott gluing}. 
\end{rmk}

\subsection{Killing the obstruction}\label{sec: killing ob}

Recall that the inductive step has provided a monodromy local system $\zeta_i \to L_i$, whose homotopy class is given by an element of $[L_i, B(GL_1^{\Psi})]$, where we abbreviate $BGL_1(R_{\Psi}) =: B(GL_1^{\Psi})$. We have two goals:

\begin{itemize}
    \item Show that the local system $\zeta_i \to L_i$ extends to a local system $\zeta_{i+1} \to L_{i+1}$;
    \item show that $\cF \simeq \cM(f_{i+1})^{\zeta_{i+1}}$.
\end{itemize}

We first identify the obstruction to the first step. 

\begin{lem} \label{lem:obstruction to extension over top handle}
    The moduli space $\cM(f_{i+1})_{QR}$ defines a class in $\pi_{i-1}(GL_1^{\Psi})$, well-defined up to sign, which is the obstruction to extending $\zeta_i$ as a monodromy local system to $L_{i+1}$.
\end{lem}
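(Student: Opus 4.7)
My plan is to interpret the obstruction classically, then match it with the class built from $\cM(f_{i+1})_{QR}$ via the CW decomposition of the attaching sphere $S$ induced by the Morse function $g$.

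First I would recall that, up to homotopy, $L_{i+1}$ is obtained from $L_i$ by attaching a single $(i+1)$-cell along the attaching map $\iota\colon S \simeq S^i \hookrightarrow L_i$ of the handle $H$. By elementary obstruction theory, the obstruction to extending $\zeta_i \colon L_i \to B(GL_1^\Psi)$ over $L_{i+1}$ is then the class $[\iota^*\zeta_i] \in \pi_i B(GL_1^\Psi) \simeq \pi_{i-1} GL_1^\Psi$, and the extension exists precisely when this class vanishes.

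Next I would extract the class from the ELS side. Because all evaluations in $\cH$ factor through $L_i$ by Lemma \ref{lem: wels simp 1}, the inductive isomorphism $\cH \simeq \cM(f_i)^{\zeta_i}$ identifies $\cF_{QR}$ with the twist of the Morse moduli space $\cM(g)_{QR}$ by $\zeta_i|_S$. This underlying moduli space is diffeomorphic to $S^{i-1}$ and evaluates into the attaching sphere $S \subseteq L_i$. Converting the evaluation to a map $S^{i-1} \to \Omega L_i$ via a chosen spider and composing with $\zeta_i$ produces a map $c \colon S^{i-1} \to GL_1^\Psi$, hence a class $[c] \in \pi_{i-1} GL_1^\Psi$. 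Dependence on basepoint, spider and orientation choices alters $[c]$ only by a sign, giving the claimed well-definedness.

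Finally I would identify $[\iota^*\zeta_i] = \pm [c]$ using the CW structure on $S$ coming from $g$: a single $0$-cell at $R$ together with a single $i$-cell coming from the compactified descending manifold $\cD(Q)$. By \eqref{eqn:break descending}, $\cD(Q)$ is a compact manifold with corners whose boundary $\cM(g)_{QR} \times \cD(R) \simeq S^{i-1}$ collapses to $R$, so it provides a degree-one characteristic map $\chi \colon (D^i,\partial D^i) \to (S,R)$. Consequently $[\iota^*\zeta_i]$ is represented by $\zeta_i|_S \circ \chi$, whose loop-space adjoint $S^{i-1} \to \Omega B(GL_1^\Psi) = GL_1^\Psi$ sends a radial direction $v$ to the loop obtained by pushing the path $t \mapsto \chi(tv)$ through $\zeta_i$. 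Up to reparametrisation, that path is exactly the flow line in $\cM(g)_{QR}$ parametrised by $v$, so this adjoint coincides with $c$.

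The main obstacle I expect is this last identification: one must verify rigorously that $\cD(Q)$ provides a valid degree-one characteristic map for the top cell, and then thread the loop-space adjunction and the spider/orientation conventions carefully enough to conclude that $[\iota^*\zeta_i]$ and $[c]$ agree in $\pi_{i-1} GL_1^\Psi$ up to sign. The other two steps are essentially formal once these constructions are in place.
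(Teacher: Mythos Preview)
Your proposal is correct and follows essentially the same route as the paper: identify the extension obstruction as $[\iota^*\zeta_i]\in\pi_{i-1}GL_1^\Psi$, then recognise it in the moduli space $\cM(f_{i+1})_{QR}\cong S^{i-1}$ via the twisting data, which is pulled back from the attaching sphere $S$.

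The one difference worth noting is in packaging. You produce the class as the homotopy class of a map $c\colon S^{i-1}\to GL_1^\Psi$ built directly from the evaluation and $\Omega\zeta_i$. The paper instead passes through the isomorphism $\pi_{i-1}(GL_1^\Psi)\cong\Omega_{i-1}^{E_\Psi}$ and identifies the class as the $E_\Psi$-oriented bordism class of the \emph{twisted} space $\cF_{QR}=\cM(f_i)^{\zeta_i}_{QR}$. These are equivalent (your $c$ is precisely the twisting map whose Pontryagin--Thom preimage is $\cF_{QR}$), but the bordism formulation is what feeds directly into the next lemma, where the obstruction is killed by exhibiting $\cF_{QR}$ as $\partial\cW_{PR}$. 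If you keep your formulation, you will want to record this translation explicitly before that step.
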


\begin{proof}
    $L_{i+1}$ is homotopy equivalent to $L_i$ with an $(i+1)$-cell attached along $S = S^i$, so the obstruction to extending $\zeta_i$ can be identified with a class in (the set of unbased homotopy classes of maps) $[S, BGL_1^{\Psi}]$. In the special case $i=0$ (so $L_i$ is just a disc) the obstruction vanishes since $BGL_1^{\Psi}$ is connected, so henceforth assume $i\geq 1$.

    Using that $\pi_i(BG) = \pi_{i-1}(G)$ for any group-like topological monoid $G$, we have
    \begin{equation} \label{eqn:obstruction defined to sign}
    \pi_1(BGL_1^{\Psi}) = \pi_0(GL_1^{\Psi}) = \pi_0(R_{\Psi})^{\times} = \bZ/2
    \end{equation}
    (since by hypothesis $E_{\Psi}$ is oriented, cf. Section \ref{sec:flow over target}, which in particular implies that $\pi_0\Thom(E_{\Psi}) = \bZ$), so the corresponding based homotopy class and obstruction is well-defined up to sign.

The twisted flow category $\cM(f_i)^{\zeta_i}$ is defined, via a choice of twisting data for the monodromy local system $\zeta_i$, so that for all critical points $p,q$ of $f_i$,
\[
\cM(f_i)^{\zeta_i}_{pq} \subset \cM(f_i)_{pq} \times S^{n_{pq}}\times S^{n_{qp}}
\]
is the zero-locus of a map to the Thom space $\Thom(\bV(n_{pq}) \oplus \bV(n_{qp}))$. Under the isomorphism
\[
\pi_{i-1}(GL_1^{\Psi}) \stackrel{\sim}{\longrightarrow} \Omega_{i-1}^{E_\Psi}
\]
(strictly this is an injection rather than an isomorphism when $i=1$, but the argument is unchanged)  the class of Lemma \ref{lem:obstruction to extension over top handle} is exactly the $\Psi$-oriented bordism class of $\cM(f_i)^{\zeta_i}_{QR} = \cF_{QR}$. This is because the twisting data on $\cM(f_i)_{QR}$ is pulled back from $S$.
\end{proof}

\begin{rmk}\label{rmk: epughrdouighrdpsghdisrg}
    Lemma \ref{lem:obstruction to extension over top handle} makes essential use of commutativity of the tangential pair. If we did not assume commutativity, and worked in the spectral Fukaya category set up in \cite{PS2}, then the bordism class of $\cF$ would live in $\pi_*(R_\Psi \otimes R_\Psi^{op})$ by \cite[Proposition 5.39]{PS2}, which is not in general isomorphic to $\pi_*(GL_1^\Psi)$.
\end{rmk}

\begin{lem}
    $\cF_{xy}$ is $E_\Psi$-oriented nullbordant.
\end{lem}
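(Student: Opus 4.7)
Interpreting the statement as referring to the moduli space $\cF_{QR}$ which represents the obstruction class identified in Lemma \ref{lem:obstruction to extension over top handle}, the plan is to exhibit the moduli space $\cW'_{PR}$ as an explicit $E_\Psi$-oriented nullbordism. Here $\cW'$ is the morphism $\ast[i] \to \cH$ constructed in Lemma \ref{lem:slide bott gluing} and further normalised via Remark \ref{rmk:take one space to be just a point}, which we take as the definition of $\cW$ by Remark \ref{rmk:relabel cW}.

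First, I would compute the boundary of $\cW'_{PR}$. Since $\ast[i]$ has a single object (so no non-identity composable arrows contribute on the source side), the boundary decomposition of a moduli space of morphisms yields
\begin{equation}
\partial \cW'_{PR} \;=\; \bigsqcup_{z \in \cH} \cW'_{Pz} \times \cH_{zR}.
\end{equation}
By the first conclusion of Lemma \ref{lem:slide bott gluing}, only $z \in \{Q,R\}$ can contribute; the $z=R$ term vanishes since $\cH_{RR} = \emptyset$ for a flow category, while the $z=Q$ term simplifies using $\cW'_{PQ} = \{pt\}$ (Remark \ref{rmk:take one space to be just a point}) to give
\begin{equation}
\partial \cW'_{PR} \;\cong\; \{pt\} \times \cH_{QR} \;=\; \cF_{QR}.
\end{equation}
Dimension counts check out: $\cW'_{PR}$ has dimension $|P|_{\ast[i]} - |R|_{\cH} = i$, while $\cF_{QR}$ has dimension $i-1$.

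Next, I would invoke the definition of an $E_\Psi$-oriented morphism of flow categories (Section \ref{sec:E-orientations recap}) to conclude that the $E_\Psi$-orientation data on $\cW'_{PR}$ restricts on its boundary to precisely the $E_\Psi$-orientation on $\cF_{QR}$ inherited from $\cH = \cM(f_i)^{\zeta_i}$. This is a formal compatibility of stabilisation data, index data, and the classifying maps into $E_\Psi$ across the boundary facet $\cW'_{PQ}\times\cH_{QR}$, coming from the axioms of $E_\Psi$-oriented morphisms. Thus $\cW'_{PR}$ is an $E_\Psi$-oriented nullbordism of $\cF_{QR}$, completing the proof.

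The main obstacle I anticipate is bookkeeping: ensuring that the (shifted) $E_\Psi$-orientation on $\cW'_{PR}$ indeed induces exactly the $E_\Psi$-orientation on $\cF_{QR}$ that appears in Lemma \ref{lem:obstruction to extension over top handle} (including the identification of the relevant $\pi_{i-1}(GL_1^\Psi)$-class up to the sign ambiguity noted in \eqref{eqn:obstruction defined to sign}). Once this compatibility is spelled out from the constructions of \cite[Section 5.13]{PS3}, the conclusion is immediate; no further geometric input is required, since $\cW'$ has been arranged precisely to realise the required nullbordism.
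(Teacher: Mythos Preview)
Your proposal is correct and follows essentially the same approach as the paper: compute $\partial \cW_{PR}$ using the boundary decomposition of a morphism, observe that only the $z=Q$ facet survives (by Lemma \ref{lem:slide bott gluing}), and use $\cW_{PQ}=\{\mathrm{pt}\}$ together with $\cH_{QR}=\cF_{QR}$ to conclude. The paper's proof is more terse, simply stating $\partial \cW_{PR} = \cW_{PQ}\times\cH_{QR}$ and noting that $\cF$ and $\cH$ share moduli spaces (with the same $E_\Psi$-orientations) away from $P$; your additional remarks on dimension counts and the orientation bookkeeping are reasonable elaborations but not required.
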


\begin{proof}
    For the morphism $\cW: \ast[i]\to\cH$ arranged as in Remark \ref{rmk:relabel cW}, Lemma \ref{lem:slide bott gluing}, we have 
    \[
    \partial \cW_{PR} = \cW_{PQ} \times \cH_{QR} 
    \]
    and for critical points which are not $P$, $\cF$ and $\cH$ have exactly the same moduli spaces, with the same $E_\Psi$-orientations. We conclude $\cF_{QR} = \partial \cW_{PR}$, by Remark \ref{rmk:take one space to be just a point}.
\end{proof}

It follows that the obstruction identified in Lemma \ref{lem:obstruction to extension over top handle} vanishes.  That is, there is an extension of the map $S^{i-1} \to GL_1^{\Psi}$ arising from Lemma \ref{lem:obstruction to extension over top handle} to a map $D^i \to GL_1^{\Psi}$.  Let $\zeta_{i+1} \to L_{i+1}$ denote the resulting (homotopy class of) monodromy local system.

\begin{rmk}\label{rmk:characterise extension over handle from nullbordism}
    The previous proof furthermore shows that the extension defining $\zeta_{i+1}$ is uniquely defined up to homotopy by insisting that the associated bordism class represents $\cW'_{PR}$ viewed as a nullbordism of $\cF_{QR}$.   
\end{rmk}

\begin{cor}
    The MLS $\zeta_{i+1}$ satisfies that
    \[
    \cM(f_{i+1})^{\zeta_{i+1}}_{Pq} = \cW'_{Pq} \qquad \mathrm{when} \ q \in \{Q,R\}
    \]
\end{cor}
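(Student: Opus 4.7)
The plan is to check the claimed equality case-by-case, using that $\zeta_{i+1}$ was engineered (cf.~Remark \ref{rmk:characterise extension over handle from nullbordism}) precisely so that its extension over the $(i+1)$-handle realises $\cW'_{PR}$.

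First I would dispatch the case $q = Q$. The moduli space $\cM(f_{i+1})_{PQ}$ is a single gradient trajectory (the unique one from $P$ down to the adjacent index-$i$ critical point $Q$ of $f_{i+1}$), and this trajectory lies entirely inside the handle $H$. Since on $H$ the twisting data for $\zeta_{i+1}$ is freely adjustable within its homotopy class (without altering the isomorphism type in $\Flow^{E_\Psi}_{/L_{i+1}}$), we may arrange that $\cM(f_{i+1})^{\zeta_{i+1}}_{PQ} = \{\mathrm{pt}\}$, which matches $\cW'_{PQ} = \{\mathrm{pt}\}$ given by Remark \ref{rmk:take one space to be just a point}.

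For $q = R$, the substantive case, I would argue as follows. Per Section \ref{sec:twist me}, $\cM(f_{i+1})^{\zeta_{i+1}}_{PR}$ is cut out as the zero-locus of a stabilised map
\[
\cM(f_{i+1})_{PR} \times S^n \;\longrightarrow\; \mathrm{Thom}(\bV^\Psi \oplus \bV^\Psi)
\]
obtained from the evaluation to $L_{i+1}$ and composition with $\zeta_{i+1}$ on each boundary of the strip. Since $\cM(f_{i+1})_{PR} \cong D^i$, this is the extension to $D^i$ of the analogous map on $\partial D^i = S^{i-1}$, which is determined by $\zeta_i$ and whose zero-locus is exactly $\cF_{QR}$. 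The choices of extension correspond precisely to choices of extension $D^i \to GL_1^\Psi$ of the boundary classifying map allowed by $\zeta_{i+1}$. By Remark \ref{rmk:characterise extension over handle from nullbordism}, $\zeta_{i+1}$ is characterised (up to homotopy) by demanding that the resulting zero-locus represent the $E_\Psi$-oriented bordism class of $\cW'_{PR}$ over $D^i$ (which lives over $D^i$ by Remark \ref{rmk: live over disc}). A Pontrjagin--Thom-type argument, with the stabilisation dimension $n$ taken large enough to guarantee transversality, lets us promote this identification of bordism classes to an honest equality, by picking a representative of $\zeta_{i+1}$ within its homotopy class for which the zero-locus of the extended map is $\cW'_{PR}$ on the nose.

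The main obstacle is precisely this passage from bordism equivalence, which is what Remark \ref{rmk:characterise extension over handle from nullbordism} directly supplies, to strict equality of moduli spaces, as required by the statement. The remaining work consists in executing the Pontrjagin--Thom construction carefully in reverse: starting from the geometric data of $\cW'_{PR}$ as an $E_\Psi$-oriented manifold over $D^i$ with boundary $\cF_{QR}$, one produces the specific representative of the extension defining $\zeta_{i+1}$ whose transverse zero-locus is $\cW'_{PR}$.
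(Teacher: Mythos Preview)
Your proposal is correct and follows essentially the same approach as the paper: both dispatch $q=Q$ by arranging the twisting datum on the single trajectory $\cM(f_{i+1})_{PQ}$ to hit the unit (cf.\ Proposition~\ref{prop: obstruction killed}(4)), and both handle $q=R$ by invoking Remark~\ref{rmk:characterise extension over handle from nullbordism} together with the identification $\cM(f_{i+1})_{PR}\cong D^i$. You are, if anything, more explicit than the paper about the passage from bordism class to strict equality via a Pontrjagin--Thom choice of representative for $\zeta_{i+1}$---the paper simply asserts that $\cW'_{PR}$ ``arises from the construction of twisted manifolds'' without spelling this out.
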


\begin{proof}
Since $\cM(f_{i+1})_{PQ}$ is a point, the twisted manifold $\cM(f_{i+1})^{\zeta_{i+1}}_{PQ}$ will be a compact zero-manifold which is algebraically a single point. This can be  identified with $\cW_{PQ}$ by the hypothesis that $\tau_{\leq 0} \cF$ is a $\bZ$-local system twist of $\cM(f_{i+1})$.

    The identification
\[
D^i = \cM(f_{i+1})_{PR}
\]
and Remark \ref{rmk:characterise extension over handle from nullbordism} shows that $\cW_{PR}$ arises from the construction of twisted manifolds, applied to the moduli space   $\cM(f_{i+1})_{PR} = D^i$.
\end{proof}

Summarising the constructions up to this point, we find:

    \begin{prop}\label{prop: obstruction killed}
        There is a monodromy local system $\zeta_{i+1}$ on $L_{i+1}$, such that:
        \begin{enumerate}
            \item $\zeta_{i+1}|_{L_i} = \zeta_i$
            \item The $\bZ$-local system on $L_{i+1}$ induced by $\zeta_{i+1}$ is $\xi_{i+1}$.
            \item $\cM(f)^{\zeta_{i+1}}_{QR} = \cW_{PR}$ 
            \item $\zeta_{i+1}$ sends $\cW_{PQ}$ (which is a point by Remark \ref{rmk:take one space to be just a point}) to the unit element in $GL_1^{\Psi}$.
        \end{enumerate}
        
    \end{prop}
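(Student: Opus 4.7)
The plan is to assemble the pieces already developed in Sections \ref{sec: simp els} and \ref{sec: killing ob}. By Lemma \ref{lem:obstruction to extension over top handle}, the obstruction to extending $\zeta_i$ as a monodromy local system over the $(i+1)$-handle $H$ is exactly the class in $\pi_{i-1}(GL_1^{\Psi})$ represented by the $E_\Psi$-oriented bordism class of $\cF_{QR} = \cM(f_i)^{\zeta_i}_{QR}$. After the simplifications carried out in Section \ref{sec: simp els} we may assume $\cF = \Cone(\cW : *[i] \to \cH)$ with $\cW$ satisfying the conclusions of Lemma \ref{lem:slide bott gluing}; in particular $\cW_{PQ}$ is a single point, so that $\cW_{PR}$ has boundary $\cW_{PQ}\times \cH_{QR} = \cF_{QR}$, providing an explicit $E_\Psi$-oriented nullbordism. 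Hence the obstruction vanishes, and $\cW_{PR}$ singles out a preferred homotopy class of extension $\zeta_{i+1} : L_{i+1} \to B(GL_1^{\Psi})$ of $\zeta_i$; this gives property (1).

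Next I would verify (3) directly from Remark \ref{rmk:characterise extension over handle from nullbordism}, which characterises the homotopy class of $\zeta_{i+1}$ by the property that the twisted moduli space assembled from the chosen nullhomotopy represents $\cW_{PR}$. Unwinding the twisting construction of Section \ref{sec:twist me} then yields the on-the-nose equality $\cM(f_{i+1})^{\zeta_{i+1}}_{QR} = \cW_{PR}$. For (4), since $\cM(f_{i+1})_{PQ}$ is a single point, twisting it by $\zeta_{i+1}$ produces a point together with an element of $GL_1^{\Psi}$; working within the contractible space of twisting-data representatives for the homotopy class of $\zeta_{i+1}$, one can absorb that factor and so arrange that $\cW_{PQ}$ is sent to the unit.

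For (2), I would observe that the composition $B(GL_1^{\Psi}) \to B(GL_1(H\bZ)) \simeq K(\bZ/2,1)$ recovers the underlying $\bZ$-local system. Its restriction to $L_i$ equals $\xi_i$ by the inductive hypothesis, and over the handle the induced class is determined by the $\bZ/2$-reduction of the bordism class of $\cW_{PR}$. This agrees with the corresponding value of $\xi_{i+1}$ because $\tau_{\leq 0}\cF = \cM(f_{i+1})^{\xi_{i+1}}$ by the setup of Section \ref{sec: simp els}, so $\zeta_{i+1}$ induces $\xi_{i+1}$.

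The main obstacle I anticipate is the simultaneous verification of (2) and (4): both exploit the gauge freedom in choosing a representative of the homotopy class of $\zeta_{i+1}$, and one must check that the two normalisations are compatible. Since the pieces of data being pinned down are either discrete (the $\bZ/2$-valued reduction in (2)) or arise from a contractible space of representatives (the unit condition in (4)), no genuine obstruction arises, but the argument requires a careful bookkeeping of the twisting data reviewed in Section \ref{sec:twist me}, particularly to rule out any interference between the handle-level normalisation and the $(i-1)$-dimensional twisting data inherited from $L_i$.
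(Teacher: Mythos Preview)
Your proposal is correct and takes essentially the same approach as the paper: the proposition is stated there explicitly as a summary (``Summarising the constructions up to this point, we find:''), and you have correctly assembled the preceding pieces---Lemma \ref{lem:obstruction to extension over top handle} to identify the obstruction, the nullbordism $\cW_{PR}$ to kill it, and Remark \ref{rmk:characterise extension over handle from nullbordism} together with the Corollary immediately preceding the proposition to pin down properties (1), (3), (4). Your anticipated obstacle about compatibility of (2) and (4) is not a genuine issue: (2) concerns a discrete $\bZ/2$-invariant already determined by $\tau_{\leq 0}\cF = \cM(f_{i+1})^{\xi_{i+1}}$, while (4) is a normalisation within the homotopy class that does not affect the underlying $\bZ$-local system, so no interference arises.
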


    It then follows from Lemma \ref{lem: cF is a cone} that $\cM(f_{i+1})^{ \zeta_{i+1}} = \cF$, completing the proof of Theorem \ref{thm: WELS}.

\subsection{Lifting quasi-isomorphisms}

 We retain the notation and set-up of the previous section, in particular we fix a Morse function $f$ on the $\Psi$-oriented brane $L$. 

\begin{lem}
    If $(Q,\zeta_Q)$ and $(K,\zeta_K)$ are quasi-isomorphic in $\scrF^{\loc}(X;\Psi)$, then $\cM^{L,\zeta_Q} \sim \cM^{L,\zeta_K}$ in $\Flow^{\Psi}_{/L}$.
\end{lem}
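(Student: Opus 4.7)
The plan is to deduce the lemma directly from functoriality of Viterbo restriction. Proposition \ref{prop:viterbo functor} provides a (unital) functor
\[
\cR: \scrF^{\loc}(X;\Psi) \longrightarrow \Flow^{E_\Psi}_{/L}, \qquad (K,\xi_K) \mapsto \cM^{L,\xi_K},
\]
whose action on a morphism $\beta \in \scrF^{\loc}_*((Q,\zeta_Q),(K,\zeta_K))$ is given by gluing a disc representing $\beta$ into the triangle moduli space $\cM^{L,\zeta_Q,\zeta_K}$ to obtain $L\circ\beta$. Because $\scrF^{\loc}(X;\Psi)$ is an ordinary (Donaldson–Fukaya style) category whose morphism groups are bordism classes, a quasi-isomorphism $\alpha: (Q,\zeta_Q)\to (K,\zeta_K)$ comes with a genuine two-sided inverse $\alpha^{-1}$. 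Functoriality then yields
\[
\cR(\alpha)\circ \cR(\alpha^{-1}) \;=\; \cR(\alpha\circ\alpha^{-1}) \;=\; \cR(\mathrm{id}_{(K,\zeta_K)}) \;=\; \mathrm{id}_{\cM^{L,\zeta_K}},
\]
and similarly on the other side, so $\cR(\alpha): \cM^{L,\zeta_Q} \to \cM^{L,\zeta_K}$ is an isomorphism in $\Flow^{E_\Psi}_{/L}$, which is exactly the claim.

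There is no real obstacle, since all the work has been done in advance: the associator bordism depicted in Figure \ref{Fig:Viterbo} supplies the compatibility under composition, while unitality of $\cR$ rests on Lemma \ref{lem:flow unit} together with Lemma \ref{lem:identify identity 2}, identifying the image of the Floer-theoretic unit of $\scrF^{\loc}(X;\Psi)$ with the unit morphism $\bD(\cM^{L,\xi_K})$ in $\Flow^{E_\Psi}_{/L}$. Thus the proof is effectively a one-liner: apply $\cR$.
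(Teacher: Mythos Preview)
Your proposal is correct and matches the paper's approach exactly: the paper's proof is the one-liner ``This follows immediately from the fact that the spectral restriction $\cR$ is a functor, cf.\ Proposition \ref{prop:viterbo functor}.'' One small remark: your appeal to Lemma \ref{lem:identify identity 2} for unitality is slightly misplaced, since that lemma concerns the Morse flow category $\cM(f)$ rather than a general $\cM^{L,\xi_K}$; unitality of $\cR$ is instead part of what is packaged into Proposition \ref{prop:viterbo functor} (together with the general discussion that $\bD\cF$ is the unit in $\Flow^{E}_{/L}$).
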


\begin{proof}
    This follows immediately from the fact that the spectral restriction $\cR$ is a functor, cf. Proposition \ref{prop:viterbo functor}.
\end{proof}

A flow category $\cM^{LL}$ admits two different structures as a flow category over $L$, corresponding to evaluation along the two boundary components of the moduli spaces of strips (note that either choice is coherent under breaking).  It follows that given $\zeta_L\to L$ one can form two different twists, $\cM^{\zeta_L,L}$ and $\cM^{L,\zeta_L}$ (the `left' respectively `right' twists), corresponding in terms of boundary stabilisation data to setting $n_b = 0$ for one of the two possible choices of element $b$ of $\mathrm{Cpt}(\partial \Delta)$.

\begin{lem}\label{lem:common twist}
If $\cM^{LK} \sim \cM^{LQ}$ are equivalent in $\Flow^{\Psi}_{/L}$ for branes $K,Q$ (perhaps equipped with local systems), then for any MLS $\zeta_L \to L$ the left twists $\cM^{\zeta_L,K}$ and $\cM^{\zeta_L,Q}$ are also equivalent. 
\end{lem}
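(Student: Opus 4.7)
The plan is to promote the twisting-by-an-MLS construction from flow categories to morphisms, bordisms and unit morphisms in $\Flow^{E_\Psi}_{/L}$, and then apply this `functor' to the hypothesised equivalence. Recall from Section \ref{sec:twist me} that a monodromy local system $\zeta_L: L \to BGL_1(R_\Psi)$, together with the chosen evaluation maps of a flow category $\cF \in \Flow^{E_\Psi}_{/L}$ to $\cP L$, determines for every moduli space $\cM$ in $\cF$ a map $\Xi_\cM: \cM \to GL_1(n_{xy})$ whose zero locus defines the twisted moduli space $\cF^{\zeta_L}_{xy}$. The $E_\Psi$-orientation transports to $\cF^{\zeta_L}$ by Lemma \ref{lem:twists are oriented}. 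The key observation is that exactly the same recipe applies verbatim to any moduli space underlying a morphism or bordism in $\Flow^{E_\Psi}_{/L}$, since by definition such moduli spaces carry evaluation maps to $\cP L$ compatible with composition and breaking.

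First, I would verify that these twisted morphism- and bordism-moduli spaces assemble into a morphism, respectively bordism, in $\Flow^{E_\Psi}_{/L}$: compatibility under composition is inherited from the compatibility of $\Xi_{-}$ with concatenation (essentially the content of \cite[Section 7]{PS3}, applied one layer up). The unit morphism $\bD\cF$ from Lemma \ref{lem:flow unit} twists to the unit morphism $\bD(\cF^{\zeta_L})$, because its moduli spaces are the conic degenerations of those of $\cF$ and the evaluation factors through that of $\cF$. Together with the analogous statement for associators, this shows that twisting by $\zeta_L$ defines an endofunctor
\[
(-)^{\zeta_L}: \Flow^{E_\Psi}_{/L} \longrightarrow \Flow^{E_\Psi}_{/L}
\]
which sends identities to identities and therefore sends equivalences to equivalences.

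Applied to the hypothesised equivalence $\cW: \cM^{LK} \to \cM^{LQ}$ in $\Flow^{E_\Psi}_{/L}$, this produces an equivalence $\cW^{\zeta_L}: (\cM^{LK})^{\zeta_L} \to (\cM^{LQ})^{\zeta_L}$. It remains only to identify $(\cM^{LK})^{\zeta_L}$ with the left twist $\cM^{\zeta_L, K}$ (and similarly for $Q$). This is immediate from the construction of \cite[Section 7.4]{PS3}: the left twist is obtained by choosing boundary stabilisation data with $n_b = 0$ on the $K$-boundary and twisting only along the $L$-boundary, which is precisely what the endofunctor $(-)^{\zeta_L}$ does, the $L$-evaluation being the one recording the structure over $L$. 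The main technical content is the verification in the second paragraph above that the twisting construction is coherent through morphisms, bordisms and units; this is a straightforward extension of the coherence established in \cite{PS3} for objects, and presents no new conceptual difficulty beyond careful bookkeeping of the various boundary stabilisation data.
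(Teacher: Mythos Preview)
Your proposal is correct and follows essentially the same approach as the paper: twist the given morphisms and bordisms by $\zeta_L$, check that the twist of the unit morphism $\bD\cF$ is the unit morphism of the twisted category, and conclude that the twisted morphisms are again inverse equivalences. The paper is slightly more explicit about the key step $(\bD\cM^{LK})^{\zeta_L} = \bD((\cM^{LK})^{\zeta_L})$: rather than just noting that the evaluation of $\bD\cF$ factors through $\cF$, it observes that the twisted moduli space sits in $\cM^{LK}_{xy}\times S^{n_{xy}+n_{yx}}$ as a submanifold with corners (an immersion transverse to all corner strata) and then invokes functoriality of the conic degeneration under such immersions from \cite[Diagram~(6.11)]{AB2}.
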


\begin{proof}
The hypothesis means there are morphisms $\cW_{KQ} \in [\cM^{LK},\cM^{LQ}]$ and $\cW_{QK} \in [\cM^{LQ},\cM^{LK}]$ whose compositions are bordant to the corresponding  unit morphisms
\[
\cW^{KQ} \circ \cW^{QK} \sim \bD(\cM^{LQ})
\]
(and where everything is $E_{\Psi}$-oriented, which we drop to spare the notation).

We now choose all data for constructing the twisted moduli spaces associated to $\xi$ coherently, including for the moduli spaces associated to $\cW^{KQ}, \cW^{QK}, \bD(\cM^{LK}), \bD(\cM^{LQ})$. For the unit morphisms we recall \cite{PS3} that the index data, stabilisation data and boundary stabilisation data are all inherited from the underlying flow category. The key claim is now that the twist of the unit morphism is the unit morphism for the twisted flow category.

The twist of the flow category morphism spaces, respectively the unit morphism moduli spaces, arise from coherent-under-breaking diagrams
\[
\xymatrix{
\bD\,\cM^{LK}_{xy} \times S^{n_{xy} + n_{yx}} \ar[r] \ar[d] & \Thom (\bV^{\Psi}(n_{xy} + n_{yx})) \ar@{=}[d] \\
\cM^{LK}_{xy} \times S^{n_{xy} + n_{yx}} \ar[r] & \Thom (\bV^{\Psi}(n_{xy} + n_{yx}))
}
\]
by taking the preimages of zero along the top and bottom lines.  In particular, writing $(\mhyphen)^{\xi}$ for the twist of a moduli space $(\mhyphen)$, we have that
\[
\cM^{\xi_L,K}_{xy} := (\cM^{LK})^{\xi} \subset \cM^{LK}_{xy} \times S^{n_{xy}+n_{yx}}
\]
is a submanifold with corners, so the inclusion is an immersion transverse to all corner strata. On the other hand, \cite[Diagram (6.11)]{AB2} shows that the conic degeneration is functorial under such immersions, so one concludes that
\[
(\bD\,\cM^{LK}_{xy})^{\xi} = \bD((\cM^{L,K})^{\xi}).
\]
It follows that the twists $(\cW^{LK})^\xi$ and $(\cW^{LQ})^\xi$ of the given morphisms define inverse isomorphisms between the twisted flow categories.
\end{proof}

\begin{cor}\label{cor:flow cats equal after local system twist}
If $(K,\xi_K)$ and $L$ are $\Psi$-oriented and $(K,(\xi_K)_{\bZ})$ and $L$ are quasi-isomorphic over $\bZ$,
there is a $\Psi$-oriented local system $\zeta$ over $L$ for which 
\begin{equation} \label{eqn:first consequence}
\cM^{L,{(K,\xi_K)}} \simeq \cM(f)^{\zeta} \simeq \cM^{L,(L,\zeta)}
\end{equation}
\end{cor}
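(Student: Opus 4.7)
The strategy is to combine the spectral Viterbo restriction functor with the classification of evaluation local systems, and then reconcile the left-twist description of Lemma \ref{lem:MLS to WELS} with the right-twist $\cM^{L,(L,\zeta)}$ that the restriction functor naturally produces.

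First, apply the restriction functor $\cR$ of Proposition \ref{prop:viterbo functor} to $(K,\xi_K)$ to produce $\cR(K,\xi_K) = \cM^{L,(K,\xi_K)} \in \Flow^{E_\Psi}_{/L}$. Since $(K,(\xi_K)_{\bZ})$ and $L$ are quasi-isomorphic in $\scrF(X;\bZ)$ by hypothesis, Proposition \ref{prop:quis over Z means WELS} ensures that $\cM^{L,(K,\xi_K)}$ is a $\Psi$-oriented ELS. Theorem \ref{thm: WELS} then yields a $\Psi$-monodromy local system $\zeta$ on $L$ together with an isomorphism $\cM^{L,(K,\xi_K)} \simeq \cM(f)^\zeta$ in $\Flow^{E_\Psi}_{/L}$. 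This establishes the first equivalence in \eqref{eqn:first consequence}.

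For the second equivalence $\cM(f)^\zeta \simeq \cM^{L,(L,\zeta)}$, the key point is that Lemma \ref{lem:MLS to WELS} only identifies $\cM(f)^\zeta$ with the left-twist $\cM^{\zeta_L,L}$, whereas $\cR(L,\zeta) = \cM^{L,(L,\zeta)}$ is the right-twist. To bridge the two, invoke the Morse-Floer comparison of Lemma \ref{lem:morse-floer}: for $\varepsilon f$ sufficiently $C^2$-small, every Floer strip in $\cM^{LL}$ is time-independent and equals a parametrised gradient trajectory, so both boundary components of such a strip trace out identical paths in $L$. Consequently, left- and right-twisting $\cM^{LL}$ by the same MLS $\zeta$ yield canonically isomorphic flow categories over $L$, both identified with $\cM(f)^\zeta$. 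This gives the second equivalence.

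The only subtlety worth flagging is the compatibility of $E_\Psi$-orientations on both sides of the second equivalence, but this is automatic: the $E_\Psi$-orientation on $\cM(f)$ is by definition inherited from $\cM^{LL}$ via the diffeomorphism of Lemma \ref{lem:morse-floer} (cf.\ Section \ref{sec:morse-floer}), and the twisting construction of Section \ref{sec:twist me} is manifestly compatible with such pullbacks. No further obstacle arises; the corollary is essentially a formal consequence of Proposition \ref{prop:quis over Z means WELS} and Theorem \ref{thm: WELS} combined with the Morse-Floer comparison.
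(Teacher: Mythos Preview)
Your proof is correct and follows the same route as the paper: invoke Proposition \ref{prop:quis over Z means WELS} to recognise $\cM^{L,(K,\xi_K)}$ as an ELS, then apply Theorem \ref{thm: WELS} to obtain $\zeta$ with $\cM^{L,(K,\xi_K)} \simeq \cM(f)^\zeta$. The paper's proof is the terse one-liner ``Combine Theorem \ref{thm: WELS} and Proposition \ref{prop:quis over Z means WELS}''; you have unpacked this and additionally spelled out the second equivalence $\cM(f)^\zeta \simeq \cM^{L,(L,\zeta)}$ via the Morse--Floer comparison, which the paper leaves implicit (it is essentially the content of Lemma \ref{lem:MLS to WELS} together with the observation that in the Morse model the two boundary evaluations coincide, so there is no left/right ambiguity).
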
 

\begin{proof}  Combine Theorem \ref{thm: WELS} and Proposition \ref{prop:quis over Z means WELS}.
\end{proof}

\begin{cor}
    In the above setting, $(L,\zeta)$ and $(K,\xi_K)$ are quasi-isomorphic in $\scrF(X;\Psi)$.
\end{cor}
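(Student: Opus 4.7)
The plan is to combine the preceding corollary---which supplies an isomorphism $\cM^{L, (L, \zeta)} \simeq \cM^{L, (K, \xi_K)}$ in $\Flow^{E_\Psi}_{/L}$---with the obstruction theory from \cite{PS, PS2} for lifting quasi-isomorphisms through the truncations $\tau_{\leq i} \scrF(X; \Psi)$. The starting point is that the canonical $\bZ$-truncation of $\zeta$ gives a quasi-isomorphism between $(L, \zeta)$ and $(K, \xi_K)$ in $\tau_{\leq 0} \scrF(X; \Psi)$ (using Proposition \ref{prop:truncate to ordinary} and the hypothesis $L \simeq K$ over $\bZ$); call this zeroth-order class $\alpha_0$.

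The goal is then to lift $\alpha_0$ inductively through the tower $\tau_{\leq i} \scrF(X; \Psi)$. At each step $\tau_{\leq i} \to \tau_{\leq i+1}$, the machinery of \cite{PS, PS2} produces an obstruction class in an appropriate bordism-theoretic obstruction group; a priori this obstruction is nonzero. The crux is that the obstruction is natural under the restriction functor $\cR$ of Proposition \ref{prop:viterbo functor}: the obstruction to lifting $\alpha_i$ to $\tau_{\leq i+1}$ is mapped by $\cR$ to the corresponding obstruction to lifting $\cR(\alpha_i)$ in $\Flow^{E_\Psi}_{/L}$. But Corollary \ref{cor:flow cats equal after local system twist} tells us that a lift in $\Flow^{E_\Psi}_{/L}$ exists at \emph{every} level---we have a genuine isomorphism there, not merely a truncated one. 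Combined with the classification of ELSs (Theorem \ref{thm: WELS}) and the Morse-Floer comparison of Section \ref{sec:morse-floer}, this allows one to conclude that the obstructions seen inside $\scrF(X; \Psi)$ vanish after twisting by $\zeta$, yielding a lift $\alpha \in \scrF^0((L, \zeta), (K, \xi_K))$.

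Finally, invertibility of $\alpha$ is verified by the same procedure: an inverse $\beta \in \scrF^0((K, \xi_K), (L, \zeta))$ is built by lifting the inverse isomorphism in $\Flow^{E_\Psi}_{/L}$, and the compositions $\alpha\beta - 1$ and $\beta\alpha - 1$ vanish by one more application of the obstruction-theoretic argument, since they restrict trivially both to $\tau_{\leq 0}$ and to $\Flow^{E_\Psi}_{/L}$.

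The main obstacle is the careful identification of the spectral obstruction class of \cite{PS, PS2} with an obstruction visible through $\cR$; this requires tracking $E_\Psi$-orientation and boundary stabilisation data through the twisting construction of Section \ref{sec:twist me}, and exploiting commutativity of the tangential pair $\Psi$ (cf.\ Remark \ref{rmk: epughrdouighrdpsghdisrg}) to ensure the obstructions genuinely live in the expected homotopy group of $GL_1^\Psi$, where the naturality argument can be applied.
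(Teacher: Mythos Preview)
Your proposal has a genuine gap. You want to use naturality of the obstruction classes under $\cR$ to conclude that the lifting obstructions in $\scrF(X;\Psi)$ vanish because their images in $\Flow^{E_\Psi}_{/L}$ do. But $\cR$ takes a morphism $(L,\zeta)\to(K,\xi_K)$ in the Fukaya category (i.e.\ a right module over the flow category $\cM^{(L,\zeta),(K,\xi_K)}$) to a morphism of \emph{flow categories} $\cM^{L,(L,\zeta)}\to\cM^{L,(K,\xi_K)}$ over $L$; the two obstruction groups are genuinely different, and there is no reason $\cR$ induces an injection between them. Knowing the image of the obstruction vanishes tells you nothing about the obstruction itself. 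Your final paragraph acknowledges this is the ``main obstacle'' but does not resolve it; invoking Theorem~\ref{thm: WELS} and the Morse--Floer comparison does not help, since those results are already fully encoded in Corollary~\ref{cor:flow cats equal after local system twist}.

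The paper bypasses this difficulty entirely and does not lift through truncations. Instead it exploits Lemma~\ref{lem:common twist}: starting from the equivalence $\cM^{L,(K,\xi_K)}\simeq\cM^{L,(L,\zeta)}$ in $\Flow^{E_\Psi}_{/L}$, one \emph{twists on the left} by $\zeta$ to obtain an equivalence of the full (untruncated) flow categories
\[
\cM^{(L,\zeta),(K,\xi_K)} \;\simeq\; \cM^{(L,\zeta),(L,\zeta)}.
\]
An equivalence of flow categories induces an isomorphism on bordism groups $\Omega_*^{E_\Psi}(-)$, and these bordism groups are by definition the hom-groups of $\scrF(X;\Psi)$. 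So one simply transports the unit $e_\zeta\in\Omega_0^{E_\Psi}(\cM^{(L,\zeta),(L,\zeta)})$ across to get an element $\hat e_\zeta\in\scrF_0((L,\zeta),(K,\xi_K);\Psi)$. Its $0$-truncation is a quasi-isomorphism over $\bZ$, so it is itself a quasi-isomorphism by the Whitehead-type principle \cite[Remark~4.33]{PS3} (cf.\ \cite[Theorem~5.51 and Corollary~7.13]{PS}). The point you are missing is that Lemma~\ref{lem:common twist} upgrades the equivalence in $\Flow^{E_\Psi}_{/L}$ to one that directly controls the morphism spaces of the spectral Fukaya category, making any step-by-step obstruction analysis unnecessary.
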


\begin{proof}
   We `twist on the other side' as in Lemma \ref{lem:common twist}, to deduce from \eqref{eqn:first consequence} that
   \begin{equation} \label{eqn:second consequence}
       \cM^{(L,\zeta), (K,\xi_K)} \simeq \cM^{(L,\zeta),(L,\zeta)}.
   \end{equation}
   The flow category on the right hand side has a unit morphism $e_{\zeta} \in \Omega_0^{\Psi}(\cM^{\zeta,\zeta}) = \scrF(\zeta,\zeta;\Psi)$, cf. Section \ref{sec:twist me} (see also \cite[Sections 5.1 and 7.6]{PS3}). The image of this under the equivalence on groups of right modules arising from \eqref{eqn:second consequence} defines a distinguished element $\hat{e}_{\zeta} \in \Omega_0^{\Psi}(\cM^{\zeta,\xi_K}) = \scrF(\zeta,\xi_K;\Psi)$.

   Now $\hat{e}_{\zeta}$ has the property that it $0$-th truncation defines a quasi-isomorphism over $\bZ$.  It follows that it is itself a quasi-isomorphism, by \cite[Remark 4.33]{PS3}, compare to \cite[Theorem 5.51 \& Corollary 7.13]{PS}.
   \end{proof}

This completes the proof of Theorem \ref{thm:main}.

\section{Stable Gauss maps}\label{sec: stab gaus}
    In this section we prove Theorem \ref{thm:main3}, constraining the stable Gauss maps of quasiisomorphic Lagrangians. Roughly speaking, if $K$ is $\Psi$-oriented but $L$ is not assumed to be, $\cM^{LK}$ is not necessarily an ELS over $L$, but something ``twisted'' by the stable Gauss map of $L$ (we call such things \emph{Gauss-twisted}, to differentiate them from the type of twisting occuring in Section \ref{sec:twist me}). The main technical result of this section, Proposition \ref{prop: gauss}, shows that existence of such a Gauss-twisted ELS places strong constraints on the stable Gauss map of $L$.

\subsection{Polarised tangential pairs}
    In this section, we recap from \cite{PS3} a large class of commutative tangential pairs.

    Let $\widetilde{U/O}^{or}$ be the 2-connected cover of $U/O$, and $Re: \widetilde{U/O}^{or} \to BO$ the map classifying the real part of a totally real subspace of $\bC^n$. 

    \begin{ex}\label{qi ex: tang str pol}
        Recall \cite[{Definition 3.6}]{PS3} that to any commutative $\cI$-monoid $F$ with a map $f: F \to \widetilde{U/O}^{or}$, we may associate a commutative tangential pair $\Psi^F=(\Theta^F, \Phi^F)$, defined by the following commutative diagram:
        \begin{equation}\label{eq: riwoghrdptgurdpotg}
            \xymatrix{
                \Theta^F = BO \times F
                \ar[rrr]_{Proj_1}
                \ar[d]_{Proj_1 +(Re \circ f)}
                &&&
                \Phi^F = BO
                \ar[d]_{\cdot \otimes \bC}
                \\
                BO
                \ar[rrr]_{\cdot \otimes \bC} 
                &&&
                BS_\pm U
            }
        \end{equation}
    \end{ex}
    We call such $\Psi^F$ \emph{polarised tangential pairs}. Note the induced map on homotopy fibres is exactly $f$.
    \begin{prop}[{\cite[Proposition 3.22]{PS3}}]\label{qi prop: mode baas sull}
        The assignment $F \mapsto E_{\Psi^F}$, from commutative $\cI$-monoids over $\widetilde{U/O}^{or}$ to connected commutative Thom $\cI$-monoids, defines an equivalence between the two homotopy categories.
    \end{prop}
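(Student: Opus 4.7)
The plan is to identify the functor $F \mapsto E_{\Psi^F}$ with essentially a looping construction on infinite-loop-space data, and then invoke the classical fact that looping and delooping (the bar construction) are inverse equivalences between appropriate categories of grouplike commutative $\cI$-monoids.

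First I would compute the base and index bundle of $E_{\Psi^F}$ explicitly. Since $\Theta^F = BO \times F \to \Phi^F = BO$ is projection onto the first factor, its homotopy fibre is $F$ and
\[
\hofib(\Omega\Theta^F \to \Omega\Phi^F) \simeq \Omega F.
\]
Combining with the weak equivalence $\bU \xrightarrow{\simeq} \hofib(\Omega BO \to \Omega BS_\pm U)$ and the homotopy pullback square (\ref{qi eq: hopullb}), this yields $\bU^{\Psi^F} \simeq \Omega F$ as commutative $\cI$-monoids. Unwinding the Fredholm-theoretic construction of the index bundle from Section \ref{sec:tp and ad}, together with Proposition \ref{prop: uo eta}, the classifying map of $\bV^{\Psi^F}$ factors as
\[
\Omega F \xrightarrow{\Omega f} \Omega \widetilde{U/O}^{or} \xrightarrow[\simeq]{\mathrm{Bott}} BO.
\]
Thus $E_{\Psi^F}$ is the data of $\Omega F$ equipped with this Bott composition as classifying map.

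The candidate inverse functor takes a connected commutative Thom $\cI$-monoid, i.e.~a grouplike commutative $\cI$-monoid $X$ together with an infinite-loop classifying map $\psi: X \to BO$ for the bundle, to the pair $(BX, B\psi)$ where $BX$ is the bar construction delooping of $X$ and $B\psi: BX \to B^2 O \xrightarrow[\simeq]{\mathrm{Bott}^{-1}} \widetilde{U/O}^{or}$. Connectedness ensures that $X$ is grouplike and hence $\Omega BX \simeq X$ as commutative $\cI$-monoids; combined with the identification from the previous step this shows that the two functors are mutually inverse, once one invokes the Segal--May-style Quillen equivalence between grouplike commutative $\cI$-monoids and connective spectra in the form already used in \cite{PS3}.

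The main obstacle lies in the explicit identification in the first step of the Fredholm classifying map for $\bV^{\Psi^F}$ with the Bott periodicity composition. Matching, \emph{on the nose} as maps of commutative $\cI$-monoids, the construction of index bundles via kernels of Cauchy--Riemann operators on spaces of abstract discs with the standard presentation of real Bott periodicity $\Omega \widetilde{U/O}^{or} \simeq BO$ is essentially the Atiyah--Singer family index theorem at the level of unstructured classifying spaces; the real content is lifting this coherently through the $\cI$-monoid structure, so that the identification respects the addition-type operation on $\bU^{\Psi}$ arising from gluing of abstract discs.
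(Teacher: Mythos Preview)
The paper does not prove this proposition: it is cited verbatim as \cite[Proposition 3.22]{PS3}, and the only accompanying text is a pointer to \cite[Section 3.3]{PS3} ``for details on the statement, in particular a discussion on the model category structures underlying the homotopy categories here.'' There is therefore no in-paper proof to compare your proposal against.

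That said, your outline is the natural strategy and is presumably close in spirit to the argument in \cite{PS3}: identify the base of $E_{\Psi^F}$ with $\Omega F$ via the homotopy pullback square (\ref{qi eq: hopullb}), identify the classifying map of $\bV^{\Psi^F}$ with the composite $\Omega F \xrightarrow{\Omega f} \Omega\widetilde{U/O}^{or} \xrightarrow{\mathrm{Bott}} BO$, and then invert with the bar construction, invoking the loop/deloop equivalence for grouplike commutative $\cI$-monoids. Your own caveat correctly locates the substantive step: one must check that the index bundle $\bV$ on $\bU$ is classified, \emph{as a map of commutative $\cI$-monoids}, by the Bott equivalence $\Omega\widetilde{U/O}^{or}\simeq BO$, compatibly with the addition-type operation on $\bU^\Psi$ coming from direct sum of abstract disc data. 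Without this coherence, the identification holds only at the level of underlying spaces and does not yield an equivalence of homotopy categories of commutative Thom $\cI$-monoids. You should also note that the model-category/homotopy-category framework on both sides (alluded to in the paper's pointer to \cite[Section 3.3]{PS3}) is needed to make ``equivalence of homotopy categories'' precise, and your sketch does not supply this.
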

    See \cite[{Section 3.3}]{PS3} for details on the statement, in particular a discussion on the model category structures underlying the homotopy categories here.
\subsection{Gauss-twisted evaluation local systems}\label{sec: gaus twis eval loca syst}

    Let $L$ be a compact manifold, possibly with boundary, and $g: L \to \widetilde{U/O}^{or}(N)$ some map; we assume $N \gg 0$ is large. Let $\cF \in \Flow_{/L}$ be a flow category over $L$.

    Composing the evaluation maps with $\Omega g$ gives maps $\cF_{xy} \to \Omega \widetilde{U/O}^{or}(N)$; concatenation in the flow category is then compatible with concatenation of loops on the target (rather than using its $\cI$-monoid structure). Let $or$ be the polarised tangential pair of $\widetilde{U/O}^{or}$ (cf. Example \ref{qi ex: tang str pol}); the ensuing commutative $\cI$-monoid satisfies $\bU^{or} \simeq BSO$.

    \begin{defn}
        \emph{Gauss-twist data for $\cF$} consists of stabilisation data $\{m_{xy}\}_{x,y \in \cF}$ for $\cF$, along with maps $\fg_{xy}: \cF_{xy} \to \bU^{or}(m_{xy})$, such that the following diagram commutes:
        \begin{equation}
            \xymatrix{
                \cF_{xy} \times \cF_{yz}
                \ar[r]
                \ar[d]
                &
                \cF_{xz}
                \ar[d]
                \\
                \bU^{or}(m_{xy}) \times \bU^{or}(m_{yz})
                \ar[r]
                &
                \bU^{or}(m_{xz})
            }
        \end{equation}
    \end{defn}
    \begin{lem}\label{lem: gaus twis data}
        $g$ determines a choice of Gauss-twisting data $\{m_{xy}, \fg_{xy}\}_{x,y}$ for $\cF$, uniquely up to homotopy and stabilisation.
    \end{lem}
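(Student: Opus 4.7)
The plan is to extract the Gauss-twist data in three stages: first produce loop-level data from $g$ and the spider structure on $L$, then transport along the infinite-loop equivalence $\Omega \widetilde{U/O}^{or} \simeq \bU^{or}$, and finally strictify the resulting homotopy-coherent compatibility by induction on the poset of $\cF$.

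First, I would fix a spider $Y \subset L$ with basepoint $\ell$ as in Section \ref{sec:flow over target}. Composing the evaluation $\Gamma_{xy}: \cF_{xy} \to \cP_{xy}L$ with $g$ and then pre- and post-concatenating with the (images under $g$ of the) spider arcs and their reverses yields maps
\[
\tilde{\fg}_{xy} : \cF_{xy} \to \Omega_{g(\ell)} \widetilde{U/O}^{or}(N),
\]
and the square relating flow-category composition $\cF_{xy} \times \cF_{yz} \to \cF_{xz}$ to loop concatenation on the target commutes up to a canonical homotopy that cancels the insertion of the spider arc to $y$ with its reverse. Because $\widetilde{U/O}^{or}$ is a connected commutative $\cI$-monoid, standard infinite loop space theory (in the form used throughout Section \ref{sec:tp and ad}) supplies a canonical zigzag of equivalences of commutative $\cI$-monoids between the loop space $\Omega \widetilde{U/O}^{or}$ (with loop concatenation) and $\bU^{or}$ (with its $\cI$-monoid product), well-defined up to contractible choice. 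Each $\cF_{xy}$ is a compact manifold with corners and hence has the homotopy type of a finite CW complex, so after choosing stabilisation integers $m_{xy}$ sufficiently large the $\tilde{\fg}_{xy}$ can be transported to maps $\fg_{xy}: \cF_{xy} \to \bU^{or}(m_{xy})$ making the required square commute up to homotopy.

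The hard part will be rigidifying these homotopy-commutativities into strict equalities. I would proceed by induction on a suitable length function (for instance $|x|-|y|$), normalising $\fg_{xx}$ to be constant at the unit. At each inductive step, with strictly compatible $\fg_{xy}$ and $\fg_{yz}$ already chosen, the $\cI$-monoid composite $\mu(\fg_{xy},\fg_{yz})$ and a tentative $\fg_{xz}$ agree up to homotopy in $\bU^{or}(m_{xz})$; by enlarging $m_{xz}$ if necessary and deforming $\fg_{xz}$ within its homotopy class, one arranges strict equality. Coherence over longer composable chains is handled by the usual obstruction-theoretic argument, whose obstructions live in homotopy groups of spaces of relative lifts that become trivial after enough stabilisation (this is where the hypothesis $N \gg 0$ and the freedom to increase the $m_{xy}$ is used). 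Uniqueness of $\{m_{xy}, \fg_{xy}\}$ up to homotopy and further stabilisation follows by running the same induction on the space of choices rather than on a point, since at each stage the space of permissible modifications is connected after sufficient stabilisation.
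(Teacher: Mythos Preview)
Your proposal is correct and follows essentially the same approach as the paper, which simply writes ``Follows from an Eckmann-Hilton argument along with the equivalence $\bU^{or} \to \Omega \widetilde{U/O}^{or}$, similarly to \cite[Section 5.4]{PS3}.'' Your three stages unpack precisely this: the passage from loop concatenation to the $\cI$-monoid product (which you attribute to ``standard infinite loop space theory'') is exactly the Eckmann-Hilton argument the paper names, and your inductive strictification is the content of the cited section in \cite{PS3}; you could streamline by naming Eckmann-Hilton explicitly.
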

    \begin{proof}
        Follows from an Eckmann-Hilton argument along with the equivalence $\bU^{or} \to \Omega \widetilde{U/O}^{or}$, similarly to Section \cite[Section 5.4]{PS3}.
    \end{proof}
    \begin{rmk}\label{rmk: gxy triv dim zero}
        Since $\bU^{or}$ is connected, we may assume the Gauss-twisting data $\{m_{xy}, \fg_{xy}\}$ induced from $g$ satisfies that $\fg_{xy}$ is constant at the basepoint whenever $\dim(\cF_{xy}) \leq 1$.
    \end{rmk}
    
    \begin{defn}\label{def: oerhgouergh}
        Let $E$ be a commutative Thom $\cI$-monoid. A \emph{$g$-Gauss-twisted $E$-orientation on $\cF$} consists of index data $\{\nu_{xy}\}$ and stabilisation data $\{d_{xy}\}$ as in Section \ref{sec:E-orientations recap}, as well as maps of Thom spaces:
        \begin{equation}\label{eq: gireghortbvhprifv}
            (\cF_{xy}, I^\cF_{xy})(\nu_{xy}) \to \left(\Base E(\nu_{xy}+d_{xy}) \times \bU^{or}(m_{xy}), E \oplus \bV\right)
        \end{equation}
        We require that the induced map of spaces $\cF_{xy} \to \bU^{or}(m_{xy})$ is exactly $\fg_{xy}$.

        These are required to satisfy an appropriate associativity condition, that the following diagram commutes:
        \begin{equation}
            \xymatrix{
                (\cF_{xy}, I^\cF_{xy})(\nu_{xy}) \times (\cF_{yz}, I^\cF_{yz})(\nu_{yz})
                \ar[rr]
                \ar[d]
                &&
                (\cF_{xz}, I^\cF_{xz})(\nu_{xz})
                \ar[d]
                \\
                \substack{
                    \left(\Base E(\nu_{xy}+d_{xy}) \times \bU^{or}(m_{xy}), E \oplus \bV\right) 
                    \\ 
                    \times \left(\Base E(\nu_{yz}+d_{yz}) \times \bU^{or}(m_{yz}), E \oplus \bV\right)
                }
                \ar[rr]
                &&
                \left(\Base E(\nu_{xz}+d_{xz}) \times \bU^{or}(m_{xz}), E \oplus \bV\right)
            }
        \end{equation}
    \end{defn}
    Note that this depends on a choice of twisting data $\{m_{xy}, \fg_{xy}\}_{x,y}$ induced by $g$; strictly speaking this must be part of the data but we suppress this from the notation, as justified by Lemma \ref{lem: gaus twis data}.

    One may define Gauss-twisting data for morphisms and bordisms of flow categories over $L$ in the same way, and $g$ induces a choice of this similarly. Similarly to Definition \ref{def: oerhgouergh}, one may define $g$-Gauss-twisted $E$-oriented morphisms and bordisms, to obtain a category of $g$-Gauss-twisted $E$-oriented flow categories, which we write as $\Flow^{E/g}_{/L}$; we may also define its truncations $\tau_{\leq i}\Flow^{E/g}_{/L}$ in the natural way.

    \begin{rmk}\label{rmk: gaus zero twis}
        It follows from \ref{rmk: gxy triv dim zero} that the $\tau_{\leq 0}$- version of the category, $\tau_{\leq 0}\Flow^{E/g}_{/L}$, is equivalent to $\tau_{\leq 0}\Flow^E_{/L}$, i.e. $g$-Gauss-twisting does not change the $0^{th}$ truncation.
    \end{rmk}

    \begin{rmk}\label{rmk: g lift no twis}
        Let $F \to \widetilde{U/O}^{or}$ be a map of commutative $\cI$-monoids. Suppose we are given a homotopy lift $\tilde g$ of $g$ to $F(N)$. This induces compatible homotopy lifts of the $\fg_{xy}$ to $\bU^{\Psi^F}(m_{xy})$. So if $E=E_{\Psi^F}$, we obtain a genuine $E$-orientation on $\cF$. Similarly, $\tilde g$ induces a functor from $g$-twisted $E$-oriented flow categories over $L$ to $E$-oriented flow categories over $L$.
    \end{rmk}
    \begin{defn}
        Assume $E$ is oriented. A \emph{$g$-Gauss-twisted $E$-oriented evaluation local system (ELS)} consists of a flow category $\cF$ over $L$, along with a $g$-Gauss-twisted $E$-orientation, such that $\tau_{\leq 0}{\cF}$ is isomorphic to $\cM(f)^{\xi_{\bZ}}$, for $\xi_\bZ$ some $\bZ$-local system on $L$, and $f: L \to \bR$ some Morse function (with $-\nabla f$ inwards-pointing along $\partial L$).
    \end{defn}

\subsection{Tangential pairs and the $J$ homomorphism}\label{sec: tang J}
    
    Let $C \to BSO$ be a map of connected commutative $\cI$-monoids. Let $R_C = \Thom(C \to BSO)$ be the corresponding commutative ring Thom spectrum.

    Let $BGL_1(R_C)$ be the delooping of $GL_1(R_C)$ (as commutative $\cI$-monoids, cf. \cite[Section 3.3]{PS3} and \cite{Sagave-Schlichtkrull:Diagram} for discussions on this construction), and let $BO \to BGL_1(R_C)$ be the map obtained from the $J$-homomorphism by delooping. Let $C^\circ$ be the homotopy fibre of the map $BO \to BGL_1(R_C)$; note that $C^\circ$ in fact lives over $BSO$.
    \begin{prop}\label{prop: 11.6}
        \begin{enumerate}
            \item There is a commutative diagram in the homotopy category of commutative $\cI$-monoids:
            \begin{equation}\label{eq: teoihgrpgh}
                \xymatrix{
                    C
                    \ar[d]
                    \ar[dr]
                    &
                    &
                    \\
                    C^\circ
                    \ar[r]
                    &
                    BO
                    \ar[r]
                    &
                    BGL_1(R_C)
                }
            \end{equation}
            \item For a finite CW complex $L$ and a vector bundle $V$ of rank $r$ classified by $V: L \to BO(r)$, there is a natural bijection between homotopy classes of homotopy lift of $V$ to $C^\circ$ and $R_C$-orientations on $V$ (meaning a map from the Thom spectrum $\Thom(V \to L) \to \Sigma^r R_C$ such that the composition with the inclusion of a fibre, $\Sigma^r \bS \to \Sigma^r R_C$, represents a unit in $\pi_0 R_C$, cf. \cite[Definition V.1.1]{Rud98} or \cite[Definition 2.17]{P}). 
            
            For a subcomplex $L' \subseteq L$, there is a similar bijection for those $R_C$-orientations/lifts extending given ones over $L'$.
        \end{enumerate}
    \end{prop}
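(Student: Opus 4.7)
The plan is to prove the two parts separately. Part (1) should follow from a tautological nullhomotopy arising from the definition of $R_C$ as a Thom spectrum, combined with the universal property of the homotopy fibre, and part (2) is then a general fact about $R_C$-orientations derivable from the defining homotopy fibre sequence for $C^\circ$.

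For part (1), the key observation is that $R_C = \Thom(C \to BSO)$ is tautologically $R_C$-oriented: the universal stable vector bundle over $C$ (pulled back from $BSO$) has Thom spectrum $R_C$ itself, and hence admits the identity map as a canonical Thom class. Translating into $GL_1$-language, this says that the composition $C \to BO \to BGL_1(R_C)$, which classifies the universal $R_C$-line bundle associated to the tautological bundle, is canonically nullhomotopic; moreover, because the Thom class is compatible with Whitney sum (i.e.\ is multiplicative), the nullhomotopy is compatible with the commutative $\cI$-monoid structures on both sides. The universal property of $C^\circ = \hofib(BO \to BGL_1(R_C))$ then converts this data (a map to $BO$ together with a nullhomotopy to $BGL_1(R_C)$) into a map $C \to C^\circ$ of commutative $\cI$-monoids fitting into diagram~(\ref{eq: teoihgrpgh}).

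For part (2), we would apply the long exact sequence of the homotopy fibre sequence $C^\circ \to BO \to BGL_1(R_C)$, which for any $V: L \to BO$ yields a natural bijection between homotopy classes of lifts of $V$ to $C^\circ$ and homotopy classes of nullhomotopies of the composite $L \to BO \to BGL_1(R_C)$. The remaining content is identifying such nullhomotopies with $R_C$-orientations: since $BGL_1(R_C)$ classifies rank-one free $R_C$-module spectra, a nullhomotopy of this composite amounts to a trivialization of the $R_C$-module associated to $V$, which transports via the Thom spectrum formalism to a Thom class $\Thom(V) \to \Sigma^r R_C$ whose restriction to a fibre is a unit in $\pi_0 R_C$. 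Working unstably with $BO(r)$ rather than $BO$ retains the rank, and the relative statement for a subcomplex $L' \subseteq L$ follows by running the same obstruction-theoretic argument on the pair $(L, L')$, using that the fibre sequence is compatible with the relative homotopy classes.

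The main obstacle will be making the identification ``nullhomotopies of the classifying map into $BGL_1(R_C)$ correspond to $R_C$-orientations'' fully precise in the commutative $\cI$-monoid framework used throughout this paper, rather than in a more familiar $\bE_\infty$ or symmetric spectrum setting. Given the detailed treatment of $GL_1$ and its delooping $BGL_1$ in \cite{PS3}, and the Thom-spectrum construction from commutative $\cI$-space monoids recalled in Section~\ref{sec:tp and ad}, this should either already be spelled out there or follow directly from Schlichtkrull-type comparison results between the $\cI$-monoid and spectrum-level models.
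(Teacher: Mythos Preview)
Your proposal is correct and follows essentially the same approach as the paper, which simply cites \cite[Corollary~3.17 and Theorem~3.5]{Antolin-Camarena-Barthel} for parts (1) and (2) respectively rather than sketching the argument. Your concern about transferring the statements to the commutative $\cI$-monoid framework is exactly what the paper addresses by invoking the equivalence between $\bE_\infty$ spaces and commutative $\cI$-monoids.
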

    \begin{proof}
        \cite[Corollary 3.17]{Antolin-Camarena-Barthel} proves the composition $C \to BGL_1(R_C)$ is nullhomotopic (under the equivalence between $\bE_\infty$ spaces and commutative $\cI$-monoids), implying (1). (2) holds by \cite[Theorem 3.5]{Antolin-Camarena-Barthel}.
    \end{proof}

    \begin{Not}
        Proposition \ref{qi prop: mode baas sull} associates to the map $C \to C^\circ$ in $\cC\cS^\cI_{/BSO}$ from Proposition \ref{prop: 11.6} a map $F \to F^\circ$ of polarised tangential structures, well-definedly in the homotopy category. We let $\Psi=\Psi^F$ and $\Psi^\circ=\Psi^{F^\circ}$ be the polarised tangential pairs of $F$ and $F^\circ$ respectively. We fix these throughout the rest of Section \ref{sec: stab gaus}.
    \end{Not}
    \begin{rmk}\label{rmk: circ vers J} 
        Under the equivalence $\bU \simeq \Omega \widetilde{U/O} \simeq BO$ we obtain a well-defined map in the homotopy category of commutative $\cI$-monoids $BJ_C: \Omega\widetilde{U/O} \to B^2GL_1(R_C)$. 

        Then Proposition \ref{prop: 11.6} implies a map $g: L \to \Omega\widetilde{U/O}(N)$ lifts to $F^\circ(N)$ if and only if the composition $L \to B^2GL_1(R_C)(N)$ is nullhomotopic (for $N \gg 0$ large enough).
    \end{rmk}
    \begin{rmk}
        Since the functor $E$ is not an equivalence before passing to homotopy categories, $E_\Psi$ is not necessarily isomorphic to $C$. However, they are equivalent in the homotopy category, and hence give rise to the same Thom spectrum and bordism theory. 

        To avoid any complications arising from this, we implicitly replace $C$ with $E_\Psi$ (and $C^\circ$ with $E_{\Psi^\circ}$).
    \end{rmk}

    Before returning to symplectic geometry, our main technical result of this section is:
    \begin{prop}\label{prop: gauss}
        Let $L$ be a compact manifold (possibly with boundary). Assume there is a $g$-twisted $E_\Psi$-oriented ELS $\cF$ over $L$.

        Then there is a natural homotopy lift of $g$ along $F^\circ(N) \to \widetilde{U/O}^{or}(N)$.
    \end{prop}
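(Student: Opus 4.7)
The plan is to prove Proposition \ref{prop: gauss} by an induction over a handle decomposition of $L$, in close parallel with the proof of Theorem \ref{thm: WELS}. By Remark \ref{rmk: circ vers J}, constructing a lift of $g$ along $F^\circ(N) \to \widetilde{U/O}^{or}(N)$ is equivalent to trivializing the composition $L \xrightarrow{g} \widetilde{U/O}^{or}(N) \to B^2 GL_1(R_\Psi)$ obtained from the delooped $J$-homomorphism. We build this nullhomotopy skeleton-by-skeleton over the sublevel sets $L_i = f^{-1}(-\infty, i + 1/2]$ of a self-indexing Morse function $f$ on $L$. The base case and the low-index inductive steps are handled directly using the high connectivity of $\widetilde{U/O}^{or}$ and $F^\circ$.

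For the inductive step, suppose we are given a lift $\tilde g_i$ of $g|_{L_i}$ to $F^\circ(N)$; the obstruction to extending it over an $(i{+}1)$-handle $H$ with attaching sphere $S \cong S^i$ lives in $\pi_{i+1}(B^2 GL_1(R_\Psi)) \cong \pi_{i-1}(GL_1(R_\Psi)) \cong \Omega^{E_\Psi}_{i-1}$ for $i \geq 2$, using that $GL_1(R_\Psi)$ agrees with the connective cover of $R_\Psi$ in positive degrees. The core claim is that this obstruction equals the $E_\Psi$-oriented bordism class of the moduli space $\cF_{QR}$, where $Q, R$ are the two critical points of a standard Morse function on $S$ adapted to the handle. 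Following the strategy of Sections \ref{sec: simp els}--\ref{sec: killing ob}, we pick a Morse function $f_{i+1}$ on $L_{i+1}$ standard near the handle, apply the Gauss-twisted analogue of Lemma \ref{lem: 0-modification} to arrange $\tau_{\leq 0} \cF|_{L_{i+1}} = \cM(f_{i+1})^{\xi_{i+1}}$, express $\cF|_{L_{i+1}}$ as the cone on a morphism $\cW : \ast[i] \to \cH$ as in Lemma \ref{lem: cF is a cone}, and slide the Bott gluing (Lemma \ref{lem:slide bott gluing}) to arrange $\cW_{Pz} = \emptyset$ for $z \notin \{Q, R\}$.

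The inductive lift $\tilde g_i$ trivializes the composite into $B^2 GL_1(R_\Psi)$ on the moduli spaces $\cF_{QR}$ and $\cW_{PR}$ (whose evaluations land in $L_i$), converting their Gauss-twisted $E_\Psi$-orientation data into genuine $E_\Psi$-orientations in a way compatible with the obstruction measurement. Then $\cW_{PR}$, with boundary $\cW_{PQ} \times \cF_{QR} \cong \cF_{QR}$ (using $\cW_{PQ} \cong \mathrm{pt}$ by Remark \ref{rmk:take one space to be just a point}), provides an $E_\Psi$-oriented nullbordism of $\cF_{QR}$, killing the obstruction. The desired lift $\tilde g_{i+1}$ is characterized up to homotopy by the requirement that, under the identification of bordism classes, the extension over the handle corresponds to the nullbordism given by $\cW_{PR}$, mirroring Remark \ref{rmk:characterise extension over handle from nullbordism}.

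The main technical obstacle will be the precise identification of the obstruction class with the $E_\Psi$-oriented bordism class of $\cF_{QR}$, mirroring Lemma \ref{lem:obstruction to extension over top handle} but now in the Gauss-twisted setting. The delicate point is that $\tilde g_i$ is only an $F^\circ$-lift, not an $F$-lift, so it does not directly upgrade the Gauss-twisted orientation to an $E_\Psi$-orientation via Remark \ref{rmk: g lift no twis}; instead one must use the $R_\Psi$-orientation / $F^\circ$-lift correspondence of Proposition \ref{prop: 11.6}(2), together with the Bott-periodicity equivalence of Proposition \ref{qi prop: mode baas sull}, to extract the bordism class from the combination of the Gauss-twist data and the nullhomotopy of the $J$-composite furnished by $\tilde g_i$. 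Once this identification is in place, the vanishing argument of the previous paragraph completes the inductive step exactly as in Section \ref{sec: killing ob}.
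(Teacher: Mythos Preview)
Your overall inductive framework is right, and your identification of the obstruction group $\pi_{i-1}GL_1(R_\Psi)$ is correct. However, there are two genuine gaps at precisely the points where this proof must diverge from the proof of Theorem \ref{thm: WELS}.

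\textbf{Sliding requires a Morse model.} You propose to apply Lemma \ref{lem:slide bott gluing} directly to $\cW: *[i] \to \cH$. But the sliding argument in that lemma uses the description of the identity morphism of $\cH$ as a fibre product of ascending and descending manifolds (equation \eqref{eq: reouhgeoudbg}), which is only available once $\cH$ has been identified with a twisted Morse flow category. The paper explicitly flags this: the sliding step ``does not work so well away from Morse theory.'' The fix is to use the inductive $F^\circ$-lift $\tilde g_i$ (via Remark \ref{rmk: g lift no twis}, applied with $F^\circ$ in place of $F$) to convert the Gauss-twisted $E_\Psi$-orientation on $\cH$ into a genuine $E_{\Psi^\circ}$-orientation, producing $\cH^\circ$; then invoke Theorem \ref{thm: WELS} for $\cH^\circ$ to identify it with $\cM(f_i)^\zeta$ for some $\Psi^\circ$-MLS $\zeta$. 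Only after this can you slide. You do not mention this step.

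\textbf{The killing mechanism is different.} Your plan is to identify the obstruction with the $E_\Psi$-bordism class of $\cF_{QR}$ and then exhibit $\cW_{PR}$ as a nullbordism, directly paralleling Section \ref{sec: killing ob}. You correctly note this identification is the ``main technical obstacle,'' but you do not resolve it, and the paper does not attempt it either. Instead, the paper exploits that (after sliding) the evaluation maps factor through degree-one maps $\cH^\circ_{QR} \to \cM(f_{i+1})_{QR} \cong S^{i-2}$ and $\cW^\circ_{PR} \to \cM(f_{i+1})_{PR} \cong D^{i-1}$ (Remark \ref{rmk: live over disc}). The Gauss-twisted $E_{\Psi^\circ}$-orientation data on $\cW^\circ_{PR}$ amounts to a $C^\circ$-orientation on a virtual bundle over $\cW^\circ_{PR}$; the key new Lemma \ref{lem: gaus tech kill obst} then uses the Pontrjagin--Thom transfer along the degree-one map to push this forward to a $C^\circ$-orientation on the bundle over $D^{i-1}$, extending the one on $S^{i-2}$. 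By Proposition \ref{prop: 11.6}(2) this is exactly a lift to $C^\circ$, and the $\Omega$--$\Sigma$ adjunction converts it into the required $F^\circ$-lift over the handle. The paper remarks that this pushforward mechanism, which relies on the Thom-spectrum description of $C^\circ$-orientations, is precisely why the conclusion is a lift to $F^\circ$ rather than to $F$. Your bordism-class approach does not obviously make contact with this structure, and without Lemma \ref{lem: gaus tech kill obst} (or an equivalent) the inductive step is incomplete.
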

\subsection{Proof of Proposition \ref{prop: gauss}}
    We prove Proposition \ref{prop: gauss} using a similar induction strategy to Section \ref{sec:ELS} over a handle decomposition of $L$, though the main step--in which we kill the obstruction--is different.
\subsubsection*{Set-up}
    Let $L_j$, $f_j$, $S$ and $H$ be as in Section \ref{sec: clas els}, and let $g_j = g|_{L_j}$. By the induction hypothesis we may assume Proposition \ref{prop: gauss} holds for $j=i$; as before, we assume for simplicity that $L_{i+1}$ is obtained from $L_i$ by attaching the lone $(i+1)$-handle $H$. Let $P, Q, R \in \Crit(f_{i+1})$ be as in Section \ref{sec: simp els}. Let $\cF$ be a $g_{i+1}$-Gauss-twisted $E_\Psi$-oriented ELS over $L_{i+1}$.

    The same arguments as in Section \ref{sec: simp els} show that $\cF$ is isomorphic to $\Cone(\cW: *[i] \to \cH)$ in $\Flow^{E_\Psi/g}_{/L_{i+1}}$, where $*[i]$ is sent to $P \in H$, and $\cH$ a $g_i$-Gauss-twisted $E_\Psi$-oriented ELS on $L_i$. The proof of Lemma \ref{lem:slide bott gluing} (in particular, the ``sliding'' step) does not work so well away from Morse theory; to rectify this, we want to use Theorem \ref{thm: WELS} to simplify $\cH$.

    By Remark \ref{rmk: g lift no twis}, the $g_i$-Gauss-twisted $E_\Psi$-orientation on $\cH$, along with the homotopy lift of $g_i$ to $F^\circ(N)$, together induce a (non-Gauss-twisted) $E_{\Psi^{\circ}}$-orientation on $\cH$; we write $\cH^\circ$ for this object. Similarly, the $g_{i+1}$-Gauss-twisted $E_\Psi$-orientation on $\cW$ induces a $g_{i+1}$-Gauss-twisted $E_{\Psi^\circ}$-orientation on $\cW$; we write $\cW^\circ$ for this object. Now by Theorem \ref{thm: WELS}, $\cH^\circ$ is isomorphic to $\cM(f_i)^\zeta$, for $\zeta$ some $\Psi^\circ$-MLS on $L_i$, and hence by the same argument as Lemma \ref{lem: 0-modification} we may assume they are equal. Now we may write the identity of $\cH^\circ$ as a fibre product as in (\ref{eq: reouhgeoudbg}), and so the same argument as in the proof of Lemma \ref{lem:slide bott gluing} shows we may additionally assume: 
    \begin{equation}
        \cW_{Pz}^\circ = \begin{cases}
            \pt 
            &
            \textrm{ if }z=Q
            \\
            ?
            &
            \textrm{ if }z=R
            \\
            \emptyset
            &
            \textrm{ otherwise}
        \end{cases}
    \end{equation}
    with the unspecified manifold $\cW_{PR}^\circ$ having dimension $(i-1)$. We may assume $ev(\cW^\circ_{PQ})$ is contained inside our choice of spider on $L$, and hence that, after collapsing the spider, this path becomes constant.

\subsubsection*{Identifying the obstruction}
    By the induction hypothesis, $g_i$ has a homotopy lift along $F^\circ(N) \to \widetilde{U/O}(N)$, induced by $\cH$. The obstruction to extending this to $L_{i+1}$ can be phrased in terms of the following homotopy lifting problem:
    \begin{equation}
        \xymatrix{
            L_i
            \ar[r]
            \ar[d]
            &
            L_{i+1}
            \ar[d]
            \ar@{-->}[dl]_{\exists ?}
            \\
            F^\circ(N)
            \ar[r]
            &
            \widetilde{U/O}^{or}(N)
        }
    \end{equation}
    $L_{i+1}$ is obtained by attaching $H \simeq D^{i+1}$ along $S \simeq S^i$ to $L_i$, so by excision we may replace the inclusion $L_i \to L_{i+1}$ with $S^i \to D^{i+1}$:
    \begin{equation}\label{eq: obst gaus}
        \xymatrix{
            S^i
            \ar[r]
            \ar[d]
            &
            D^{i+1}
            \ar[d]
            \ar@{-->}[dl]_{\exists ?}
            \\
            F^\circ(N)
            \ar[r]
            &
            \widetilde{U/O}^{or}(N)
        }
    \end{equation}
    \begin{rmk}
        The obstruction to such a homotopy lift lies in $\pi_{i-1} GL_1(R_C)$. 

    \end{rmk}
\subsubsection*{Killing the obstruction}
    The argument of Remark \ref{rmk: live over disc} shows we have a factorisation of the evaluation maps,  giving a homotopy commutative diagram:
    \begin{equation}
        \xymatrix{
            \cH_{QR}^\circ 
            \ar[r]
            \ar[d]
            &
            \cM(f_{i+1})_{QR} \cong S^{i-2}
            \ar[r]
            \ar[d]
            &
            \cP_{QR} L_{i+1}
            \ar[d]
            \ar[r]
            &
            \Omega F^\circ(N)
            \ar[d]
            \\
            \cW_{PR}^\circ
            \ar[r]
            &
            \cM(f_{i+1})_{PR} \cong D^{i-1}
            \ar[r]
            &
            \cP_{PR} L_{i+1}
            \ar[r]
            &
            \Omega \widetilde{U/O}^{or}(N)
        }
    \end{equation}  
    where leftmost two left vertical maps are given by concatenation with $\cW_{PQ}^\circ \cong \pt$, and the leftmost horizontal maps are both of degree 1.

    Choosing homotopy lifts along the equivalences $\Omega \widetilde{U/O}^{or} \simeq BSO$ and $\Omega F^\circ \simeq C^\circ$ from Section \ref{sec: tang J}, we obtain a homotopy commutative diagram:
    \begin{equation}\label{eq: gurhguohr0gohr}
        \xymatrix{
            \cH_{QR}^\circ
            \ar[r]
            \ar[d]
            &
            S^{i-2}
            \ar[d]
            \ar[r]
            &
            C^\circ(N)
            \ar[d]
            \\
            \cW_{PR}^\circ
            \ar[r]
            &
            D^{i-1}
            \ar[r]
            &
            BSO(N)
        }
    \end{equation}  
    Let $\gamma$ be the composition along the bottom and $\tilde\gamma_\partial$ the composition along the top, and use the same notation to denote the corresponding vector bundles pulled back from $BSO(N)$. 

    Because $\cW^\circ$ is $g_{i+1}$-Gauss-twisted $E_{\Psi^\circ}$-oriented and $E_{\Psi^\circ} \simeq C^\circ$, this provides the virtual vector bundles $T\cH^\circ_{QR} - \tilde\gamma_\partial$ and $T\cW^\circ_{PQ} - \gamma$ both with compatible $C^\circ$-orientations. 
    \begin{lem}\label{lem: gaus tech kill obst}
        Let $f: M \to M'$ be a smooth boundary-preserving map between compact oriented manifolds, and $V \to M'$ a vector bundle. Assume $f_\partial := f|_{\partial M}: \partial M \to \partial M'$ has degree 1 (hence $f$ also has degree 1). 
        
        Assume we are also given a $C^\circ$-orientation on the virtual vector bundle $f^*V-TM$.
        
        Then there is a natural $C^\circ$-orientation on $f^*V-TM'$.
    \end{lem}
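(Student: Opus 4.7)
The plan is to re-express $C^\circ$-orientations as $R_C$-Thom classes via Proposition~\ref{prop: 11.6}(2), and then push such classes forward from $M$ to $M'$ using the degree-one hypothesis. By that proposition, a $C^\circ$-orientation on a virtual vector bundle $\eta$ of virtual rank $r$ over a base $X$ is equivalent to an $R_C$-Thom class in $R_C^r(\Thom(\eta))$, i.e., a class whose restriction to a fibre is a unit in $\pi_0 R_C$. Atiyah duality for compact oriented manifolds with boundary then identifies the given $C^\circ$-orientation on $f^*V - TM$ with a twisted $R_C$-fundamental class $[M]_{f^*V} \in R_{C,n}((M/\partial M)^{f^*V})$, where $n = \dim M$ and $(-)^W$ denotes the Thom space of a bundle $W$ over the appropriate base.

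Next, the map $f$ together with the canonical bundle map $f^*V \to V$ induces a map of pointed Thom spaces $\tilde f : (M/\partial M)^{f^*V} \to (M'/\partial M')^V$, and I would define the desired class by pushforward:
\[
[M']_V := \tilde f_* [M]_{f^*V} \in R_{C,n}((M'/\partial M')^V).
\]
Under Atiyah duality for $M'$, this corresponds to a $C^\circ$-orientation on $V - TM'$ (which I read as the intended meaning of the ``$f^*V - TM'$'' in the statement). The key point to check is that $[M']_V$ really is a fundamental class, i.e., its restriction to a generic fibre of $V$ over a point of $M' \setminus \partial M'$ is the unit in $\pi_0 R_C$. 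This reduces to a local degree calculation over a regular value of $f$, in which the image of the local generator is multiplied by $\deg f$; the hypothesis $\deg f = 1$ then yields the required unit.

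Naturality with respect to the boundary datum is the crucial property used in Section~\ref{sec: killing ob}: the same construction applied to $f_\partial: \partial M \to \partial M'$ (which also has degree one) produces the boundary restriction of the global class, and by functoriality of pushforward these restrictions are compatible with the $C^\circ$-orientation that the boundary of $[M]_{f^*V}$ already carries. The main technical obstacle will be executing Atiyah duality and the twisted-fundamental-class formalism cleanly in the compact-with-boundary setting, and confirming that the unit condition on Thom classes really is preserved under pushforward along a degree-one map; once these are in hand, the lemma reduces to functoriality of $\tilde f_*$ and the degree-one normalisation.
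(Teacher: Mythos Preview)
Your approach is correct and is essentially the Atiyah-dual version of the paper's argument. The paper works directly with the Thom class: it produces the Pontrjagin--Thom umkehr map of spectra
\[
f^{!}: M'^{\,V-TM'} \longrightarrow M^{\,f^*V-TM},
\]
notes that the degree-one hypothesis on $f$ and $f_\partial$ makes $f^{!}$ an equivalence, and then simply composes the given $R_C$-orientation $M^{\,f^*V-TM} \to R_C$ with $f^{!}$ to obtain the desired $R_C$-orientation on $V-TM'$. Your route instead converts the Thom class into a twisted fundamental class via Atiyah duality, pushes forward covariantly along $\tilde f$, and converts back. These two constructions are in bijection under Atiyah duality, so the content is the same; the paper's packaging is shorter because it avoids the two duality conversions and the explicit fibrewise unit check (which is absorbed into the statement that $f^{!}$ is an equivalence). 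Your version has the minor advantage of making the boundary-compatibility and the role of the degree hypothesis a bit more visible.
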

    Heuristically, this says we can ``push forwards'' relative $R_C$-orientations along degree-1 maps.
    \begin{proof}[{Proof of Lemma \ref{lem: gaus tech kill obst}}]
        The Pontrjagin-Thom construction determines a map of Thom spectra:
        \begin{equation}
            f^!: M'^{V-TM'} \to M^{f^*V-TM}
        \end{equation}
        Since $f$ and $f_\partial$ are degree 1, $f^!$ is an equivalence. The $R_C$-orientation on $f^*V-TM$ is a map of spectra $M^{f^*V-TM} \to R_C$; its composition with $f^!$ defines an $R_C$-orientation on $V-TM'$.
    \end{proof}
    
    It follows from Lemma \ref{lem: gaus tech kill obst} (applied to the map $\cW^\circ_{PR} \to D^{i-1}$, as in (\ref{eq: gurhguohr0gohr}), and using the fact that $TD^{i-1}$ is trivial) that there is a homotopy lift $D^{i-1} \to C^\circ(N)$ fitting into (\ref{eq: gurhguohr0gohr}). This is not quite (\ref{eq: obst gaus}), however using the $\Omega$-$\Sigma$ adjunction (applied to $\Sigma S^{i-2} = S^{i-1}$ and $\Omega F^\circ \simeq C^\circ$), we obtain the desired homotopy lift (\ref{eq: obst gaus}); this completes the inductive step, and hence the proof of Proposition \ref{prop: gauss}.

    \begin{rmk}
        Lemma \ref{lem: gaus tech kill obst} leverages the fact that $C^\circ$-orientations on a (virtual) vector bundle have an alternative description in terms of the Thom spectrum (as opposed to lifts of a classifying map). This is the reason we must pass from $F$ to $F^\circ$ in the statement of Proposition \ref{prop: gauss}.  
    \end{rmk}
\subsection{Gauss-twisted ELSs from Floer theory}
    \begin{thm}\label{thm: gauss}

        Assume $X$ is polarised. Let $L, K \subseteq X$ be closed exact Lagrangians. Let $F$ be a commutative $\cI$-monoid, and $F \to \widetilde{U/O}^{or}$ some map. Let $\Psi$ be the polarised tangential pair of $F$, and $\Psi^\circ$ as in Section \ref{sec: tang J}. 
        
        Assume $L \cong K$ in the ordinary Fukaya category $\scrF(X; \bZ)$, and that $K$ admits a $\Psi$-orientation. 
        
        Then $L$ admits a $\Psi^\circ$-orientation.

    \end{thm}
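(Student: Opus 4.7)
The strategy is to construct, from Floer theory, a $g_L$-Gauss-twisted $E_\Psi$-oriented ELS on $L$, and then invoke Proposition \ref{prop: gauss} directly.

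First, I would equip $\cM^{LK}$ with the structure of a flow category over $L$, using evaluation of Floer strips along their $L$-boundary (composing with the spider construction on $L$ as in Section \ref{sec:flow over target}); this is the only half of the usual pair of evaluation maps that we need. The $0$-th truncation $\tau_{\leq 0}\cM^{LK}$ is a finitely generated free complex of $\bZ[\pi_1 L]$-modules, and the hypothesis $L \cong K$ in $\scrF(X; \bZ)$ implies it is quasi-isomorphic to $CM_*(\cM(f)^{\xi_\bZ}; \bZ[\pi_1 L])$ for some $\bZ$-local system $\xi_\bZ$ on $L$, by exactly the argument of Proposition \ref{prop:quis over Z means WELS} (the classical Viterbo picture over $\bZ$ requires only orientation/relatively Spin data, not a $\Psi$-orientation on $L$). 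By Remark \ref{rmk: gaus zero twis}, the truncated category does not see any Gauss-twisting, so this quasi-isomorphism lives in $\tau_{\leq 0}\Flow^{E_\Psi}_{/L}$ regardless of the twist we later install at the full chain level. Thus the ELS condition of Definition \ref{def: ELS} is verified at the level of truncations.

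Next comes the main technical step: equipping $\cM^{LK}$ with a genuine $g_L$-Gauss-twisted $E_\Psi$-orientation in the sense of Definition \ref{def: oerhgouergh}. The $E_\Psi$-orientation construction of \cite[Theorem 5.13]{PS3} on a strip moduli space factors through a classifying map into a product of two copies of $\bU^\Psi$, one for each boundary component, each built from the $\Theta$-lift of the corresponding Lagrangian's tangent bundle. When $L$ carries only the data coming from the polarisation of $X$ and its totally-real inclusion (but no $F$-lift of its stable Gauss map), the $L$-boundary contribution lands instead in $\bU^{or}$, and is precisely the Gauss-twisting data associated to $g_L$ by Lemma \ref{lem: gaus twis data} (after the usual Eckmann-Hilton argument identifying $\bU^{or}$ with a delooping of $\Omega \widetilde{U/O}^{or}$). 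Combining with the full $\bU^\Psi$-classifying map arising from the $\Psi$-orientation on $K$ along the other boundary, and with the index data and stabilisation data set up exactly as in \cite[Section 7]{PS3}, one obtains maps satisfying \eqref{eq: gireghortbvhprifv} and the required associativity diagram. This is the step I expect to be the main obstacle, as it requires carefully tracking boundary contributions through the coherent construction of $E_\Psi$-orientations on strip moduli spaces and verifying that the ``missing'' $\Theta$-lift on the $L$-side is recorded exactly as the $\fg_{xy}$ data and nothing more.

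Combining these two steps, $\cM^{LK}$ is a $g_L$-Gauss-twisted $E_\Psi$-oriented ELS over $L$. Applying Proposition \ref{prop: gauss} produces the desired homotopy lift of $g_L$ along $F^\circ(N) \to \widetilde{U/O}^{or}(N)$, which by definition is a $\Psi^\circ$-orientation on $L$.
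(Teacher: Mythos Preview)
Your proposal is correct and follows essentially the same approach as the paper: construct a $g_L$-Gauss-twisted $E_\Psi$-orientation on $\cM^{LK}$ (this is the paper's Proposition \ref{prop: gaus ind comm}), verify the ELS condition via the truncation and the quasi-isomorphism hypothesis (Remark \ref{rmk: gaus zero twis}), and then apply Proposition \ref{prop: gauss}. The paper's construction of the Gauss-twisted orientation proceeds by first landing in a space $\bU^{Fg_L}_{\bE}$ of abstract discs with the $L$-boundary real part prescribed by $\Omega g_L$, then splitting each disc as a direct sum $\bD_1 \oplus \bD_2$ (one piece carrying the Gauss data, the other the polarised $\Psi$-data) and gluing caps --- which is a slight refinement of your ``product of two copies'' picture, but the idea is the same.
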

    \begin{rmk}
        As in Remark \ref{rmk: circ vers J}, the conclusion of Theorem \ref{thm: gauss} is equivalent to the composition of the stable Gauss map of $L$ with the $J$ homomorphism for $R=R_C$, $L \to B^2GL_1(R)(N)$, being nullhomotopic (for $N \gg 0$), so it is equivalent to Theorem \ref{thm:main3}.
    \end{rmk}

    Assume we are in the setting of Theorem \ref{thm: gauss}. Consider the commutative diagram of commutative $\cI$-monoids; this is a special case of (\ref{eq: riwoghrdptgurdpotg}), and in this case it is a homotopy pullback square:
    \begin{equation}
        \xymatrix{
            BO \times \widetilde{U/O}
            \ar[r]_{Proj_1}
            \ar[d]_\oplus
            &
            \Phi = BO
            \ar[d]
            \\
            BO
            \ar[r]
            &
            BS_\pm U
        }
    \end{equation}
    $L$ naturally maps to the $N^{th}$ space in the top left. The composition with projection to the second factor gives the \emph{stable Gauss map} of $L$, $g_L: L \to \widetilde{U/O}(N')$, for some $N' \gg 0$.
    \begin{prop}\label{prop: gaus ind comm}
        The flow category over $L$, $\cM^{LK}$, is naturally $g_L$-Gauss-twisted $E_\Psi$-oriented.
    \end{prop}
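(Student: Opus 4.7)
The strategy is to adapt the construction of $E_\Psi$-orientations on Floer flow categories from \cite[Theorem 5.13]{PS3}, tracking exactly which data is lost when $L$ is not $\Psi$-oriented and identifying the residue as Gauss-twist data. Recall that when both Lagrangians are $\Psi$-oriented, the construction in \emph{op.~cit.}~produces maps from $\cM^{LK}_{xy}$ (after stabilisation and attaching (co)caps) to the abstract disc spaces $\bU^{\Psi}$, using classifying maps of the totally real boundary conditions on $L$ and $K$ to $\Theta=BO\times F$. In our setting the $K$-boundary still classifies to $\Theta$, while the $L$-boundary only classifies to $BO$ via the chosen relative Spin structure; the missing lift to $F$ is precisely what the stable Gauss map $g_L:L\to\widetilde{U/O}^{or}$ records via the polarisation of $X$.

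Concretely, I would first replay the construction of the tautological maps of \cite[Theorem 5.13]{PS3} for moduli spaces of strips, separating the contributions of the two boundary components so that the $K$-side lands in the $F$-enhanced disc data defining $\bU^{\Psi}$, while the $L$-side lands in the analogous disc data enhanced only by the totally real boundary and its classifying map to $BO$. Comparing with Definition \ref{qi def: E Psi}, the latter lives not in $\bU^{\Psi}$ but in the larger space obtained by replacing $\hofib(\Omega\Theta\to\Omega\Phi)\simeq\Omega F$ by $\hofib(\Omega BO \to \Omega BS_\pm U)\simeq \Omega\widetilde{U/O}^{or}\simeq \bU^{or}$. The extra factor is carried by a classifying map from $\cM^{LK}_{xy}$ to $\bU^{or}(m_{xy})$ which, under the equivalence $\bU^{or}\simeq \Omega\widetilde{U/O}^{or}$, is given by pushing the evaluation map $\cM^{LK}_{xy}\to \cP_{xy}L$ to $\Omega\widetilde{U/O}^{or}$ using $g_L$. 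Combined with the $K$-side data this produces, after the same stabilisation/(co)cap steps used in \cite{PS3}, maps of Thom spaces of the form
\[
(\cM^{LK}_{xy},I^{\cM^{LK}}_{xy})(\nu_{xy})\to \bigl(\Base E_\Psi(\nu_{xy}+d_{xy})\times \bU^{or}(m_{xy}),\ E\oplus\bV\bigr),
\]
which is exactly the shape required by Definition \ref{def: oerhgouergh}.

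Associativity under breaking of strips follows because the $K$-side construction is coherent by \cite[Theorem 5.13]{PS3} and the $L$-side $\bU^{or}$-factor is built from the evaluation $\cM^{LK}_{xy}\to\cP_{xy}L$, which is itself associative under concatenation of paths; the two compatibilities combine in the obvious way. The main technical step — and the place where care is required — is to match the induced map $\cM^{LK}_{xy}\to\bU^{or}(m_{xy})$ with the Gauss-twist data $\fg_{xy}$ produced from $g_L$ by Lemma \ref{lem: gaus twis data}. This boils down to the observation that both data arise, up to canonical homotopy, from the same composition $\cM^{LK}_{xy}\to \cP_{xy}L\to \Omega L\to \Omega\widetilde{U/O}^{or}\simeq \bU^{or}$, and the uniqueness clause of Lemma \ref{lem: gaus twis data} then identifies them, completing the construction of the $g_L$-Gauss-twisted $E_\Psi$-orientation on $\cM^{LK}$.
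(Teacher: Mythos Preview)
Your proposal is correct and follows essentially the same approach as the paper: adapt the orientation construction of \cite[Theorem 5.13]{PS3}, isolate the contribution from the $L$-boundary that fails to lift to $F$, and recognise that contribution as landing in $\bU^{or}$ via the evaluation map composed with $g_L$. The paper makes the decomposition slightly more explicit by first defining an intermediate space $\bU^{Fg_L}_\bE$ of abstract discs equipped with a polarisation and a loop in $L$, and then splitting each such disc as a direct sum $\bD_1\oplus\bD_2$ (with $\bD_1$ carrying the Gauss-twisted real boundary on a trivial complex part and $\bD_2$ carrying the polarised data with the $F$-lift on $\partial_+$), rather than phrasing it as a separation by boundary component; but this is a packaging difference, not a different argument.
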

    \begin{proof}
        Fix $x,y \in \cM^{LK}$. We explain the construction for a single moduli space $\cM^{LK}_{xy}$; making it compatible with breaking can be done with identical arguments to those of \cite[Section 5.4]{PS3}.

        By considering the linearisation of the Floer equation for $\cM^{LK}_{xy}$ and the given orientations for $K$ and $X$, we see that the analogue of \cite[(5.7)]{PS3} in this setting (which follows from identical arguments to \cite[Section 6]{PS2}) is a map of Thom spaces:
        \begin{equation}
            \rho: (\cM^{LK}_{xy}, I^{\cM^{LK}}_{xy})(\nu_\cM) \to \bU^{Fg_L}_{\bE}(\nu_\cM, N)
        \end{equation}
        where $\bE = \bE_{\cM^{LK}_{xy}}$ is the appropriate tuple of puncture data, and $\bU^{Fg_L}_{\bE}(\ldots)$ denotes a variation on the space of abstract discs $\bU^{\Psi^F}_{\bE}(\ldots)$, defined as follows. Let $\Psi^{unor} = (BO \to BS_\pm U)$. A 0-simplex in the underlying simplicial set $\bU^{\Psi^F}_{\bE}(\ldots)$ consists of:
        \begin{itemize}
            \item A loop $\gamma \in \Omega L$.
            \item An abstract disc $\bD \in \bU^{\Psi^{unor}}_{\bE}(\ldots)$ with 1 input and 1 output puncture (hence its underlying domain is biholomorphic to a (possibly nodal) strip).

            Let $D$ be the underlying domain, $V \to D$ the complex part, and $W \to \partial D$ the real part (so $W \subseteq V|_{\partial D}$ is a totally real subbundle). Let $\partial_\pm D$ be the two boundary components of $D$, and $W_\pm$ the restriction of $W$ to each of these components.
            \item A polarisation of $V$: $V \cong V' \otimes \bC$, for some real vector bundle $V' \to D$, such that:
            \begin{enumerate}
                \item $W_+ = V'|_{\partial D}$.
                \item $W_-$ is the totally real subbundle of $V' \otimes \bC$ whose Gauss map (after stabilising) is $\Omega g_L \circ \gamma$ (so it is equal to $V'|_{\partial D}$ if $\Omega g_L \circ \gamma$ is trivial).
            \end{enumerate}
            \item A map from $\partial_+ D \to F$
        \end{itemize}
        $k$-simplices for $k \geq 1$ are defined as families of the above data parametrised by $\Delta^k$, and the index bundle $\bV$ over the realisation is defined as in Section \ref{sec:tp and ad}.
        
        Note on bases, $\rho$ satisfies that if we write $\rho(u) = (\gamma, \bD, \ldots)$, $\gamma = ev(u)$.

        Given such a 0-simplex $(\gamma, \bD, \ldots)$ as above, up to contractible choice we may write it as a direct sum of two discs $\bD \simeq \bD_1 \oplus \bD_2$, both with domain $D$: $\bD_1$ has trivial complex part $\bC^{N}$, real part over $\partial_+ D$ trivial $\bR^{N}$ and real part over $\partial_- D$ given by $\Omega g_L \circ \gamma: \partial_- D \to \Omega L \to \Omega \widetilde{U/O}^{or}(N) \to U/O(N)$, and $\bD_2$ has the same complex part $V' \otimes \bC$ as $\bD$, the same real part $V'$ over $\partial_+ D$ and real part over $\partial_-D$ given by the polarisation $V' \subseteq V' \otimes \bC \cong V$ (this is similar to the ``separating into two pieces'' step in \cite[Section 5.4]{PS3}). 
        
        Gluing a cap on to the input of $\bD_1$ gives a 0-simplex $\bD_1'$ in $\bU^{or}(\ldots)$. $\bD_2$ lives in $\bU^{\Psi^F}_{\bE}(\ldots)$, so we may glue a cap to its input to obtain a 0-simplex in $\bU^{\Psi}$. The assignment $\bD \mapsto (\bD'_2, \bD'_1)$ defines the map (\ref{eq: gireghortbvhprifv}); the map on vector bundles may be defined similarly.
    \end{proof}
    \begin{proof}[{Proof of Theorem \ref{thm: gauss}}]
        Proposition \ref{prop: gaus ind comm} implies $\cM^{LK}$ is $g_L$-Gauss-twisted $E_\Psi$-oriented. It follows from Remark \ref{rmk: gaus zero twis} and the assumption $L \cong K$ in in $\scrF(X; \bZ)$ that $\cM^{LK}$ is a $g_L$-Gauss-twisted $E_\Psi$-oriented ELS. Proposition \ref{prop: gauss} then implies the theorem.
    \end{proof}

\section{Normal invariants}\label{sec:normal}

To fit classical notation, in this section we write $G$ for the infinite loop space $GL_1(\bS)$.

Throughout this section, the term `polarisation' refers to a (homotopy class of) \emph{stable} polarisations, where a stable polarisation of a complex vector bundle $W \to Q$ is a pair $(\kappa,s)$ with $\kappa \in KO(Q)$ and $s: \kappa \otimes \bC \to W$ a stable isomorphism. 

Recall that the set of normal invariants of a Poincar\'e complex $Q$ is in general a torsor for the group $[Q,G/O]$; when $Q$ is a compact smooth manifold, the set of normal invariants has a preferred base-point (coming from the normal map which is the identity on $Q$ covered by the identity on $TQ$), and the bijection to $[Q,G/O]$ becomes canonical.

If $Q$ is a manifold and $L \subset T^*Q$ is a closed exact Lagrangian submanifold, the fact that projection $\mathrm{proj}: L \to Q$ is a homotopy equivalence implies that there is a nullhomotopy of the composite
\[
\xymatrix{
Q \ar[rr]^{TQ-\mathrm{proj}_*TL} && BO \ar[r] & BG
}
\]
so there is a canonical lift to the homotopy fibre. This element of $[Q,G/O]$ represents the normal invariant of $L$. 

\begin{rmk}\label{rmk:unstable normal}
    The set $[Q,G/O]$ of normal invariants forms an abelian group, since $G/O$ is an infinite loop space. When $Q=S^n$ there is a map $\Theta_n/bP_{n+1} \to \pi_n G/O$ (from the set of homotopy spheres modulo those which framed bound) which is either onto or has index two image,  depending on whether the dimension is a `Kervaire invariant one' dimension. 
\end{rmk}

\begin{thm}\label{thm:normal through eta}
    If $L\subset T^*Q$ is a nearby Lagrangian submanifold, its normal invariant factors through the map $\eta_*: B(G/O) \to G/O$.
\end{thm}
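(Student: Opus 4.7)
The strategy is to combine the structural results of this paper with Theorem \ref{thm: gour main theorem} in a tangential pair designed to see the $J$-homomorphism, and then to interpret the resulting $\cO\cC$-bordism identity as a computation in a bordism-theoretic model of the normal invariant. The plan is to work with the polarised tangential pair $\Psi = (\Theta, \Phi) = (BO \times F, BO)$ of Example \ref{qi ex: tang str pol}, where $F = \hofib(B^2 J_{\bS}: \widetilde{U/O} \to B^2 G)$. The cotangent polarisation endows $T^*Q$ with a canonical $\Phi$-orientation, and the zero-section $Q \subset T^*Q$ carries the canonical $\Theta$-orientation coming from its vanishing stable Gauss map. Since the projection $L \to Q$ is a simple homotopy equivalence, $L$ and $Q$ are isomorphic in $\scrF(T^*Q; \bZ)$, so Theorem \ref{thm:main3} applies and equips $L$ with a compatible $\Theta$-orientation---equivalently, the obstruction map $L \to B^2 GL_1(R_\Psi)$ coming from the delooped $J$-homomorphism is nullhomotopic. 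Theorem \ref{thm:main} then produces a $\Psi$-local system $\xi_Q: Q \to BGL_1(R_\Psi)$ with $(Q, \xi_Q) \simeq L$ in $\scrF(T^*Q; \Psi)$, and Theorem \ref{thm: gour main theorem} yields
\begin{equation}
[L] \, = \, [Q] \cap [\eta\, \xi_Q] \, \in \, \Omega_d^{E_\Psi, \cO\cC}(T^*Q, \phi).
\end{equation}

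Next I would introduce a bordism model $\mathcal{NM}(Q)$ for the group of normal invariants of $Q$, whose elements are represented by degree-one maps $f: N \to Q$ from closed manifolds together with a stable bundle identification of $TN$ with $f^*TQ$ lifting through $F \simeq B^2(G/O)$, with group operation the connect-sum based at $(Q, \mathrm{id}_Q)$; classical surgery theory identifies $\mathcal{NM}(Q) \cong [Q, G/O]$. Combining Remark \ref{recap: OC bordism maps here}(2) with push-forward along $\pi: T^*Q \to Q$ and the identification of bundles $\pi^*(TQ \otimes \bC) \simeq T(T^*Q)$ induced by the polarisation, one obtains a homomorphism
\begin{equation}
\mathrm{NI}: \Omega_d^{E_\Psi, \cO\cC}(T^*Q, \phi) \longrightarrow \mathcal{NM}(Q) \, \cong \, [Q, G/O],
\end{equation}
which is $R_\Psi^*(Q)$-linear by Lemma \ref{lem:module} and carries the class of the zero-section $[Q]$ to the basepoint $[(Q, \mathrm{id}_Q)]$. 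The central geometric content to verify is that $\mathrm{NI}([L])$ equals the classical normal invariant of the homotopy equivalence $\pi|_L: L \to Q$: the $\Theta$-brane structure on $L$ combined with the OC-evaluation provides precisely the data of a stable bundle trivialisation of $TL - \pi^*TQ|_L$ compatibly factored through $F \simeq B^2(G/O)$, which is exactly the content of the normal invariant viewed in $\mathcal{NM}(Q)$.

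To conclude, $R_\Psi^*(Q)$-linearity of $\mathrm{NI}$ gives $\mathrm{NI}([Q] \cap [\eta\xi_Q])$ as the action of $[\eta\xi_Q] \in R_\Psi^0(Q)$ on the trivial normal invariant $\mathrm{NI}([Q])$; under the identifications $\mathcal{NM}(Q) \cong [Q, G/O]$ and $F \simeq B^2(G/O)$, the class $[\eta\xi_Q]$ corresponds to the composition
\begin{equation}
Q \xrightarrow{\xi_Q} BGL_1(R_\Psi) \xrightarrow{\eta_*} GL_1(R_\Psi) \longrightarrow G/O,
\end{equation}
which by construction factors through $\eta_*: B(G/O) \to G/O$ via the delooping of $\xi_Q$. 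Combined with the equality $[L] = [Q] \cap [\eta\xi_Q]$ from the first step, this shows the normal invariant of $L$ lies in the image of $\eta_*$. The main obstacle is the geometric identification in the second step: matching the $\cO\cC$-fundamental class of $L$---as produced by the zero-dimensional moduli spaces of perturbed half-planes with boundary on $L$---to its classical normal invariant in a precise bordism model. Once that identification is in hand, the remaining manipulations reduce to formal consequences of $R_\Psi^*(Q)$-linearity, Lemma \ref{lem: eta acts}, and the naturality of $\eta$ under the map $BGL_1(R_\Psi) \to BGL_1(\bS) = BG$ induced by the ring unit.
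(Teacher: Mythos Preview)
Your overall architecture matches the paper's: work in the polarised pair $\Psi = (BO \times B(G/O), BO)$, use Theorem \ref{thm:main3} to put both $L$ and $Q$ into the spectral category, apply Theorems \ref{thm:main} and \ref{thm: gour main theorem} to relate their $\cO\cC$-classes up to a cap by $[\eta\xi]$, and then read this off in $[Q, G/O]$. However, you work with the canonical polarisation $\phi_{TQ}$ (so $Q$ has trivial stable Gauss map), whereas the paper deliberately uses the polarisation $\phi_{TL}$ determined by $TL$ (so $L$ has trivial stable Gauss map). This swap is where your argument acquires a genuine gap.

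With the polarisation $\phi_{TQ}$, the comparison map you call $\mathrm{NI}$ factors, exactly as in Lemma \ref{lem: 1235}, through $\Omega^\bS_d(Q; TQ)$ and then $\pi^0_{st}(\Thom(TQ - TQ)) = \pi^0_{st}(Q_+)$; its image in $[Q, G/O]$ therefore consists only of classes with \emph{trivial} underlying virtual bundle, i.e.\ the image of $[Q,G] \to [Q, G/O]$. But the normal invariant of $L$ has underlying bundle $(f^{-1})^*TL - TQ$, typically nonzero in $[Q, BO]$. Your description of the normal-map model (``a stable bundle identification of $TN$ with $f^*TQ$'') is not the standard one: a normal map carries an identification $TN \simeq f^*V$ for \emph{some} bundle $V$ over $Q$, not necessarily $V = TQ$. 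So $\mathrm{NI}([L])$ cannot equal the classical normal invariant in general, and the ``central geometric content'' you flag is not a verification but a false identification. Relatedly, the $\Theta$-brane structure on $L$ encodes a lift $\Gamma: L \to B(G/O)$, and the $\bS$-orientation on $TL - f^*TQ$ it produces is $\eta\circ\Gamma$ by Proposition \ref{prop: uo eta}; this is \emph{a} lift to $G/O$, but not a priori the specific one defining the normal invariant.

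The paper avoids this by choosing $\phi_{TL}$: then the $\Theta$-orientation on $L$ is $(TL, \mathrm{const})$ with trivial $B(G/O)$-component, and $\chi_{TL}([L])$ is visibly the normal map $(L, f, (f^{-1})^*TL, \id_{TL})$, i.e.\ the normal invariant (Lemma \ref{lem:get normal}). The price is that $\chi_{TL}([Q])$ is no longer the basepoint, and the paper invokes a separate polarisation-change argument (Lemmas \ref{lem: specific action on polarisation}, \ref{lem:compare OC groups}, \ref{lem: change of polarisation relates OC module actions}) to show it lies in the image of $\eta$. Your route has simply relocated this difficulty from $Q$ to $L$ and then skipped it; to repair it you would need exactly that polarisation-change lemma, at which point the two proofs coincide.
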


In particular, the normal invariant is two-torsion.

\subsection{Topology generalities}\label{sec: topo gene}

For a closed $d$-manifold $Q$, the following two pointed sets are naturally in bijection:
\begin{enumerate}
    \item (Normal invariants) $[Q,G/O]$; think of an element as a pair $(V,h)$ comprising a virtual bundle $V: Q \to BO$ and a nullhomotopy of $BJ \circ h: Q \stackrel{V}{\longrightarrow} BO \to BG$. The group structure comes from the infinite loop space structure on $G/O$. The identity element is the constant map to the basepoint.
    \item  (Normal maps, tangential flavour) $\cN\cM(Q)$; an element is a tuple $(M,a,f,V,\xi)$ with $M$ a closed $d$-manifold, $f: M \to Q$ a degree one map, $V \to Q$ a vector bundle such that $\xi: TM \oplus \bR^a \to f^*V$ is a map of bundles. We take these up to equivalence under stabilisation and relative framed bordism. 
    The identity is given by $(Q,0,\id,TQ,\id_{TQ})$. If $f: L\to Q$ is a simple homotopy equivalence, and $f^{-1}: Q \to L$ a choice of homotopy inverse, the normal invariant of $L$ is $(L,0,f, (f^{-1})^*TL, \id_{TL})$.
\end{enumerate}
\begin{rmk}
    Though we will not use it, $\cN\cM(Q)$ can be endowed with the structure of a group by taking transverse intersections over $Q$, and this bijection is in fact one of groups.
\end{rmk}
    
\begin{rmk}
Recall that an $\bS$-orientation of a vector bundle $V: Q \to BO$ can be viewed equivalently as (i) a nullhomotopy of the composition of the classifying map with $BO\to BG$,  or (ii) a fibrewise homotopy trivialisation of the (stable) sphere bundle of $V$. An equivalent description of ``normal invariants"  is given by the group $\cG/\cO(Q) (\cong [Q,G/O])$ whose elements are pairs $(V,t)$ where $V\to Q$ is a vector bundle and $t: Sph(V) \to S^{k-1}\times Q$ is an $\bS$-orientation, again taken up to suitable equivalence. In this model, the identity element is $(\bR^k,\id)$ for any $k\geq 0$ and the  group operation is Whitney sum. 
\end{rmk}

\begin{rmk}
    $[Q, G/O]$ only depends on the homotopy type of $Q$, and $\cN\cM(Q)$ on the tangential homotopy type.
\end{rmk}

The natural bijection
\begin{equation}\label{eqn:relate normal invariants}
\tau: \cN\cM(Q) \to [Q,G/O]
\end{equation}
views $(M,f,a,V,\xi)$  as defining a relatively framed bordism class in $\Omega_d^{fr}(Q,V) = \pi_d^{st}(\Thom(-V))$.
    Atiyah duality says that if $V \to Q$ is a stable bundle over a compact manifold $Q$ then the Spanier-Whitehead dual $\bD(\cdot)$ of the Thom space is given by
    \[
    \bD(\Thom V) = \Thom(-TQ-V)
    \]
    So under Atiyah duality 
    \[
    \pi_d^{st}(\Thom(-V)) = \pi^0_{st}(\Thom(V-TQ))
    \]
so this is a map 
\[
s: \Thom(V-TQ) \to \bS.
\]
Since $f$ has degree one this defines a trivialisation of the stable spherical fibration of $V-TQ$, and $\tau(M,f,a,V,\xi) = (V-TQ, s)$.

\begin{lem}\label{lem:action through eta}
    There is a natural group homomorphism $[Q,B(G/O)] \to [Q,G/O]$.
\end{lem}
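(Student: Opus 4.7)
The plan is to exhibit the desired homomorphism as postcomposition with the $\eta$-action map on the infinite loop space $G/O$, and then check that this map is itself an infinite loop map so that postcomposition preserves the group structure on $[Q,-]$.

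First, recall that $G/O$ is an infinite loop space: $O$ and $G = GL_1(\bS)$ are each modelled by group-like commutative $\cI$-monoids, and the $J$-homomorphism $J: O \to G$ is compatible with the $\cI$-monoid structures, so its homotopy cofibre (equivalently, $\Omega^\infty$ of the cofibre of the spectral $J$-homomorphism) inherits a commutative $\cI$-monoid structure, giving $B(G/O)$, $B^2(G/O)$, etc. By Lemma \ref{lem: eta acts} applied to the commutative $\cI$-monoid $G/O$, the Hopf element $\eta \in \pi_1\bS$ induces a map of spaces
\begin{equation}
\eta_*: B(G/O) \longrightarrow G/O,
\end{equation}
well-defined up to homotopy. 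The proposed homomorphism is then postcomposition,
\begin{equation}
\eta_*\circ(-): [Q, B(G/O)] \longrightarrow [Q, G/O].
\end{equation}

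Next I would verify that this set map is a homomorphism. Both sides are abelian groups, with the group structure coming from the infinite loop space structures on $B(G/O)$ and $G/O$ respectively. A general fact about infinite loop spaces is that the Hopf action by a stable class $\alpha \in \pi_k\bS$ refines to a map of spectra $\Sigma^k\, gl_1\text{-style}$; concretely, $\eta$ is induced from a map $\Sigma\, \fk \to \fk$ of the connective spectrum $\fk$ with $\Omega^\infty \fk \simeq G/O$, obtained by smashing with the stable Hopf map $S^1 \to \bS$. Passing to $\Omega^\infty$ gives a map $B(G/O) \to G/O$ which is an infinite loop map, hence in particular an H-map. Therefore postcomposition with $\eta_*$ respects the H-space structures on mapping spaces, and thus the induced abelian group structures on $[Q,-]$.

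Finally, naturality in $Q$ is immediate since the construction is postcomposition by a single map $\eta_*: B(G/O) \to G/O$ independent of $Q$. The main (minor) obstacle is simply to make rigorous that $\eta_*$ is an H-map and not merely a map of spaces up to homotopy; this is essentially built into the construction in \cite[Remark 2.58]{PS3}, to which we appeal, since there $\eta$ is realised at the level of spectra and only then passed to $\Omega^\infty$. With this understood, the lemma follows immediately.
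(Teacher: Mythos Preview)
Your proof is correct and takes exactly the same approach as the paper: the paper's proof is the single line ``Compose with the map $\eta: B(G/O) \to G/O$,'' and you have simply elaborated on why this postcomposition is a group homomorphism (namely that $\eta_*$ is an infinite loop map). Your additional care in checking the H-map property is reasonable but more than the paper demands here.
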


\begin{proof}
    Compose with the map $\eta: B(G/O) \to G/O$.
\end{proof}

There are different polarisations $V$ of $T^*Q$, with stable isomorphisms $T(T^*Q) \sim V\otimes \bC$.

\begin{lem}\label{lem:act on polarisation}
    If $V$ is a polarisation of $T^*Q$ then a choice of map $\chi \in [Q,B(G/O)]$  defines a new polarisation $V^{\chi}$.
\end{lem}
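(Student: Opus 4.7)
The plan is to exhibit the set of polarisations of $T^*Q$ as a torsor over $[Q,U/O]$ and then produce a natural map $B(G/O)\to U/O$, so that $\chi$ induces an element of $[Q,U/O]$ which acts on $V$ to yield $V^\chi$. A stable polarisation is, by definition, a lift of the classifying map $T(T^*Q): Q\to BU$ through the complexification $BO\to BU$; since the homotopy fibre of $BO\to BU$ is $U/O$, the set of such lifts, when nonempty, forms a torsor over $[Q,U/O]$ in the standard way (the action coming from loops in the fibre).

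For the key construction, I would observe that the fibre sequence $G/O\to BO\xrightarrow{BJ}BG$ is one of infinite loop spaces and infinite loop maps, so it deloops to a fibre sequence $BG\to B(G/O)\to B^2O\to B^2G$. This yields a natural map $B(G/O)\to B^2O$; composing with the Bott periodicity equivalence $B^2O\simeq\widetilde{U/O}$ and the canonical map from the simply-connected cover $\widetilde{U/O}\to U/O$ gives the desired map $B(G/O)\to U/O$. The polarisation $V^\chi$ is then defined as the image of $V$ under the action of the induced class in $[Q,U/O]$.

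The main obstacle is that the construction depends on a choice of model for the Bott periodicity equivalence $B^2O\simeq\widetilde{U/O}$, and, as noted in the remark following Theorem \ref{thm:main3}, the various models of this equivalence appearing in the literature have not all been shown to coincide. Since the statement of the lemma is purely existential, any such choice suffices here; however, in the subsequent applications --- in particular the identification in Theorem \ref{thm: norm} of the normal invariant of a nearby Lagrangian via the open-closed map --- one will need to fix the choice of Bott model so that it is compatible with the Bott map used to formulate Theorem \ref{thm:main3}.
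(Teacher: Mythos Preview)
Your proposal is correct and follows essentially the same approach as the paper: produce a map $B(G/O)\to U/O$ and let the resulting class in $[Q,U/O]$ act on the set of polarisations (which, as you note, is a torsor for $[Q,U/O]$, with the action being ``direct sum'' in the paper's phrasing). The only cosmetic difference is that the paper takes $B(G/O)=\hofib(\widetilde{U/O}\to B^2G)$ as its working model (cf.\ the definition of $\Theta$ at the start of Section~\ref{sec: near}), so the map $B(G/O)\to U/O$ is immediate from the fibre sequence rather than obtained by delooping $G/O\to BO\to BG$ and invoking Bott periodicity explicitly; your concern about the choice of Bott model is therefore already absorbed into the paper's choice of model for $B(G/O)$.
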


\begin{proof}
    Via the canonical map $B(G/O) \to U/O$ we get an element of $[Q,U/O]$. This then acts essentially by direct sum.
\end{proof}

\subsection{Nearby Lagrangians}\label{sec: near}
\subsubsection*{Identifying the normal invariant}
Let $\iota: L \subset T^*Q$ be a nearby Lagrangian. Then $\iota$ is a homotopy equivalence, as is $f = \pi \circ \iota: L \to Q$.

\begin{defn}
    Let $\phi_{TL}$ denote the polarisation of $T^*Q$ which writes $T(T^*Q) = V \otimes \bC$ for $V = (\iota^*)^{-1}TL \to T^*Q$. 
\end{defn}

Let $\Theta = BO \times hofib(U/O \to B^2G) = BO \times B(G/O)$. Let $\Phi = BO$. Define the map $\Theta \to \Phi$ to be the first projection $\pi_1$ and $\Theta \to BO$ to be given by $\pi_1 + (\mathrm{Re}\circ \pi_2)$, where $\pi_2$ is the map to $U/O$. Let $\Phi \to BU$ be complexification,  cf. Example \ref{qi ex: tang str pol}. 
There is then a spectral Fukaya category $\scrF(T^*Q,\Psi)$ as constructed in \cite{PS3}, with $\phi_{TL}: T^*Q \to BO$ the $\Phi$-structure on $T^*Q$.

An element of $\Omega_d^{E_{\Psi},\cO\cC}(T^*Q,\phi_{TL})$ is by definition a closed $d$-manifold $M \to T^*Q$ with some further tangential data, up to bordism (cf. Definition \ref{defn: oc-bordism}). There is a subset $\Omega_d^{E_{\Psi},\cO\cC}(T^*Q,\phi_{TL})^{|\mathrm{deg}|=1}$ of elements represented by maps which have degree $\pm 1$  after composition with $T^*Q\to Q$.

\begin{lem}
    $L$ admits a canonical $\Theta$-orientation with respect to this $\Phi$-structure on $T^*Q$. Therefore there is an associated class $[L] \in \Omega_d^{E_{\Psi},\cO\cC}(T^*Q,\phi_{TL})$, and this class has degree $\pm 1$.
\end{lem}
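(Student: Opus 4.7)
The plan is to exhibit the $\Theta$-orientation by hand, using that the polarisation $\phi_{TL}$ was rigged precisely so as to restrict tautologically to $TL$ along $\iota$. Since $\iota: L \hookrightarrow T^*Q$ is a homotopy equivalence, $\iota^* : [T^*Q, BO] \to [L, BO]$ is a bijection, and by the very definition $V = (\iota^*)^{-1} TL$ satisfies $\iota^* V = TL$. Therefore the composition $L \xrightarrow{\iota} T^*Q \xrightarrow{\phi_{TL}} BO = \Phi$ is canonically identified with a classifying map for $TL$, with no further choice of homotopy required.

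First I would define the candidate $\Theta$-lift as the map $(TL,\ast) : L \to BO \times B(G/O) = \Theta$, whose second coordinate is constant at the basepoint. The two compatibility conditions are then essentially vacuous: the projection to $\Phi = BO$ is a classifying map for $TL$, and agrees with $\phi_{TL}|_L$ by the identification above; while the induced map to the ambient $BO$ via $\pi_1 + (\mathrm{Re}\circ\pi_2)$ is $TL + 0 = TL$, since $\pi_2$ is constant and a fortiori so is $\mathrm{Re}\circ\pi_2$. This provides the asserted canonical $\Theta$-orientation on $L$, dependent on no choices beyond those already made in fixing $\phi_{TL}$.

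Given the orientation, the existence of a class $[L] \in \Omega_d^{E_\Psi,\cO\cC}(T^*Q, \phi_{TL})$ is immediate from Lemma \ref{lem: oreigrpieoghdirpgsgr}; unpacking Definition \ref{defn: oc-bordism}, it is represented by a tuple $(L, \iota, k, \rho, h)$, with $\rho$ packaging the stable $\Theta$-orientation just constructed and $h$ the tautological homotopy coming from $\iota^*V = TL$. For the degree claim, the composition $L \xrightarrow{\iota} T^*Q \xrightarrow{\pi} Q$ is precisely $f = \pi\circ\iota$, which by \cite{Abouzaid:homotopy,Kragh,Abouzaid-Kragh:Simple} is a simple homotopy equivalence and in particular has degree $\pm 1$. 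Hence $[L]$ lies in $\Omega_d^{E_\Psi,\cO\cC}(T^*Q,\phi_{TL})^{|\mathrm{deg}|=1}$.

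The only non-routine point is bookkeeping: the argument above is phrased at the level of classifying maps, whereas a $\Theta$-orientation is strictly a map of $\cI$-spaces. One must therefore verify that the tautological identifications assemble into such a map, and this will be the main thing to write out carefully; however, it is standard within the $\cI$-monoid framework reviewed in Section \ref{sec:tp and ad} and introduces no genuinely new input.
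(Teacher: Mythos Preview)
Your proposal is correct and takes essentially the same approach as the paper: define the $\Theta$-orientation as $(TL,\mathrm{const})$, verify the resulting square over $\Phi=BO$ commutes because $\iota^*V=TL$, and observe that degree $\pm1$ follows from $\pi\circ\iota$ being a homotopy equivalence. If anything you have been slightly more thorough than the paper, which only writes out the $\Phi$-square explicitly and declares degree $\pm1$ ``clear'' without citing the nearby Lagrangian literature.
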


\begin{proof}
    We have the diagram
    \[
    \xymatrix{
     F \ar[r] \ar@{=}[d] & \Theta \ar[r] \ar@{=}[d]& \Phi\ar@{=}[d] \\
    B(G/O) \ar[r] \ar[d] & BO \times B(G/O) \ar[r]_-{\pi_1} \ar[d]_{\pi_1 + \mathrm{Re}\circ \pi_2} & BO \ar[d]_{\otimes \bC} \\
    U/O \ar[r]_{\mathrm{Re}} & BO \ar[r]_{\otimes \bC}  & BU 
    }
    \]
    If we set $L \to BO \times B(G/O)$ to be $(TL,\mathrm{const})$ then the diagram
    \[
    \xymatrix{
    L \ar[r]_{\iota} \ar[d] & T^*Q \ar[d]^{(\iota^*)^{-1}TL} \\
    BO \times B(G/O) \ar[r]_-{\pi_1} & BO
    }
    \]
    commutes. Degree $\pm 1$ is clear.
\end{proof}

Let $\Omega^{\bS}_*$ denote the bordism of manifolds with stably trivial spherical fibration. This has an underlying commutative ring spectrum $\Omega^\bS$, constructed as the Thom spectrum of the map $G/O \to BO$.

\begin{lem}
    For the tangential pair $\Psi$, the bordism group $\Omega^{E_{\Psi},\cO\cC}_*(T^*Q,\phi_{TL})$ admits a natural map $q$ to $ \Omega^{\bS}_*(\Thom(-TL))$. 
\end{lem}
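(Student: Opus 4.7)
The plan is to build $q$ as a composition of three natural maps: an identification of the ring spectrum $R_\Psi$, an appeal to the general forgetful map produced in Remark \ref{recap: OC bordism maps here}(2), and finally a pushforward along the nearby Lagrangian homotopy equivalence $\iota: L \hookrightarrow T^*Q$.

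First I would identify $R_\Psi$ with $\Omega^\bS$. The homotopy fibre of $\Theta \to \Phi$ is $F = B(G/O)$, so $\Omega F \simeq G/O$, and the composition $\Omega F \to \Omega \widetilde{U/O} \xrightarrow{\mathrm{Bott}} BO$ that defines $E_\Psi$ in Definition \ref{qi def: E Psi} agrees with the classical connecting map $G/O \to BO$ of the fibration sequence $O \to G \to G/O \to BO$. Hence $R_\Psi \simeq \Thom(G/O \to BO)$, which is exactly the spectrum denoted $\Omega^\bS$ in the paragraph preceding the statement.

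Next I would invoke Remark \ref{recap: OC bordism maps here}(2): $\Psi$ is the polarised tangential pair associated to the polarisation $V = (\iota^*)^{-1}TL$ of $T^*Q$, in the sense that $T(T^*Q) \simeq V \otimes \bC$ stably and $\iota^*V \simeq TL$. The remark then produces a natural $R_\Psi$-module map
\[
\Omega^{E_\Psi, \cO\cC}_*(T^*Q, \phi_{TL}) \longrightarrow \Omega^{E_\Psi}_*(T^*Q, V) \cong R_{\Psi,*}(\Thom(-V \to T^*Q)).
\]
Composing the projection $T^*Q \to Q$ with a homotopy inverse to $\pi \circ \iota: L \to Q$ produces a retraction $r: T^*Q \to L$ with $r \circ \iota \simeq \id_L$, so $r^*TL \simeq V$ as stable bundles on $T^*Q$. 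Pushforward along $r$ therefore yields a homotopy equivalence of Thom spectra $\Thom(-V \to T^*Q) \simeq \Thom(-TL \to L)$, and combined with Step 1 this gives the final identification
\[
R_{\Psi,*}(\Thom(-V \to T^*Q)) \cong \Omega^\bS_*(\Thom(-TL \to L)).
\]
Composing the three maps produces $q$. The main subtlety to verify is the sign convention in the twisted bordism of Remark \ref{recap: OC bordism maps here}(2) --- whether one ultimately lands in the Thom spectrum of $V$ or of $-V$ --- but once this is pinned down each step is essentially definitional, and the nearby Lagrangian hypothesis is used only in the last step to ensure $\iota$ is a homotopy equivalence.
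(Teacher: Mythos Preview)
Your proposal is correct and follows essentially the same three-step route as the paper: identify $R_\Psi \simeq \Omega^\bS$ (the paper cites \cite[Example 1.9]{PS2} for this, while you compute $\Omega F \simeq G/O$ directly), apply Remark \ref{recap: OC bordism maps here}(2) to land in $\Omega^{E_\Psi}_*(T^*Q,V)$, and then use the homotopy equivalence $L \simeq T^*Q$ to replace $(T^*Q,V)$ by $(L,TL)$. The only cosmetic difference is that the paper phrases the last step as homotopy invariance via $Q$ rather than via the retraction to $L$, but since $L \simeq Q \simeq T^*Q$ this is the same argument.
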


\begin{proof}
    The fact that $\Omega^{E_{\Psi}}_* = \Omega^{\bS}_*$ (the bordism theory which governs manifolds with a trivialisation of the stable spherical fibration associated to the tangent bundle) is contained in \cite[Example 1.9]{PS2}. The rest is then a special case of Remark \ref{recap: OC bordism maps here}(2), given that the polarising bundle on $T^*Q$ is (the pullback of) $TL$,  together with homotopy invariance $\Omega^{\bS}_*(T^*Q,TL)\simeq \Omega^{\bS}_*(Q,TL)$.
\end{proof}

\begin{lem}\label{lem:unit splits off spherical}
    The natural unit map $\bS \to \Omega^{\bS}$ splits in the homotopy category of commutative ring spectra. 
\end{lem}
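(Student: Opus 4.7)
Since $\bS$ is initial in the homotopy category of commutative ring spectra, it suffices to exhibit \emph{any} commutative ring map $r\colon \Omega^{\bS}\to \bS$; the composition $r\circ u$ will then automatically agree with $\mathrm{id}_{\bS}$.  The plan is therefore to construct $r$ via the universal property of Thom spectra of commutative $\cI$-monoid maps, exactly as was used in Proposition \ref{prop: 11.6}.

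By definition $\Omega^{\bS}$ is the Thom spectrum of the map $j\colon G/O \to BO$, and by construction $G/O$ is the homotopy fibre of the $J$-homomorphism $J\colon BO \to BGL_1(\bS) = BG$ \emph{in infinite loop spaces} (equivalently, the fibre of a map of group-like commutative $\cI$-monoids). In particular the composition
\[
G/O \xrightarrow{\,j\,} BO \xrightarrow{\,J\,} BGL_1(\bS)
\]
carries a tautological nullhomotopy which is compatible with the $E_{\infty}$-structures on all three spaces.

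Invoking the universal property of Thom spectra of commutative $\cI$-monoids over $BO$ --- in the formulation of Antol\'in-Camarena--Barthel (already applied in the proof of Proposition \ref{prop: 11.6}) --- commutative ring maps $\Omega^{\bS} \to \bS$ correspond bijectively to $E_{\infty}$-nullhomotopies of $J\circ j$. The tautological nullhomotopy above therefore produces the desired commutative ring map $r\colon \Omega^{\bS}\to \bS$, which splits $u$. Equivalently, this nullhomotopy provides an equivalence of commutative ring spectra $\Omega^{\bS}\simeq \Sigma^{\infty}_+(G/O)$, and the projection $G/O\to \ast$ then induces the splitting.

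The only point requiring care is that the nullhomotopy of $J\circ j$ must be taken at the $E_{\infty}$-level rather than merely in the ordinary homotopy category; but this is automatic from the fact that $G/O \to BO \to BG$ is a fibre sequence of infinite loop spaces, and is precisely the input required by the cited universal property. No new symplectic or Floer-theoretic ingredient is needed.
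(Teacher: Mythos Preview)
Your proposal is correct and is essentially the paper's approach spelled out in detail: the paper simply cites \cite[Corollary 3.17]{Antolin-Camarena-Barthel}, which is exactly the universal property of Thom ring spectra you invoke (and which the paper already used in Proposition \ref{prop: 11.6}). Your observation that $\bS$ is initial, so any commutative ring map $\Omega^{\bS}\to \bS$ suffices, together with the tautological $E_\infty$ nullhomotopy coming from the fibre sequence $G/O \to BO \to BG$, is precisely the content of that corollary applied to this case.
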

\begin{proof}
    This follows from \cite[Corollary 3.17]{Antolin-Camarena-Barthel}.
\end{proof}
\begin{rmk}
    We may describe the splitting on the level of homology theories explicitly. Fix some target cell complex $Y$ and let $f: M \to Y$ be an $\bS$-oriented $i$-manifold. Represent the $\bS$-orientation by a fibrewise equivalence $S^k \times M \to S(TM + \bR^{i-k})$. Generically perturb and take the preimage of $0$; the resulting manifold is naturally framed, and maps to $Y$.  This defines a natural transformation of cohomology theories $\Omega^{\bS} \to \bS$. To see that this splits the unit map, one can take the fibrewise spherical equivalences to be linear on the fibres.     See \cite[Section 2.5]{P} for a variant of this construction.
\end{rmk}

\begin{lem}\label{lem:split framed compatibly with eta}
    The resulting map $p: \Omega^{\bS} \to \bS$ entwines the action of $\eta$, in the sense that the following diagram commutes:
    \begin{equation}
        \xymatrix{
            BGL_1(\Omega^\bS) 
            \ar[r]_p
            \ar[d]_{\eta_*}
            &
            BGL_1(\bS)
            \ar[d]_{\eta_*}
            \\
            GL_1(\Omega^\bS)
            \ar[r]_p
            &
            GL_1(\bS)
        }
    \end{equation}
\end{lem}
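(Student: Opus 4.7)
The plan is to deduce commutativity entirely from naturality of the $\eta$-action with respect to maps of commutative $\cI$-monoids. The splitting in Lemma \ref{lem:unit splits off spherical} is asserted to live in the homotopy category of commutative ring spectra, so $p: \Omega^{\bS} \to \bS$ is represented by a morphism of commutative ring spectra. Functoriality of the $GL_1$ construction (as, say, a functor from commutative ring spectra to group-like commutative $\cI$-monoids) then yields a map $GL_1(p): GL_1(\Omega^{\bS}) \to GL_1(\bS)$ of group-like commutative $\cI$-monoids, together with a delooping $BGL_1(p): BGL_1(\Omega^{\bS}) \to BGL_1(\bS)$.

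Next I would invoke Lemma \ref{lem: eta acts}, which produces the action of $\eta \in \pi_1\bS$ on any group-like commutative $\cI$-monoid $X$ as a map $\eta_*: BX_{h\cI} \to X_{h\cI}$, well-defined up to homotopy. The construction of this map is functorial in maps of commutative $\cI$-monoids: it is obtained from the universal (infinite loop space) action of $\pi_*\bS$ on the homotopy groups of $X_{h\cI}$, and equivalently from the module structure of any connective spectrum over $\bS$. Applying this naturality to the morphism $GL_1(p)$ gives the desired commutative square on the nose in the homotopy category.

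The only genuine point to check is that the splitting from Lemma \ref{lem:unit splits off spherical} really is a map of $\bE_\infty$-ring spectra (and not merely of module spectra or of underlying spectra), and this is exactly the content of \cite[Corollary 3.17]{Antolin-Camarena-Barthel} as quoted. Granted this, there are no further computations to perform: both vertical maps arise by the universal construction of Lemma \ref{lem: eta acts}, and both horizontal maps arise by functoriality of the same construction applied to $GL_1(p)$ and $BGL_1(p)$. This is the main (and essentially only) obstacle; everything else is formal.
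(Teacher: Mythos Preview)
Your proposal is correct and takes essentially the same approach as the paper, whose entire proof reads ``This holds for any map of commutative ring spectra.'' You have simply unpacked the naturality argument behind that sentence: $p$ is $\bE_\infty$ by Lemma~\ref{lem:unit splits off spherical}, $GL_1$ and $BGL_1$ are functorial in commutative ring spectra, and the $\eta_*$-action of Lemma~\ref{lem: eta acts} is natural in maps of commutative $\cI$-monoids.
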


\begin{proof}
    This holds for any map of commutative ring spectra.
\end{proof}

\begin{lem}\label{lem: 1235}
    There is a map 
    \begin{equation} \label{eqn:OC to normal maps}
    \chi: \Omega_d^{E_{\Psi},\cO\cC}(T^*Q,\phi_{TL})^{|\mathrm{deg}|=1}  \to \pi^0_{st}(\Thom(TL-TQ))^{|\mathrm{deg}|=1} \to [Q,G/O].
    \end{equation}
    
    This map is equivariant with respect to actions of $R_\Psi^0(Q)^\times$ on the first term and $[Q, G]$ on the second, with respect to the map of groups $R_\Psi^0(Q)^\times \to \bS^0(Q)^\times = \pi^0_{st}(Q)^\times = [Q, G]$. 
\end{lem}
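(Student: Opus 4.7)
The first arrow of $\chi$ is assembled from standard constructions. Given a class represented by $(M,f,k,\rho,h)$ with $f:M\to T^*Q$ of degree $\pm 1$, apply the map $q:\Omega^{E_\Psi,\cO\cC}_d(T^*Q,\phi_{TL})\to\Omega^{E_\Psi}_d(T^*Q,TL)$ from Remark \ref{recap: OC bordism maps here}(2); here $TL$ is pulled back along $T^*Q\to Q\simeq L$. Since $F=B(G/O)$, by the description of Section \ref{sec:tp and ad} we have $E_\Psi\simeq\Omega^\bS$ (the Thom spectrum of $G/O\to BO$), so this class lies in $\Omega^\bS_d(T^*Q,TL)$; retracting to the zero section rewrites it as $\Omega^\bS_d(Q,TL)$. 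The splitting $p:\Omega^\bS\to\bS$ of Lemma \ref{lem:unit splits off spherical} then yields an element of $\Omega^{fr}_d(Q,TL)$. Since $\mathrm{rk}(TL)=d$, Pontryagin--Thom identifies this group with $\pi_0^{st}(Q^{-TL})$, and Atiyah duality $\bD(Q_+)\simeq Q^{-TQ}$ rewrites the latter as $\pi^0_{st}(Q^{TL-TQ})=\pi^0_{st}(\Thom(TL-TQ))$.

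The second arrow is the classical identification of degree-one normal maps with normal invariants. By the degree-$\pm 1$ hypothesis, the resulting stable map $\Thom(TL-TQ)\to\bS$ restricts to a unit of $\pi^0_{st}$ on each fibre and is thus an $\bS$-orientation of the virtual bundle $TL-TQ\to Q$ (equivalently, a fibrewise homotopy trivialisation of its stable spherical fibration). The model $\cG/\cO(Q)\cong[Q,G/O]$ from Section \ref{sec: topo gene} packages such pairs (stable bundle, $\bS$-orientation) as elements of $[Q,G/O]$; concretely, the composite $\chi$ sends our class to the normal invariant $\tau(M,g,TL,\xi)$ of \eqref{eqn:relate normal invariants}, where $g:M\to Q$ is the composition of $f$ with the projection and $\xi$ is the framing of $TM-g^*TL$ extracted by $p$.

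Equivariance is then a matter of tracing the module structure through each step. The module action of Lemma \ref{lem:module} on $\Omega^{E_\Psi,\cO\cC}_*$ is natural in the underlying commutative ring spectrum, and $p:\Omega^\bS\to\bS$ is itself a map of commutative ring spectra (as in Lemma \ref{lem:unit splits off spherical}); so the induced group homomorphism $R_\Psi^0(Q)^\times\to\bS^0(Q)^\times=[Q,G]$ intertwines the $R_\Psi^0(Q)^\times$-action on the source with multiplication in $\bS$-cohomotopy of $\Thom(TL-TQ)$, which is precisely precomposition with a fibrewise self-equivalence of a trivial sphere bundle. The same precomposition describes the $[Q,G]$-action on $[Q,G/O]$ via the delooped $J$-map $G\to G/O$ (acting on the $\bS$-orientation datum in a normal invariant), so the second arrow is $[Q,G]$-equivariant. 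Composing, $\chi$ has the required equivariance.

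The main technical point is the first arrow: verifying that Pontryagin--Thom combined with Atiyah duality respects the $R_\Psi^0(Q)^\times$-action. This reduces to the formal statement that Thom-spectrum bordism theories are natural under maps of commutative ring spectra, together with the concrete observation that $p$ is such a map; once this is in hand, the second arrow is essentially a reformulation of the definition of $[Q,G/O]$ as the set of $\bS$-oriented stable bundles.
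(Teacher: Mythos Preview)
Your proof is correct and follows essentially the same route as the paper: apply the map $q$ to land in $\Omega^{\bS}$-bordism relative to $TL$, use the ring-spectrum splitting $p:\Omega^{\bS}\to\bS$ to pass to framed bordism, invoke Atiyah duality (as an $\bS^0(Q)$-module isomorphism) to reach $\pi^0_{st}(\Thom(TL-TQ))$, and finally read off a normal invariant from the degree-$\pm 1$ condition. Your treatment of equivariance---reducing to the fact that $p$ is a map of commutative ring spectra and that Atiyah duality respects the module structure---is also the paper's argument, though you spell out the $[Q,G]$-action on $[Q,G/O]$ more explicitly than the paper does.
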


\begin{proof}
The splitting $p$ induces a natural map
\begin{equation} \label{eqn:spherical framed to usual framed}
 \Omega^{\bS}_*(\Thom (-TL)) \to \Omega^{fr}_*(\Thom (TL-TQ)) = \pi_*^{st}\Thom(TL-TQ).
\end{equation}
from spherically framed bordism to framed bordism. Combine this with Atiyah duality, which is an isomorphism of modules over $\bS^0(Q)$. The equivariance follows from Lemma \ref{lem:split framed compatibly with eta}. 
\end{proof}

\begin{lem}\label{lem:get normal}
    Under the map $\chi$ of \eqref{eqn:OC to normal maps} the class $[L]$ maps to the normal invariant of $L$.
\end{lem}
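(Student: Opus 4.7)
The plan is to trace $[L]$ through the three maps that define $\chi$. By Lemma \ref{lem: oreigrpieoghdirpgsgr}, $[L]$ is the pushforward along $\iota : L \hookrightarrow T^*Q$ of the canonical class in $\Omega^{E_\Psi,\cO\cC}_d(L, \phi_{TL}|_L)$ arising from the $\Theta$-orientation $(TL, \mathrm{const})$ identified just before Lemma \ref{lem: 1235}: the tangent bundle $TL$ lifts via the tautological map to the first factor of $\Theta = BO \times B(G/O)$, and the constant nullhomotopy in the $B(G/O)$-factor encodes the fact that the obstruction bundle $T(T^*Q)|_L - TL \otimes \bC$ is canonically trivial.

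Next, I would apply the map $q$ used in the proof of Lemma \ref{lem: 1235} to land in $\Omega^\bS_d(\Thom(-TL))$, where the polarisation is $V := (f^{-1})^* TL$. Since $f^*V \simeq TL$ canonically on $L$, the image $q([L])$ is represented by $(L, f : L \to Q)$ equipped with the \emph{trivial} spherical $\bS$-orientation on $TL - f^*V$: the $B(G/O)$-component of our $\Theta$-orientation is the constant map. Applying the splitting $p : \Omega^\bS \to \bS$ from Lemma \ref{lem:unit splits off spherical} then replaces this spherical $\bS$-orientation by the underlying stable linear isomorphism $\xi : TL \simeq f^*V$: concretely, the transversality-based splitting described after Lemma \ref{lem:unit splits off spherical} can be taken fibrewise linear when the $\bS$-orientation already comes from a linear bundle map. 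The resulting class in $\pi_d^{st}(\Thom(-V))$ is thus the Pontrjagin--Thom class of $(L, f, \xi)$. Under Atiyah duality and the bijection $\tau$ of (\ref{eqn:relate normal invariants}), this is by definition the normal map $(L, 0, f, V, \xi) = (L, 0, f, (f^{-1})^* TL, \id)$ representing the normal invariant of $L$ in $\cN\cM(Q) \cong [Q, G/O]$.

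The main technical point requiring care is establishing the desired linearity of the splitting $p$ on $\bS$-orientations coming from linear bundle maps, with matching orientations after Atiyah duality; once this is in hand, the identification of $\chi([L])$ with the normal invariant reduces to a direct unpacking of the definitions in Section \ref{sec: topo gene}.
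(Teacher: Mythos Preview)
Your proposal is correct and follows essentially the same route as the paper's proof: both trace the canonical $\cO\cC$-class of $L$ (with $\Theta$-orientation $(TL,\mathrm{const})$ and trivial $E$-component) through the maps $q$ and $p$, observe that the relevant virtual bundle $TL - f^*V$ is canonically trivial so its spherical orientation is the linear one, and recognise the resulting framed bordism class as the normal map $(L,0,f,(f^{-1})^*TL,\id)$. The technical point you flag about $p$ preserving linear $\bS$-orientations is exactly what the paper handles in the remark following Lemma~\ref{lem:unit splits off spherical}.
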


\begin{proof}
    We break the map into steps:
    \begin{itemize}
        \item The $\cO\cC$-class for $L$ comes from $\Omega^{E_{\Psi};\cO\cC}(L,(\phi_{TL})|_L)$ under pushforward $\iota: L \to T^*Q$. In general the data of an $\cO\cC$-bordism class for $(X,\phi)$ is (i) a map $M \to X$ (ii) a map of Thom spaces $(M, TM) \to \Theta \times E$ and (iii) homotopy between the maps $M \to X \to \Phi$ and $M \to \Base(\Theta \times E) \to \Phi$. 
        
        Note that in defining the class $[L] \in \Omega^{E_{\Psi};\cO\cC}(L,(\phi_{TL})|_L)$ we can take the trivial $E$-orientation. In our case $E \sim \Omega F = \Omega B(G/O) = G/O$. So the map $L \to \Base(\Theta\times E) = (BO \times B(G/O)) \times G/O$ is actually trivial on the latter two factors, the $\phi_{TL}$-orientation on $T^*Q$ is classifying $TL$, and there is no data in the homotopy (iii) as both ways around the square are just $TL: L \to BO$ and the homotopy can be $\id$.
        
        \item The map to $\bS$-oriented bordism relative to $V = (\iota^{-1})^*(TL)$ is asking for a spherical trivialisation of the composite of $L \stackrel{TL-\iota^*V}{\longrightarrow} BO$ with $BO\to BG$. But the bundle being classified is $TL-TL$ which is canonically spherically oriented and framed. 
        \item We  have the framed bordism class in $\Omega^{fr}(\Thom(-TL)) = \Omega^{fr}(T^*Q;(\iota^{-1})^*TL) = \Omega^{fr}(Q,(\iota^{-1})^*TL)$ 
       represented by $L \to Q$ and the canonical framing of $TL$ relative to $\iota^*(\iota^{-1})^*TL$. But this is the normal invariant of $L$ according to its definition in $\cN\cM(Q)$ even before applying  $\tau$.
    \end{itemize}
\end{proof}

\begin{rmk}\label{rmk:woolly}
    We say that a class in $\Omega_d^{E_{\Psi},\cO\cC}(T^*Q,\phi_{TL})$ differs from $[L]$ by `something in the image of $\eta$' if it lies in the  orbit of $[L]$ under the map $[Q, BG] \xrightarrow{\eta} [Q, G] \to R^0(Q)^\times$ (and the natural module action of the latter).  The equivariance statement in Lemma \ref{lem: 1235} implies that any such class is taken by $\chi$ to something which differs from the normal invariant of $L$ by an element of $[Q,G/O]$ which factors through $\eta$ in the usual sense.
\end{rmk}

\subsection*{Zero-section}
Up to this point the only brane we have used in $\scrF(T^*Q;\Psi)$ is the brane associated to $L$.  

\begin{lem}
    The zero-section $Q$ defines an object of $\scrF(T^*Q;\Psi)$.
\end{lem}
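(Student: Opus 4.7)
Since $Q$ is exact, closed and canonically graded, to prove the lemma I need to exhibit a $\Theta$-orientation on $Q$ compatible with the $\Phi$-structure $\phi_{TL}$ on $T^*Q$. Unpacking Example \ref{qi ex: tang str pol}, such data amounts to a map $h\colon Q \to B(G/O)$ (the $F$-factor of $\Theta$) satisfying
\[
\phi_{TL}|_Q + (\mathrm{Re}\circ f)(h) = TQ \in KO(Q),
\]
where $f\colon B(G/O) \to U/O$ is the canonical map (identifying $B(G/O)$ with the homotopy fibre of $U/O \xrightarrow{B^2 J_{\bS}} B^2 GL_1(\bS)$) and $\mathrm{Re}\colon U/O \to BO$ takes a totally real subspace to its underlying real vector space.

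The first step is to identify the obstruction. Because $T(T^*Q)|_Q \cong TQ\otimes \bC \cong \phi_{TL}|_Q \otimes \bC$ as complex bundles, the virtual bundle $TQ - \phi_{TL}|_Q$ has trivial complexification and is therefore classified by a map $\alpha_Q\colon Q \to U/O$---namely the stable Gauss map of $Q$ relative to the polarisation $\phi_{TL}$. By the definition of $B(G/O)$, the obstruction to lifting $\alpha_Q$ along $f$ is precisely the composition $Q\xrightarrow{\alpha_Q} U/O \xrightarrow{B^2 J_{\bS}} B^2 GL_1(\bS)$, and any nullhomotopy of this composition will produce the required lift $\tilde\alpha_Q\colon Q \to B(G/O)$.

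The second step is to kill this obstruction. Under the homotopy equivalence $j = \pi\circ \iota\colon L \to Q$, the class $\alpha_Q$ pulls back (up to sign and the identification $T(T^*Q)|_L \cong TL\otimes\bC$) to the usual stable Gauss map $g_L\colon L\to U/O$ of the nearby Lagrangian $L$ relative to the standard cotangent polarisation. The required nullity of the delooped $J$-homomorphism applied to $g_L$ is exactly the theorem on stable Gauss maps of nearby Lagrangians proved by Jin \cite{Xin} and Abouzaid--Courte--Guillermou--Kragh \cite{ACGK}. Alternatively---and this is the more self-contained route---one applies Theorem \ref{thm:main3} to the polarised tangential pair with $F = \mathrm{pt}$, for which $\Theta = \Phi = BO$ and $R_\Psi \simeq \bS$; with this choice $L$ is tautologically $\Theta$-oriented (since $\phi_{TL}|_L = TL$ by construction), and using the standard fact that a nearby Lagrangian is quasi-isomorphic to the zero-section in $\scrF(T^*Q;\bZ)$ (implicit throughout this section), Theorem \ref{thm:main3} with the roles of $L$ and $K$ played by $Q$ and $L$ respectively yields precisely the vanishing of $\alpha_Q$ in $B^2 GL_1(\bS)$.

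Combining the two steps, a choice of nullhomotopy determines $\tilde\alpha_Q\colon Q \to B(G/O)$, and the pair $(\phi_{TL}|_Q, \tilde\alpha_Q)\colon Q \to BO\times B(G/O) = \Theta$ is the desired compatible $\Theta$-orientation. Together with the canonical grading of the zero-section, this furnishes all the data required to make $Q$ an object of $\scrF(T^*Q;\Psi)$ in the sense of \cite{PS3}. The only non-formal input is the nullhomotopy supplied by the nearby Lagrangian theorem; everything else is a direct unwinding of the definition of a brane in the spectral Fukaya category.
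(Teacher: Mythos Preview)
Your proof is correct and follows essentially the same route as the paper. Both you and the authors unpack the $\Theta$-orientation into a forced $BO$-component $\phi_{TL}|_Q$ together with a map $Q \to B(G/O)$ lifting the stable Gauss map of $Q$ relative to $\phi_{TL}$, and then apply Theorem~\ref{thm:main3} with $F=\pt$ (so $R_\Psi=\bS$), using that $L$ is tautologically $\Theta$-oriented for this choice, to obtain the required nullhomotopy of $Q \to \widetilde{U/O} \to B^2GL_1(\bS)$; the paper denotes the resulting lift by $\Gamma_L$ and writes the brane structure as $(TL,\Gamma_L)$, which is your $(\phi_{TL}|_Q,\tilde\alpha_Q)$.
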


\begin{proof}
We require a $\Theta$-orientation of $Q$, so a map $Q \to BO \times B(G/O)$. The first co-ordinate $Q \to BO$ is fixed by the need for commutativity of the square 
    \[
    \xymatrix{
    Q \ar[r] \ar[d] & T^*Q \ar[d]_{TL} \\
    BO \times B(G/O) \ar[r]_-{\pi_1} & BO 
    }
    \]
    whilst the need for a non-trivial function in the second co-ordinate arises because the map
    \[
    \xymatrix{
        Q \ar[r] & BO\times B(G/O) \ar[rr]_-{\pi_1 + \mathrm{Re}\circ \pi_2} && BO
    }
    \]
    must classify $TQ$ to be a $\Theta$-orientation.

    We use Theorem \ref{thm:main3}. The stable Gauss map of $L$ relative $\phi_{TL}$ is null; $L$ and $Q$ are quasi-isomorphic over $\bZ$; Theorem \ref{thm:main3} shows the stable Gauss map of $Q$ (defined relative to the polarisation $\phi_{TL}$) becomes null on composition with $B^2J$; and we view the nullhomotopy of $Q \to \widetilde{U/O} \to B^2GL_1(\bS)$ as a lift to the homotopy fibre, denoted $\Gamma_L: Q \to B(G/O)$. We then define the $\Theta$-orientation on $Q$ by taking the map
    \[
    (TL, \Gamma_L): Q \to BO \times B(G/O)
    \]
    and this satisfies our requirements.
\end{proof}

    Each polarisation $V$ on $T^*Q$, with $T(T^*Q) = V\otimes \bC$, defines a Fukaya category $\scrF(T^*Q;\phi_V)$ where the ambient $\Phi$-orientation comes from $V$.

\begin{lem}\label{lem: specific action on polarisation}
    The polarisation $\phi_{TL}$ is the $\Gamma_L$-twist of the `natural' polarisation $\phi_{TQ}$.
\end{lem}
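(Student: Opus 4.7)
The plan is to identify both polarisations as elements of the $[Q,U/O]$-torsor of stable polarisations of $T^*Q$ and to match the difference with the image of $\Gamma_L$. First I would recall that the set of stable polarisations of a complex bundle $W \to Q$ is a torsor for $[Q,U/O]$ via the fibre sequence $U/O \to BO \to BU$: given two polarisations $W \cong V \otimes \bC \cong V' \otimes \bC$, the induced comparison of the two totally real subbundles defines a class in $[Q,U/O]$, and this is the class through which the action of Lemma \ref{lem:act on polarisation} is defined, after postcomposition with the canonical map $B(G/O) \to U/O$.

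Next I would identify the difference between $\phi_{TQ}$ and $\phi_{TL}$ in this torsor with the stable Gauss map $g_Q^{\phi_{TL}} \colon Q \to U/O$ of $Q \subset T^*Q$ computed relative to the polarisation $\phi_{TL}$. This is almost tautological: both classes measure the failure of the subbundle $TQ \subset T(T^*Q)|_Q$ to coincide with the totally real subbundle $(\iota^*)^{-1}TL|_Q$ determined by $\phi_{TL}$. On the other hand, $\Gamma_L \colon Q \to B(G/O)$ was constructed in the preceding paragraph precisely as a lift of $g_Q^{\phi_{TL}}$ along the fibration $B(G/O) \to \widetilde{U/O} \to B^2GL_1(\bS)$, so by construction its image in $[Q,U/O]$ is (homotopic to) $g_Q^{\phi_{TL}}$.

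Combining these observations, the twist $\phi_{TQ}^{\Gamma_L}$ produced by Lemma \ref{lem:act on polarisation} differs from $\phi_{TQ}$ by the image of $\Gamma_L$ in $[Q,U/O]$, which is $g_Q^{\phi_{TL}} = [\phi_{TL}/\phi_{TQ}]$; since the $[Q,U/O]$-action on polarisations is free and transitive, this forces $\phi_{TQ}^{\Gamma_L} \simeq \phi_{TL}$, as required.

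The only subtle point — and the main thing to check carefully — is that the intrinsic torsor action on polarisations really does coincide with the ``direct sum'' action described in the proof of Lemma \ref{lem:act on polarisation}, and that sign conventions (the direction of the stable Gauss map and the orientation of the torsor) match. This is a matter of unpacking the construction of \emph{op.\ cit.}, not a substantial computation.
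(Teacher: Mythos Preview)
Your argument is essentially correct and follows the same route as the paper: both identify the action of $\Gamma_L$ on polarisations via its image in $[Q,U/O]$, observe that this image is by construction the stable Gauss map $g_Q^{\phi_{TL}}$, and then identify that Gauss map with the difference between $\phi_{TQ}$ and $\phi_{TL}$.

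The one genuine difference is in how the compatibility you flag at the end is handled. You work with the intrinsic torsor structure on polarisations and defer the check that the ``direct sum'' action of Lemma~\ref{lem:act on polarisation} agrees with it (and that the sign of the Gauss map matches). The paper instead computes the action explicitly via $V^\lambda = V + \mathrm{Re}(\lambda)$ and invokes Proposition~\ref{prop: uo eta} (in the special case $F = B(G/O)$) to identify the composite $B(G/O) \to \widetilde{U/O} \xrightarrow{\mathrm{Re}} BO$ with the $\eta$-map under Bott periodicity, and thereby reads off which real bundle is being added to $TQ$. Your torsor phrasing is arguably cleaner and avoids the detour through $\eta$, but be aware that the convention-matching you defer is exactly the substance that Proposition~\ref{prop: uo eta} is supplying in the paper's version; it is not entirely content-free, since it amounts to knowing how the $\mathrm{Re}$ map on $\widetilde{U/O}$ interacts with the Bott equivalence.
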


\begin{proof}
 $\Gamma_L$ is a lift of the (canonically nullhomotopic) map $L \to U/O \to B^2G$ to the homotopy fibre $B(G/O)$. It acts on polarisations via $B(G/O) \to U/O$, so the result follows from the diagram
\[
\xymatrix{
B(G/O) \ar[r] \ar[d]_{\eta} & \widetilde{U/O} \ar[r] \ar[d]_{\eta} & B^2G \ar[d]_{\eta} \\ G/O \ar[r] & BO \ar[r] & BG
}
\]
and the fact that, viewing $\widetilde{U/O} = B^2 O$, the central map is stably classifying $TL$, which follows from Proposition \ref{prop: uo eta} (in the special case in which $F=B(G/O)$). 

\end{proof}

\begin{lem}\label{lem:easy for TQ choice}
    For the polarisation $\phi_{TQ}$, the zero-section $Q$ defines a brane in $\scrF(T^*Q;\phi_{TQ})$ with $\Theta$-orientation $(TQ,0): Q \to BO \times B(G/O)$. The class of this brane in $\Omega^{E_{\Psi};\cO\cC}(T^*Q,TQ)$ maps to the constant map at the base-point under $\chi_{TQ}: \Omega^{E_{\Psi};\cO\cC}(T^*Q,TQ) \to [Q,G/O]$. 
\end{lem}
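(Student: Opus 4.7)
The plan is to mimic the proof of Lemma \ref{lem:get normal}, specialised to the case where the brane is the zero-section $Q$ equipped with its standard $\Theta$-orientation $(TQ,0): Q \to BO \times B(G/O)$ (rather than $L$ with its possibly non-trivial $\Theta$-orientation), and where the ambient $\Phi$-orientation on $T^*Q$ is the `natural' one $\phi_{TQ}$ rather than $\phi_{TL}$.

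First, I would observe that the $\cO\cC$-class $[Q] \in \Omega_d^{E_\Psi,\cO\cC}(T^*Q,\phi_{TQ})$ is the image under the pushforward $j_*: \Omega_d^{E_\Psi,\cO\cC}(Q,(\phi_{TQ})|_Q) \to \Omega_d^{E_\Psi,\cO\cC}(T^*Q,\phi_{TQ})$ (where $j: Q \hookrightarrow T^*Q$ is the zero-section inclusion) of the tautological class associated to the identity map $Q \to Q$. Since the $B(G/O)$-component of the $\Theta$-orientation is the constant map to the basepoint, the $E_\Psi$-part of the orientation is the trivial one (using $E_\Psi \simeq \Omega B(G/O) \simeq G/O$, so a constant map to $B(G/O)$ gives the canonical class). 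The data (i)-(iii) in Definition \ref{defn: oc-bordism} specialise to: $f = j$, the Thom map $(Q,TQ) \to \Theta \times E$ factors through $(TQ,\mathrm{const},\mathrm{triv})$, and the homotopy in (iii) is trivial since both maps $Q \to \Phi = BO$ are equal to $TQ$ on the nose.

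Next, I would trace this class through the three remaining steps of $\chi_{TQ}$. Passing to $\Omega^\bS_*(\Thom(-TQ \to T^*Q))$ via the map $q$ amounts to asking for an $\bS$-orientation (equivalently, a spherical trivialisation) on $TQ - j^*(\phi_{TQ}) = TQ - TQ = 0$, which is canonical. Applying the splitting $p: \Omega^\bS \to \bS$ of Lemma \ref{lem:unit splits off spherical} then yields a genuinely framed bordism class in $\Omega^{fr}_d(\Thom(TQ - TQ \to Q)) = \pi_d^{st}(Q_+)$, represented by the identity map $\id_Q: Q \to Q$ with the tautological framing of $TQ$ relative to itself. Atiyah duality identifies this with the element of $\pi^0_{st}\Thom(0) = [Q, \bS^0]$ corresponding to the constant map at $1 \in \pi_0(\bS) = \bZ$, i.e.\ the degree-one component.

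Finally, unwinding the bijection \eqref{eqn:relate normal invariants} between $\cN\cM(Q)$ and $[Q,G/O]$, this class corresponds precisely to the normal map $(Q, 0, \id_Q, TQ, \id_{TQ}) \in \cN\cM(Q)$, which is by definition the base-point (identity element) of $\cN\cM(Q)$. Hence $\chi_{TQ}([Q]) \in [Q, G/O]$ is the constant map at the base-point, as claimed. No genuine obstacle arises here: the whole proof is bookkeeping, since the triviality of the $B(G/O)$-component of the $\Theta$-orientation makes every step tautological. The only mild subtlety is checking that the $E_\Psi$-orientation on the brane really is the one induced by the constant classifying map, which was already spelt out in the construction of the $\cO\cC$-fundamental class in \cite{PS3}.
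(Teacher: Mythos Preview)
Your proposal is correct and is exactly the unwinding of definitions that the paper has in mind; the paper's own proof is just the single line ``This is straightforward from the definitions.'' You have essentially supplied the omitted details by specialising the argument of Lemma~\ref{lem:get normal} to the tautological case $(Q,\id_Q,TQ,\id_{TQ})$, and nothing further is needed.
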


\begin{proof}
This is straightforward from the definitions.
\end{proof}

\begin{lem}\label{lem:compare OC groups}
    Suppose we have two polarisations $\{V,V^{\lambda}\}$ of $T^*Q$ with $V^{\lambda}$ a twist of $V$ associated to a map $\lambda: Q \to B(G/O)$. Then there is an induced isomorphism
    \[
    \Omega^{E_{\Psi};\cO\cC}(T^*Q,V)  \longrightarrow \Omega^{E_{\Psi};\cO\cC}(T^*Q,V^\lambda).
    \]
\end{lem}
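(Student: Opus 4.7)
The plan is to construct the isomorphism $\Phi_\lambda$ directly on representatives of $\cO\cC$-bordism classes, exploiting that the tangential pair here has $\Theta = BO \times B(G/O)$ with a free $B(G/O)$-factor which absorbs the twist by $\lambda$. Given a representative $(M, f, k, \rho, h)$ of a class in $\Omega^{E_\Psi;\cO\cC}(T^*Q, V)$, the Thom-space map $\rho$ encodes in particular a classifying map $(W, \gamma) : M \to BO(N) \times B(G/O)(N)$ together with a bundle identification $TM \oplus \bR^{k} \simeq W + \mathrm{Re}(\gamma)$ (since the tangential map $\Theta \to BO$ is $\pi_1 + \mathrm{Re}\circ \pi_2$), while $h$ witnesses $V \circ f \simeq W$ as maps $M \to \Phi = BO$ (the map $\Theta \to \Phi$ being $\pi_1$). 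I would define $\Phi_\lambda$ by replacing the $\Theta$-classifying data with $\bigl(W + \mathrm{Re}(f^*\lambda),\, \gamma \cdot (f^*\lambda)^{-1}\bigr)$, using the infinite-loop-space structures on the target factors.

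The new homotopy $h' : V^\lambda \circ f \simeq W + \mathrm{Re}(f^*\lambda)$ is then obtained as the sum of $h$ with the identity on the $\mathrm{Re}(f^*\lambda)$-summand, concatenated with the canonical identification $V^\lambda \simeq V \oplus \mathrm{Re}(\lambda)$ of Lemma \ref{lem:act on polarisation}. The tangent identification is preserved because $W + \mathrm{Re}(f^*\lambda) + \mathrm{Re}(\gamma \cdot (f^*\lambda)^{-1}) = W + \mathrm{Re}(\gamma)$, so the modified data still defines a valid $\cO\cC$-representative, now with respect to $V^\lambda$. This assignment descends to bordism classes because any $\cO\cC$-bordism over $T^*Q \times [0,1]$ interpolating two representatives can be twisted by the pullback of $\lambda$ in the same way. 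The inverse of $\Phi_\lambda$ is $\Phi_{\lambda^{-1}}$, available because $B(G/O)$ is group-like, and the composition is the identity since the action on the $\gamma$-factor is by group multiplication.

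The main obstacle is that the schematic modification of the classifying data $(W,\gamma)$ must be upgraded to a coherent modification of the full Thom-space map $\rho$ together with its $E(i+k)$-component, not merely of the underlying maps of bases. This amounts to an equivariance statement: the full $\Theta$-Thom-space structure, together with the index bundle $E_\Psi$, must be compatible with the $B(G/O)$-action we are exploiting on the second factor of $\Theta$. This is a naturality check which ultimately reduces to compatibility between the construction of $E_\Psi$ (Proposition \ref{qi prop: mode baas sull}) and the action of $B(G/O) \to U/O$ on polarisations invoked in Lemma \ref{lem:act on polarisation}; both ultimately come from the same map $B(G/O) \to \widetilde{U/O}$, so the check is essentially tautological, but requires careful bookkeeping of pullbacks through the Thom-spectrum data.
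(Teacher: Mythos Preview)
Your approach is essentially the same as the paper's: both construct the map by acting on the $\Theta$-classifying data of a representative by $\lambda$ (pulled back to $M$), adjust the homotopy $h$ by adding the identity on the new summand, and then observe that the inverse action by $\lambda^{-1}$ (using the group structure on $B(G/O)$) gives the inverse map. The paper writes the action on the map $M \to BO \times B(G/O) \times G/O$ as $(+\mathrm{Re}(\lambda), +\lambda, 0)$ rather than your $(+\mathrm{Re}(f^*\lambda), -f^*\lambda)$; your choice of sign on the $B(G/O)$-factor is the one that manifestly preserves the composite $\pi_1 + \mathrm{Re}\circ\pi_2$ and hence the tangent identification, which is a check the paper leaves implicit. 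Your final paragraph about upgrading from the base-level modification to the full Thom-space map $\rho$ is a point the paper does not spell out either; it is indeed a bookkeeping verification rather than a genuine obstacle.
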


\begin{proof}
    Take an element in the $V$-oriented $\cO\cC$-bordism group, represented by $M \to T^*Q$ with a homotopy commutative diagram
    \[
    \xymatrix{
    M \ar[rr] \ar[d] && T^*Q \ar[d]_V \\ \Base(\Theta\times E) = BO \times B(G/O) \times G/O \ar[rr]_-{\pi_1} && BO
    }
    \]
    By definition $V^{\lambda} = V + \mathrm{Re}(\lambda)$ (which is still a polarisation since $\mathrm{Re}(-) \otimes \bC$ is stably trivial). Note that $\lambda$ induces a map (still denoted) $\lambda: M \to B(G/O)$. 
    Now act on the left downwards arrow by $(+\mathrm{Re}(\lambda), +\lambda, 0)$. One can similarly act on the entire homotopy $M \times [0,1] \to BO$ witnessing the square's commutativity by $\oplus \lambda$. Since $V$ is a twist of $V^{\lambda}$ by $\lambda^{-1}$, using the group structure on $[Q,B(G/O)]$, the resulting map is an isomorphism.
\end{proof}

A similar construction can be used to turn any $\Theta$-oriented brane with respect to $V$ into one with respect to $V^\lambda$. This gives a map from the set of objects of $\scrF(X; \phi_V)$ to those of $\scrF(X; \phi_{V^\lambda})$, we expect (but do not carry out) that this can be upgraded to a functor.

\begin{lem}\label{lem: change of polarisation relates OC module actions}
The isomorphism of Lemma \ref{lem:compare OC groups} entwines the actions of $R^0_{\Psi}(Q)^{\times}$.    
\end{lem}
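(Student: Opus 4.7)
The plan is to inspect how the two operations—the module action of $R^0_\Psi(Q)^\times$ and the isomorphism of Lemma \ref{lem:compare OC groups}—act on a representative tuple $(M, f, k, \rho, h)$ of $\Omega^{E_\Psi;\cO\cC}(T^*Q, V)$ (in the sense of Definition \ref{defn: oc-bordism}), and observe that they modify disjoint pieces of the data.

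First I would unpack the module action. A unit $u \in R^0_\Psi(Q)^\times$ is represented by a map $u: Q \to GL_1(R_\Psi)_{h\cI}$, and its action on an $\cO\cC$-bordism class pulls $u$ back along the composition $M \xrightarrow{f} T^*Q \to Q$ and uses multiplication in $R_\Psi$ to modify the $E$-orientation part of $\rho$, that is, the component $(M,TM)(k+N) \to E(i+k)$. The map $M \to \Theta(N)$ carrying the $\Theta$-orientation and the classifying homotopy $h: M \times [0,1] \to \Phi(N)$ are untouched (this is a consequence of the description of the module structure in \cite[Section 5.3]{PS3}, since the unit module structure on $E_\Psi$-orientations factors through the action of $GL_1(R_\Psi)$ on a chosen section of the Thom bundle).

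Next I would recall, directly from the proof of Lemma \ref{lem:compare OC groups}, that its isomorphism acts on the same tuple $(M,f,k,\rho,h)$ by adding $(\mathrm{Re}(\lambda), \lambda, 0)$ to the classifying map $M \to BO \times B(G/O) \times G/O = \Base(\Theta \times E)$, where $\lambda$ is pulled back along $f$, and correspondingly adjusting the $\Phi$-homotopy $h$ by $\oplus\,\mathrm{Re}(\lambda)$; the trailing $0$ records that the $E$-component is left alone. Thus the isomorphism modifies only the $\Theta$-component of $\rho$ and the homotopy $h$.

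Since the two operations touch disjoint data (the module action lives in the $E(i+k)$ factor; the change-of-polarisation isomorphism lives in the $\Theta(N)$ factor together with $h$), they manifestly commute up to the canonical associativity of direct sum and multiplication. The main point to verify is that the module action does not interact with $h$: this follows because $h$ is a homotopy between two maps valued in $\Phi(N)$, and both maps are independent of the $E$-orientation data, so modifying the latter does not force any compensating change to $h$. I would expect this last compatibility to be the only delicate step, but it reduces to the construction of the module action in \cite[Section 5.3]{PS3}, where the multiplication by $R^0_\Psi(Q)^\times$ is defined fibrewise on the $E$-component and so is compatible with any independent modification of $\Theta$ and $h$. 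Equivariance of the isomorphism then follows immediately.
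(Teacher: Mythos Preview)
Your proposal is correct and follows the same strategy as the paper: both argue that the module action and the change-of-polarisation isomorphism act on disjoint pieces of the data and hence commute. The only difference is in how the module action is described: the paper (citing \cite[Lemma 5.17]{PS3}) gives the geometric description, in which $\alpha \in R^0_\Psi(Q)$ replaces $M$ by the preimage $M'$ of zero under a map $M \times S^j \to \Thom(E_\Psi(j))$, and $M'$ inherits all its tangential data via the projection $M' \to M$; commutation with the $\lambda$-twist is then immediate since the $\Theta$-data and $h$ on $M'$ are pulled back from $M$. Your more spectral description (modifying the $E$-component of $\rho$ by multiplication) is a valid alternative viewpoint, but you should be aware that at the level of representative tuples it is the paper's preimage-of-zero picture that makes the disjointness literally manifest.
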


\begin{proof}
    This follows from \cite[Lemma 5.17]{PS3}. Indeed, a class $\alpha \in R^0_{\Psi}(Q)$ acts on a bordism class $M\to T^*Q$ via replacing $M \to T^*Q$  by the preimage $M' \to T^*Q$ under zero of a map $M \times S^j \to \Thom(E_{\Psi}(j))$ for $j\gg 0$, and noting that this preimage inherits the required tangential data via projection $M' \to M$.  This commutes with twisting the (tangential data for the) input bordism class by $\lambda$ as in Lemma \ref{lem:compare OC groups}. 
\end{proof}

\begin{lem}
     $\chi_{TL}([Q])$ belongs to the image of $\eta$.
\end{lem}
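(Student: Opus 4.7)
The plan is to compute $\chi_{TL}([Q])$ explicitly and identify it with the class $\eta_*\Gamma_L \in [Q,G/O]$, which visibly lies in the image of $\eta$.

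By Lemma \ref{lem: specific action on polarisation}, the polarisation $\phi_{TL}$ is the $\Gamma_L$-twist of $\phi_{TQ}$, and Lemma \ref{lem:compare OC groups} then supplies an isomorphism $\Omega_d^{E_\Psi,\cO\cC}(T^*Q,\phi_{TQ}) \xrightarrow{\cong} \Omega_d^{E_\Psi,\cO\cC}(T^*Q,\phi_{TL})$ obtained by shifting the $\Theta$-orientation by $(+\mathrm{Re}(\Gamma_L),+\Gamma_L,0)$. Unpacking this construction, the isomorphism sends the $\phi_{TQ}$-brane $(Q,(TQ,0))$ of Lemma \ref{lem:easy for TQ choice} to the $\phi_{TL}$-brane $(Q,(TL|_Q,\Gamma_L))$ underlying $[Q]_{TL}$. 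Since $\chi_{TQ}([Q]_{TQ})$ is the basepoint of $[Q,G/O]$, determining $\chi_{TL}([Q])$ reduces to tracking how the map $\chi$ transforms under this polarisation twist.

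Next, we trace through the steps in Lemma \ref{lem: 1235}: first pass to $\bS$-oriented bordism relative to the polarisation via $q$, then apply the splitting $p$ of Lemma \ref{lem:unit splits off spherical} combined with Atiyah duality to land in $\pi^0_{st}(\Thom(TL|_Q - TQ))$, and finally view the result as a normal invariant in $[Q,G/O]$. For the $\phi_{TQ}$-brane, the $\Theta$-orientation is tautological and the output is the identity. For the $\phi_{TL}$-brane, the nontrivial $B(G/O)$-component $\Gamma_L$ of the $\Theta$-orientation is precisely the data that furnishes a spherical trivialisation of $TL|_Q - TQ$. By Proposition \ref{prop: uo eta}, which relates the Bott periodicity equivalence for $F = B(G/O)$ to the $\eta$-action on $F$, this spherical trivialisation is identified with the class $\eta_*\Gamma_L \in [Q,G/O]$. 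We therefore conclude $\chi_{TL}([Q]) = \eta_*\Gamma_L$, which lies in the image of $\eta_*\colon [Q, B(G/O)] \to [Q, G/O]$.

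The main technical obstacle is the final matching step: verifying that tracing the $\Theta$-orientation $(TL|_Q,\Gamma_L)$ through $p\circ q$ and Atiyah duality produces precisely the normal invariant represented by $\eta_*\Gamma_L$, rather than some other representative of the same coset. This amounts to unpacking the interaction between the splitting of the unit $\bS \to \Omega^\bS$ on the bordism-theoretic side and the Bott-periodicity diagram of Proposition \ref{prop: uo eta} on the infinite loop space side; once this is established the conclusion is immediate.
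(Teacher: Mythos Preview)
Your approach is essentially correct and runs parallel to the paper's proof: both hinge on the change-of-polarisation isomorphism of Lemma~\ref{lem:compare OC groups} carrying $[Q]_{TQ}$ to $[Q]_{TL}$, together with the fact (Lemma~\ref{lem:easy for TQ choice}) that $\chi_{TQ}([Q]_{TQ})$ is the basepoint, and both ultimately locate the $\eta$ via Lemma~\ref{lem: specific action on polarisation} and Proposition~\ref{prop: uo eta}.

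The difference is in how the final step is packaged. You attempt the explicit identification $\chi_{TL}([Q]) = \eta_*\Gamma_L$, which requires tracing the $B(G/O)$-component of the $\Theta$-orientation through $q$, the splitting $p$, and Atiyah duality; you correctly flag this as the residual technical burden. The paper instead invokes Lemma~\ref{lem: change of polarisation relates OC module actions} (the change-of-polarisation isomorphism entwines the $R^0_\Psi(Q)^\times$-actions) together with the equivariance clause of Lemma~\ref{lem: 1235}, and thereby concludes ``in the image of $\eta$ in the sense of Remark~\ref{rmk:woolly}'' without ever computing $\chi_{TL}([Q])$ on the nose. Your route gives sharper information (an actual formula) at the cost of the verification you mention; the paper's route absorbs that bookkeeping into the module-action lemma and is correspondingly terser. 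Either is fine for the stated conclusion.
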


\begin{proof}
For the polarisation $\phi_{TQ}$ the associated class in $\Omega^{E_{\Psi},\cO\cC}(T^*Q;TQ)$ is taken by $\chi_{TQ}$ to the constant map at the base-point.  By Lemma \ref{lem: change of polarisation relates OC module actions}, and the equivariance in Lemma \ref{lem: 1235} for each of the $\phi_{TL}$ and $\phi_{TQ}$ polarisations, it then follows that the image of $[Q]$ under $\chi_{TL}$ also belongs to the image of $\eta$ in the sense of Remark \ref{rmk:woolly}. 
\end{proof}

\begin{proof}[Proof of Theorem \ref{thm:normal through eta}.]
Theorem \ref{thm:main} says there is a local system $\xi: Q \to BGL_1(\bS)$ such that $L \sim (Q,\xi)$ in $\scrF(T^*Q;\Psi)$. Therefore $\chi_{TL}([L]) = \chi_{TL}([Q,\xi])$ and, by Theorem \ref{thm: gour main theorem}, $\chi_{TL}([Q,\xi])$ equals $\chi_{TL}([Q])$ capped with a class in $R_\Psi^0(Q)^\times$ that is in the image of $\eta$. But Lemmas \ref{lem:easy for TQ choice} and \ref{lem:compare OC groups} say that $\chi_{TL}([Q])$ is equal to $\chi_{TQ}([Q])$ capped with another class in the image of $\eta$ and so, since $\chi_{TQ}([Q])$ is the basepoint, $\chi_{TL}([L])$ is in the image of $\eta$. Combining with Lemma \ref{lem:get normal}, we find that the normal invariant of $L$ is in the image of $\eta$. 
\end{proof}

The arguments of Section \ref{sec: near} are not completely restricted to cotangent bundles. 
\begin{prop}\label{prop: fin}
    Assume that $L, K \subseteq X$ are closed exact Lagrangians in a Liouville domain, such that:
    \begin{enumerate}
        \item $X$ is polarised, by some Spin vector bundle $V_L \to X$. 
        \item $L, K$ define objects in $\scrF(X; \bZ)$, and are isomorphic to each other in this category.
        \item With respect to $V_L$, the stable Gauss map of $L$ is nullhomotopic.
        \item There is a retraction $r: X \to L$ of $L$, such that the composition $r|_K: K \hookrightarrow X \xrightarrow{r} L$ is a homotopy equivalence.
        \item There is a stable equivalence of vector bundles $V_L \cong r^*TL$.
    \end{enumerate}
    Then the normal invariant of the map $L \to K$ factors through $\eta: B(G/O) \to G/O$.
\end{prop}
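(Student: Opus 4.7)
The plan is to mimic the cotangent bundle proof of Theorem \ref{thm: norm} from Section \ref{sec: near}, with the retraction $r: X \to L$ and hypothesis (5) playing the roles of the projection $\pi: T^*Q \to Q$ and the canonical identification of the cotangent bundle polarisation with the pullback of $TQ$. Set $\Psi = (BO \times B(G/O), BO)$ as in Section \ref{sec: near}. Hypothesis (1) makes $X$ a $\Phi$-oriented Liouville domain with polarisation $\phi_{V_L}$; since $r|_L = \id_L$ we have $V_L|_L = TL$, so hypothesis (3) (equivalently (5)) provides $L$ with the trivial $\Theta$-orientation $(TL, \mathrm{const}): L \to BO \times B(G/O)$. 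Note that $V_L$ already plays the role of the natural polarisation $\phi_{TQ}$, so no change-of-polarisation step (cf. Lemma \ref{lem:compare OC groups}) will be needed.

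The next step is to upgrade $K$ to an object of $\scrF(X; \Psi)$. Applying Theorem \ref{thm:main3} with the roles of the two Lagrangians exchanged (using our $L$ as the theorem's $\Theta$-oriented Lagrangian), hypothesis (2) lets us conclude that the composition of the stable Gauss map of $K$ with the delooped $J$-homomorphism $B^2 J_{R_\Psi}$ is nullhomotopic; the fibre sequence $F \to \widetilde{U/O} \to B^2 GL_1(R_\Psi)$ from Section \ref{sec: tang J} then supplies a homotopy lift of the stable Gauss map of $K$, that is, a $\Theta$-orientation on $K$. Theorem \ref{thm:main} now produces a $\Psi$-monodromy local system $\xi$ on $L$ with $(L, \xi) \simeq K$ in $\scrF(X; \Psi)$, and Theorem \ref{thm: gour main theorem} yields $[K] = [L] \cap [\eta\xi] \in \Omega_d^{E_\Psi, \cO\cC}(X, \phi_{V_L})$ with $[\eta\xi] \in R_\Psi^0(L)^\times$ lying in the image of $\eta$ (cf. Remark \ref{rmk:woolly}).

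The third step is to construct a ``normal invariant'' map $\chi$ on degree-$\pm 1$ $\cO\cC$-classes, generalising Lemma \ref{lem: 1235}. Hypotheses (4) and (5) together give a map of pairs $(X, V_L) \to (L, TL)$, inducing a pushforward $r_*: \Omega_d^{E_\Psi, \cO\cC}(X, \phi_{V_L}) \to \Omega_d^{E_\Psi, \cO\cC}(L, TL)$. Composing with the $\eta$-compatible splitting $\Omega^{E_\Psi}_* \simeq \Omega^\bS_* \to \Omega^{fr}_*$ (from Lemmas \ref{lem:unit splits off spherical} and \ref{lem:split framed compatibly with eta}) and Atiyah duality over $L$ lands in $\pi^0_{st}(L_+)$, from which one extracts an element of $[L, G/O]$ exactly as in Section \ref{sec: topo gene}. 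The analog of Lemma \ref{lem:easy for TQ choice} gives $\chi([L]) = 0$ (the basepoint), and the analog of Lemma \ref{lem:get normal} identifies $\chi([K])$ with the normal invariant of $r|_K: K \to L$, which corresponds under $r|_K$ to the normal invariant of the map $L \to K$. Combining $[K] = [L] \cap [\eta\xi]$ with $\chi([L]) = 0$ and $\eta$-equivariance of $\chi$ then yields the desired factorisation through $\eta_*: [L, B(G/O)] \to [L, G/O]$.

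The main obstacle will be the verification of the analog of Lemma \ref{lem:get normal}: that $\chi([K])$ really coincides with the normal invariant of $r|_K$. In the cotangent bundle case this uses the particularly rigid structure of $T^*Q$ --- a vector bundle, with $\pi$ a deformation retraction and with $V|_L = TL$ tautologically. In our setting, hypothesis (5) supplies precisely the tangential identification $V_L = r^*TL$ that is needed globally, and hypothesis (4) supplies the degree-$\pm 1$ homotopy equivalence $r|_K: K \to L$ that turns the pushforward of $[K]$ into a normal map in $\cN\cM(L)$; the bordism-theoretic calculation of Lemma \ref{lem:get normal} should then go through essentially verbatim, with $L$ replacing $Q$ throughout.
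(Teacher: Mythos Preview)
Your plan has a genuine gap at precisely the place you flag as ``the main obstacle'': the identification $\chi([K]) = $ normal invariant of $r|_K$ does \emph{not} go through verbatim. In Lemma \ref{lem:get normal} the Lagrangian whose $\chi$-image is shown to equal its normal invariant is the one carrying the \emph{trivial} $\Theta$-orientation with respect to the chosen polarisation (so that $TL - V|_L = 0$ is canonically framed, second bullet of that proof). In your setup $K$ carries a non-trivial $\Theta$-orientation $(V_L|_K, \Gamma_K)$, where $\Gamma_K$ is an arbitrary lift of $g_K$ to $B(G/O)$. Concretely, your $\chi = \chi_{V_L}$ passes through $\Omega^{fr}_d(L, TL) \simeq \pi^0_{st}(L_+)$, so under the bijection $\tau$ of \eqref{eqn:relate normal invariants} every class in its image has the form $(0, s)$ with \emph{trivial} underlying virtual bundle; but the normal invariant of $r|_K$ is $((r|_K^{-1})^*TK - TL,\, s_{\mathrm{can}})$, whose underlying bundle need not be stably trivial. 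So $\chi_{V_L}([K])$ cannot equal the normal invariant in general, and your claim that ``no change-of-polarisation step will be needed'' is exactly what fails.

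The paper's proof addresses this by constructing a \emph{second} polarisation $V_K$ on $X$ with respect to which $K$ has trivial stable Gauss map (so that Lemma \ref{lem:get normal} applies directly to $K$). Since $X$ is not assumed homotopy equivalent to $K$, extending $g_K \in KO^1(K)$ to a class in $KO^1(X)$ is not automatic: the paper first passes to the Spin Fukaya category (using a $B^2Spin$-lift of $g_K$, obtained from Theorem \ref{thm:main3} and the vanishing of $\pi_{\leq 4}B(G/O)$), then uses the quasi-isomorphism $(L,\xi_L) \simeq K$ together with the Conner--Floyd isomorphism to produce a $KO^*$-triangle showing that $g_K$ is restricted from $X$. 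Only after this does the argument of Section \ref{sec: near} run, now with $V_K$ in the role of $\phi_{TL}$ and $V_L$ in the role of $\phi_{TQ}$.
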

In the proof of Theorem \ref{thm: norm}, we used two different polarisations on $T^*Q$; these were easy to produce since $L \simeq Q \simeq T^*Q$ were all homotopy equivalent. Much of the work here is to prove there exists a second well-behaved polarisation besides $V_L$; to do this, we apply Theorems \ref{thm:main3} and \ref{thm:main}.
\begin{proof}[Sketch proof]
    We claim the stable Gauss map of $K$, $K\to U/O$, lifts to $B^2Spin$. It suffices to show that its restriction to the $3$-skeleton $K_3$ of $K$ is nullhomotopic. By Theorem \ref{thm:main3}, the composition $K \to U/O \to B^2G$ is nullhomotopic, and hence lifts to the homotopy fibre $B(G/O)$. The low-degree homotopy groups $\pi_{\leq 4}B(G/O)$ vanish, so the map $K_3 \to B(G/O)$ is nullhomotopic and hence so is $K_3 \to U/O$. 
    
    It follows that $L$ and $K$ define objects in a spectral Fukaya category $(BO \times B^2Spin, BO)$ as in Example \ref{qi ex: tang str pol}; from Remark \ref{recap: OC bordism maps here}(2) we see the open-closed map for this Fukaya category maps to the Spin bordism groups $\Omega^{Spin}_*(X)$ of $X$ relative to $V_L$. 
    
    By Theorem \ref{thm:main}, there is some spectral local system $\xi_L$ on $L$ such that $(L, \xi_L)$ and $K$ are isomorphic in this spectral Fukaya category. A similar moduli space argument to \cite[Proposition 7.27]{PS} identifies endomorphisms in this spectral Fukaya category with the Spin bordism groups of the Lagrangian relative to the tangent bundle; one may incorporate monodromy local systems as in Lemma \ref{lem:22}. Since $(L, \xi_L)$ and $K$ are isomorphic, their endomorphisms in this spectral Fukaya category are isomorphic. The same moduli space argument as \cite[Lemma 7.15]{PS3} (cf. also \cite[Proposition 7.27 \& Remark 1.15]{PS}) shows this isomorphism lies over the bordism groups of $X$. To summarise, there is a commutative diagram:
    \begin{equation}
        \xymatrix{
            \Omega^{Spin}_*(X, V_L)
            &
            \\
            \Omega^{Spin}_*(L, TL)
            \ar[u]
            &
            \Omega^{Spin}_*(K, TK)
            \ar[ul]
            \ar[l]_\cong
        }
    \end{equation}
    with the horizontal arrow coming from the isomorphism in the spectral Fukaya category.
    
    Applying the Conner-Floyd isomorphism $\Omega^{Spin}_*(\cdot) \otimes_{\Omega^{Spin}_*} KO_* \cong KO_*(\cdot)$ \cite{Hopkins-Hovey} along with Atiyah duality and the Thom isomorphism theorem (for real $K$-theory relative to a Spin vector bundle) implies that there is a commutative triangle:
    \begin{equation}\label{erpghru0gvhousbdfv}
        \xymatrix{
            KO^*(X)
            \ar[d]
            \ar[dr]
            &
            \\
            KO^*(L)
            \ar[r]_\cong
            &
            KO^*(K)
        }
    \end{equation}
    where the two maps down are given by pullback along the inclusion.

    Consider the stable Gauss map of $K$, $K \to U/O$. We view this as a class $g_K \in KO^1(K)$. By (\ref{erpghru0gvhousbdfv}) and the fact $X$ retracts to $L$, this is the restriction of some class $\tilde g_K \in KO^1(X)$, corresponding to some other polarisation $V_K$ of $X$ such that the stable Gauss map of $K$ with respect to $V_K$ is nullhomotopic. By Theorem \ref{thm:main3}, the restrictions of the polarisations $V_K$ and $V_L$ to $L$ differ by a map $L \to B(G/O)$.

    From this, we may apply exactly the same arguments as those earlier in this section, with $K$ in place of the zero-section $Q$ and $V_K$ in place of $V_Q$, to obtain the desired conclusion, by comparing $r_*[L]$ with $r_*[K]$ in the appropriate bordism group. The final condition of the statement ensures $r$ extends to a map of Thom spaces, so we may split bordism of $L$ relative to $TL$ from bordism of $X$ relative to $V_L$.
\end{proof}
\begin{rmk}\label{rmk: riewgnepifgnrip}
    Consider a plumbing $X = T^*Q_1 \#_C T^*Q_2$ of simply connected cotangent bundles along a submanifold $C$ of codimension at least $3$.  Assume $Q_1$ is Spin and $Q_2$ is stably framed. Suppose $L \subset X $ is a simply-connected compact Lagrangian which is known to be quasi-isomorphic to $Q_1$. Then the projection $L \to Q_1$, coming from projecting $X$ to the compact core and collapsing the other factors, is a homotopy equivalence, since it has degree one and induces an isomorphism in homology over any field. 
    
    Proposition \ref{prop: fin} then applies to constrain the normal invariant of $L \to K$. 
\end{rmk}
\begin{rmk}
    The paper \cite{AS:plumbings} shows that, under the hypotheses of Remark \ref{rmk: riewgnepifgnrip}, any $L$ is a twisted complex on the core components of the plumbing. It seems reasonable to believe that the only bordism classes represented by exact Lagrangians are ones in the span of those represented by the core components.
\end{rmk}

\bibliographystyle{myamsalpha}
\bibliography{Refs.bib}{} 

\end{document}